\documentclass{article}

\usepackage{vmargin}
\usepackage{epsfig}
\usepackage[francais]{babel}
\usepackage[latin1]{inputenc}
\usepackage[tbtags]{amsmath}
\usepackage{amsthm}
\usepackage{amssymb,bbm,mathrsfs}
\usepackage{array,multirow}
\usepackage{alltt}
\usepackage{pifont}
\usepackage[all]{xy}
\usepackage{url}
\usepackage{times}

\newtheorem{introtheo}{Théorème}
\newtheorem{theo}{Théorème}[section]
\newtheorem{lemme}[theo]{Lemme}
\newtheorem{prop}[theo]{Proposition}
\newtheorem{cor}[theo]{Corollaire}

\newtheorem{conj}[theo]{Conjecture}
\theoremstyle{definition}
\newtheorem{deftn}[theo]{Définition}
\newtheorem{introdef}[introtheo]{Définition}
\theoremstyle{remark}
\newtheorem{rem}[theo]{Remarque}

\def\leq{\leqslant}
\def\geq{\geqslant}

\def\1{\mathbbm{1}}
\def\A{\mathbb{A}}
\def\L{\mathbb{L}}
\def\N{\mathbb{N}}
\def\Z{\mathbb{Z}}
\def\Q{\mathbb{Q}}

\def\R{\mathbb{R}}

\def\F{\mathbb{F}}
\def\G{\mathbb{G}}
\def\O{\mathcal{O}}

\def\cro#1{\left[ #1 \right]}

\def\epsilon{\varepsilon}

\def\calA{\mathcal{A}}
\def\calC{\mathcal{C}}
\def\calL{\mathcal{L}}

\def\calX{\mathcal{X}}

\def\id{\text{\rm id}}

\def\Frac{\text{\rm Frac}\:}
\def\spec{\text{\rm Spec}\:}
\def\val{\text{\rm val}}
\def\card{\text{\rm Card}}
\def\min{\text{\rm min}}
\def\max{\text{\rm max}}
\def\lex{\text{\rm lex}}
\def\Var{\text{\rm Var}}

\def\red{\text{\rm red}}
\def\df{\text{\rm def}}
\def\mod{\,\text{\rm mod}\,}

\def\puian{{k[[u^{1/\infty}]]}}
\def\puico{{k((u^{1/\infty}))}}

\def\Leb{\text{\rm Leb}}
\def\Reg{\text{\rm Reg}}
\def\ord{\text{\rm ord}}
\def\weyl{w}
\def\weylp{\varpi}
\def\weyld{\weyl^\star}
\def\weyldd{\weyl^\vee}
\def\Rec{\text{\rm Rec}}
\def\rec{\text{\rm rec}}
\def\inv{\text{\rm inv}}

\def\vect#1{\smash{\vec{#1}}}

\def\perm#1#2#3#4{(#1\,#2\,#3\,#4)}

\title{Estimation des dimensions de certaines variétés de Kisin}
\author{Xavier Caruso}
\date{Mai 2010}

\begin{document}

\maketitle

\begin{abstract}
Dans cet article, nous nous intéressons aux dimensions de certaines 
variétés qui ont été introduites récemment par Kisin pour démontrer la 
modularité de certaines représentations galoisiennes. Nous étudions plus 
spécialement un cas particulier pour lequel nous donnons une estimation 
de la dimension en question, puis, en nous basant sur ce résultat, nous 
énonçons une conjecture dans le cas général.
\end{abstract}

\renewcommand{\abstractname}{Abstract}

\begin{abstract}
In this paper, we study dimensions of some varieties, that were 
introduced recently by Kisin in order to prove modularity of some
Galois representations.
In fact, we mainly consider a special case for which we obtain
an estimation of the dimension we are interested in. Then, based on
this result, we state a conjecture for the general case.
\end{abstract}

\setcounter{tocdepth}{2}
\tableofcontents

\bigskip

\noindent
\hrulefill

\bigskip

Motivé par l'étude de certains problèmes de modularité et poursuivant
des travaux de Breuil, Kisin a introduit et étudié dans \cite{kisin} une
certain nombre de variétés, notées $\mathscr{GR}_{V_\F}$,
$\mathscr{GR}_{V_\F,0}$ et $\mathscr{GR}_{V_\F,0}^{\mathbf v,
\text{loc}}$ dans \emph{loc. cit.}, paramétrant certains types de
schémas en groupes définis sur l'anneau des entiers d'un corps local $K$
d'inégale caractéristique $(0,p)$. Dans ce qui précède, l'indice $V_\F$
désigne une représentation du groupe de Galois absolu de $K$ à
coefficients dans un corps fini $\F$ de caractéristique $p$. S'inspirant
de cette construction, Pappas et Rapoport ont ensuite défini dans
\cite{rapoport} un champ sur $\Z_p$ dont certaines fibres s'interprètent
comme les variétés que Kisin avait définies, et en ont profité pour
nommer ces dernières \emph{variétés de Kisin}. Comprendre la géométrie
des variétés de Kisin, et notamment calculer leurs dimensions, est d'une
grande importance pour les applications. Toutefois, en dehors du cas où
$V_\F$ est de dimension $2$ considéré dans certains travaux de Kisin
(voir \cite{kisin}), Hellmann (\cite{hellmann}, \cite{hellmann2}) et 
Imai (\cite{imai1}, \cite{imai2}, \cite{imai3}), pratiquement rien n'est 
connu.
Dans cet article, nous entâmons l'étude de la dimension des variétés
de Kisin lorsque la représentation galoisienne $V_\F$ est de dimension
supérieure à $2$. Plus précisément, dans un premier temps, nous
donnons des estimations de ces dimensions dans l'exemple (déjà 
compliqué) où le corps des coefficients $\F$ est le corps premier $\F_p$ 
et où le groupe de Galois agit trivialement sur $V_\F$ et, forts de 
cela, dans un second temps, nous formulons un certain nombre de
conjectures générales qui laissent croire que le cas particulier étudié
est plutôt représentatif.

\medskip

Afin de décrire plus en détails le contenu de cet article, il est
nécessaire de commencer par rappeler la définition des variétés
de Kisin. Soit $p$ un nombre premier et $k$ un corps de caractéristique
$p$. On pose $K = k((u))$ et on appelle $\phi$ l'unique morphisme de
$k$-algèbres $\phi : K \to K$ qui est continu pour la topologie
$u$-adique et qui envoie $u$ sur $u^p$. Pour tout l'article, on fixe 
un nombre entier $d$ supérieur ou égal à $1$ et on pose $M = K^d$. Le
Frobenius $\phi$ s'étend naturellement en un opérateur sur $M$, encore
noté $\phi$, en agissant coordonnée par coordonnée\footnote{C'est le
choix de cette action particulière de $\phi$ sur $M$ qui correspond au
fait que l'on se restreint à l'action triviale de Galois sur l'espace
$V_\F$.}. Un \emph{réseau} $L$ de $M$ est, par définition, un
sous-$k[[u]]$-module $L \subset M$ engendré par une $k((u))$-base de
$M$. On note $\calL_{\leq e}$ l'ensemble des réseaux $L$ de $M$
vérifiant la condition
\begin{equation}
\label{eq:condBreuil}
u^e L \subset \phi(k[[u^{1/p}]] \otimes_{k[[u]]} L) \subset L
\end{equation}
où $\phi$ est étendu à $k[[u^{1/p}]] \otimes_{k[[u]]} L$ de façon
évidente. Kisin démontre que $\calL_{\leq e}$ apparaît naturellement
comme les $k$-points d'une variété algébrique notée $\calX_{\leq e}$.
Dans cet article, nous démontrons le théorème suivant.

\begin{introtheo}
\label{theo:dimkisin}
Avec les notations précédentes, on a :
$$\cro{\frac{d^2} 4} \cdot \cro{\frac{e-p+2}{p+1}} \leq \dim_k
\calX_{\leq e} \leq \frac{d(d-1)} 2 + \cro{\frac{d^2} 4} \cdot \frac e
{p+1}$$
où $[x]$ désigne la partie entière du réel $x$.
\end{introtheo}

\noindent
On insiste sur le fait que le théorème n'affirme en aucune façon que les
variétés $\dim_k \calX_{\leq e}$ sont équidimen\-sionnelles, ni même que
l'inégalité annoncée vaut pour toutes les composantes irréductibles. Le
nombre $\dim_k \calX_{\leq e}$ désigne bien uniquement la plus grande
dimension d'une composante irréductible.

En réalité, à la place du théorème \ref{theo:dimkisin}, nous allons 
démontrer un résultat légèrement plus général que nous énonçons 
maintenant. On se donne deux entiers $h$ et $b$ avec $b \geq 2$, et on 
remplace $\phi$ par l'application $\sigma : k((u)) \to k((u))$ donnée
par la formule suivante :
\begin{equation}
\label{eq:frobenius}
\sigma : K \to K, \quad \sum_{i \gg -\infty} a_i u^i \mapsto
\sum_{i \gg -\infty} a_i^{p^h} u^{bi}.
\end{equation}
Lorsque $h = 0$ et $b = p$, on retrouve l'opérateur $\phi$. Un autre cas 
qui semble intéressant est celui où $h = 1$ et $b = p$. En effet, un
théorème de Breuil (voir \cite{breuil}) dit alors que les éléments de
$\calX_{\leq e}(k)$ sont en bijection avec l'ensemble des classes
d'isomorphisme de modèles entiers du schéma en groupes $(\Z/p\Z)^d_K$ où
$K$ est une extension totalement ramifiée fixée de degré $e$ de $\Frac
W(k)$ (où $W(k)$ désigne l'anneau des vecteurs de Witt à coefficients
dans $k$). Le cas $b = 1$ est, à vrai dire, lui aussi très intéressant.
Nous avons préféré l'écarter dans cet article simplement car il conduit
à certaines variétés de Deligne-Lusztig affines qui ont déjà été
largement étudiées et, en particulier, dont les dimensions ont déjà été
déterminées (dans une plus grande généralité) dans les articles
\cite{gortz} et \cite{viehmann}.

\begin{introtheo}
\label{theo:dimcaruso}
Si $h \neq 0$, on a :
$$\cro{\frac{d^2} 4} \cdot \cro{\frac{e-b+2}{b+1}} \leq \dim_k \calX_{\leq e}
\leq \cro{\frac{d^2} 4} \cdot \frac e {b+1}.$$
Si $h = 0$, on a :
$$\cro{\frac{d^2} 4} \cdot \cro{\frac{e-b+2}{b+1}} \leq \dim_k \calX_{\leq e}
\leq \frac{d(d-1)} 2 + \cro{\frac{d^2} 4} \cdot \frac e {b+1}.$$
\end{introtheo}

Dans leurs articles respectifs, Kisin d'une part et Pappas et Rapoport
d'autre part définissent également des variantes des variétés
$\calX_{\leq e}$ qui ne sont plus paramétrées par un unique entier $e$
mais par un $d$-uplet d'entiers relatifs $(\mu_1, \ldots, \mu_d)$ tels
que $\mu_1 \geq \cdots \geq \mu_d$. Dans la généralité considérée ici
--- c'est-à-dire lorsque $b$ et $h$ peuvent être quelconques --- ces
variantes ont encore un sens. Précisément, si $\mu = (\mu_1, \ldots,
\mu_d)$ est un $d$-uplet comme précédemment, on peut construire des
variétés $\calX_\mu$ et $\calX_{\leq \mu}$ dont les points
$k$-rationnels sont respectivement :
$$\calL_\mu = \left\{\, 
\text{réseaux } L \text { de } M \, \left| \,
\begin{array}{c}
\text{il existe une base } m_1, \ldots, m_d \text{ de } L 
\text{ telle que} \\
u^{\mu_1} m_1, \ldots, u^{\mu_d} m_d \text{ soit une base de } 
\sigma(k[[u^{1/b}]] \otimes_{k[[u]]} L)
\end{array} \right. \, \right\}$$
et
$$\calL_{\leq \mu} = \bigcup_{\mu' \leq \mu} \calL_{\mu'}$$
où l'on convient que $\mu' = (\mu'_1, \cdots, \mu'_d)$ est plus
petit ou égal à $\mu$ si $\mu'_1 + \cdots + \mu'_t \leq \mu_1 + \cdots +
\mu_t$ pour tout $t \in \{1, \ldots, d\}$ avec égalité si $t = d$. Dans
cet article, nous nous intéressons également à la dimension de ces 
variétés. Pour énoncer les résultats obtenus, il est commode de munir 
$\R^d$ du produit scalaire usuel $\left< \cdot | \cdot \right>_d$ et
d'introduire le vecteur
$$\vec \rho = \Big( \frac {d-1} 2, \frac{d-3} 2, \ldots, \frac{1-d} 2 
\Big) \in \R^d$$
(la $i$-ième coordonnée est donnée par la formule $\frac{d+1} 2-i$). 

\begin{introdef}
\label{def:breg}
On dit qu'un $d$-uplet $\mu = (\mu_1, \ldots, \mu_d) \in \R^d$ est :
\begin{itemize}
\item \emph{$b$-régulier} si $\mu_i - \mu_{i+1} \leq b (\mu_{d-i}
- \mu_{d-i+1})$ pour tout $i \in \{1, \ldots, d-1\}$ ;
\item \emph{intégralement $b$-régulier} s'il est $b$-régulier, si
tous les $\mu_i$ sont entiers et $b-1$ divise $\mu_1 + \cdots
+ \mu_d$,
\item \emph{fortement intégralement $b$-régulier} s'il est intégralement
$b$-régulier et vérifie en plus :
$$\mu_{d-1} - \mu_d \leq b(\mu_1 - \mu_2) - d(b^2-1).$$
\end{itemize}
\end{introdef}

Les définitions d'éléments $b$-réguliers et intégralement $b$-réguliers
semblent s'imposer dans ce contexte. Par contre, l'inégalité renforcée
qui apparaît dans la définition de fortement intégralement $b$-régulier
n'est probablement pas optimale et devra sans doute être corrigée
ultérieurement. On remarque néanmoins que si $\mu = (\mu_1, \ldots,
\mu_d)$ est $b$-régulier, alors $\mu_1 \geq \cdots \geq \mu_d$.
Réciproquement si les $\mu_i$ sont rangés par ordre décroissant et
\emph{deux à deux distincts}, le $d$-uplet $\mu$ est $b$-régulier pour
$b$ suffisamment grand.

\begin{introtheo}
\label{theo:dimdivelem2}
Soit $\mu = (\mu_1, \ldots, \mu_d) \in \Z^d$ tel que $\mu_1 \geq \mu_2
\geq \cdots \geq \mu_d$. Si $b-1$ ne divise pas $\mu_1 + \cdots +
\mu_d$, alors la variété $\calX_\mu$ est vide. On suppose donc tout au
long du théorème que $b-1$ divise $\mu_1 + \cdots + \mu_d$.

On pose $\varepsilon = 1$ si $h = 0$ et $\varepsilon = 0$ dans le
cas contraire.
Alors, il existe un entier $\delta \in \{0, 1, \ldots, \varepsilon
{\cdot}\frac{d(d-1)} 2\}$ tel que l'on ait la congruence :
$$\dim_k \calX_\mu \equiv \delta - \sum_{i=1}^d i \cdot \mu_i \pmod
{b-1}.$$
En particulier, si $h \geq 0$, on a :
$$\dim_k \calX_\mu \equiv - \sum_{i=1}^d i \cdot \mu_i \pmod
{b-1}.$$

On suppose maintenant en plus $b \geq 1 + [\frac{(d-1)^2} 4]$.
Alors on a :
$$\dim_k \calX_\mu \leq \varepsilon \cdot \frac{d(d-1)} 2
+ (b-1) \cdot \min_{\weyl \in \mathfrak S_d} \sum_{i=1}^d
\sum_{n=1}^\infty \mu_i \cdot \frac{d+1-i-\weyl^n(i)}{b^n}$$
où, bien entendu, $\mathfrak S_d$ désigne le groupe des permutations de
$\{1, \ldots, d\}$ et $\weyl^n = \weyl \circ \cdots \circ \weyl$ ($n$
fois). En outre, si $\mu$ est $b$-régulier, alors le minimum précédent
est atteint pour $\weyl = \weyl_0 : i \mapsto d+1-i$ et vaut $\frac 1
{b^2-1} \cdot \left< 2 \vect \rho | \mu \right>_d$ (le produit de ce
minimum par $(b-1)$ est donc égal à $\frac 1 {b+1} \cdot \left< 2 
\vect \rho | \mu \right>_d$).

On suppose toujours $b \geq 1 + [\frac{(d-1)^2} 4]$.
Il existe des constantes positives $c_1$ et $c_2$ (qui ne dépendent que
de $d$ et $b$) telles que si les $\mu_i$ vérifient en plus $\mu_i \geq
\mu_{i+1} + c_1$ pour tout $i$, alors :
$$\dim_k \calX_\mu \geq - c_2 + (b-1) \cdot \min_{\weyl
\in \mathfrak S_d} \sum_{i=1}^d \sum_{n=1}^\infty \mu_i \cdot
\frac{d+1-i-\weyl^n(i)}{b^n}.$$
\end{introtheo}

\noindent
Encore une fois, on ne dit rien quant à l'équidimensionnalité des variétés
$\calX_\mu$. Cependant, lorsque $h \neq 0$, on peut se demander
s'il est vrai que toutes les composantes irréductibles de $\calX_\mu$
ont des dimensions congrues à $- \sum_{i=1}^d i \cdot \mu_i$ modulo
$(b-1)$. À part cela, il est clair que les sommes infinies qui apparaissent
dans la formule du théorème précédent convergent. Étant donné que toute
permutation $\weyl$ est d'ordre fini, on peut même facilement calculer
leur limite qui s'exprime toujours comme le produit de $\mu_i$
par un nombre rationnel, ce dernier étant même la valeur en $b$ d'une
fraction rationnelle à coefficients entiers. 

On en vient maintenant aux variétés $\calX_{\leq \mu}$.

\begin{introtheo}
\label{theo:dimdivelem}
Soit $\mu = (\mu_1, \ldots, \mu_d) \in \R^d$ tel que $\mu_1 \geq
\mu_2 \geq \cdots \geq \mu_d$. On pose $\varepsilon = 1$ si $h = 0$
et $\varepsilon = 0$ dans le cas contraire. Alors :
$$- (d-1)^2 - \frac {(d-2)^2} 4 +
\sup_{\substack{\mu' \leq \mu \\ \mu' \text{\rm\ f.i. } b\text{\rm -rég.}}}
\frac{\left< 2 \vect \rho | \mu' \right>_d} {b+1} 
\leq \dim_k \calX_{\leq \mu} \leq \varepsilon \cdot \frac{d(d-1)} 2 + 
\frac{\left< 2 \vect \rho | \mu \right>_d} {b+1}.$$
Si, en outre, $b \geq 1 + \max(d,[\frac {(d-1)^2} 4])$, alors la 
majoration peut être renforcée comme suit :
$$\dim_k \calX_{\leq \mu} \leq \varepsilon \cdot \frac{d(d-1)} 2
+ \sup_{\substack{\mu' \leq \mu \\ \mu'\,b\text{\rm -rég.}}}
\frac{\left< 2 \vect \rho | \mu' \right>_d} {b+1}.$$
\end{introtheo}

\medskip

\noindent
Il est utile de commenter un peu le théorème. Pour la première
assertion, on remarque que si $\mu$ est lui-même fortement intégralement
$b$-régulier, alors la borne supérieure qui apparaît est atteinte pour
$\mu' = \mu$. Ainsi le théorème dit, dans ce cas, que la quantité
$\frac{\left< 2 \vect \rho | \mu \right>_d} {b+1}$ est une bonne
approximation de la dimension de $\calX_{\leq \mu}$. La deuxième
assertion mérite, quant à elle, une discussion plus approfondie. Tout
d'abord, il est facile de prouver que la borne supérieure qui apparaît
est plus petite ou égale --- et en général strictement plus petite, du
moins si $\mu$ n'est pas lui-même $b$-régulier --- que $\frac{\left< 2
\vect \rho | \mu \right>_d} {b+1}$ ; ainsi, comme cela est déjà précisé
dans l'énoncé du théorème, la majoration écrite est meilleure que la
précédente. Par ailleurs, on a clairement $\calX_{\leq \mu} =
\bigcup_{\mu' \leq \mu} \calX_{\leq \mu'}$, d'où on déduit que :
$$\dim_k \calX_{\leq \mu} = \sup_{\mu' \leq \mu} \, \dim_k \calX_{\leq
\mu'}.$$
L'inégalité du théorème dit donc \emph{en substance} que, si $b \geq
1 + \max(d,[\frac{(d-1)^2} 4])$, les variétés $\calX_{\leq \mu'}$, pour 
$\mu' \leq \mu$ non $b$-régulier, n'apportent pratiquement pas de 
nouvelles dimensions à $\calX_{\leq \mu}$. Notamment, contrairement à 
ce qui se passe dans le cas des variétés de Deligne-Lusztig affines, il 
n'est pas clair --- et ce n'est d'ailleurs en général pas vrai --- que 
l'essentiel de la dimension de  $\calX_{\leq \mu}$ est concentré dans la 
variété $\calX_\mu$. Du fait que $\calX_\mu$ est un ouvert dans 
$\calX_{\leq \mu}$, il suit que $\calX_{\leq \mu}$ n'est généralement 
pas équidimensionnelle lorsque $\mu$ n'est pas $b$-régulier.

Finalement, la borne supérieure qui apparaît dans la dernière inégalité
du théorème est aussi égale au minimum d'un nombre fini de formes
linéaires sur $\R^d$, ce qui permet de la calculer efficacement. Malgré
tout, bien que ces formes linéaires soient définies de façon plutôt
explicite, leur nombre et leur complexité croît très rapidement lorsque
$d$ augmente. À titre d'exemple, le tableau \ref{tab:coordextr} (page
\pageref{tab:coordextr}) les donne pour $d \leq 4$.

\medskip

Il est vrai que les théorèmes précédents peuvent sembler ni vraiment
intéressants, ni faciles à appliquer car ils énoncent finalement, sous
des hypothèses plutôt fortes, des résultats à la fois techniques et
imprécis. Ce point de vue convient toutefois d'être nuancé (voire
reconsidéré) pour plusieurs raisons. 

Tout d'abord, en ce qui concerne les hypothèses, il ne faut pas voir les 
résultats de cet article comme une fin en soi, mais bel et bien comme un 
premier pas vers la résolution d'un problème plus général. Mieux encore, 
le théorème \ref{theo:dimdivelem2} semble directement donner les clés de 
cette vaste généralisation. Généralisation tout d'abord au cas d'un 
opérateur $\sigma : M \to M$ n'agissant pas nécessairement coordonnée 
par coordonnée (c'est-à-dire d'une représentation $V_\F$ quelconque avec 
encore $\F = \F_p$) pour lequel l'auteur pense que le théorème 
\ref{theo:dimdivelem2} s'étend simplement en modifiant certaines 
constantes (voir conjecture \ref{conj:anyfrob}, page 
\pageref{conj:anyfrob}, pour plus de précisions). Mais généralisation 
également au cas des variétés de Kisin associés à un groupe réductif 
connexe déployé quelconque (le cas présenté ici est celui de 
$\text{GL}_d$), ce qui englobe notamment le cas des variétés de Kisin 
associées à des représentations $V_\F$ à coefficients dans une extension 
finie arbitraire de $\F_p$. Pour un énoncé précis dans cette direction, 
on se contente de renvoyer le lecteur au \S \ref{subsec:reductif}, et 
plus particulièrement à la conjecture \ref{conj:reductif}. De surcroît, 
l'auteur pense que les méthodes développées dans cet article sont de 
nature à s'étendre à la situation générale des groupes réductifs, et y 
reviendra sans doute dans un travail ultérieur.

Au sujet, ensuite, de la technicité des résultats, il est à noter que, 
si l'on se restreint à des $\mu$ intégralement $b$-réguliers, tous les 
théorèmes de cet article deviennent particulièrement simples 
puisqu'alors, à une constante près, tous les minorants et majorants sont 
égaux à $\frac 1 {b+1} \cdot \left< 2 \vect \rho | \mu \right>_d$. Il 
est vrai, enfin, que le problème de l'imprécision reste, quant à lui, 
non résolu même conjecturalement. Il est toutefois intéressant de 
comparer la forme générale des formules du théorème 
\ref{theo:dimdivelem2} avec les formules connues pour les dimensions des 
variétés de Deligne-Lusztig, démontrées dans \cite{gortz} et 
\cite{viehmann}. Un rapide coup d'\oe il à ces références montre que 
cette dimension s'écrit comme le somme d'une contribution linéaire (qui 
s'exprime en terme de produit scalaire avec le vecteur $\rho$ comme dans 
cet article) et d'une contribution bornée. Il semble donc que l'on ait 
découvert, ici, l'analogue de la partie linéaire et, en ce sens, le 
théorème \ref{theo:dimdivelem2} apparaît à nouveau comme un premier pas 
incontournable pour le calcul de la dimension des variétés de Kisin en 
toute généralité.

\paragraph{Présentation sommaire de la méthode et du plan de l'article}

De façon générale, la méthode suivie est largement inspirée de l'article 
\cite{viehmann} de Viehmann : l'idée est de définir une stratification 
plus fine des variétés de Kisin pour laquelle on sait calculer 
précisément la dimension des strates. Le problème du calcul de la 
dimension des variétés de Kisin se métamorphose alors complètement en un 
nouveau problème combinatoire que l'on parvient à résoudre ensuite, au 
moins de façon approchée. Si la première partie du cheminement suit 
d'assez près les arguments de Viehmann, les chemins se séparent 
nettement pour la résolution du problème combinatoire qui s'avère être 
bien plus délicat dans le cas des variétés de Kisin.

De façon plus détaillée, on commence dans le \S \ref{subsec:defvarphi}
par associer à chaque réseau $L \subset M$ une donnée combinatoire
$\varphi(L)$ constituée de $d$ fonctions. Il résulte des travaux de
Viehmann que ces données combinatoires sont soumises à des nombreuses
contraintes qui imposent une forte rigidité. 
Dans le \S \ref{subsec:parametrisation}, on étudie plus en détails ces
contraintes, et on en déduit une paramétrisation des \og
données combinatoires admissibles \fg\ par les points d'un réseau à
l'intérieur d'un convexe vivant dans un espace vectoriel réel de
dimension $\frac{d(d+1)} 2$. D'un point de vue géométrique, la
construction précédente définit une stratification des variétés de Kisin
par des sous-espaces localement fermés $\calX_\varphi$. On démontre
alors le théorème \ref{theo:dim} qui donne une estimation (et même une
formule exacte dans certains cas) pour la dimension des variétés
$\calX_\varphi$, qui s'exprime de façon complètement explicite en
fonction du point du réseau paramétrant $\varphi$.

À ce stable, le problème du calcul des variétés de Kisin se reformule 
complètement en termes de programmation linéaire. Dans le \S 
\ref{sec:methode}, on introduit les outils nécessaires (qui sont plus ou 
moins classiques) à sa résolution et, à titre d'exemple, on fait 
fonctionner la méthode dans un cas simple, aboutissant ainsi à une 
démonstration de la majoration du théorème \ref{theo:dimcaruso} sous 
l'hypothèse supplémentaire $b > d$. Dans le \S \ref{sec:dimkisin}, la 
machine se met enfin véritablement en route avec pour objectif de 
démontrer complètement les théorèmes \ref{theo:dimcaruso}, 
\ref{theo:dimdivelem2} et \ref{theo:dimdivelem}. Cela devient alors 
assez vite très technique et il n'est pas vraiment envisageable d'en 
dire beaucoup plus dans cette introduction, sauf peut-être que 
l'essentiel de la démonstration consiste à donner des descriptions 
précises de certaines parties convexes de $\R^d$, et ne fait intervenir 
que des arguments élémentaires (mais parfois subtils).

Dans le \S \ref{sec:conjectures} finalement sont examinées plusieurs 
perspectives offertes par les résultats des théorèmes 
\ref{theo:dimcaruso}, \ref{theo:dimdivelem2} et \ref{theo:dimdivelem}, 
et notamment les généralisations éventuelles à des $\sigma$ agissant sur 
$M$ pas nécessairement coordonnée par coordonnée (voir \S 
\ref{subsec:frobarbit}) et d'autres groupes réductifs (voir \S 
\ref{subsec:reductif}) qui ont été évoquées précédemment. On discute 
également, dans le \S \ref{subsec:dimexacte}, de la possibilité 
d'obtenir une formule exacte pour la dimension. À part un calcul 
explcite en dimension $3$ dont la conclusion reste mystérieuse, il faut 
bien dire que rien de vraiment précis ne se dégage pour l'instant.

\paragraph{Remerciements}

C'est un plaisir de remercier Michael Rapoport pour m'avoir soumis le
problème dont il est question dans cet article, et également pour ses
encouragements constants. Je le remercie également de m'avoir plusieurs
fois invité à l'Université de Bonn, qui est un lieu extraordinaire pour
faire des mathéma\-tiques. Mes remerciements vont aussi à Eugen Hellmann
pour d'intéressantes discussions et à Eva Viehmann pour m'avoir fait
connaître son article \cite{viehmann}, duquel tout ce travail est
inspiré. Je remercie également David Monniaux pour m'avoir fait
connaître son logiciel {\tt mjollnir} \cite{monniaux} qui m'a été fort
utile lors de l'élaboration de cet article.

Finalement, je remercie l'Agence Nationale de la Recherche (ANR) pour 
son soutien financier par l'intermédiaire du projet CETHop (Calculs 
Effectifs en Théorie de Hodge $p$-adique) référencé ANR-09-JCJC-0048-01.

\numberwithin{equation}{section}

\section{Une stratification utile}
\label{sec:stratification}

On commence par rappeler très brièvement que les variétés $\calX_{\leq 
e}$, $\calX_\mu$ et $\calX_{\leq \mu}$ sont définies par l'intermédiaire 
de leur \og foncteur des points \fg. Pour $\calX_{\leq e}$ par exemple, 
étant donné $R$ une $k$-algèbre, on définit l'ensemble $\calX_{\leq
e}(R)$ comme l'ensemble des $R[[u]]$-sous-modules $L$ de $M 
\otimes_{k((u))} R((u)) = R((u))^d$ qui sont tels que :
\begin{itemize}
\item[i)] $L$ est un réseau de $M \otimes_{k((u))} R((u)) = R((u))^d$,
c'est-à-dire que $L$ est un $R[[u]]$-module localement libre de
type fini (pour la topologie de $\spec R$) et le morphisme naturel $L
\otimes_{R[[u]]} R((u)) \to M$ est un isomorphisme ;
\item[ii)] la condition \eqref{eq:condBreuil} est satisfaite où $\phi$
est remplacé par $\sigma$ (et où $\sigma$ opère encore par $x \mapsto
x^{p^h}$ sur $R$, envoie $u$ sur $u^b$ et s'étend à $M \otimes_{k((u))}
R((u)) = R((u))^d$ en agissant coordonnée par coordonnée).
\end{itemize}
On montre ensuite que le foncteur $R \mapsto \calX_{\leq e} (R)$ ainsi 
défini est représentable par un schéma de type fini sur $k$ qui apparaît
naturellement comme un sous-schéma fermé de la grassmanienne affine sur
$k$. Des considérations analogues conduisent à une définition rigoureuse 
de $\calX_\mu$ et $\calX_{\leq \mu}$.

Le but de ce premier chapitre est de montrer que les variétés 
précédentes sont stratifiées par des sous-variétés $\calX_\varphi$ (où 
$\varphi$ est une donnée combinatoire qui dépend de $\frac{d(d+1)}2$ 
entiers) dont on sait calculer la dimension.

\subsection{Donnée combinatoire associée à un réseau}

Dans cette partie, on associe à chaque réseau $L$ de $M$ la donnée 
combinatoire évoquée précédemment qui s'avère être un ensemble de $d$ 
fonctions soumises à un certain nombre de contraintes. On montre ensuite 
que ces contraintes imposent une rigidité telle qu'elles réduisent la 
donnée des $d$ fonctions à celle de seulement $\frac{d(d+1)}2$ nombres.

\subsubsection{Définitions}
\label{subsec:defvarphi}

Soit $\val$ la valuation naturelle sur $k((u))$ : la valuation d'une 
somme $\sum_{i \gg -\infty} a_i u^i$ est le plus petit entier $v$ tel 
que $a_v \neq 0$ et on convient que $\val(0) = +\infty$. La valuation 
s'étend de manière unique à l'extension totalement ramifiée 
$k((u^{1/b}))$, et on note encore $\val$ ce prolongement ; on a donc 
$\val(u^{1/b}) = \frac 1 b$.

On pose $M_{k((u^{1/b}))} = k((u^{1/b})) \otimes_{k((u))} M =
k((u^{1/b}))^d$ et on note $(e_1, \ldots, e_d)$ la base canonique de $M
= k((u))^d$. Les vecteurs $1 \otimes e_i$ forment une base de
$M_{k((u^{1/b}))}$ sur le corps $k((u^{1/b}))$. La valuation $\val$
définit une application $\val_M : M_{k((u^{1/b}))} \backslash \{0\} \to
\frac 1 b \Z \times \{1, \ldots, d\}$ par la formule :
\begin{eqnarray*}
\val_M(x_1, \ldots, x_d) = (v,i) & \text{où} &
v = \min \, \{ \val(x_1), \ldots, \val(x_d) \} \\
& \text{et} & i = \min \, \{ j \, | \, v = \val(x_j) \}
\end{eqnarray*}
On prolonge $\val_M$ à $M_{k((u^{1/b}))}$ tout entier en convenant que 
$\val_M(0) = \infty$ où le symbole $\infty$ désigne un nouvel élément 
que l'on ajoute au produit $\frac 1 b \Z \times \{1, \ldots, d\}$. On 
vérifie immédiatement que si $\lambda \in k((u^{1/b}))$ et $x \in 
M_{k((u^{1/b}))}$, on a $\val_M(\lambda x) = \val(\lambda) + \val_M(x)$ 
avec la convention évidente que $\infty + t = \infty$ lorsque $t$ est un 
nombre rationnel ou un couple $(v,i) \in \frac 1 b \Z \times \{1, 
\ldots, d\}$. De plus, si l'on munit l'ensemble $\frac 1 b \Z \times 
\{1, \ldots, d\}$ de l'ordre lexicographique\footnote{Cela signifie que 
$(v,i) < (v',i')$ si, et seulement si soit $v < v'$, soit $v = v'$ et $i 
< i'$.} et que l'on convient que $\infty$ est strictement plus grand que 
tous les couples $(v,i)$, alors, pour tous $x$ et $y$ dans $M$, on a 
$\val_M(x+y) \geq \min \{ \val_M(x), \val_M(y) \}$ et l'égalité a lieu 
dès que $\val_M(x) \neq \val_M(y)$.

\begin{deftn}
\label{def:varphitilde}
Soit $L$ un réseau de $M$. Pour tout $v \in \frac 1 b \Z$ et tout
$i \in \{1, \ldots, d\}$, on pose
\begin{equation}
\label{eq:defvarphi}
\tilde \varphi_i(L)(v) = \sup_{\substack{x \in k[[u^{1/b}]]
\otimes_{k[[u]]} L \\ \val_M(x) = (v,i)}} \Big( \sup \big\{\, n \in \Z
\,\,|\,\, \sigma(x) \in u^n L \, \big\} \Big)
\end{equation}
où, par convention, la borne supérieure d'un ensemble non majoré est
$+\infty$ et celle de l'ensemble vide est $-\infty$.
\end{deftn}

La définition ci-dessus n'est en fait rien d'autre qu'une adaptation de
la définition des fonctions $\varphi$ qui apparaissent dans
\cite{viehmann}. On a choisi de conserver la formulation de \emph{loc. 
cit.} mais il est sans doute bon de garder à l'esprit que l'on peut
interpréter les nombres $\tilde \varphi_i(L)(v)$ de façon plus parlante
en termes de distance. Pour ce faire, on munit l'espace $k((u))$ de la
norme $||x|| = a^{\val(x)}$ pour un certain réel $a$ fixé dans $]0,1[$. 
Le choix du réseau $L$ définit une norme $|| \cdot ||_L$ sur $M$ comme
suit : si $(f_1, \ldots, f_d)$ est une $k[[u]]$-base de $L$, on pose
$||\sum_{i=1}^d \lambda_i f_i||_L = \max ||\lambda_i||$. La norme
obtenue ne dépend alors pas du choix des $f_i$. Par ailleurs, un examen
direct des définitions montre que, pour la distance associée à
$||\cdot||_L$, le nombre réel $a^{\tilde \varphi_i(L)(v)}$ est égal à la
distance de l'origine au sous espace $\sigma(B)$ où $B$ est l'ensemble
des éléments de $k[[u^{1/b}]] \otimes_{k[[u]]} L$ de valuation $(v,i)$
(ensemble que l'on peut voir aussi comme la boule unité pour une autre
norme). Comme $0 \not\in \sigma(B)$ (puisque $\sigma$ est injectif et
que $0$ n'est pas dans $B$), il suit de la description précédente que la
fonction $\tilde \varphi_i(L)$ ne prend jamais la valeur $+ \infty$.

\begin{prop}
\label{prop:varphi}
Soit $L$ un réseau de $M$. Les fonctions $\tilde \varphi_i(L) : \frac 1
b \Z \to \Z \cup \{-\infty\}, v \mapsto \tilde \varphi_i(L)(v)$ 
vérifient les propriétés suivantes.
\begin{enumerate}
\item Pour tout $i$, la fonction $\tilde \varphi_i(L)$ est strictement
croissante où, par un léger abus d'écriture, l'on entend par
là\footnote{Et ce sera aussi le cas dans tout l'article.} que $\tilde
\varphi_i(L)$ est croissante et qu'elle est strictement croissante sur
l'ensemble où elle prend des valeurs finies.
\item Pour tout $i$, il existe un entier $\tilde q_i(L)$ tel que 
\begin{itemize}
\item la fonction $\tilde \varphi_i(L)$ prend des valeurs finies 
exactement sur l'intervalle $[\tilde q_i(L), +\infty[$, et
\item pour $v$ suffisamment grand, on a $\tilde \varphi_i(L)(v) = bv - 
\tilde q_i(L)$.
\end{itemize}
\item Pour $j \in \{1, \ldots, d\}$, il existe des fonctions croissantes
$\psi_j(L) : \Z \to \frac 1 b \Z \cup\{-\infty\}$ telles que $\psi_1(L)
\leq \psi_2(L) \leq \cdots \leq \psi_d(L)$ et pour tout couple $(v,\mu)
\in \frac 1 b \Z \times \Z$, il y a autant d'indices $i \in \{1, \ldots,
d\}$ tels que $\mu = \tilde \varphi_i(L)(v)$ que d'indices $j \in \{ 1,
\ldots, d\}$ tels que $v = \psi_j(L)(\mu)$.

Ces fonctions sont en outre uniquement déterminées. 

\item Si $u^{\mu_1(L)}, \ldots, u^{\mu_d(L)}$, avec $\mu_1(L) \geq
\cdots \geq \mu_d(L)$, sont les diviseurs élémentaires du
$k[[u]]$-module engendré par $\sigma(L)$ par rapport à $L$, alors, pour
tout $i$, la fonction $\psi_i$ prend des valeurs finies exactement sur
l'intervalle $[\mu_i(L), +\infty[$.
\end{enumerate}
\end{prop}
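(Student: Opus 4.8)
The plan is to funnel everything through the single $\Z$-valued function $f\colon M_{k((u^{1/b}))}\setminus\{0\}\to\Z$ defined by $f(x)=\sup\{n\in\Z\mid\sigma(x)\in u^nL\}$ (always finite because $\sigma$ is injective): writing $\Lambda=k[[u^{1/b}]]\otimes_{k[[u]]}L$ and $S_{v,i}=\{x\in\Lambda\mid\val_M(x)=(v,i)\}$, one has $\tilde\varphi_i(L)(v)=\sup_{x\in S_{v,i}}f(x)$. I would first record the three elementary properties of $f$: it satisfies $f(x+y)\ge\min(f(x),f(y))$ (since $\sigma$ is additive and the coordinate valuations are ultrametric); it satisfies $f(\lambda x)=b\,\val(\lambda)+f(x)$ for $\lambda\in k((u^{1/b}))$ (since $\sigma(\lambda)\in k((u))$ has valuation $b\,\val(\lambda)$); and in particular $f(u^{1/b}x)=f(x)+1$.

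Assertion~1 and the fact that each $\tilde\varphi_i(L)$ is finite on an up-closed subset of $\frac1b\Z$ are then immediate: multiplication by $u^{1/b}$ sends $S_{v,i}$ injectively into $S_{v+1/b,i}$ while raising $f$ by $1$, so $\tilde\varphi_i(L)(v+1/b)\ge\tilde\varphi_i(L)(v)+1$ whenever $S_{v,i}\neq\varnothing$. For assertion~2, pick $A\in\Z$ with $L\subseteq u^Ak[[u]]^d$; for $x\in S_{v,i}$ the $i$-th coordinate of $\sigma(x)$ has valuation exactly $bv$, so $\sigma(x)\in u^nL\subseteq u^{n+A}k[[u]]^d$ forces $n\le bv-A$, hence $\tilde\varphi_i(L)(v)\le bv-A$. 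Thus $v\mapsto\tilde\varphi_i(L)(v)-bv$ is nondecreasing and bounded above on its domain, so it is eventually equal to some constant $-\tilde q_i(L)$, which is the claimed asymptotics. The remaining part of assertion~2 --- that this $\tilde q_i(L)$ is also the left endpoint of the domain --- I would only obtain together with assertions~3 and~4.

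For those I would pass to a $\val_M$-adapted basis of $\Lambda$, i.e.\ a $k[[u^{1/b}]]$-basis $g_1,\dots,g_d$ with $\val_M(g_1)<\cdots<\val_M(g_d)$ for which the ``second coordinates'' $\iota_\ell$ of $\val_M(g_\ell)=(w_\ell,\iota_\ell)$ are pairwise distinct (so $\ell\mapsto\iota_\ell$ is a permutation). Such a basis exists by a Gram--Schmidt/Iwasawa argument, by induction on $d$: take $g_1\in\Lambda$ of minimal $\val_M$ (it is then not divisible by $u^{1/b}$ in $\Lambda$, hence primitive), split off $k[[u^{1/b}]]g_1$ using its dominant $\iota_1$-th coordinate, and lift a reduced adapted basis of the quotient of rank $d-1$. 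Its virtue is that for $x=\sum_\ell\lambda_\ell g_\ell$ the pairs $\val_M(\lambda_\ell g_\ell)=(\val(\lambda_\ell)+w_\ell,\iota_\ell)$ are pairwise distinct, so $\val_M(x)$ is exactly their lexicographic minimum, and one controls $f(x)$ through the quantities $b\,\val(\lambda_\ell)+f(g_\ell)$ together with the leading-term behaviour of $\sigma$ on the $g_\ell$. Feeding these descriptions into $\tilde\varphi_i(L)(v)=\sup_{x\in S_{v,i}}f(x)$ reduces the computation to an explicit finite optimization over the $\val(\lambda_\ell)$, subject to combinatorial side-conditions. Carrying it out produces a piecewise-affine closed form for each $\tilde\varphi_i(L)$, from which one reads off at once: the coincidence of the left endpoint of its domain with the asymptotic intercept $\tilde q_i(L)$ (completing assertion~2); the ``transpose'' of the graph, which yields the $\psi_j(L)$ --- whose value at $\mu$, as a multiset, consists of those $v$ with $\tilde\varphi_i(L)(v)=\mu$ for some $i$ (counted with multiplicity), padded by $-\infty$'s to total length $d$ --- this description making their monotonicity, the inequalities $\psi_1(L)\le\cdots\le\psi_d(L)$ and their uniqueness automatic; and, finally, the identity $\{v:\psi_j(L)(v)\text{ finite}\}=[\mu_j(L),+\infty)$, which amounts to comparing the counting function $\mu\mapsto\#\{i\mid\mu\in\im\tilde\varphi_i(L)\}$ with $\mu\mapsto\#\{i\mid\mu_i(L)\le\mu\}$ and identifying the $g_\ell$-description of the $\sigma$-image of $L$ with its elementary-divisor form relative to $L$.

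The real obstacle, I expect, is precisely this explicit optimization --- i.e.\ controlling $f$ on $\Lambda$ sharply enough. The subtlety is that $f(\sum_\ell\lambda_\ell g_\ell)$ can strictly exceed $\min_\ell(b\,\val(\lambda_\ell)+f(g_\ell))$ when several terms have comparable contributions and their $\sigma$-leading terms cancel; this is exactly why $\tilde\varphi_i(L)$ is only piecewise affine, and why the elementary divisors $\mu_j(L)$ --- rather than the naive valuations $f(g_\ell)$ --- enter assertion~4. Tracking when such cancellations are compatible with the constraint $\val_M(x)=(v,i)$, and noting that the $p^h$-power map on coefficients is injective so that it never disturbs this bookkeeping, is the technical heart of the argument.
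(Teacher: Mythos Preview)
The paper does not actually prove this proposition: its entire proof reads ``C'est simplement une transposition du lemme 4.1 de \cite{viehmann}.'' So there is no in-paper argument to compare yours against; you are in effect proposing a direct proof where the author chose to cite.

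Your treatment of assertion~1 and of the eventual linearity in assertion~2 is clean and correct: the identity $f(u^{1/b}x)=f(x)+1$ together with $u^{1/b}S_{v,i}\subset S_{v+1/b,i}$ gives $\tilde\varphi_i(L)(v+\tfrac1b)\ge\tilde\varphi_i(L)(v)+1$, and your bound $\tilde\varphi_i(L)(v)\le bv-A$ then forces the integer-valued nondecreasing function $v\mapsto\tilde\varphi_i(L)(v)-bv$ to stabilize.

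The remainder, however, is a plan rather than a proof. You correctly locate the crux --- controlling the cancellations in $f(\sum_\ell\lambda_\ell g_\ell)$ well enough to obtain an explicit piecewise-affine formula for each $\tilde\varphi_i(L)$ --- but you do not carry it out, and it is not routine. Two points in particular hinge on this missing computation. First, the coincidence in assertion~2 between the asymptotic intercept and the left endpoint of the domain is a genuine constraint on $\tilde\varphi_i(L)$, not a formal consequence of monotonicity and boundedness. Second, the monotonicity of the $\psi_j(L)$ is \emph{not} automatic from the padded-multiset description you give: for abstract strictly increasing $\tilde\varphi_i$ one can easily manufacture a jump in $\tilde\varphi_1$ over a value $\mu$ hit only by $\tilde\varphi_2$ at a large $v$, making the sorted-multiset $\psi_1$ drop from a finite value back to $-\infty$. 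That this does not happen for the $\tilde\varphi_i(L)$ arising from an actual lattice is precisely part of the content of the proposition, and it must come out of the explicit analysis you defer. Your adapted-basis approach is a sensible route and should succeed, but completing it amounts to reproducing the substance of Viehmann's lemma rather than sidestepping it.
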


\begin{proof}
C'est simplement une transposition du lemme 4.1 de \cite{viehmann}.
\end{proof}

Si $L$ est un réseau de $M$, on définit aussi des fonctions 
$\varphi_1(L), \ldots, \varphi_d(L) : \frac 1 b \Z \to \Z \cup \{ 
-\infty\}$ en convenant que pour tout $v \in \frac 1 b \Z$, les nombres 
$\varphi_1(L)(v), \ldots, \varphi_d(L)(v)$ sont les mêmes que $\tilde 
\varphi_1(L)(v), \ldots, \tilde \varphi_d(L)(v)$ mais triés par ordre 
décroissant. Les fonctions précédentes vérifient donc tautologiquement 
l'inégalité $\varphi_1(L) \geq \varphi_2(L) \geq \cdots \geq 
\varphi_d(L)$ et on montre sans peine qu'elles satisfont encore aux 
quatre alinéas de la proposition \ref{prop:varphi} : les entiers 
$\mu_j(L)$ restent inchangés tandis que les $\tilde q_i(L)$ sont \emph{a 
priori} permutés. Dans la suite, l'entier correspondant à la fonction 
$\varphi_i(L)$ sera noté $q_i(L)$.

\subsubsection{Un exemple en dimension $2$}

En guise d'illustration de la proposition \ref{prop:varphi} et pour 
familiariser le lecteur avec la définition \ref{def:varphitilde}, on 
examine un exemple en dimension $2$ (\emph{i.e.} avec $d = 2$). Si $L$ 
est un réseau de $M$, il existe des entiers relatifs $\alpha, \delta$ et 
un élément $c \in k((u))$ de valuation $\gamma$ tels que $L$ soit 
engendré sur $k[[u]]$ par les vecteurs $f_1 = (u^\alpha, 0)$ et $f_2 = 
(c, u^\delta)$. De plus, quitte à retirer à $c$ un multiple de 
$u^\alpha$, on peut supposer que soit $c = 0$, soit $\gamma < \alpha$. 

\begin{lemme}
\label{lem:ex2}
Soit $x = (x_1, x_2) \in k((u^{1/b})) \otimes_{k((u))} M$.
Le plus grand $n$ tel que $x \in u^n k[[u^{1/b}]]
\otimes_{k[[u]]} L$ est le plus petit des deux nombres $\val(x_2) -
\delta$ et $\val(x_1 - x_2 c u^{-\delta}) - \alpha$.
\end{lemme}

\begin{proof}
Le vecteur $(x_1, x_2)$ se décompose sur la base $(f_1, f_2)$ sous la 
forme :
$$(x_1, x_2) = (x_1 u^{-\alpha} - x_2 c u^{-(\alpha + \delta)}) f_1
+ x_2 u^{-\delta} f_2.$$
Ainsi il appartient à $M$ si, et seulement si les deux coefficients que
l'on voit apparaître dans l'écriture précédente sont de valuation
positive ou nulle, c'est-à-dire si, et seulement si $\val(x_1 - x_2 c
u^{-\delta}) \geq \alpha$ et $\val(x_2) \geq \delta$. Le lemme en
découle.
\end{proof}

À partir de maintenant, on suppose que $c = u^\gamma$ avec $\gamma <
\delta < \alpha$ (les autres cas se traitent au moyen de calculs
analogues).
On détermine tout d'abord la fonction $\tilde \varphi_1(L)$. Soient $v 
\in \frac 1 b \Z$ et $x \in k[[u^{1/b}]] \otimes_{k[[u]]} L$ tel que 
$\val(x) = (v,1)$. Quitte à multiplier $x$ par un élément inversible de 
$k[[u^{1/b}]]$, ce qui ne change pas la valeur de la deuxième borne 
inférieure dans la formule \eqref{eq:defvarphi}, on peut supposer que 
$x$ s'écrit $u^v \otimes e_1 + u^v y \otimes e_2$ où $y \in 
k[[u^{1/b}]]$. Par le lemme \ref{lem:ex2}, le fait que $x$ appartienne 
à $L$ se traduit par les inégalités $\val(u^v y) \geq \delta$ et 
$\val(u^v - u^{v + \gamma - \delta} y) \geq \alpha$, ce qui se réécrit 
encore :
\begin{equation}
\label{eq:condy}
\val(y) \geq \delta - v \quad \text{et} \quad
y \equiv u^{\delta-\gamma} \pmod {u^{\delta-\gamma + (\alpha - v)}}
\end{equation}
On suppose pour commencer que $v < \alpha$. Dans ce cas, la dernière
congruence implique que $y$ est de valuation $\delta-\gamma$, d'où on
déduit $\delta-\gamma \geq \delta-v$, c'est-à-dire $v \geq \gamma$.
Autrement dit, si $v < \gamma$, aucun $x$ ne satisfait aux conditions
requises et on a alors $\tilde \varphi_1(L)(v) = -\infty$. Si, au
contraire, $\gamma \leq v < \alpha$, il existe des $x$ convenables qui
sont précisément les vecteurs de la forme
$$x = u^v \otimes e_1 + u^{v + \delta-\gamma} ( 1 + u^{\alpha - v} z)
\otimes e_2$$
pour un certain élément $z \in k[[u^{1/b}]]$. Le lemme \ref{lem:ex2}
appliqué au vecteur $\sigma(x)$ donne ainsi :
\begin{eqnarray*}
\tilde \varphi_1(L) (v) & = & 
\sup_{z \in k[[u^{1/b}]]} \big( \min \{ \, b(v +
\delta-\gamma) - \delta, \, bv - \alpha + \val( 1 - u^{(b-1) (\delta -
\gamma)} (1 + u^{b(\alpha - v)} \sigma(z))) \, \} \big) \\
& = & \sup_{z \in k[[u^{1/b}]]} \big( 
\min \{ \, b(v + \delta-\gamma) - \delta, \, bv - \alpha \, \} \big) 
\qquad \text{car } \delta - \gamma > 0 \\
& = & \min \{ \, b(v + \delta-\gamma) - \delta, \, bv - \alpha \, \}
\end{eqnarray*}
Or, $b(\delta - \gamma) > 0 > \delta - \alpha$, d'où on obtient 
finalement $\tilde \varphi_1(L)(v) = bv - \alpha$ pour $v \in [\gamma,
\alpha[$.

On suppose désormais que $v \geq \alpha$. Dans ce cas, la première 
condition
de la ligne \eqref{eq:condy} est automatique vérifiée (on rappelle que
$y$ est dans $k[[u^{1/b}]]$) tandis que la congruence qui suit se réduit
à $\val(y) \geq \delta-\gamma + \alpha - v$. On est ainsi amené à
calculer
$$\sup_y \big( \min \{ \, b(v + \val(y)) - \delta, \, bv - \alpha + \val(
1 - u^{\gamma - \delta} \sigma(y)) \, \} \big)$$
où la borne supérieure est prise sur tous les $y$ dont la valuation est
à la fois supérieure ou égale à $0$ et à $\delta-\gamma + \alpha - v$.
Pour les $y$ de valuation strictement plus petite (resp. strictement
plus grande) que $\frac{\delta - \gamma} b$, la valuation de la
différence $1 - u^{\gamma-\delta} \sigma(y)$ vaut $\gamma-\delta + b
\val(y)$ (resp. $0$), et un calcul simple montre que le minimum qui
apparaît dans la formule précédente est inférieur ou égal (resp. est
égal) à $bv - \alpha$. Par contre, pour les $y$ qui ont la valuation
critique $\frac{\delta - \gamma} b$, la borne supérieure est atteinte 
lorsque $1 - u^{\gamma - \delta} \sigma(y)$ s'annule, et elle est
égale à $b(v + \val(y)) - \delta = bv - \gamma$. Du fait que $\gamma <
\alpha$, on déduit que la valeur de $\tilde \varphi_1(L)(v)$ est $bv -
\gamma$ dès que l'on peut choisir $y$ de valuation $\frac{\delta -
\gamma} b$, c'est-à-dire dès que $\frac{\delta - \gamma} b \geq \delta -
\gamma + \alpha - v$ ou encore après simplification $v \geq \alpha +
\frac{b-1}b (\delta - \gamma)$, et qu'elle est $bv - \alpha$ dans le cas
contraire.

En résumé, la fonction $\tilde \varphi_1(L)$ prend la forme simple 
suivante :
$$\begin{array}{rrcll}
\tilde \varphi_1(L)\,:& v & \mapsto & -\infty & \text{si } v < \gamma \\
& v & \mapsto & bv - \alpha & \text{si } \gamma \leq v < \ \alpha + 
\frac{b-1}b (\delta - \gamma) \\
& v & \mapsto & bv - \gamma & \text{sinon}.
\end{array}$$

On en vient à la fonction $\tilde \varphi_2(L)$. Soit $x \in L$
tel que $\val_M(x) = (v,2)$. Comme dans le cas
précédent, quitte à multiplier $x$ par un élément inversible de
$k[[u^{1/b}]]$, on peut supposer qu'il est de la forme $x = u^v y \otimes e_1
+ u^v \otimes e_2$ où $y \in k((u^{1/b}))$ est un élément de valuation
strictement positive. Comme l'on sait que $x$ est élément de $L$, le 
lemme \ref{lem:ex2} implique que $\val(y - u^{\gamma - \delta}) \geq 
\alpha -  v$, soit encore $\gamma - \delta \geq \alpha - v$ car $\gamma 
- \delta < 0 < \val(y)$. Ainsi la valuation de la différence $y - 
u^{\gamma - \delta}$ est $\gamma - \delta$. En particulier, si $v < 
\alpha + \delta - \gamma$, aucun  élément $y$ ne
satisfait aux conditions requises, et donc $\tilde \varphi_2(L)(v) =
-\infty$. Par contre, si $v \geq \alpha + \delta - \gamma$, tous les $y$ 
de valuation strictement positive conviennent et un calcul analogue à celui
mené pour $\tilde \varphi_1(L)$ conduit dans ce cas à $\tilde \varphi_2
(L) (v) = bv - \alpha + \gamma - \delta$. En résumé, on a donc :
$$\begin{array}{rrcll}
\tilde \varphi_2(L)\,:& v & \mapsto & -\infty & \text{si } v < \alpha +
\delta - \gamma \\
& v & \mapsto & bv - \alpha + \gamma - \delta & \text{sinon}.
\end{array}$$
On observe sans difficulté que $\tilde \varphi_1(L)(v) \geq \tilde
\varphi_2(L)(v)$ pour tout $v \in \frac 1 b \Z$ ; ainsi 
$\varphi_1(L) = \tilde \varphi_1(L)$ et $\varphi_2(L) = \tilde \varphi_2
(L)$. Il est maintenant facile de vérifier la proposition \ref{prop:varphi}
sur cet exemple et en particulier de décrire les fonctions $\psi_1(L)$
et $\psi_2(L)$. On trouve :
$$\begin{array}{rrcll}
\psi_1(L)\,:& \mu & \mapsto & -\infty & \text{si } \mu < b(\alpha +
\delta - \gamma) - \delta \\
& \mu & \mapsto & b^{-1} (\mu + \gamma) & \text{sinon} \\ \\
\psi_2(L)\,:& \mu & \mapsto & -\infty & \text{si } \mu < b\gamma -
\alpha \\
& \mu & \mapsto & b^{-1} (\mu + \alpha) & \text{si } b\gamma -
\alpha \leq \mu < (b-1)(\alpha + \delta - \gamma) \\
& \mu & \mapsto & b^{-1} (\mu + \alpha + \delta - \gamma) & \text{sinon.}
\end{array}$$
Les entiers $\mu_1(L)$ et $\mu_2(L)$ valent donc respectivement
$b(\alpha + \delta - \gamma) - \delta$ et $b\gamma - \alpha$, et l'on
peut vérifier par un calcul indépendant que ce sont bien les exposants
des diviseurs élémentaires du $k[[u]]$-module engendré par
$\sigma(L)$ par rapport à $L$.

\subsubsection{Prolongement des fonctions $\varphi_i$}

Pour la suite, il sera commode, afin de mieux visualiser les fonctions 
$\varphi_i(L)$ de les prolonger à tout $\R$ en posant 
$$\varphi_i(q) = \varphi_i(v) + b(q-v) \quad \textstyle \text{pour tout 
} q \in [v, v + \frac 1 b[$$
avec la convention que $-\infty + x = -\infty$ pour tout nombre réel 
$x$. On prolonge de la même façon les fonctions $\tilde \varphi_i(L)$. 
Il est alors clair que pour tout réel $q$ les nombres $\varphi_1(L)(q), 
\ldots, \varphi_d(L)(q)$ sont les mêmes que $\tilde \varphi_1(L)(q), 
\ldots, \tilde \varphi_d(L)(q)$ mais triés par ordre décroissant. Il est 
également évident que les fonctions $\tilde \varphi_i(L)$ et 
$\varphi_i(L)$ sont continues à droite, affines par morceaux, et que 
leurs dérivées valent $b$ partout où elles sont définies. Les fonctions 
$\psi_j(L)$ données par la proposition \ref{prop:varphi} se prolongent 
elles aussi à $\R$ tout entier en convenant qu'elles valent $-\infty$ 
sur l'intervalle $]-\infty, \mu_j(L)[$ et qu'elles sont affines de pente 
$\frac 1 b$ sur tout intervalle de la forme $[\mu, \mu+1[$ où $\mu$ est 
un nombre entier supérieur ou égal à $\mu_j(L)$. Ces fonctions ainsi 
prolongées vérifient encore la condition de l'alinéa 3 de la proposition 
\ref{prop:varphi} lorsque $v$ et $\mu$ sont des éléments de $\R$.

\medskip

La méthode que l'on a employée pour prolonger les fonctions $\varphi_i$
et $\psi_j$ à tout $\R$ peut sembler artificielle, mais en fait il n'en
est rien comme le montre la proposition suivante.

\begin{prop}
On note $\puian = \bigcup_{n \geq 1} k[[u^{1/n}]]$ (resp. $\puico =
\bigcup_{n \geq 1} k((u^{1/n}))$) l'anneau (resp. le corps) des séries
de Puiseux à coefficients dans $k$. Alors pour tout $i \in \{1, \ldots,
d\}$ et tout nombre rationnel $q$, on a :
$$\tilde \varphi_i(L)(q) = \sup_{\substack{x \in \puian
\otimes_{k[[u]]} L \\ \val_{M,\Q}(x) = (q,i)}} \Big( \sup \big\{\, n \in \Q
\,\,|\,\, \sigma(x) \in u^n (\puian \otimes_{k[[u]]} L) \, \big\} \Big)$$
où $\val_{M,\Q}$ désigne le prolongement naturel de $\val_M$ à $\puico 
\otimes_{k((u))} M$.
\end{prop}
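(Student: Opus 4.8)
The plan is to show that passing to Puiseux series does not change the supremum in Definition~\ref{def:varphitilde}. Fix $i$ and a rational number $q$; write $L_\Q = \puian \otimes_{k[[u]]} L$ and denote by $\Phi$ the right-hand side of the claimed formula. First I would establish the inequality $\tilde\varphi_i(L)(q) \leq \Phi$, which only uses the definitions. One must be slightly careful because the base point $(q,i)$ may be of the form $(v,i)$ with $v \in \frac1b\Z$ or not. If $q \in \frac1b\Z$, any $x \in k[[u^{1/b}]]\otimes_{k[[u]]} L$ with $\val_M(x) = (q,i)$ lies a fortiori in $L_\Q$ with $\val_{M,\Q}(x) = (q,i)$, and the condition $\sigma(x) \in u^n L$ implies $\sigma(x) \in u^n L_\Q$, so the $\sup$ defining $\tilde\varphi_i(L)(q)$ is taken over a subfamily of the one defining $\Phi$, giving $\leq$. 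If $q \notin \frac1b\Z$, one instead uses the piecewise-affine extension: writing $q \in [v, v+\frac1b[$ with $v \in \frac1b\Z$ and taking an optimal $x$ at $v$, the element $u^{q-v} x \in L_\Q$ has $\val_{M,\Q} = (q,i)$ and $\sigma(u^{q-v}x) = u^{b(q-v)}\sigma(x)$, whence $\Phi \geq \tilde\varphi_i(L)(v) + b(q-v) = \tilde\varphi_i(L)(q)$.

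For the reverse inequality $\Phi \leq \tilde\varphi_i(L)(q)$, the key point is an approximation argument: any $x \in L_\Q$ actually lies in $k[[u^{1/N}]]\otimes_{k[[u]]} L$ for some integer $N$ (divisible by $b$), and one should relate the computation over $k[[u^{1/N}]]$ to the one over $k[[u^{1/b}]]$. Concretely, let $K_N = k((u^{1/N}))$. The ring $k[[u^{1/N}]]$ is a free $k[[u^{1/b}]]$-module with basis $u^{j/N}$, $0 \leq j < N/b$, so $k[[u^{1/N}]]\otimes_{k[[u]]} L = \bigoplus_j u^{j/N}(k[[u^{1/b}]]\otimes_{k[[u]]} L)$. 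The plan is to decompose a candidate $x \in L_N := k[[u^{1/N}]]\otimes_{k[[u]]} L$ with $\val_{M,\Q}(x)=(q,i)$ along this basis; since $\sigma$ sends $u^{1/N}$ to $u^{b/N}$ and the direct sum decomposition of $L_N$ is respected by $u^{b/N}$-multiplication, one can argue componentwise. The term realizing the minimal valuation $(q,i)$ is $u^{j_0/N} x_0$ for a unique $j_0$, with $x_0 \in k[[u^{1/b}]]\otimes_{k[[u]]} L$ of valuation $(q - j_0/N, i)$, and $\sigma(x) \in u^n L_N$ forces (looking at the component in $u^{b j_0/N}(k[[u^{1/b}]]\otimes L)$, which dominates) $\sigma(x_0) \in u^{n - b j_0/N} L$ up to the $k[[u^{1/b}]]$-part; so $n - b j_0/N \leq \tilde\varphi_i(L)(q - j_0/N)$, and by the piecewise-linearity with slope $b$ this gives $n \leq \tilde\varphi_i(L)(q)$. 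Taking the sup over $x$ yields $\Phi \leq \tilde\varphi_i(L)(q)$.

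The main obstacle I anticipate is the bookkeeping in the componentwise argument: one must check that when $\sigma(x) \in u^n L_N$, the \emph{leading} component (the one of smallest valuation, namely $u^{bj_0/N}\sigma(x_0)$) is not accidentally cancelled by another component of the same valuation in $\sigma(x)$. This is where one uses that $\sigma$ is injective and, more importantly, that $\sigma$ maps $u^{j/N}(k[[u^{1/b}]]\otimes L)$ into the distinct summand $u^{bj/N}(k[[u^{1/b}]]\otimes L)$, so no cancellation between summands indexed by different $j$ can occur — the condition $\sigma(x) \in u^n L_N$ is equivalent to the conjunction of the conditions $u^{bj/N}\sigma(x_j) \in u^n L_N$ over all $j$. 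Once this is pinned down, the reduction to the minimal summand is immediate, and combined with the first inequality this proves the proposition. A final remark worth recording is that this gives a conceptual explanation, via Puiseux series, of the piecewise-linear extension of $\varphi_i(L)$ introduced just above, rather than it being an ad hoc convention.
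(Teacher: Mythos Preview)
Your approach is sound and genuinely different from the paper's. The paper proceeds algorithmically: it analyses an explicit row-reduction of a matrix built from a basis of $L$ and the vectors $u^q e_j$, and reads off from this that the Puiseux supremum $f_i(L)(q)$ is piecewise affine of slope $b$ with break points in $\frac{1}{b}\Z$, with the same integer values as $\tilde\varphi_i(L)$ on $\frac{1}{b}\Z$. Your direct-sum decomposition of $k[[u^{1/N}]]$ over $k[[u^{1/b}]]$ is more conceptual, bypasses the matrix computations entirely, and makes transparent why the piecewise extension by slope $b$ is the \emph{right} extension rather than a convention.

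One technical point needs a small repair. You assert that $\sigma$ sends $u^{j/N}(k[[u^{1/b}]]\otimes L)$ into $u^{bj/N}(k[[u^{1/b}]]\otimes L)$ and that these are distinct summands of $L_N$. Neither is quite right: first, $\sigma(x_j)$ lands only in $M$ (nothing forces $\sigma(L)\subset L$); second, for $N=b^2$ one has $bj/N=j/b\in\frac{1}{b}\Z$, so all these subspaces sit inside the \emph{same} summand of the decomposition over $k[[u^{1/b}]]\otimes L$. The fix is to use instead the finer decomposition $M_N=\bigoplus_{l=0}^{N-1}u^{l/N}M$ over $M$: since $\sigma(x_j)\in M$ and the integers $bj$ for $0\le j<N/b$ are pairwise distinct modulo $N$, the terms $u^{bj/N}\sigma(x_j)$ lie in distinct summands of this decomposition, and one gets $\val_{L,\Q}(\sigma(x))=\min_j\bigl(\tfrac{bj}{N}+\val_L(\sigma(x_j))\bigr)$. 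Taking the $j_0$-term then gives $\val_{L,\Q}(\sigma(x))\le \tfrac{bj_0}{N}+\tilde\varphi_i(L)(q-\tfrac{j_0}{N})=\tilde\varphi_i(L)(q)$, exactly as you intended.
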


\begin{rem}
En d'autres termes, la proposition dit que la fonction
$$q \mapsto \sup_{\substack{x \in \puian \otimes_{k[[u]]} L \\ \val_{M,\Q}
(x) = (q,i)}} \Big( \sup \big\{\, n \in \Q \,\,|\,\, \sigma(x) \in u^n (\puian
\otimes_{k[[u]]} L) \, \big\} \Big)$$
est entièrement déterminée par ses valeurs sur l'ensemble $\frac 1 b \Z$
et que, de surcroît, pour calculer les valeurs en ces points, on peut se
contenter d'étendre les scalaires à $k((u^{1/b}))$ (sans aller, donc,
jusqu'aux séries de Puiseux).
\end{rem}

\begin{proof}
Étant donné que la proposition ne sera pas utilisée dans la suite,
on se contente de donner quelques indications sur sa preuve.
L'idée directrice est de comprendre comment l'on peut calculer de façon
algorithmique la borne supérieure qui apparaît dans l'énoncé de la
proposition, notée $f_i(L) (q)$ dans le restant de la
preuve. Dans la suite, on notera également $(e_1, \ldots, e_d)$ la base
canonique de $M$.
L'entier $i$ restant fixé, on explique tout d'abord comment calculer
la borne inférieure des $q$ tels qu'il existe dans $\puian
\otimes_{k[[u]]} L$ des éléments de valuation $(q,i)$. Soit $q_i$ cette 
borne inférieure. Dire que $\puian
\otimes_{k[[u]]} L$ contient un élément de valuation $(q,i)$ signifie
exactement qu'il existe un nombre rationnel $\varepsilon > 0$ tel que 
$$u^q e_i \in L + \sum_{j=1}^i u^{q+\epsilon} e_j k[[u^{1/\infty}]] + 
\sum_{j=i+1}^d u^q e_j k[[u^{1/\infty}]].$$
Si l'on fixe $m_1, \ldots, m_d$ une base de $L$ et que l'on note 
$M_i$ (resp. $E_i$) le vecteur colonne des coordonnées de $m_i$
(resp. $e_i$) dans la base canonique, la condition précédente 
signifie que le vecteur colonne $u^q E_i$ est dans l'image de la
matrice par blocs
$$\big(\,M_1 \,\big|\, M_2 \,\big|\, \cdots \big|\, M_d \,\big|\, 
u^{q+\epsilon} E_1 \,\big|\, \cdots \,\big|\, u^{q+\epsilon} E_i 
\,\big|\, u^q E_{i+1} \,\big|\, \cdots \,\big|\, u^q E_d \,\big),$$
c'est-à-dire dans le module engendré par les vecteurs colonne de cette 
matrice.
On peut à présent effectuer des opérations sur les colonnes de la matrice précédente
(ce qui ne modifie pas son image) pour se ramener à permutation des
lignes près à une matrice de la forme 
$$\left( \raisebox{0.5\depth}{\xymatrix @R=5pt @C=1pt {
u^{n_1} \ar@{.}[rrdd] & 0 \ar@{.}[rrrr] \ar@{.}[rrdd] 
& & & & 0 \ar@{.}[dd] \\
\star \ar@{.}[rd] \ar@{.}[d] \\
\star \ar@{.}[r] & \star & u^{n_d} & 0 \ar@{.}[rr] & & 0 
}}\right)$$
pour certains nombres rationnels $n_j$ ($1 \leq j \leq d$) rangés par
ordre croissant. En outre, un examen de l'algorithme classique de calcul
de la forme précédente permet d'établir la dépendance en $q$ et
$\varepsilon$ (pourvu que ce dernier reste suffisamment petit) des $n_j$ :
on trouve qu'il existe des \emph{entiers} $v_1 < \cdots <
v_n$ et $c_{0,j} < \cdots < c_{n,j}$ tels que, en posant $v_0 = -\infty$ 
et $v_{n+1} = +\infty$, on ait pour tout $j$, et sur chaque intervalle 
$[v_s, v_{s+1}[$, soit $n_j = c_{s,j}$, soit $n_j = q + c_{s,j}$, soit 
$n_j = q + \epsilon + c_{s,j}$. Il est enfin possible d'exprimer $q_i$ 
en fonction des $c_{s,j}$, à partir de quoi l'on déduit que $q_i$ est 
entier. 
Il résulte également de ces considérations qu'il existe dans $L$ (sans
tensoriser par $\puian$) un élément de valuation $(q_i,i)$ puisque 
dans le cas où $q$ est un nombre entier, toutes les opérations 
effectuées peuvent se faire dans $k[[u]]$.

Pour $q < q_i$, la proposition est clairement vraie puisque les deux
nombres qui apparaissent dans l'égalité à établir valent tous deux
$-\infty$.  D'autre part, dans tous les cas, le nombre $f_i(L)(q)$ 
s'interprète aussi comme la borne inférieure des nombres rationnels
$n$ tels que l'implication suivante soit vraie :
\begin{equation}
\label{eq:implication}
(x \in \puian \otimes_{k[[u]]} L \,\, \text{et}\,\, \val(x) = (q,i)) 
\quad \Longrightarrow \quad \sigma(x) \not\in u^n (\puian \otimes_{k[[u]]} 
L).
\end{equation}
Or, si $z_i \in L$ est un élément fixé de valuation $(q_i,i)$ et si 
l'on suppose $q \geq q_i$, un élément $x$ vérifie la prémisse de
l'implication si, et seulement si
$$x - u^{q - q_i} z_i \in (\puian \otimes_{k[[u]]} L) \cap \Bigg(
\sum_{j=1}^i u^{q+\varepsilon} e_j + \sum_{j=i+1}^d u^q e_j\Bigg)$$
pour un certain $\varepsilon > 0$. On conclut alors de manière semblable
à ce qui a été déjà fait : on commence par calculer l'intersection qui
apparaît dans la formule précédente en effectuant des opérations sur les
lignes d'une matrice, et réinjectant cela dans l'implication
\eqref{eq:implication}, on trouve qu'il existe des entiers $v_1 < 
\cdots < v_n$ et $c_0 < \cdots < c_n$ tel que sur chaque intervalle 
$[\frac {v_s} b, \frac{v_{s+1}} b[$, on ait $f_i(L)(q) = b q - c_s$.
Ce faisant, on obtient également que $f_i(L)(q) = \tilde \varphi_i(L)
(q)$ si $q \in \frac 1 b \Z$, d'où il résulte la proposition.
\end{proof}

En s'autorisant à travailler dans des corps encore plus gros que
$\puico$, on peut aussi interpréter les nombres $\tilde \varphi_i
(L)(q)$ pour $q \in \R$ comme des bornes supérieures du type précédent. 
Par exemple, on peut considérer l'anneau $k[[u^{\R^+}]]$ formé des
séries formelles $\sum_{i \in I} a_i u^i$ où $I \subset \R^+$ est un
monoïde de type fini. Celui-ci est encore muni d'une valuation naturelle
qui permet de définir $\val_{M,\R}$ sur $k[[u^{\R^+}]] \otimes_{k[[u]]}
M$. On a alors pour tout nombre réel $q$ :
$$\tilde \varphi_i(L)(q) = \sup_{\substack{x \in k[[u^{\R^+}]]
\otimes_{k[[u]]} L \\ \val_{M,\R}(x) = (q,i)}} \Big( \sup \big\{\, n \in
\R \,\,|\,\, \sigma(x) \in u^n (k[[u^{\R^+}]] \otimes_{k[[u]]} L) \,
\big\} \Big)$$
la démonstration étant en tout point analogue à celle de la
proposition précédente.

\subsection{Paramétrisation de l'espace des fonctions $\varphi$}
\label{subsec:parametrisation}

\begin{figure}
\begin{center}
\includegraphics{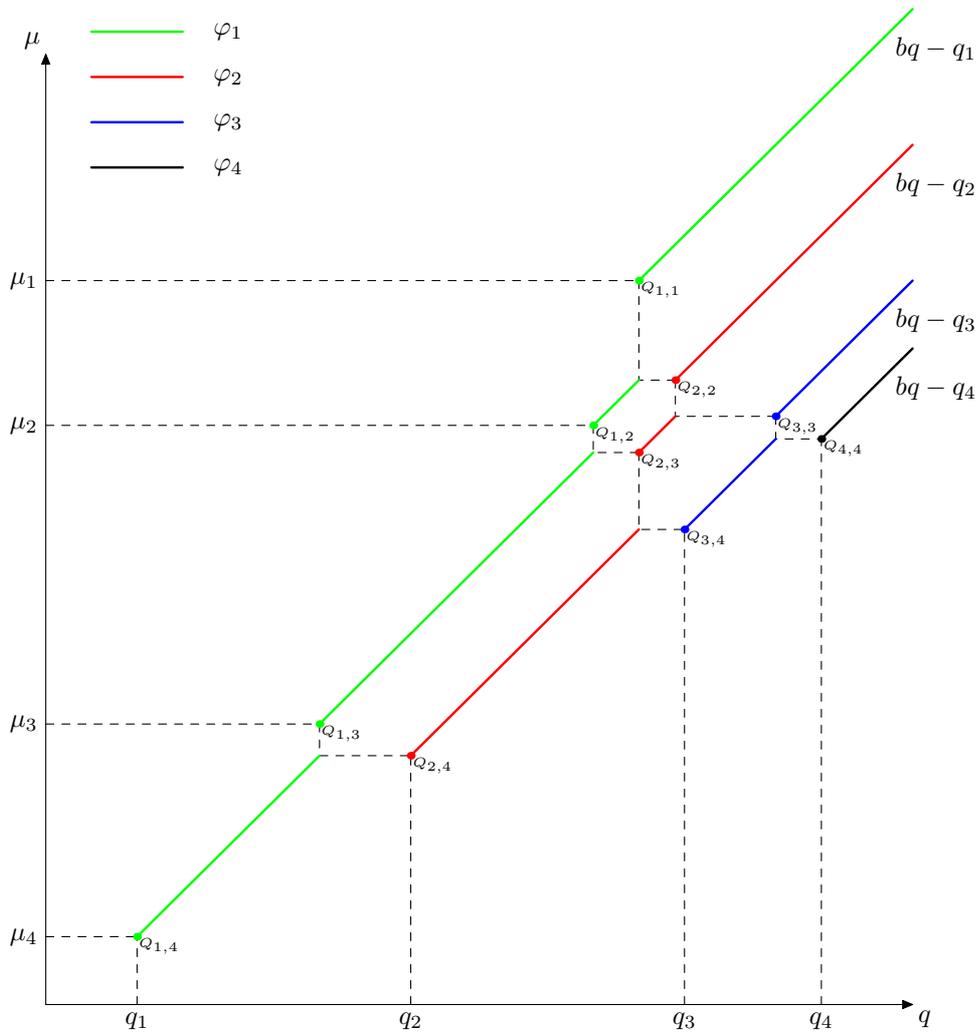}

\caption{Un exemple de $d$-uplet $(\varphi_1, \ldots, \varphi_d)$
appartenant à $\Phi$ avec $d=4$}
\label{fig:exemplevarphi}

\medskip

\begin{minipage}{13cm}
Les points $Q_{i,j}$ qui apparaissent sur le graphique sont ceux
de coordonnées $(q_{i,j}, \mu_{i,j})$. 

Les valeurs $q_i = q_{i,d}$ indiquées sur l'axe des abscisses 
correspondent aux endroits à partir desquels les fonctions $\varphi_i$ 
prennent des valeurs finies.

Les valeurs $\mu_j = \mu_{1,j}$ indiquées sur l'axe des ordonnées
correspondent, quant à elles, aux endroits à partir desquels les 
fonctions $\psi_j$ prennent des valeurs finies et aussi, dans le cas où 
le $d$-uplet $(\varphi_1, \ldots, \varphi_d)$ provient d'un réseau $L$, 
aux exposants des diviseurs élémentaires de $\sigma(k[[u^{1/b}]] 
\otimes_{k[[u]]} L)$ par rapport à $L$.
\end{minipage}
\end{center}
\end{figure}

L'objectif de ce numéro est de décrire complètement les $d$-uplets 
$\varphi = (\varphi_1, \ldots, \varphi_d)$ où les $\varphi_i$
sont des fonctions de $\R$ dans $\R \cup \{-\infty\}$ qui satisfont aux
conditions suivantes :
\begin{enumerate}
\label{enu:condphi}
\item on a $\varphi_1 \geq \varphi_2 \geq \cdots \geq \varphi_d$ ;
\item les fonctions $\varphi_i$ sont strictement
croissantes\footnote{On rappelle que l'on entend par là que les $\varphi_i$
sont croissantes et strictement croissantes sur l'intervalle où elles 
prennent des valeurs finies} et continues à droite ;
\item pour tout $i$, il existe un nombre réel $q_i$ (nécessairement
unique) tel que
\begin{itemize}
\item la fonction $\varphi_i$ prend des valeurs finies exactement
sur l'intervalle $[q_i, +\infty[$,
\item la fonction $\varphi_i$ est affine par morceaux sur 
$[q_i, +\infty[$ et pour presque tout $q$ dans cet intervalle, on
a $\varphi'_i(q) = b$, et
\item pour $q$ suffisamment grand, on a $\varphi_i(q) = bq -
q_i$ ;
\end{itemize}
\item pour $j \in \{1, \ldots, d\}$, il existe des fonctions strictement
croissantes et continues à droites $\psi_j : \R \to \R\cup\{-\infty\}$
telles que $\psi_1 \leq \psi_2 \leq \cdots \leq \psi_d$ et pour
tout couple $(q,\mu) \in \R^2$, il y a autant d'indices $i \in \{1,
\ldots, d\}$ tels que $\mu = \varphi_i(q)$ que d'indices $j \in \{
1, \ldots, d\}$ tels que $q = \psi_j(\mu)$.
\end{enumerate}
À partir de maintenant, on note $\Phi$ l'ensemble des $d$-uplets de
fonctions $\varphi = (\varphi_1, \cdots, \varphi_d)$
vérifiant les conditions précédentes. La proposition \ref{prop:varphi}, 
dit que les $\varphi(L) = (\varphi_1(L), \ldots, \varphi_d(L))$ 
provenant d'un réseau $L \subset M$ définissent des éléments de $\Phi$.
Par contre, il n'est pas vrai que, réciproquement, tout élément de 
$\Phi$ s'obtient de cette manière. En effet, si $\varphi$ provient d'un 
réseau, il vérifie en outre au moins les deux propriétés supplémentaires 
suivantes :
\begin{enumerate}
\label{enu:condphiZ}
\setcounter{enumi}{4}
\item pour tout $i$, le réel $q_i$ est un nombre entier ;
\item pour tout $i$, les réels en lesquels $\varphi_i$ est discontinue
appartiennent à $\frac 1 b \Z$.
\end{enumerate}
Ces deux dernièrs conditions seront appelées \emph{conditions 
d'intégrité} dans la suite de cet article, tandis que l'ensemble des 
éléments de $\Phi$ qui les satisfont sera noté $\Phi_\Z$.

\subsubsection{Les réels $q_{i,j}$ et $\mu_{i,j}$}

Soient $\varphi = (\varphi_1, \ldots, \varphi_d) \in \Phi$ et $\psi =
(\psi_1, \ldots, \psi_d)$ le $d$-uplet de fonctions correspondant.

On suppose pour commencer --- et il s'agit d'une hypothèse qui évitera
bien des problèmes techniques --- que $\varphi_1 > \cdots > \varphi_d$
où, étant donné deux fonctions $f,g : \R \to \R \cup \{-\infty\}$, on
convient que $f > g$ si $f(q) \geq g(q)$ pour tout réel $q$ et que
l'inégalité est stricte dès que $f(q) \neq -\infty$. On prolonge les
fonctions $\varphi_i$ et $\psi_j$ à $\R \cup \{-\infty\}$ en posant
$\varphi_i (-\infty) = \psi_j(-\infty) = -\infty$. Pour tout couple
$(i,j)$ d'entiers vérifiant $1 \leq i \leq j \leq d$, on définit :
\begin{itemize}
\item le nombre $q_{i,j}$ comme la borne inférieure des nombres
réels $q$ tels que $\psi_j \circ \varphi_i (q) \geq q$ ; 
\item le nombre $\mu_{i,j}$ comme la borne inférieure des nombres
réels $\mu$ tels que $\varphi_i \circ \psi_j (\mu) \geq \mu$.
\end{itemize}
L'ordre dans lequel sont classés les $\varphi_i$ et les $\psi_j$ impose 
que les fonctions $\varphi_i$ et $\psi_i$ sont inverses l'une de l'autre 
sur des voisinages de $+\infty$ ; il s'ensuit qu'il existe toujours des 
$q$ et des $\mu$ satisfaisant les inégalités précédentes. Par ailleurs 
si $q$ (resp. $\mu$) est suffisamment petit, on a $\varphi_i(q) = 
-\infty$ pour tout $i$ (resp. $\psi_j(\mu) = -\infty$ pour tout $j$). On 
en déduit que les $q_{i,j}$ et $\mu_{i,j}$ sont bien des nombres réels. 
Enfin, il est clair que $q_{i,j} \leq q_{i+1,j}$, $q_{i,j} \geq 
q_{i,j+1}$, $\mu_{i,j} \leq \mu_{i+1,j}$ et $\mu_{i,j} \geq \mu_{i,j+1}$ 
pour tout couple $(i,j)$ pour lequel cela a un sens. Dans la suite, pour 
des raisons pratiques, on posera également $q_{i,i-1} = \mu_{j+1,j} 
= +\infty$ pour tous indices $i$ et $j$.

\begin{lemme}
\label{lem:degenere}
Pour tout entiers $i$ et $j$ tels que $1 \leq i \leq j \leq d$, on a
$$\mu_{i,j} \leq \varphi_i (q_{i,j})
\qquad \text{et} \qquad
q_{i,j} \leq \psi_j (\mu_{i,j}).$$
De plus ces inégalités sont des égalités si, et 
seulement s'il existe $q$ tel que $\psi_j \circ \varphi_i(q) = q$ si, et
seulement s'il existe $\mu$ tel que $\varphi_i \circ \psi_j(\mu) = \mu$.
\end{lemme}

\begin{proof}
La croissance et la continuité à droite des fonctions $\varphi_i$ et
$\psi_j$ impliquent que $\psi_j \circ \varphi_i$ est aussi continue à
droite. On en déduit que $\psi_j \circ \varphi_i(q_{i,j}) \geq q_{i,j}$.
À partir de là, en appliquant $\varphi_i$, on obtient $\varphi_i \circ
\psi_j(\mu) \geq \mu$ avec $\mu = \varphi_i(q_{i,j})$. Par définition
de la borne inférieure, il vient $\mu_{i,j} \leq \mu$ comme annoncé. On
démontre de même l'autre inégalité.

Il est clair que si les inégalités sont des égalités, il existe
$q$ et $\mu$ satisfaisant à la condition du lemme : il suffit de prendre
$q = q_{i,j}$ et $\mu = \mu_{i,j}$. On suppose maintenant qu'il existe
$q$ tel que $\psi_j \circ \varphi_i (q) = q$. Du fait que la fonction
$\psi_j \circ \varphi_i - \id$ est en escalier (puisqu'elle est affine
par morceaux et que sa dérivée s'annule partout où elle est définie), on
déduit facilement que $\psi_j \circ \varphi_i(q_{i,j}) = q_{i,j}$.
En appliquant maintenant $\psi_j$ à la première égalité du lemme, on
trouve que $\psi_j (\mu_{i,j}) \leq q_{i,j}$ et donc finalement que
$\psi_j(\mu_{i,j}) = q_{i,j}$. L'autre égalité se démontre de même
analogue en appliquant $\varphi_i$ à la deuxième inégalité du lemme. Le
cas où il existe $\mu$ tel que $\varphi_i \circ \psi_j(\mu) = \mu$ se
traite pareillement.
\end{proof}

Dans le cas où l'une des inégalités du lemme est stricte, on dira que le 
couple $(i,j)$ est \emph{dégénéré}. Il est à noter que les couples de la 
forme $(i,i)$ ne sont jamais dégénérés puisque l'on a dit que sur des 
voisinages de l'infini les fonctions $\varphi_i$ et $\psi_i$ étaient 
inverses l'une de l'autre. En particulier, on a toujours $\varphi_i 
(q_{i,i}) = \mu_{i,i}$ et $\psi_i(\mu_{i,i}) = q_{i,i}$.

\begin{prop}
\label{prop:bijphipsi}
Soit $\varphi = (\varphi_1, \ldots, \varphi_d) \in \Phi$ tel que
$\varphi_1 > \cdots > \varphi_d$. Alors, pour tout couple $(i,j)$
avec $1 \leq i \leq j \leq d$, les
fonctions $\varphi_i$ et $\psi_j$ définissent par restriction des
bijections
$$\varphi_{i|[q_{i,j}, q_{i,j-1}[} : [q_{i,j}, q_{i,j-1}[ \; \to 
[\mu_{i,j}, \mu_{i+1,j}[ \quad \text{et} \quad
\psi_{j|[\mu_{i,j}, \mu_{i+1,j}[} : [\mu_{i,j}, \mu_{i+1,j}[ \; \to
[q_{i,j}, q_{i,j-1}[$$
inverses l'une de l'autre. (On notera qu'il est possible que les
intervalles précédents soient vides.)

De plus, pour tout $i$ (resp. tout $j$), la fonction $\varphi_i$ (resp.
$\psi_j$) vaut $-\infty$ sur l'intervalle $]-\infty, q_{i,d}[$ (resp.
l'intervalle $]-\infty, \mu_{1,j}[$).
\end{prop}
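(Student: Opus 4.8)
The plan is to deduce everything from the counting condition (alinéa 4 of the list defining $\Phi$) together with the monotonicity and right-continuity of the $\varphi_i$ and $\psi_j$. First I would record the key structural fact already used in Lemme \ref{lem:degenere}: since the $\varphi_i$ are strictly ordered ($\varphi_1 > \cdots > \varphi_d$), for $q$ near $+\infty$ each $\varphi_i$ is a genuine bijection onto a neighbourhood of $+\infty$ with inverse $\psi_i$, and — because $\psi_j \circ \varphi_i - \id$ is a step function with nonnegative jumps — the value $q_{i,j}$ is exactly the smallest $q$ with $\psi_j \circ \varphi_i(q) = q$ when the pair $(i,j)$ is non-degenerate, and similarly $\mu_{i,j}$ is the smallest $\mu$ with $\varphi_i \circ \psi_j(\mu) = \mu$. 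I would then observe that on the interval $[q_{i,j}, q_{i,j-1}[$ the function $\varphi_i$ takes values in $[\mu_{i,j}, \mu_{i+1,j}[$: the lower bound $\mu_{i,j} \le \varphi_i(q_{i,j})$ is exactly the first inequality of Lemme \ref{lem:degenere}, and combined with $q_{i,j} \le \psi_j(\mu_{i,j})$ and strict monotonicity one gets $\varphi_i(q_{i,j}) \le \mu_{i,j}$, hence equality $\varphi_i(q_{i,j}) = \mu_{i,j}$ at the left endpoint (and symmetrically $\psi_j(\mu_{i,j}) = q_{i,j}$); the right endpoints are handled by the analogous identity applied to the pair $(i+1,j)$ resp. $(i,j-1)$.

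The heart of the argument is then the injectivity-plus-surjectivity of these restricted maps, and here the counting condition does the work. Fix $\mu \in [\mu_{i,j}, \mu_{i+1,j}[$. By the ordering $\psi_1 \le \cdots \le \psi_d$ and the locations $\mu_{1,j'} $ where the $\psi_{j'}$ start being finite (which I would identify with the $\mu_{i,j}$ via the endpoint computation above, reading off that $\psi_{j}$ is finite precisely on $[\mu_{1,j}, +\infty[$ and that its value crosses $q_{i,j}$ exactly at $\mu_{i,j}$), one counts: the number of indices $j'$ with $\psi_{j'}(\mu) \in [q_{i,j}, q_{i,j-1}[$ is controlled, and the counting condition translates this into the number of indices $i'$ with $\varphi_{i'}$ taking the relevant values. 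Because the $\varphi_{i'}$ are \emph{strictly} ordered, for a generic $\mu$ only the index $i' = i$ can contribute a value equal to $\mu$ in that range, which forces $\varphi_i$ to hit $\mu$ exactly once on $[q_{i,j}, q_{i,j-1}[$; right-continuity upgrades ``generic $\mu$'' to ``all $\mu$''. The same bookkeeping run the other way (fixing $q$ and counting indices $i'$ with $\varphi_{i'}(q)$ in $[\mu_{i,j}, \mu_{i+1,j}[$) gives that $\psi_j$ restricted to $[\mu_{i,j}, \mu_{i+1,j}[$ lands in and covers $[q_{i,j}, q_{i,j-1}[$. That the two restrictions are mutually inverse then follows: $\psi_j \circ \varphi_i$ is right-continuous, equals $\id$ at $q_{i,j}$, has derivative that vanishes where defined and only nonnegative jumps, yet is a bijection of $[q_{i,j}, q_{i,j-1}[$ onto itself — so it has no jumps at all and is the identity there.

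For the final assertion, $q_{i,d}$ is by construction (and the $(i,d)$-case of Lemme \ref{lem:degenere}, noting $q_{i,d-1} = +\infty$ is not relevant here; rather one uses that $\psi_d$ is the last, largest $\psi$) the left endpoint of the region where $\varphi_i$ is finite: for $q < q_{i,d}$ one has $\psi_d \circ \varphi_i(q) < q$, and since $\psi_d$ is the largest of the $\psi_j$ and is finite only on $[\mu_{1,d},+\infty[$, the counting condition forces $\varphi_i(q)$ to lie below all the $\mu_{1,j}$, i.e.\ $\varphi_i(q) = -\infty$; conversely condition 3 in the definition of $\Phi$ says $\varphi_i$ is finite on all of $[q_i,+\infty[$, and one checks $q_i = q_{i,d}$ by chaining the intervals $[q_{i,j},q_{i,j-1}[$ for $j = d, d-1, \ldots, i$. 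The symmetric statement for $\psi_j$ on $]-\infty, \mu_{1,j}[$ is identical with the roles of $\varphi$ and $\psi$ exchanged. The main obstacle I anticipate is purely combinatorial bookkeeping: making the counting condition yield the \emph{exact} index $i'=i$ (resp.\ $j'=j$) contributing in the specified half-open interval, which is where the strict inequalities $\varphi_1 > \cdots > \varphi_d$ are indispensable and where a careless argument would only give surjectivity or only injectivity; one must combine the count at a generic point with right-continuity to pin down behaviour at the (at most countably many) jump points.
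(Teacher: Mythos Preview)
Your overall strategy—bypassing the paper's induction on $j$ in favour of a direct argument from the counting condition and the strict ordering—is viable, but the execution has a real gap. The endpoint claim $\varphi_i(q_{i,j}) = \mu_{i,j}$ in your first paragraph is not correctly derived: from Lemme~\ref{lem:degenere} you have $\mu_{i,j} \le \varphi_i(q_{i,j})$ and $q_{i,j} \le \psi_j(\mu_{i,j})$, but applying the increasing map $\varphi_i$ to the second inequality gives only $\varphi_i(q_{i,j}) \le \varphi_i(\psi_j(\mu_{i,j}))$, and since $\varphi_i \circ \psi_j(\mu_{i,j}) \ge \mu_{i,j}$ this yields nothing. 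Indeed for degenerate pairs the inequality $\mu_{i,j} < \varphi_i(q_{i,j})$ is strict, so the claimed equality is simply false there (the proposition still holds because both intervals are then empty, but you do not show this). Your second paragraph then relies on this flawed endpoint computation (``$\psi_j$'s value crosses $q_{i,j}$ exactly at $\mu_{i,j}$'') to control the count of indices $j'$, making the argument circular.

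The fix is to reorganise: do not aim for the endpoint identities first. You already observe that $\varphi_i \circ \psi_j - \id$ is a non-decreasing step function; this alone gives $\{\mu : \varphi_i \circ \psi_j(\mu) \ge \mu\} = [\mu_{i,j}, +\infty[$ directly from the definition of $\mu_{i,j}$. Now for $\mu$ with $q := \psi_j(\mu)$ finite, the strict ordering $\psi_1 < \cdots < \psi_d$ (deduce it from $\varphi_1 > \cdots > \varphi_d$ via the counting condition) forces exactly one $j'$ with $\psi_{j'}(\mu) = q$, hence exactly one $i'$ with $\varphi_{i'}(q) = \mu$; the strict ordering of the $\varphi_{i'}$ then says $i' = i$ iff $\varphi_i(q) \ge \mu > \varphi_{i+1}(q)$, i.e.\ iff $\mu \in [\mu_{i,j}, \mu_{i+1,j}[$. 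This gives $\varphi_i \circ \psi_j = \id$ on $[\mu_{i,j}, \mu_{i+1,j}[$; the dual argument gives $\psi_j \circ \varphi_i = \id$ on $[q_{i,j}, q_{i,j-1}[$, and together these yield both bijections with no separate endpoint analysis. This repaired version is genuinely different from (and arguably cleaner than) the paper's proof, which proceeds by induction on $j$: one first pins down $\varphi_1$ on $[q_{1,1}, +\infty[$ as the inverse of $\psi_1$ using that no graph of any $\varphi_i$ enters the region $\{q < \psi_1(\mu)\}$, then peels off $\varphi_2$ using that the region $\{q < \psi_2(\mu)\}$ meets only the already-determined graph of $\varphi_1$, and so on.
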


\begin{rem}
Telle quelle, la proposition est fausse sans l'hypothèse $\varphi_1 >
\cdots > \varphi_d$. On expliquera rapidement au \S \ref{subsec:multi}
comment modifier la définition des $q_{i,j}$ et $\mu_{i,j}$ pour que la
proposition s'étende sans cette hypothèse.
\end{rem}

\begin{proof}
On raisonne par récurrence sur $j$ en commençant par traiter le cas $j=1$.
Puisque $\psi_1$ est strictement croissante, il est clair qu'elle prend
des valeurs finies exactement sur un intervalle de la forme $[\mu_1,
+\infty[$ pour $\mu_1 \in \R$. Du fait que $\psi_1$ est la plus petite
des fonctions $\psi_j$, on déduit que la réunion des graphes des
$\varphi_i$ n'intersecte pas la région du plan suivante :
$$D_1 = \{ \, (q, \mu) \in \R^2 \,\, | \,\, q < \psi_1(\mu) \}.$$
Soit $\mu$ un réel plus grand ou égal à $\mu_1$. On pose $q = 
\psi_1(\mu)$ et et on considère un réel $\mu'$ strictement supérieur à 
$\mu$. Étant donné que $\psi_1$ est croissante, le couple $(q,\mu')$ est 
dans $D_1$, ce
qui signifie que $\varphi_i(q) \neq \mu'$ pour tout $i$. Comme ceci 
est vrai pour tout $\mu' > \mu$, on obtient $\varphi_i(q) \leq \mu$, 
\emph{i.e} $\varphi_i \circ \psi_1(\mu) \leq \mu$. Par ailleurs, on
sait que cette inégalité doit être une égalité pour au moins un $i$.
Si $i_0$ est cet indice privilégié, on a $\mu = \varphi_{i_0} \circ
\psi_1(\mu) \leq \varphi_1 \circ \psi_1(\mu) \leq \mu$ car les
$\varphi_i$ sont triés par ordre décroissant. Ainsi $\varphi_1 \circ
\psi_1(\mu) = \mu$, et ce pour tout $\mu \geq \mu_1$. On en déduit que
$\mu_{1,1} \leq \mu_1$. Mais comme $\psi_1$ vaut $-\infty$ sur les $\mu
< \mu_1$, il vient $\mu_{1,1} = \mu_1$ et donc $q_{1,1} = \psi_1
(\mu_1)$ par le lemme \ref{lem:degenere}. En outre, sur l'intervalle 
$[\mu_{1,1}, +\infty[$ la fonction $\psi_1$ est inversible à gauche, et 
son inverse à gauche est la restriction de $\varphi_1$ à $[q_{1,1}, 
+\infty[$. Du fait que ces fonctions sont en outre affines par morceaux 
et strictement croissantes, on en déduit facilement ce qui est énoncé 
dans la proposition.

Plutôt que de traiter l'hérédité de la récurrence dans le cas général
--- ce qui multiplierait encore les notations ---, on se contente 
d'expliquer comment le cas \og $j=2$ \fg\ se déduit de ce que
l'on vient de faire, les arguments pour les $j$ supérieurs étant
similaires. Comme précédemment, on commence par remarquer que la
fonction $\psi_2$ prend des valeurs finies exactement sur un intervalle
de la forme $[\mu_2, +\infty[$ pour un certain nombre réel $\mu_2$. De
$\psi_2(\mu_{1,1}) \geq \psi_1(\mu_{1,1}) > -\infty$, on tire $\mu_2
\leq \mu_{1,1}$. On introduit le domaine :
$$D_2 = \{ \, (q, \mu) \in \R^2 \,\, | \,\, q < \psi_2(\mu) \}.$$
Des résultats de l'étude menée pour $j = 1$, on déduit que $D_2$ intersecte
la réunion des graphes de $\varphi_i$ au plus selon le graphe de la
restriction de $\varphi_1$ à l'intervalle $[q_{1,1}, +\infty[$. Soient
$\mu \geq \mu_2$ et $q = \psi_2(\mu)$. On considère un nombre réel $\mu' 
> \mu$. Le couple $(q, \mu')$ appartenant à $D_2$, il en résulte que :
\begin{itemize}
\item si $q < q_{1,1}$, alors $\varphi_i(q) \neq \mu'$ pour tout $i \in 
\{1, \ldots, d\}$ ;
\item si $q \geq q_{1,1}$, alors $\varphi_i(q) \neq \mu'$ pour tout $i 
\in \{2, \ldots, d\}$.
\end{itemize}
Les conclusions précédentes étant valables pour tout $\mu' > \mu$, on
peut remplacer dans leurs énoncés \og $\varphi_i(q) \neq \mu'$ \fg\ par
\og $\varphi_i(q) \leq \mu$ \fg. En utilisant le fait que les fonctions
$\varphi_i$ sont triées par ordre décroissant, et que la fonction
$\varphi_1$ est connue sur $[q_{1,1}, +\infty[$, on obtient pour tout
$\mu \geq \mu_2$ :
\begin{itemize}
\item si $\psi_2(\mu) < q_{1,1}$, alors $\varphi_1 \circ \psi_2(\mu) 
= \mu$ ;
\item si $\psi_2(\mu) \geq q_{1,1}$, alors $\varphi_2 \circ \psi_2(\mu) 
= \mu$, et donc $\varphi_1 \circ \psi_2(\mu) \geq \mu$.
\end{itemize}
On déduit déjà de cela que $\mu_{1,2} = \mu_2$ puisque $\varphi_1 \circ
\psi_2 (\mu)$ vaut $-\infty$ si $\mu < \mu_2$ et est supérieur ou égal à
$\mu$ sinon. On a également $\psi_2(\mu_{2,2}) = q_{2,2}$ et $\varphi_2
(q_{2,2}) = \mu_{2,2}$ étant donné que le couple $(2,2)$ n'est pas
dégénéré. Soit $\mu'$ l'infimum des nombres réels $\mu$ tels que 
$\psi_2(\mu) 
> q_{1,1}$. La restriction de $\psi_2$ à $[\mu_{1,2}, \mu'[$ admet 
alors pour inverse $\varphi_1$ tandis que sa restriction à $[\mu', 
+\infty[$ admet pour inverse $\varphi_2$. Pour conclure, il suffit donc
de montrer que $\mu' = \mu_{2,2}$ et que si $\mu_{1,2} < \mu_{2,2}$ 
alors $\psi_2(\mu_{1,2}) = q_{1,2}$.
Du fait que $\varphi_2 \circ \psi_2(\mu') = \mu'$, on déduit que 
$\mu_{2,2} \leq \mu'$. Mais si l'inégalité était stricte, on aurait
$\varphi_1 = \varphi_2$ sur l'intervalle de la forme $[q_{1,1} -
\varepsilon, q_{1,1}[$ (pour $\varepsilon > 0$), ce que l'on a exclu
au départ. L'autre point résulte maintenant des descriptions que l'on 
vient d'obtenir.
\end{proof}

Étant donné qu'une fonction définie sur un intervalle à valeurs dans un
autre intervalle, qui est à la fois affine par morceaux et bijective est
affine, on déduit facilement de la proposition la description suivante
des fonctions $\varphi_i$ et $\psi_j$ :
\begin{equation}
\label{eq:formephi}
\begin{array}{rrcll}
\varphi_i\,:& q & \mapsto & -\infty & \text{si } q < q_{i,d} \\
& q & \mapsto & b(q - q_{i,j}) + \mu_{i,j}
& \text{si } q_{i,j} \leq q < q_{i,j-1},
\quad \text{et ce pour tout } j \in \{i, \ldots, d\} \\ \\
\psi_j\,:& \mu & \mapsto & -\infty & \text{si } \mu < \mu_{1,j} \\
& \mu & \mapsto & b^{-1} (\mu - \mu_{i,j}) + q_{i,j}
& \text{si } \mu_{i,j} \leq \mu < \mu_{i+1,j},
\quad \text{et ce pour tout } i \in \{1, \ldots, j\}
\end{array}
\end{equation}
Il résulte en particulier de cette écriture que les fonctions $\varphi_i$
sont entièrement déterminées par la donnée des $q_{i,j}$ et $\mu_{i,j}$.

\subsubsection{Relations entre les $q_{i,j}$ et $\mu_{i,j}$}

On sait déjà que les nombres $q_{i,j}$ et $\mu_{i,j}$ ne peuvent être 
quelconques puisque ceux-ci vérifient les inégalités $q_{i,j} \leq 
q_{i+1,j}$, $q_{i,j} \geq q_{i,j+1}$, $\mu_{i,j} \leq \mu_{i+1,j}$ et 
$\mu_{i,j} \geq \mu_{i,j+1}$. Dans ce paragraphe, on détermine un 
certain nombres d'autres contraintes auxquelles ils doivent satisfaire.

\paragraph{Relations égalitaires}

On considère $\varphi = (\varphi_1, \ldots, \varphi_d) \in \Phi$
et on suppose encore $\varphi_1 > \cdots > \varphi_d$. Par définition,
au voisinage de $+\infty$, la fonction $\varphi_i$ est donnée par $q
\mapsto bq - q_{i,d}$ (puisque la fonction $\varphi_i$ prend des valeurs
finis à partir de $q_{i,d}$).  En comparant avec la forme de $\varphi_i$
obtenu ci-dessus, on trouve :
\begin{equation}
\label{eq:rele1}
\mu_{i,i} = b q_{i,i} - q_{i,d}
\end{equation}
pour tout $i \in \{1, \ldots, d\}$.
D'autre part, par la proposition \ref{prop:bijphipsi}, la fonction
$\varphi_i$ réalise une bijection de l'intervalle $[q_{i,j}, q_{i,j-1}[$
sur l'intervalle $[\mu_{i,j}, \mu_{i+1,j}[$. Comme on sait que cette 
fonction dilate la mesure de Lebesgue d'un facteur $b$, il vient :
\begin{equation}
\label{eq:rele2}
\mu_{i+1,j} - \mu_{i,j} = b (q_{i,j-1} - q_{i,j})
\end{equation}
pour tout $i,j \in \{1, \ldots, d\}$ tels que $1 \leq i < j \leq d$.
À partir des ces deux relations, on voit facilement que les $\mu_{i,j}$ 
s'expriment en fonction des $q_{i,j}$ :
\begin{equation}
\label{eq:rele3}
\mu_{i,j} = b q_{j,j} - q_{j,d} + b \cdot \sum_{s=i}^{j-1} (q_{s, j} 
- q_{s,j-1})
\end{equation}
pour tout couple $(i,j)$ tel que $1 \leq i \leq j \leq d$. En fait,
la formule \eqref{eq:rele3} implique réciproquement les
formules \eqref{eq:rele1} et \eqref{eq:rele2}. Ainsi, on peut décider
d'oublier ces deux dernières relations et de ne travailler qu'avec les
$q_{i,j}$. On peut également inverser les formules \eqref{eq:rele3} et
exprimer les $q_{i,j}$ en fonction des $\mu_{i,j}$ ; on obtient pour
$1 \leq i \leq j \leq d$
\begin{equation}
\label{eq:rele4}
q_{i,j} = \frac 1{b-1} \cdot \Bigg( \mu_{i,i} + \sum_{s=i+1}^j (\mu_{i,s} 
- \mu_{i+1,s}) + \sum_{s=j+1}^d \frac{\mu_{i,s} - \mu_{i+1,s}} b\Bigg).
\end{equation}
Il est donc également possible d'oublier les $q_{i,j}$ et de
travailler uniquement avec les $\mu_{i,j}$. L'avantage, néanmoins, de
continuer à considérer simultanément les $q_{i,j}$ et les $\mu_{i,j}$
réside dans le fait que les formules qui apparaîtront dans la suite 
pourront souvent s'écrire manière plus agréable.

\paragraph{Relations inégalitaires}

On rappelle que l'on a déjà vu les inégalités
\begin{eqnarray}
q_{i,j} \geq q_{i,j+1} & ; & \mu_{i,j} \leq \mu_{i+1,j} \label{eq:reli1} \\
q_{i,j} \leq q_{i+1,j} & ; & \mu_{i,j} \geq \mu_{i,j+1} \label{eq:reli2}
\end{eqnarray}
pour tout couple $(i,j)$ pour lequel cela a un sens.

\begin{lemme}
Avec les notations précédentes, on a les relations supplémentaires :
\begin{equation}
\label{eq:reli3}
q_{i,j} \leq q_{i+1,j+1} \qquad \text{et} \qquad
\mu_{i,j} \geq \mu_{i+1,j+1}
\end{equation}
pour tout couple $(i,j)$ tel que $1 \leq i < j \leq d$.
\end{lemme}

\begin{proof}
On démontre seulement la première inégalité, la seconde étant
complètement analogue. Si $q_{i,j} = q_{i,j+1}$ ou $q_{i+1,j+1} =
q_{i+1,j}$, l'inégalité résulte de \eqref{eq:reli2}. On peut donc
supposer que $q_{i,j} > q_{i,j+1}$ et $q_{i+1,j+1} < q_{i+1,j}$. Dans ce cas,
on applique la proposition \ref{prop:bijphipsi} qui nous assure que la
fonction $\psi_{j+1}$ réalise une bijection croissante de $[\mu_{i,j+1},
\mu_{i+1, j+1}[$ dans $[q_{i,j+1}, q_{i,j}[$, et également de
$[\mu_{i+1,j+1}, \mu_{i+2, j+1}[$ dans $[q_{i+1,j+1}, q_{i+1,j}[$. Par
hypothèse, tous ces intervalles sont non vides. On en déduit que 
$\lim_{\mu \to \mu_{i+1,j+1}^-} \psi_{j+1}(\mu) = q_{i,j}$ et que 
$\psi_{j+1}(\mu_{i+1,j+1}) = q_{i+1,j+1}$. En utilisant la croissance de 
$\psi_{j+1}$, on obtient finalement $q_{i,j} \leq q_{i+1,j+1}$ comme 
voulu.
\end{proof}

En fait, les inégalités \eqref{eq:reli2} résultent de \eqref{eq:reli1} 
et \eqref{eq:reli3}. En outre, en vertu de l'égalité \eqref{eq:rele2}, 
les deux inégalités de la ligne \eqref{eq:reli1} se déduisent 
mutuellement l'une de l'autre. Ainsi, les six jeux d'inégalités obtenues 
précédemment se résument finalement aux trois suivants :
\begin{equation}
\label{eq:reli4}
q_{i,j} \geq q_{i,j+1} \quad ; \quad
q_{i,j} \leq q_{i+1,j+1} \quad ; \quad
\mu_{i,j} \geq \mu_{i+1,j+1}.
\end{equation}
où $(i,j)$ parcourt l'ensemble des couples d'entiers tels que $1 \leq
i \leq j < d$. Bien entendu, par ailleurs, les inégalités portant sur 
les $\mu_{i,j}$ peuvent se réécrire en termes de $q_{i,j}$ en utilisant 
la formule \eqref{eq:rele3}, et réciproquement en utilisant la formule
\eqref{eq:rele4}.

\subsubsection{Un mot sur la gestion des multiplicités}
\label{subsec:multi}

Dans ce qui précède, on a toujours supposé pour simplifier que 
$\varphi_1 >
\cdots > \varphi_d$, c'est-à-dire si l'on préfère qu'il n'y a aucun
point $(q,\mu)$ qui appartient simultanément aux graphes du plusieurs
fonctions $\varphi_i$. Dans le cas où cette condition n'est pas
réalisée, des complications techniques apparaissent. En
particulier, la définition des $q_{i,j}$ et $\mu_{i,j}$ n'est plus
correcte et doit être remplacée par :
\begin{itemize}
\item le nombre $q_{i,j}$ est la borne inférieure des nombres
réels $q$ tels que soit $\psi_j \circ \varphi_i (q) > q$, soit $\psi_j
\circ \varphi_i (q) = q$ et
\begin{equation}
\label{eq:cardmult}
\card \, \big\{ \, i' \leq i \,\,|\,\, \varphi_{i'} (q) = \mu \, \big\}
\, \leq \,
\card \, \big\{ \, j' \leq j \,\,|\,\, \psi_{j'} (\mu) = q \, \big\}
\end{equation}
où on a posé $\mu = \varphi_i(q)$ ;
\item le nombre $\mu_{i,j}$ est la borne inférieure des nombres
réels $\mu$ tels que soit $\varphi_i \circ \psi_j (\mu) > \mu$, soit
$\varphi_i \circ \psi_j (\mu) = \mu$ et l'inégalité \eqref{eq:cardmult}
est vérifiée avec $q = \psi_j(\mu)$.
\end{itemize}
Dans le cas où $\varphi_1 > \cdots > \varphi_d$, on retrouve bien la
définition donnée auparavant. En effet, il y a alors un unique indice
$i'$ (resp. $j'$) tel que $\mu = \varphi_{i'} (q)$ (resp. $q =\psi_{j'}
(\mu)$) à savoir $i' = i$ (resp. $j' = j$), et donc la condition
supplémentaire est toujours vérifiée. L'idée dans cette définition 
est que si, étant donné un couple $(q, \mu)$, on note $i_1 < i_2 <
\cdots < i_k$ et $j_1 < j_2 < \cdots < j_k$ les entiers tels que :
$$\varphi_{i_1}(q) = \varphi_{i_2}(q) = \cdots = \varphi_{i_k}(q) =
\mu \quad \text{et} \quad
\psi_{j_1}(\mu) = \psi_{j_2}(\mu) = \cdots = \psi_{j_k}(\mu) = q,$$
alors on a bien $\psi_{j_s} \circ \varphi_{i_s} (q) = q$ mais, si $t<s$
le nombre $\psi_{j_t} \circ \varphi_{i_s} (q)$ doit être considéré comme
infinitésimalement plus petit que $q$ et, en tout cas, ne doit pas être
pris en compte dans la définition de $q_{i_s,j_t}$. En formalisant cette
vision des choses et en reprenant les arguments développés précédemment,
on peut montrer avec un peu de présévérance (exercice laissé au lecteur) 
que la proposition \ref{prop:bijphipsi} est encore vraie pour tous les 
$\varphi \in \Phi$ à condition de prendre la définition modifiée 
précédente. De même, la formule \eqref{eq:formephi} demeure, ainsi que 
les relations \eqref{eq:rele1}, \eqref{eq:rele2} et \eqref{eq:reli4}.

\subsubsection{Une bijection}

À partir de maintenant, on va faire varier les éléments $\varphi$ dans 
$\Phi$. C'est pourquoi, afin de lever tout risque d'ambiguïté, on notera 
dans la suite $q_{i,j}(\varphi)$ et $\mu_{i,j}(\varphi)$ respectivement 
à la place de $q_{i,j}$ et $\mu_{i,j}$. Soit $I$ l'ensemble des couples 
$(i,j)$ tels que $1 \leq i \leq j \leq d$. On considère l'espace 
vectoriel $(\R^2)^I$ des suites $(q_{i,j}, \mu_{i,j})$ indicées par les 
éléments de $I$. Soit $K$ le sous-ensemble convexe de $(\R^2)^I$ définis 
par les relations \eqref{eq:rele1}, \eqref{eq:rele2} et 
\eqref{eq:reli4}.

\begin{theo}
\label{theo:parametrisation}
L'application
$$\Phi \to K, \quad \varphi \mapsto (q_{i,j}(\varphi), \mu_{i,j}
(\varphi))_{(i,j)\in I}$$
est une bijection et son inverse est donné par la formule
\eqref{eq:formephi}.
\end{theo}

\begin{proof}
Il s'agit de démontrer qu'étant donné $(q_{i,j}, \mu_{i,j}) \in K$, la
formule \eqref{eq:formephi} définit un $d$-uplet $\varphi = (\varphi_1,
\ldots, \varphi_d)$ qui appartient à $\Phi$ et qui est tel que 
$q_{i,j}(\varphi) = q_{i,j}$ et $\mu_{i,j}(\varphi) = \mu_{i,j}$
pour tout $(i,j) \in I$. Les conditions 2 et 3 qui définissent 
l'ensemble $\Phi$ (voir page \pageref{enu:condphi})
ne posent aucun problème à part peut-être en ce qui concerne la
croissance des $\varphi_i$, mais celle-ci résulte directement des
égalités et des inégalités supposées sur les $q_{i,j}$ et $\mu_{i,j}$.
On a en outre les inégalités suivantes qui seront utiles dans la suite :
\begin{eqnarray}
\forall q < q_{i,j}, & &
\varphi_i(q) \leq b(q-q_{i,j}) + \mu_{i+1,j+1} \label{eq:majophi} \\
\forall q \geq q_{i,j}, & &
\varphi_i(q) \geq b(q-q_{i,j}) + \mu_{i,j} \label{eq:minophi}
\end{eqnarray}
On montre à présent la condition 1, c'est-à-dire que pour tout indice $i
\in \{1, \ldots, d-1\}$, on a $\varphi_i \geq \varphi_{i+1}$. Soit $q
\in \R$. Si $q < q_{i+1,d}$, on a bien $\varphi_i(q) \geq \varphi_{i+1}
(q) = -\infty$. Sinon, il existe $j$ tel que $q_{i+1,j} \leq q <
q_{i+1,j-1}$. On a alors
$\varphi_{i+1}(q) = b (q - q_{i+1,j}) + \mu_{i+1,j} 
\leq b(q - q_{i,j-1}) + \mu_{i,j-1} \leq \varphi_i(q)$
la première inégalité résultant des hypothèses $q_{i+1,j} \geq 
q_{i,j-1}$ et $\mu_{i+1,j} \leq \mu_{i,j-1}$, et la seconde résultant
de \eqref{eq:minophi} après avoir remarqué que $q \geq q_{i+1,j} \geq
q_{i,j-1}$. 

On en vient à la condition 4. Bien sûr, on prend les fonctions
$\psi_j$ définies par la formule \eqref{eq:formephi}. Les inégalités
supposées impliquent de même que précédemment qu'elles sont strictement
croissantes et rangées par ordre croissant. Soit $(q,\mu)$ un couple de
nombres réels. Par définition, un indice $i \in \{1, \ldots, d\}$
vérifie $\mu = \varphi_i(q)$ si, et seulement s'il existe $j \in \{1,
\ldots, d\}$ tel que
\begin{equation}
\label{eq:condij}
q_{i,j} \leq q < q_{i,j-1} \quad \text{et} \quad
\mu - \mu_{i,j} = b(q - q_{i,j})
\end{equation}
De plus, si un tel indice $j$ existe, il est clair qu'il est unique.
Ainsi, il existe autant d'indices $i$ satisfaisant $\mu = \varphi_i
(q)$ que de couples $(i,j)$ satisfaisant \eqref{eq:condij}. De même, 
on démontre qu'il existe autant d'indices $j$ satisfaisant $q =
\psi_j(\mu)$ que de couples $(i,j)$ satisfaisant :
\begin{equation}
\label{eq:condij2}
\mu_{i,j} \leq \mu < \mu_{i+1,j} \quad \text{et} \quad
\mu - \mu_{i,j} = b(q - q_{i,j}).
\end{equation}
Il suffit donc de montrer que les conditions \eqref{eq:condij} et
\eqref{eq:condij2} sont équivalentes, ce qui résulte sans peine de
l'égalité \eqref{eq:rele2}.

Il reste enfin à démontrer que $q_{i,j}(\varphi) = q_{i,j}$ et
$\mu_{i,j}(\varphi) = \mu_{i,j}$. Grâce à la relation \eqref{eq:rele3}, 
il suffit de démontrer l'égalité pour les $q_{i,j}$. Comme précédemment, 
on n'écrit la preuve que dans le cas où $\varphi_1 > \cdots > 
\varphi_d$. Il faut alors montrer que pour tout $(i,j) \in I$, on a 
$\psi_j \circ \varphi_i(q_{i,j}) \geq q_{i,j}$ et $\psi_j \circ 
\varphi_i(q) < q$ pour tout $q < q_{i,j}$. La formule \eqref{eq:minophi} 
montre que $\varphi_i(q_{i,j}) \geq \mu_{i,j}$, tandis que, de manière 
analogue, on démontre que $\psi_j(\mu_{i,j}) \geq q_{i,j}$. Il en 
résulte, en utilisant la croissance, que $\psi_j \circ
\varphi_i(q_{i,j}) \geq \psi_j(\mu_{i,j}) \geq q_{i,j}$. Soit $q <
q_{i,j}$. Si $\varphi_i(q) = -\infty$, il n'y a rien à démontrer ; on
suppose donc que ce n'est pas le cas. La formule \eqref{eq:majophi} 
implique que $\varphi_i(q) < \mu_{i+1,j+1} \leq \mu_{i,j}$. Comme, par
ailleurs, sur l'intervalle $]-\infty, \mu_{i,j}[$, on a $\psi_j(\mu) 
\leq b^{-1}(\mu - \mu_{i,j}) + q_{i-1,j-1}$, on obtient :
$$\psi_j \circ \varphi_i(q) \leq q - (q_{i,j} - q_{i-1,j-1}) -
\frac {\mu_{i,j} - \mu_{i+1,j+1}} b.$$
Si l'inégalité est stricte, la démonstration est terminée. Sinon, cela 
signifie que toutes les inégalités utilisées sont des égalités, et donc 
en particulier que $\varphi_i(q) = b(q - q_{i,j}) + \mu_{i+1,j+1}$ et 
que $q_{i,j} - q_{i-1,j-1} = \mu_{i,j} - \mu_{i+1,j+1} = 0$ puisque ces 
deux différences sont toujours positives ou nulles. On applique alors
l'inégalité \eqref{eq:majophi} au couple $(i-1, j-1)$ : cela donne
$\varphi_{i-1} (q) \leq b(q - q_{i-1,j-1}) + \mu_{i,j} = b(q - q_{i,j})
+ \mu_{i,j} = \varphi_i(q)$, ce qui contredit l'hypothèse.
\end{proof}

On rappelle que l'on avait défini un sous-ensemble $\Phi_\Z$ de $\Phi$
caractérisé par certaines conditions dites d'intégrité (les conditions
5 et 6, page \pageref{enu:condphiZ}). La proposition suivante montre que
ce sous-ensemble est facilement caractérisable à l'aide de la bijection
du théorème précédent.

\begin{prop}
\label{prop:integrite}
Pour tout $\varphi \in \Phi$, les trois conditions suivantes sont
équivalentes :
\begin{itemize}
\item[i)] $\varphi \in \Phi_\Z$
\item[ii)] pour tout $(i,j) \in I$, $q_{i,j}(\varphi) \in \frac 1 b \Z$,
et pour tout $i \in \{1, \ldots, d\}$, $q_{i,d}(\varphi) \in \Z$ ;
\item[iii)] pour tout $(i,j) \in I$, $\mu_{i,j}(\varphi) \in \Z$ et pour
tout $i \in \{1, \ldots, d\}$, la somme $\mu_{i,i}(\varphi) +
\mu_{i,i+1}(\varphi) + \cdots + \mu_{i,d}(\varphi)$ est divisible par
$b-1$.
\end{itemize}
\end{prop}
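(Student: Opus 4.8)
The plan is to exploit the explicit formulas \eqref{eq:rele3} and \eqref{eq:rele4} that translate between the $q_{i,j}(\varphi)$ and the $\mu_{i,j}(\varphi)$, together with the description \eqref{eq:formephi} of the functions $\varphi_i$, to verify the three implications $i)\Rightarrow ii)$, $ii)\Rightarrow iii)$ and $iii)\Rightarrow i)$ in a cycle. Throughout I will use that, by Theorem \ref{theo:parametrisation}, giving $\varphi$ is the same as giving the tuple $(q_{i,j},\mu_{i,j})\in K$, so every statement can be read off the formulas. As usual, I will first argue under the simplifying hypothesis $\varphi_1>\cdots>\varphi_d$ and then indicate that the general case follows from the modified definitions of §\ref{subsec:multi}, where \eqref{eq:formephi}, \eqref{eq:rele1}, \eqref{eq:rele2} still hold.

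\medskip

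\emph{Step 1: $i)\Rightarrow ii)$.} Recall from \eqref{eq:formephi} that the points of discontinuity of $\varphi_i$ are exactly the $q_{i,j}$ for $j\in\{i,\dots,d\}$ (those at which the interval $[q_{i,j},q_{i,j-1}[$ is nonempty), and that $q_{i,d}$ is the value below which $\varphi_i\equiv-\infty$. Condition 6 says precisely that these discontinuities lie in $\frac1b\Z$, which gives $q_{i,j}(\varphi)\in\frac1b\Z$ for all $(i,j)\in I$ (for the degenerate indices $j$ the value $q_{i,j}$ coincides with a neighbouring non-degenerate one by \eqref{eq:reli2}, so it is again in $\frac1b\Z$). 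Condition 5 says $q_{i,d}(\varphi)=q_i\in\Z$. This is exactly $ii)$.

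\medskip

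\emph{Step 2: $ii)\Rightarrow iii)$.} Assume all $q_{i,j}\in\frac1b\Z$ and all $q_{i,d}\in\Z$. From \eqref{eq:rele1}, $\mu_{i,i}=bq_{i,i}-q_{i,d}$; here $bq_{i,i}\in\Z$ since $q_{i,i}\in\frac1b\Z$, and $q_{i,d}\in\Z$, so $\mu_{i,i}\in\Z$. From \eqref{eq:rele2}, $\mu_{i+1,j}-\mu_{i,j}=b(q_{i,j-1}-q_{i,j})\in\Z$ for $i<j$, so by descending induction on $i$ (starting from $\mu_{i,i}\in\Z$ read with the convention, or rather building up from the diagonal) every $\mu_{i,j}\in\Z$; more directly \eqref{eq:rele3} exhibits $\mu_{i,j}$ as $b$ times elements of $\frac1b\Z$ minus an integer, hence an integer. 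For the divisibility, I compute the telescoping sum $\mu_{i,i}+\mu_{i,i+1}+\cdots+\mu_{i,d}$ using \eqref{eq:rele3}; alternatively, and more cleanly, I sum the relation $q_{i,j}=\frac1{b-1}(\dots)$ from \eqref{eq:rele4} over the relevant index, or simply observe that \eqref{eq:rele4} for $j=i$ reads $q_{i,i}=\frac1{b-1}\bigl(\mu_{i,i}+\sum_{s=i+1}^{d}\frac{\mu_{i,s}-\mu_{i+1,s}}{b}\bigr)$-type expression whose being in $\frac1b\Z$ forces a congruence mod $b-1$ on the $\mu$'s. The cleanest route: from \eqref{eq:rele1}, $(b-1)q_{i,i}=\mu_{i,i}+q_{i,i}-q_{i,d}$, but $q_{i,i}$ and $q_{i,d}$ need not be integral individually, so instead I sum \eqref{eq:rele2} over $j$ from $i+1$ to $d$ and combine with \eqref{eq:rele1}: one gets $\sum_{j=i}^d\mu_{i,j}-\sum_{j=i}^d\mu_{i+1,j}$ (with $\mu_{d+1,d}$ absent) expressed as $b$ times a difference of $q$'s minus $q_{i,d}$; iterating down to the diagonal and using $q_{i,d}\in\Z$ pins the sum $\mu_{i,i}+\cdots+\mu_{i,d}$ modulo $b-1$. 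I will carry out this telescoping carefully; it is short but is the place where one must be attentive.

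\medskip

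\emph{Step 3: $iii)\Rightarrow i)$.} Assume all $\mu_{i,j}\in\Z$ and $(b-1)\mid(\mu_{i,i}+\cdots+\mu_{i,d})$ for every $i$. By \eqref{eq:rele4}, $q_{i,j}=\frac1{b-1}\bigl(\mu_{i,i}+\sum_{s=i+1}^j(\mu_{i,s}-\mu_{i+1,s})+\sum_{s=j+1}^d\frac{\mu_{i,s}-\mu_{i+1,s}}{b}\bigr)$. For $j=d$ the last sum is empty, so $q_{i,d}=\frac1{b-1}\bigl(\mu_{i,i}+\sum_{s=i+1}^d(\mu_{i,s}-\mu_{i+1,s})\bigr)$; by a telescoping identity this inner quantity equals $\mu_{i,i}+\cdots+\mu_{i,d}$ minus $\mu_{i+1,i+1}+\cdots+\mu_{i+1,d}$ — and here I must be careful about the ranges — and each of these blocks is divisible by $b-1$ by hypothesis, so $q_{i,d}\in\Z$: that is condition 5. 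For condition 6, by \eqref{eq:formephi} the discontinuities of $\varphi_i$ are the $q_{i,j}$, so it suffices that each $q_{i,j}\in\frac1b\Z$, i.e. that $bq_{i,j}\in\frac1{b-1}\Z\cap\ldots$; from the displayed formula, $b(b-1)q_{i,j}$ is an integer combination of the $\mu$'s (the term $\sum_{s=j+1}^d\frac{\mu_{i,s}-\mu_{i+1,s}}{b}$ becomes integral after multiplying by $b$), so $b q_{i,j}\in\frac1{b-1}\Z$; combining with $q_{i,d}\in\Z$ and the relations \eqref{eq:rele2} (which give $q_{i,j-1}-q_{i,j}=\frac1b(\mu_{i+1,j}-\mu_{i,j})\in\frac1b\Z$) propagates $q_{i,j}\in\frac1b\Z$ from $q_{i,d}$ upward through all $j$. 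Hence condition 6 holds and $\varphi\in\Phi_\Z$.

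\medskip

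\textbf{Main obstacle.} There is no deep obstacle: the proposition is essentially a bookkeeping exercise with the triangular change of variables $q\leftrightarrow\mu$ of \eqref{eq:rele3}--\eqref{eq:rele4}. The one genuinely delicate point is getting the telescoping sums in Steps 2 and 3 exactly right — in particular identifying $\mu_{i,i}+\sum_{s=i+1}^j(\mu_{i,s}-\mu_{i+1,s})$ correctly (the upper index of the second sum, the role of the diagonal terms $\mu_{i,i}$ versus $\mu_{i+1,i+1}$, and the missing term $\mu_{j+1,j}=+\infty$ which by convention does not occur here since $j\le d$) so that the divisibility-by-$(b-1)$ hypothesis on the row-sums $\sum_{j\geq i}\mu_{i,j}$ matches exactly what is needed to make $q_{i,d}$ integral. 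Once that combinatorial identity is pinned down, both directions are immediate. The extension to arbitrary $\varphi\in\Phi$ (without $\varphi_1>\cdots>\varphi_d$) costs nothing, since all the formulas \eqref{eq:formephi}, \eqref{eq:rele1}, \eqref{eq:rele2} used above remain valid with the modified definitions of §\ref{subsec:multi}.
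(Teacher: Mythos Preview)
Your proposal is correct and follows essentially the same approach as the paper: both rely on the change-of-variables formulas \eqref{eq:rele3}--\eqref{eq:rele4} for $ii)\Leftrightarrow iii)$ and on the explicit shape \eqref{eq:formephi} of the $\varphi_i$ for the link with $i)$. The only differences are organizational --- you run a cycle $i)\Rightarrow ii)\Rightarrow iii)\Rightarrow i)$ whereas the paper proves $ii)\Leftrightarrow iii)$ and $i)\Leftrightarrow ii)$ separately --- and a small variation in the argument for $i)\Rightarrow ii)$: the paper observes that the step function $\psi_j\circ\varphi_i$ is constant on each interval $[v,v+\tfrac1b[$ with $v\in\tfrac1b\Z$ and reads $q_{i,j}\in\tfrac1b\Z$ directly from its definition as an infimum, which sidesteps the minor bookkeeping you flag about degenerate intervals.
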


\begin{proof}
L'équivalence entre les conditions \emph{ii)} et \emph{iii)} résulte
directement des formules \eqref{eq:rele3} et \eqref{eq:rele4} qui
permettent d'exprimer les $\mu_{i,j}$ en fonction des $q_{i,j}$ et
réciproquement. Reste donc à montrer l'équivalence entre \emph{i)} 
et \emph{ii)}. À partir de la proposition \ref{prop:bijphipsi}, on
déduit que \emph{ii)} implique \emph{i)}. Si l'on suppose
maintenant que $\varphi \in \Phi_\Z$, alors toutes les 
composées $\psi_j \circ \varphi_i$ sont constantes sur les
intervalles de la forme $[v, v + \frac 1 b[$ pour $v \in \frac 1 b
\Z$ et de là, en revenant à la définition, on déduit que les $q_{i,j}
(\varphi)$ appartiennent tous à $\frac 1 b \Z$. Finalement, il est
clair que $q_{i,d}(\varphi)$ est entier pour tout $i$, puisque
celui-ci est égal à $q_i$ qui est justement supposé entier.
\end{proof}

\subsection{Les variétés $\calX_\varphi$ et leurs dimensions}

Étant donné un $d$-uplet $\tilde \varphi = (\tilde \varphi_1, \ldots,
\tilde \varphi_d)$, on définit $\tilde \calX_{\tilde \varphi}(k)$ comme
l'ensemble des réseaux $L \subset M$ tels que $\tilde \varphi_i(L) =
\tilde \varphi_i$ pour tout $i$. Si l'on note $\varphi$ le $d$-uplet de
fonctions $\varphi_i : \R \to \R \cup \{-\infty\}$ obtenu à partir de
$\tilde \varphi$ après réordonnement et prolongement, les propositions
\ref{prop:varphi} et \ref{prop:bijphipsi} montrent ensemble que $\tilde
\calX_{\tilde \varphi}(k)$ est inclus dans $\calX_\mu(k)$ pour $\mu =
(\mu_{1,1}(\varphi), \mu_{1,2}(\varphi), \ldots, \mu_{1,d} (\varphi))$.
De façon plus précise, on démontre comme dans le lemme 4.2 de
\cite{viehmann} que l'on obtient ce faisant une sous-variété $\tilde
\calX_{\tilde \varphi}$ de $\calX_\mu$ qui est localement fermée.

De façon similaire, étant donné $\varphi \in \Phi_\Z$, on note
$\calX_\varphi(k)$ l'ensemble des réseaux $L \subset M$ tels que
$\varphi(L) = \varphi$ ; cet ensemble s'écrit manifestement comme une
union de $\tilde \calX_{\tilde \varphi}(k)$. Le lemme suivant montre
qu'il s'agit même d'une union finie et, par voie de conséquence, que
$\calX_\varphi(k)$ est aussi l'ensemble des $k$-points d'une
sous-variété algébrique localement fermée de $\calX_\mu$ pour le même
$\mu$ que précédemment.

\begin{lemme}
On fixe un élément $\varphi \in \Phi_\Z$. Alors, 
il n'existe qu'un nombre fini de $d$-uplets de fonctions $(\tilde
\varphi_1, \ldots, \tilde \varphi_d)$ qui satisfont aux conditions de
la proposition \ref{prop:varphi} et qui redonnent le $d$-uplet $\varphi$ 
après réordon\-nement.
\end{lemme}

\begin{proof}
Si $(\tilde \varphi_1, \ldots, \tilde \varphi_d)$ est un tel $d$-uplet,
alors il existe une permutation $w \in \mathfrak S_d$ telle que, pour
tout $i$, on ait $\tilde \varphi_i(v) = - \infty$ pour $v < q_{w(i),
d}(\varphi)$ et $\tilde \varphi_i(v) = b v - q_{w(i), d}(\varphi)$ pour
$v$ suffisamment grand. Du fait que les fonctions $\tilde \varphi_i$
doivent en outre être croissantes, on déduit que l'égalité $\tilde
\varphi_i(v) = b v - q_{w(i), d}(\varphi)$ vaut pour
tout $v \geq q_{d,d}(\varphi)$. Après cela, il ne reste plus qu'un 
nombre fini de $v$ et donc qu'un nombre fini de possibilités pour
attribuer les valeurs manquantes aux $\tilde \varphi_i$ puisque pour
chaque $v$, on ne peut que permuter les nombres $\varphi_i(v)$ et on
a donc au maximum $d!$ possibilités.
\end{proof}

\subsubsection{La fonction dimension sur $\Phi$}

\begin{deftn}
\label{def:dim}
Soit $\Leb$ la mesure de Lebesgue sur $\R$.
Si $\varphi = (\varphi_1, \ldots, \varphi_d) \in \Phi$, on pose :
$$\dim(\varphi) = \sum_{1 \leq i < i' \leq d} \Leb (\varphi_{i'}(\R)
\backslash \varphi_i(\R))$$
où, si $E$ et $E'$ sont des ensembles, on note $E \backslash E'$ 
l'ensemble des éléments qui appartiennent à $E$ mais pas à $E'$.
\end{deftn}

\begin{rem}
Pour $\varphi = (\varphi_1, \ldots, \varphi_d) \in \Phi$, il est
clair qu'il existe des constantes $A$ et $B$ telles que $[A, +\infty[ \;
\subset \varphi_i(\R) \subset [B, +\infty[$ pour tout $i$. On en déduit
que les différences $\varphi_{i'}(\R) \backslash \varphi_i(\R)$ sont
toutes incluses dans l'intervalle $[A,B]$ et donc, en particulier,
qu'elles ont une mesure finie. Ainsi $\dim(\varphi)$ est toujours fini.
\end{rem}

\emph{Via} la bijection de la proposition \ref{prop:varphi}, un élément
$\varphi \in \Phi$ est entièrement déterminé par la donnée des
$q_{i,j}(\varphi)$ et $\mu_{i,j}(\varphi)$. Ainsi, le nombre
$\dim(\varphi)$ que l'on vient de définir doit s'exprimer en fonction
des $q_{i,j}(\varphi)$ et $\mu_{i,j}(\varphi)$. On a plusieurs
possibilités pour cela, comme le montre le lemme suivant.

\begin{lemme}
\label{lem:dim}
Pour tout $\varphi \in \Phi$, on a :
\begin{eqnarray*}
\dim(\varphi) & = &
\sum_{j=1}^d (d+1-j) \cdot \mu_{1,j}(\varphi) - \sum_{(i,j) \in I}
\mu_{i,j}(\varphi) \\
& = & b \cdot \sum_{(i,j) \in I} q_{i,j}(\varphi) + 
\sum_{i=1}^d (2i-1-d-bi) \cdot q_{i,d}(\varphi).
\end{eqnarray*}
\end{lemme}

\begin{rem}
On constate en particulier --- et ce sera crucial 
dans la suite --- que $\dim(\varphi)$ dépend de façon linéaire des
$\mu_{i,j}(\varphi)$ et $q_{i,j}(\varphi)$.
\end{rem}

\begin{proof}
Comme d'habitude, on ne donne la démonstration que dans le cas où
$\varphi_1 > \cdots > \varphi_d$.
On fixe un indice $i \in \{1, \ldots, d\}$. D'après la proposition
\ref{prop:bijphipsi}, l'image de la fonction $\varphi_i$ s'écrit :
$$\varphi_i(\R) = \bigsqcup_{i \leq j \leq d} [\mu_{i,j}, \mu_{i+1,j}[.$$
Comme on a en outre $\mu_{i+1,j+1} \leq \mu_{i,j}$, les intervalles qui
apparaissent dans l'union précédente sont \og rangés par ordre
décroissant \fg. On en déduit que le réel $\mu$ n'est pas dans l'image
de $\varphi_i$ si, et seulement s'il existe $j \in \{i, \ldots, d\}$ tel
que $\mu_{i+1,j+1} \leq \mu < \mu_{i,j}$ où on a posé par convention
$\mu_{i+1,d+1} = -\infty$. Pour un tel $\mu$, on se propose de compter
le nombre d'indices $i' > i$ tels que $\mu$ appartienne à l'image de
$\varphi_{i'}$. Comme les fonctions $\varphi_{i'}$ sont injectives sur
l'intervalle où elles prennent des valeurs finies et que l'on a supposé
$\varphi_1 > \cdots > \varphi_d$, cela revient encore à compter le
nombre de $q$ pour lesquels il existe $i' > i$ tel que $\mu =
\varphi_{i'}(q)$. Cette condition se réécrit encore :
$$\exists i' \in \{1, \ldots, d\}, \quad \mu = \varphi_{i'}(q)
\text{ et } \varphi_{i+1}(q) \geq \mu$$
puis, d'après la définition des $\psi_j$ :
$$\exists j' \in \{1, \ldots, d\}, \quad q = \psi_{j'}(\mu) 
\text{ et } \varphi_{i+1}(q) \geq \mu.$$
En remplaçant $q$ par $\psi_{j'}(\mu)$, la dernière inégalité devient
$\varphi_{i+1} \circ \psi_{j'} (\mu) \geq \mu$, ce qui 
équivaut encore à $\mu \geq \mu_{i+1,j'}$. Au final, on
cherche donc à dénombrer les réels $q$ s'écrivant sous la forme
$\psi_{j'}(\mu)$ pour un indice $j'$ tel que $\mu \geq \mu_{i+1,j'}$.
De l'hypothèse supplémentaire $\varphi_1 > \cdots > \varphi_d$, on
déduit facilement que $\psi_1 < \cdots < \psi_d$, d'où il suit que le
nombre cherché est aussi le nombre d'indices $j'$ tels que $\mu \geq
\mu_{i+1,j'}$. Or, comme $\mu$ a été pris dans l'intervalle
$[\mu_{i+1, j+1}, \mu_{i,j}[$ et donc \emph{a fortiori} dans
$[\mu_{i+1, j+1}, \mu_{i+1,j}[$, on déduit de la décroissance de la
suite $j' \mapsto \mu_{i+1,j'}$ que les $j'$ convenables sont ceux de
l'ensemble $\{j+1, \ldots, d\}$. En particulier, il y en a $d-j$.

En résumé, on vient de montrer que le complémentaire de l'image des
$\varphi_i$ est la réunion disjointe des intervalles $[\mu_{i+1, j+1},
\mu_{i,j}[$ pour $j$ variant dans $\{i, \ldots, d\}$, et que si $\mu \in
[\mu_{i+1, j+1}, \mu_{i,j}[$ pour un certain $j$, il y a exactement
$d-j$ indices $i' > i$ tels que $\mu \in \varphi_{i'} (\R)$. Il en
résulte que
$$\sum_{i' = i+1}^d \Leb (\varphi_{i'}(\R) \backslash \varphi_i(\R)) =
\sum_{j=i}^{d-1} (d-j) \cdot (\mu_{i,j} - \mu_{i+1,j+1}).$$
En sommant ces égalités pour tout $i$, il vient :
$$\dim(\varphi) = 
\sum_{1 \leq i \leq j < d} (d-j) \cdot (\mu_{i,j} - \mu_{i+1,j+1}).$$
Les formules annoncées dans la proposition s'en déduisent (avec un peu
de calcul) à partir de la relation \eqref{eq:rele3}.
\end{proof}

\begin{cor}
\label{cor:dim}
Pour tout $\varphi \in \Phi_\Z$, on a la congruence :
$$\dim(\varphi) \equiv - \sum_{j=1}^d j \cdot \mu_{1,j} (\varphi)
\pmod{b-1}.$$
\end{cor}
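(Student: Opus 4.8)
Le plan est de partir de la première expression de $\dim(\varphi)$ fournie par le lemme \ref{lem:dim}, à savoir
$$\dim(\varphi) = \sum_{j=1}^d (d+1-j)\cdot\mu_{1,j}(\varphi) \;-\; \sum_{(i,j) \in I} \mu_{i,j}(\varphi),$$
et de la réduire modulo $b-1$ en exploitant la caractérisation des conditions d'intégrité donnée par la proposition \ref{prop:integrite}.

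Première étape : comme $\varphi \in \Phi_\Z$, la condition \emph{iii)} de la proposition \ref{prop:integrite} assure que tous les $\mu_{i,j}(\varphi)$ sont des entiers et que, pour chaque $i \in \{1, \ldots, d\}$, la somme $\mu_{i,i}(\varphi) + \mu_{i,i+1}(\varphi) + \cdots + \mu_{i,d}(\varphi)$ est divisible par $b-1$. Or $I$ est l'ensemble des couples $(i,j)$ avec $1 \leq i \leq j \leq d$, si bien que $\sum_{(i,j)\in I}\mu_{i,j}(\varphi) = \sum_{i=1}^d \big(\sum_{j=i}^d \mu_{i,j}(\varphi)\big)$ est une somme des quantités qui viennent d'être déclarées divisibles par $b-1$ ; cette double somme est donc congrue à $0$ modulo $b-1$. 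Il reste ainsi $\dim(\varphi) \equiv \sum_{j=1}^d (d+1-j)\cdot\mu_{1,j}(\varphi) \pmod{b-1}$.

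Seconde étape : on écrit $d+1-j = (d+1) - j$, d'où $\sum_{j=1}^d (d+1-j)\mu_{1,j}(\varphi) = (d+1)\sum_{j=1}^d \mu_{1,j}(\varphi) - \sum_{j=1}^d j\cdot\mu_{1,j}(\varphi)$. La somme $\sum_{j=1}^d \mu_{1,j}(\varphi)$ est précisément le cas $i=1$ de la condition de divisibilité ci-dessus, donc elle est congrue à $0$ modulo $b-1$, et \emph{a fortiori} $(d+1)\sum_{j=1}^d \mu_{1,j}(\varphi) \equiv 0 \pmod{b-1}$. On obtient alors $\dim(\varphi) \equiv -\sum_{j=1}^d j\cdot\mu_{1,j}(\varphi) \pmod{b-1}$, ce qui est l'énoncé.

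Aucune difficulté sérieuse n'est à attendre : le seul point à ne pas manquer est que la condition d'intégrité de la proposition \ref{prop:integrite} porte exactement sur les sommes partielles $\mu_{i,i}(\varphi) + \cdots + \mu_{i,d}(\varphi)$ indexées par les « lignes », ce qui coïncide avec le regroupement naturel de $\sum_{(i,j)\in I}\mu_{i,j}(\varphi)$ ; une fois cette observation faite, tout le reste n'est qu'arithmétique élémentaire. On pourrait tout aussi bien partir de la seconde expression du lemme \ref{lem:dim} en termes des $q_{i,j}(\varphi)$ et invoquer la caractérisation \emph{ii)} de la proposition \ref{prop:integrite}, mais la voie via les $\mu_{i,j}(\varphi)$ est la plus directe.
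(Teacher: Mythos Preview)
Your proof is correct and follows exactly the approach indicated in the paper: the first expression of $\dim(\varphi)$ from lemme~\ref{lem:dim} combined with the divisibility conditions of proposition~\ref{prop:integrite}~(iii). You have simply made explicit the two elementary steps (the row-wise regrouping of $\sum_{(i,j)\in I}\mu_{i,j}(\varphi)$ and the rewriting $(d+1-j) = (d+1) - j$) that the paper leaves to the reader.
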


\begin{proof}
C'est une conséquence immédiate de la première égalité de la proposition
précédente et de la proposition \ref{prop:integrite}.
\end{proof}

Le corollaire est intéressant notamment car, dans le cas où $\varphi$
provient d'un réseau $L$, les nombres $\mu_{1,j} (\varphi)$
s'interprètent comme les exposants des diviseurs élémentaires du module
engendré par $\sigma(L)$ par rapport à $L$. Par ailleurs, comme on peut
s'y attendre, la fonction $\dim$ que l'on vient de définir est liée de
près à la dimension des variétés $\calX_\varphi$. Plus précisément, on
a le théorème suivant.

\begin{theo}
\label{theo:dim}
Soit $\varphi = (\varphi_1, \ldots, \varphi_d) \in \Phi_\Z$. Alors
\begin{itemize}
\item si $h \neq 0$, on a $\dim_k \calX_{\varphi} = 
\dim(\varphi)$ ;
\item si $h = 0$, on a $\dim(\varphi) \leq \dim_k 
\calX_{\varphi} \leq \dim(\varphi) + \frac{d(d-1)} 2$.
\end{itemize}
\end{theo}

\subsubsection{Démonstration du théorème \ref{theo:dim}}

On fixe un élément $\varphi = (\varphi_1, \ldots, \varphi_d) \in
\Phi_\Z$. Puisque $\calX_\varphi$ s'écrit comme l'union finie des
$\tilde \calX_{\tilde \varphi}$ sur les $d$-uplets $\tilde \varphi =
(\tilde \varphi_1, \ldots, \tilde \varphi_d)$ vérifiant les conditions
de la proposition \ref{prop:varphi} et redonnant $\varphi$ après
réordonnement, il suffit de démontrer

\begin{itemize}
\item d'une part, que la dimension de toutes les variétés 
$\tilde \calX_{\tilde \varphi}$ est majorée par $\dim(\varphi)$ 
dans le cas où $h \neq 0$ et par $\dim(\varphi) + 
\frac{d(d-1)} 2$ dans le cas contraire, et
\item d'autre part, qu'il existe un $d$-uplet $\tilde \varphi$
particulier pour lequel $\dim_k \tilde \calX_{\tilde \varphi} \geq
\dim(\varphi)$.
\end{itemize}

\medskip

\noindent 
Pour cela, on suit la méthode de \cite{viehmann}. 

\paragraph{Notion de famille correcte}

On fixe des $d$-uplet $\varphi$ et $\tilde \varphi$ comme précédemment,
et on définit les ensembles suivants :
$$\textstyle Q = \frac 1 b \Z \times \{1, \ldots, d\}
\quad ; \quad  
\tilde V_\mu = \big\{ \, (q,i) \in Q
\,\, \big| \,\, \tilde \varphi_i(q) = \mu
\big\} \, \text{ (pour } \mu \in \Z \text{)}
\quad ; \quad
\tilde V = \bigcup_{\mu \in \Z} \tilde V_\mu$$
$$\textstyle A = \big\{ \, (q,i,q',i') \in Q^2 \,\, \big| \,\, 
(q',i') > (q,i) \text{ et } \varphi_i(q-\frac 1 b) < \varphi_{i'}(q') 
< \varphi_i(q) \, \big\}$$
$$\textstyle \tilde A = \big\{ \, (q,i,q',i') \in Q^2 \,\, \big|
\,\, (q',i') > (q,i) \text{ et } \tilde \varphi_i(q-\frac 1 b) < \tilde
\varphi_{i'} (q') < \tilde \varphi_i(q) \, \big\}$$
et, enfin, pour tout $(q,i) \in \tilde V$ :
$$\textstyle \tilde A(q,i) = \big\{ \, (q',i') \in Q \,\, \big| \,\,
(q',i') > (q,i) \text{ et } \tilde \varphi_i(q-\frac 1 b) < \tilde
\varphi_{i'} (q') < \tilde \varphi_i(q) \, \big\}.$$

\begin{lemme}
\label{lem:cardA}
On a $\card\,\tilde A \leq \card\,A = \dim(\varphi)$.
\end{lemme}

\begin{proof}
On remarque dans un premier temps que si $(q,i,q',i')$
est un élément de $A$, alors on a nécessairement $i < i'$. En effet, on
déduit de $(q,i,q',i') \in A$ que $\varphi_i(q) > \varphi_{i'}(q') \geq
\varphi_{i'}(q)$, ce qui ne peut se produire si $i' \leq i$ étant donné
que $\varphi_1 \geq \cdots \geq \varphi_d$ par hypothèse. On déduit en
particulier de cette propriété que, dans la définition de $A$, on peut
remplacer l'inégalité $(q',i') > (q,i)$ par la condition plus simple $q'
\geq q$.

Pour tout $q \in \frac 1 b \Z$, il existe une permutation $\tau_q \in
\mathfrak S_d$ telle que $\tilde \varphi_i(q) = \varphi_{\tau_q(i)} (q)$
pour tout indice $i$. Pour démontrer que $\card\,\tilde A \leq
\card\,A$, il suffit donc de montrer que, pour tout triplet $(q,q',i')
\in \frac 1 b \Z \times Q$ avec $q' \geq q$, il n'y a pas plus
d'indices $i$ tels que $(q,i,q',i') \in \tilde A$ que d'indices $i$ tels
que $(q,i,q',\tau_{q'}(i')) \in A$. D'après le résultat du premier
alinéa de la démonstration, il suffit pour cela de démontrer que, si on a
posé $\mu = \tilde \varphi_{i'}(q') = \varphi_{\tau_{q'}(i')} (q')$, les
deux ensembles suivants :
$$\textstyle \tilde B = \big\{ \, i \, | \, \tilde
\varphi_i(q-\frac 1 b) < \mu < \tilde \varphi_i(q) \, \big\}
\quad \text{et} \quad
B = \big\{ \, i \, | \, \varphi_i(q-\frac 1 b)
< \mu < \varphi_i(q) \, \big\}$$
ont même cardinal. Or, on peut écrire
$$\textstyle \tilde B = \tilde B_1 \backslash \tilde B_2 
\quad \text{avec} \quad
\tilde B_1 = \big\{ \, i \, | \, \mu < \tilde \varphi_i(q) \, \big\} 
\quad \text{et} \quad
\tilde B_2 = \big\{ \, i \, | \, \mu \leq \tilde \varphi_i(q-\frac 1 b)
\, \big\}$$
et de même $B = B_1 \backslash B_2$ où $B_1$ et $B_2$ sont définis de
manière analogue en remplaçant $\tilde \varphi_i$ par $\varphi_i$. On
a alors les inclusions $\tilde B_2 \subset \tilde B_1$ et $B_2 \subset
B_1$ alors que, par ailleurs, la permutation $\tau_q$ (resp.
$\tau_{q-\frac 1 b}$) induit une bijection de $\tilde B_1$ dans $B_1$
(resp. de $\tilde B_2$ dans $B_2$). La conclusion s'ensuit.

Il reste à démontrer que $\card\,A = \dim(\varphi)$. Pour cela, on fixe
deux entiers $i$ et $i'$ avec $i < i'$. L'ensemble différence
$\varphi_{i'} (\R) \backslash \varphi_i(\R)$ s'écrit comme une union
disjointe d'intervalles de la forme $[\mu, \mu+1[$ pour certains entiers
$\mu$. De plus, si $\mu$ est un tel entier (\emph{i.e.} si $\mu \in
\varphi_{i'} (\R) \backslash \varphi_i(\R)$), il existe $q, q' \in
\frac 1 b \Z$ tels que $\varphi_{i'} (q') = \mu$
et $(q,i,q',i') \in A$ (on rappelle que, dans la définition de
l'ensemble $A$, on peut remplacer $(q',i') > (q,i)$ par $q' \geq q$). En
outre, les rationnels $q$ et $q'$ sont uniquement déterminés. À partir
de la définition de $\dim(\varphi)$ (voir définition \ref{def:dim}), on
en déduit que $\dim(\varphi)$ compte le nombre de quadruplets
$(q,i,q',i') \in A$ tels que $i < i'$, c'est-à-dire le nombre d'éléments
de $A$ puisque l'on a démontré que tout $(q,i,q',i') \in A$ vérifie la
condition supplémentaire $i < i'$.
\end{proof}

On définit $\tilde q_i$ comme le nombre associé aux fonctions $\tilde
\varphi_i$ et on note $(e_1, \ldots, e_d)$ la base canonique de $M$. La
première étape de la preuve consiste à démontrer qu'étant donné un
réseau $L$ de $M$ tel que $\tilde \varphi_i(L) = \tilde \varphi_i$ pour
tout $i$, il existe des éléments $v_{q,i} \in M_{k((u^{1/b}))}$ pour
$(q,i) \in \tilde V$ et des élements $a_{q,i,q',i'} \in k$ pour $(q,i,q',i')
\in \tilde A$ qui vérifient :
\begin{itemize}
\item[i)] pour tout $(q,i) \in \tilde V$, on a $\val(v_{q,i}) = (q,i)$ et
$\val(v_{q,i} - u^q e_i) > (q,i)$ ;
\item[ii)] pour tout $(q,i) \in \tilde V$, on a $v_{q,i} \in k[[u^{1/b}]]
\otimes_{k[[u]]} L$ et $w_{q,i} = u^{-\tilde \varphi_i(q)} \sigma(v_{q,i})
\in L$ ;
\item[iii)] pour tout nombre entier $\mu$, les éléments $(w_{q,i} \mod
u)$ pour $(q,i)$ parcourant $\tilde V_\mu$ forment une famille libre sur
$k$ dans $L/uL$ ;
\item[iv)] pour tout $(q,i) \in \tilde V$ tel que $(q-\frac 1 b, i) \in 
\tilde V$, on a
\begin{equation}
\label{eq:viehmann1}
v_{q, i} = u^{1/b} v_{q-\frac 1 b,i} + \sum_{(q',i') \in \tilde A(q,i)}
a_{q,i,q',i'} \cdot v_{q',i'}
\end{equation}
\item[v)] pour tout $(q,i) \in \tilde V$ tel que $\tilde \varphi_i(q) = bq - 
\tilde q_i$ (ou de façon équivalente pour un tel $(q,i)$), on a
\begin{equation}
\label{eq:viehmann2}
v_{\tilde q_i, i} = w_{q,i} +
\sum_{(q',i') \in \tilde A(q_i,i)} a_{q_i,i,q',i'}^{p^h} \cdot v_{q',i'}
- \sum_{\substack{(q',i') \in \tilde A(q,i) \\ q' = q}} 
a_{q,i,q,i'}^{p^h} \cdot u^{\tilde q_i} e_{i'}
\end{equation}
où $w_{q,i} = u^{-\tilde \varphi_i(q)} \sigma(v_{q,i})$ comme ci-dessus.
\end{itemize}

\medskip

Une famille $(v_{q,i}, a_{q,i,q',i'})$ est dite \emph{correcte} pour $L$
si elle vérifie les conditions précédentes et, étant donné un entier
$n$, elle est dite \emph{$n$-correcte} si elle vérifie i), ii), iii) et
si les égalités iv) et v) sont vraies respectivement modulo $u^{q+\frac
n b}$ et $u^{\tilde q_i + \frac n b}$. On va construire une famille
correcte en procédant par approximations successives. Pour amorcer la
construction, on se donne des éléments $v_{q,i}$ vérifiant simplement
les conditions i), ii) et iii) ; leur existence résulte de la définition
des fonctions $\tilde \varphi_i(L)$ et, en ce qui concerne iii), d'une
analyse de la démonstration du lemme 4.1 de \cite{viehmann}. On choisit
également $a_{q,i,q',i'} = 0$ pour tout $(q,i,q',i') \in \tilde A$. La famille
$(v_{q,i}, a_{q,i,q',i'})$ est alors $0$-correcte. L'étape d'itération
est donnée par le lemme suivant duquel il résulte directement
l'existence souhaitée après un passage à la limite.

\begin{lemme}
Soit $(v_{q,i}, a_{q,i,q',i'})$ une famille $n$-correcte pour $L$ pour
un certain entier $n\geq 0$. On pose $m = n+1$. Alors, il existe
$(v'_{q,i}, a'_{q,i,q',i'})$ une famille $m$-correcte pour $L$ telle que
\begin{itemize}
\item on ait $v'_{q,i} \equiv v_{q,i} \pmod {u^{q + \frac n b}}$ pour
tout $(q,i) \in \tilde V$;
\item si $n$ est suffisamment grand, on ait aussi $a'_{q,i_0,q',i'} =
a_{q,i_0,q',i'}$ pour tout $(q,i_0,q',i') \in \tilde A$.
\end{itemize}
\end{lemme}

\begin{proof}
On construit les $v'_{q,i}$ par récurrence sur $q$ et à $q$ fixé par
récurrence descendante sur $i$. Autrement dit, on considère $(q,i) \in
\tilde V$, on suppose que tous les $v'_{q',i'} \in \tilde V$ avec $q' <
q$ ou $q' = q$ et $i' > i$ sont construits et on cherche à construire
$v'_{q,i}$.  On suppose d'abord que $(q-\frac 1 b, i)$ appartient à 
$\tilde V$ et on regarde dans ce cas l'équation \eqref{eq:viehmann1} 
modulo $u^{q+\frac m b}$, \emph{i.e.} la congruence
\begin{equation}
\label{eq:congrcorrect}
v'_{q, i} \equiv u^{1/b} \cdot v'_{q-\frac 1 b,i} + \sum_{(q',i') \in
\tilde A(q,i)} a'_{q,i,q',i'} \cdot v'_{q',i'} \pmod{u^{q+\frac m b}}
\end{equation}
qui doit être satisfaite par l'élément $v'_{q,i}$ que l'on veut
construire. Dans l'expression précédente, les $v'_{q-\frac 1 b,i}$ ont
déjà été construits de même que les $v'_{q',i'}$ pour $q = q'$ car on a
alors nécessairement $i'> i$. Si $q' > q$, en revanche, on n'a pas
encore construit $v'_{q', i'}$ mais on souhaite le faire de façon à ce
que $v'_{q',i'} \equiv v_{q',i'} \pmod {u^{q' + \frac n b}}$ et donc
\emph{a fortiori} $v'_{q',i'} \equiv v_{q',i'} \pmod {u^{q + \frac m
b}}$. On cherche donc à ce que $v'_{q',i'}$ satisfasse la congruence
\eqref{eq:congrcorrect} où on a remplacé $v'_{q',i'}$ pour $q' > q$ par
$v_{q',i'}$. En fait, on va chercher $v'_{q',i'}$ de sorte que cette
nouvelle congruence soit une égalité, c'est-à-dire de sorte que
\begin{equation}
\label{eq:egalcorrect}
v'_{q, i} = u^{1/b} \cdot v'_{q-\frac 1 b,i} + \sum_{(q',i') \in \tilde
A(q,i)} a'_{q,i,q',i'} \cdot x_{q',i'}
\end{equation}
où, pour unifier les écritures, on a posé, pour $(q',i') \in \tilde
A(q,i)$, $x_{q',i'} = v'_{q',i'}$ si $q' = q$ et $x_{q',i'} = v_{q',i'}$
sinon. Par ailleurs, en plus de cela, on doit avoir $\sigma(v'_{q,i}) 
\in u^{\tilde \varphi_i(q)} L$. En reportant la valeur désirée de 
$v'_{q,i}$ donnée par l'égalité \eqref{eq:egalcorrect}, on est amené 
à démontrer qu'il existe $a'_{q,i,q',i'} \in k$ tels que :
\begin{equation}
\label{eq:appcorrect}
\sigma\Bigg(
u^{1/b} \cdot v'_{q-\frac 1 b,i} + \sum_{(q',i') \in \tilde
A(q,i)} a'_{q,i,q',i'} \cdot x_{q',i'}
\Bigg) \in u^{\tilde \varphi_i(q)} L
\end{equation}
Par définition de $\tilde \varphi_i(q)$, on sait qu'il existe dans $L$
un élément $x$ de valuation $(q,i)$ tel que $\sigma(x) \in u^{\tilde
\varphi_i(q)}$. Quitte à multiplier $x$ par une constante dans $k$, on
peut en outre supposer que $\val(x - u^{1/b} v'_{q-\frac 1 b,i}) >
(q,i)$. On en déduit, en utilisant la condition i), que $x - u^{1/b}
v'_{q-\frac 1 b,i}$ s'écrit comme une somme infinie, portant sur tous
les couples $(q',i')$ strictement supérieurs à $(q,i)$, de termes de la
forme $a'_{q,i,q',i'} v_{q',i'}$ avec $a'_{q,i,q',i'} \in k$. Puisque
$\sigma(x_{q',i'}) \in u^{\tilde \varphi_{i'}(q')} L$, on peut, quitte à
changer $x$ en un autre élément de valuation $(q,i)$ tel que $\sigma(x)
\in u^{\tilde \varphi_i(q)}$, retirer de la somme précédente les
contributions apportées par les couples $(q',i')$ tels que $\tilde
\varphi_{i'}(q') \geq \tilde \varphi_i(q)$. La somme restante est alors
finie, car il n'existe de toute façon qu'un nombre fini de couples
$(q',i') \in \tilde V$ tels que $\tilde \varphi_{i'}(q') < \tilde
\varphi_i(q)$. Pour conclure, il ne reste plus qu'à démontrer que les
$a'_{q,i,q',i'}$ sont nécessairement nuls dès que $\tilde
\varphi_{i'}(q') \leq \tilde \varphi_i(q - \frac 1 b)$. On part pour
cela de la relation
\begin{equation}
\label{eq:appcorrect2}
\sigma(x) = u \cdot \sigma(v'_{q-\frac 1 b,i}) + 
\sum_{\substack{(q',i') > (q,i) \\ 
\tilde\varphi_{i'}(q') < \tilde \varphi_i(q)}} 
(a'_{q,i,q',i'})^{p^h} \cdot \sigma(x_{q',i'})
\in u^{\tilde \varphi_i(q)} L.
\end{equation}
obtenue simplement en développant. On définit également la valuation
$L$-adique $\val_L(v)$ d'un élément $v \in M$ comme le plus grand entier
$n$ tel que $v \in u^n L$. La valuation $L$-adique d'un élément de la
forme $\sigma(v_{q',i'})$ ou $\sigma(v'_{q',i'})$ est alors égale à
$\tilde \varphi_{i'}(q')$ : en effet, elle est supérieure ou égale à
cette valeur d'après la condition ii) et l'inégalité ne peut être
stricte par définition de $\tilde \varphi_{i'}(q')$. Par ailleurs, à
partir de la condition iii), il est facile de montrer que la valuation
$L$-adique d'une somme de termes de la forme $c_{q',i'}
\sigma(x_{q',i'})$ (avec $c_{q',i'} \in k$) est toujours égale au
minimum des valuations $L$-adiques des $c_{q',i'} \sigma(x_{q',i'})$.
Ainsi aucun terme de la somme qui apparaît dans \eqref{eq:appcorrect2}
ne peut avoir une valuation $L$-adique strictement inférieure à $\val_L(
u \cdot \sigma(v'_{q-\frac 1 b,i})) = 1 + \tilde \varphi_i(q - \frac 1
b)$, ce qui implique finalement ce qu'il fallait démontrer.

Si maintenant, au contraire, $(q-\frac 1 b, i) \not\in \tilde V$, on
raisonne de manière similaire sauf que l'on part désormais de l'équation
\eqref{eq:viehmann2} et comme précédemment on remplace $w'_{q,i} =
u^{-\tilde \varphi_i(q)} \sigma(v'_{q,i})$ par $w_{q,i} = u^{-\tilde
\varphi_i(q)} \sigma(v_{q,i})$. Après cela, il n'est plus difficile de
vérifier que $v'_{q,i} \equiv v_{q,i} \pmod {u^{q + \frac n b}}$ pour
tout $(q,i) \in \tilde V$ et que la famille $(v'_{q,i}, a'_{q,i,q',i'})$ que
nous avons construite est bien $m$-correcte. Il reste à montrer que si
$n$ est suffisamment grand, on a $a_{q,i,q',i'} = a'_{q,i,q',i'}$ pour
tout quadruplet $(q,i,q',i') \in \tilde A$. Mais cela résulte
directement du fait que, en vertu des congruences $v'_{q,i} \equiv
v_{q,i} \pmod {u^{q + \frac n b}}$ pour tout $(q,i) \in \tilde V$, l'assertion
\eqref{eq:appcorrect}, de même que son analogue dans le cas où $(q-\frac
1 b, i) \not\in \tilde V$, est vraie avec $a'_{q,i,q',i'} = a_{q,i,q',i'}$ si
$n$ est suffisamment grand.
\end{proof}


\begin{rem}
\label{rem:unicite}
En reprenant l'argument de la démonstration de l'existence des
$a'_{q,i,q',i'}$, on voit que ceux-ci sont en fait uniquement
déterminés. Il résulte de cette remarque, par passage à la limite, que
les éléments $a_{q,i,q',i'}$ dans une famille correcte pour $L$ sont,
entièrement déterminés par $L$.
\end{rem}

Il est également possible à partir d'une famille correcte $(v_{q,i},
a_{q,i,q',i'})$ de retrouver le réseau $L$ : en effet, à partir des
conditions i) et ii), on démontre directement que $L$ est le module
engendré par les $v_{q,i}$ pour $(q,i)$ parcourant $V$. Le lemme suivant
montre qu'en fait $L$ est déjà engendré par les $d$ vecteurs $v_{\tilde
q_i, i}$ (qui en forment donc une base).

\begin{lemme}
\label{lem:vqss}
Soit $(v_{q,i}, a_{q,i,q',i'})$ une famille correcte pour $L$.
Pour tout $(q,i) \in \tilde V$, il existe des $\lambda_s \in k[[u]]$ ($1 \leq 
s \leq d$) tels que :
$$v_{q,i} = \sum_{s=1}^d \lambda_s \cdot u^{q-\tilde q_s} v_{\tilde
q_s, s}$$
et $\lambda_s = 0$ si $q < \tilde q_s$ ou si $q = \tilde q_s$ et 
$s < i$.
\end{lemme}

\begin{proof}
Si $(q-\frac 1 b,i) \not \in \tilde V$, alors $q = \tilde q_i$ et le résultat
est clair. Dans le cas contraire, la relation \eqref{eq:viehmann1} 
assure que $v_{q,i}$ s'exprime en termes de $v_{q-\frac 1 b,i}$ et des
$v_{q',i'}$ pour $(q',i') \in \tilde A(q,i)$. Or, les nombres $\tilde
\varphi_i(q- \frac 1 b)$ et $\varphi_{i'}(q')$ sont tous strictement
plus petits que $\varphi_i(q)$. Une récurrence sur le nombre $\tilde 
\mu = \tilde \varphi_i(q)$ permet donc de terminer la démonstration
(on remarque que l'initialisation ne pose pas de problème car si
$\tilde \mu$ est suffisamment petit, aucun couple $(q,i)$ ne
convient).
\end{proof}

\begin{rem}
\label{rem:vqss}
Un examen de la démonstration précédente indique, en outre, que les
$\lambda_s$ s'expriment uniquement en fonction des $a_{q,i,q',i'}$ et
des propriétés combinatoires des fonctions $\tilde \varphi_i$.
\end{rem}

\paragraph{L'espace des familles correctes}

Dans ce paragraphe, on explique comment l'invariant \og famille correcte
\fg\ permet de paramétrer les réseaux $L$. À partir de maintenant, nous
ne fixe donc plus un réseau $L$ mais, au contraire, on considère
l'ensemble $\calC(k)$ des familles $(v_{q,i}, a_{q,i,q',i'})$
satisfaisant les conditions i), iv) et v) précédemment énoncées.
Manifestement, $\calC(k)$ est l'ensemble des $k$-points d'une variété
algébrique définie sur $k$ que l'on note $\calC$.

\begin{lemme}
\label{lem:henselvqi}
On suppose que $(v_{q,i}, a_{q,i,q',i'})$ et $(v'_{q,i},
a_{q,i,q',i'})$ (avec les mêmes $a_{q,i,q',i'}$) vérifient
les conditions i), iv) et v), et que pour tout $i \in \{1, \ldots,
d\}$, on a $v_{\tilde q_i,i} \equiv v'_{\tilde q_i,i} \pmod {u^{\tilde
q_i + \frac 1 b}}$.
Alors $v_{q,i} = v'_{q,i}$ pour tout $(q,i) \in \tilde V$.
\end{lemme}

\begin{proof}
On pose, pour simplifier les écritures, $v_i = u^{-\tilde q_i} v_{\tilde
q_i,i}$ et de même $v'_i = u^{-\tilde q_i} v'_{\tilde q_i,i}$.
L'hypothèse s'écrit alors $v_i \equiv v'_i \pmod {u^{1/b}}$. En
utilisant l'égalité \eqref{eq:viehmann2}, le lemme \ref{lem:vqss} 
ainsi que la remarque \ref{rem:vqss}, on
voit qu'il existe des matrices $G$ et $H$ à coefficients dans $k[[u]]$
de taille respectivement $d \times d$ et $1 \times d$ telles que
$$(\sigma(v_1), \ldots, \sigma(v_d)) = (v_1, \ldots, v_d) G + H
\quad \text{et} \quad
(\sigma(v'_1), \ldots, \sigma(v'_d)) = (v'_1, \ldots, v'_d) G + H.$$
De plus, on vérifie que la matrice $G$ s'écrit $I_d + G'$ où $G'$ est
topologiquement nilpotente ; en particulier $G$ est inversible. En
posant $w_i = v_i - v'_i$, on a par hypothèse $w_i \equiv 0 \pmod
{u^{1/b}}$ et, d'après ce qui précède, $(\sigma(w_1), \ldots, \sigma(w_d)) =
(w_1, \ldots, w_d) G$. Comme $G$ est inversible, cela implique que $w_i
\equiv 0 \pmod u$. En répétant l'argument, on obtient $w_i \equiv 0
\pmod {u^{b^n}}$ pour tout $n$, c'est-à-dire $w_i = 0$. Finalement, $v_i
= v'_i$ et une nouvelle application du lemme \ref{lem:vqss} permet de
conclure.
\end{proof}

\begin{rem}
Le lemme précédent reste vrai si $k$ est remplacé par une $k$-algèbre
quelconque.
\end{rem}

Soit $\calA = \A_k^{\tilde A}$ l'espace affine standard sur $k$ dont les
coordonnées sont indicées par l'ensemble $\tilde A$ ; c'est une variété
algébrique de dimension $\card\,\tilde A$. On dispose par ailleurs d'un
morphisme naturel $f : \calC \to \calA$ qui à une famille $(v_{q,i},
a_{q,i,q',i'})$ associe le vecteur de $a_{q,i,q',i'}$. Le lemme
précédent et la remarque qui le suit montrent que les fibres de $f$ sont
de dimension inférieure ou égale à $\frac{d(d-1)} 2$. En outre, lorsque
$h \neq 0$, un examen de la preuve du lemme \ref{lem:henselvqi} montre
même que $f$ est étale. Ainsi, si l'on pose $\varepsilon = 1$ si $h = 0$
et $\varepsilon = 0$ dans le cas contraire, on obtient :
\begin{equation}
\label{eq:majdim}
\dim_k \calC \leq \dim_k \calA + \varepsilon \cdot \frac{d(d-1)} 2
= \card\, A + \varepsilon \cdot \frac{d(d-1)} 2 \leq \dim(\varphi) +
\varepsilon \cdot \frac{d(d-1)} 2.
\end{equation}
Par ailleurs, l'application qui à une famille $(v_{q,i}, a_{q,i,q',i'}) 
\in \calC(k)$ associe le réseau engendré par les $v_{q,i}$
définit un morphisme algébrique $g$ de $\calC$ dans la grassmanienne
affine sur $k$. Le fait que tout réseau $L$ appartenant à $\tilde
\calX_{\tilde \varphi}(k)$ admette une famille correcte signifie que
l'image de $g$ contient $\tilde \calX_{\tilde \varphi}$. Ainsi 
on obtient $\dim_k \tilde \calX_{\tilde \varphi} \leq \dim_k \calC$
et la majoration que l'on voulait suit alors de \eqref{eq:majdim}.

\paragraph{Démonstration de la minoration}

On se place ici dans le cas où $\tilde \varphi_1 \geq \tilde \varphi_2
\geq \cdots \geq \tilde \varphi_d$ et on souhaite montrer qu'alors
$\dim_k \tilde \calX_{\tilde \varphi} \geq \dim(\varphi)$. Dans ce cas
particulier, les ensembles $A$ et $\tilde A$ coïncident et, d'après le
lemme \ref{lem:cardA}, leur cardinal vaut $\dim(\varphi)$.

\begin{lemme}
Le morphisme $f : \calC \to \calA$ défini précédemment est un
isomorphisme.
\end{lemme}

\begin{proof}
Soit $R$ une $k$-algèbre. Il s'agit de montrer que pour tout
$(a_{q,i,q',i'}) \in \calA(R)$. Il existe une unique famille
$(v_{q,i})_{(q,i) \in \tilde V}$ d'éléments de $M \otimes_{k((u))}
R((u^{1/b}))$ telle que $(v_{q,i}, a_{q,i,q',i'}) \in \calC(R)$.

On construit les $v_{q,i}$ et on démontre leur unicité par récurrence
descendante sur $i$. En reprenant la démonstration du lemme
\ref{lem:henselvqi}, on voit que l'élément $v_i = u^{-\tilde q_i}
v_{\tilde q_i,i}$ doit satisfaire une équation de la forme
$$\sigma(v_i) = v_i + \sum_{s = i+1}^d \lambda_s v_s$$
où les $\lambda_s$ sont de valuation $u$-adique strictement positive
(les éventuels termes de valuation $0$ s'annulent avec le terme
$\sum_{(q',i') \in \tilde A(q,i), \, q' = q} a_{q,i,q,i'}^{p^h} \cdot
u^{\tilde q_i} e_{i'}$ de la formule \eqref{eq:viehmann2}). Comme les
$v_s$ pour $s > i$ sont déjà connus, on a à résoudre une équation de la
forme $\sigma(v_i) = v_i - c$ où $c$ est un élément connu de valuation
strictement positive de $R[[u^{1/b}]]$. Une telle équation a bien une
unique solution, à savoir $v_i = \sum_{n=0}^\infty \sigma^n(c)$. Les
$v_{q,i}$ pour $q > q_i$ s'obtiennent alors à partir d'une variante du
lemme \ref{lem:vqss}.
\end{proof}

À présent, il est aisé de conclure. La remarque \ref{rem:unicite} montre
que le morphisme $g : g^{-1}(\tilde \calX_{\tilde \varphi}) \to \tilde
\calX_{\tilde \varphi}$ est également un isomorphisme. Il suffit donc de
démontrer que $g^{-1}(\tilde \calX_{\tilde \varphi})$ est un ouvert non
vide de $\calC$, ce qui se fait comme dans la preuve du \emph{Claim} 3
de \cite{viehmann}.

\section{Mise en place de la méthode}
\label{sec:methode}

Les théorèmes \ref{theo:parametrisation} et \ref{theo:dim} permettent de 
reformuler le problème de calculer --- ou disons, plutôt d'estimer --- 
la dimension de $\calX_{\leq e}$ en un problème de programmation 
linéaire. Dans cette section, nous mettons en place les outils 
nécessaires à la résolution de ce dernier problème puis, en guise 
d'exemple, nous illustrons la méthode proposée en démontrant le théorème 
\ref{theo:dimcaruso} de l'introduction sous une hypothèse additionnelle. 
La démonstration complète de ce théorème est reportée à la section 
suivante, \S \ref{subsec:demcaruso}.

\subsection{Préliminaires de programmation linéaire}
\label{subsec:opticonv}

On considère un espace euclidien $E$ dont on
note $\left< \cdot | \cdot \right>_E$ le produit scalaire. On se donne :
\begin{itemize}
\item un cône convexe $Q \subset E$, c'est-à-dire un sous-ensemble non
vide de $E$ stable par addition et par multiplication par les nombres
réels positifs ou nuls ;
\item une application linéaire $f : E \to \R^n$ où $n$ est un certain
entier naturel ;
\item une forme linéaire $\ell : E \to \R$.
\end{itemize}
Étant donné que $E$ est un espace euclidien, il existe des vecteurs
$\vec \ell, \vec f_1, \ldots, \vec f_n$ tels que $\ell(x) = \left<
x | \vect \ell \right>_E$ et $f(x) = (\left< x | \vect f_1
\right>_E, \ldots, \left< x | \vect f_n \right>_E)$ pour tout $x \in
E$.
On souhaite étudier la fonction $a_{Q,f,\ell} : \R^n \to
\R \cup \{\pm \infty\}$ définie par :
$$a_{Q,f,\ell}(y) = \sup_{\substack{x \in Q \\ f(x) = y}} \ell (x) =
\sup_{\substack{x \in Q \\ \left< x | \vect f_i \right>_E = y_i}}
\left< x | \vect \ell \right>_E \qquad \text{où } y = (y_1,
\ldots, y_n)$$
en convenant, comme d'habitude, que la borne supérieure de l'ensemble
vide est $-\infty$ et celle d'un ensemble non majorée est $+\infty$.
Soit $Q^\star$ le cône dual de $Q$ :
$$Q^\star = \big\{ \, x \in E \,\, | \,\, \left< x | x' \right>_E 
\geq 0, \quad \forall x' \in Q \, \big\}.$$
Si $Q$ est défini par les inégalités $\left< x |
\vec v_i \right>_E \geq 0$ ($1 \leq i \leq N$), alors $Q^\star$ est
le cône convexe engendré par les vecteurs $v_i$, c'est-à-dire 
l'ensemble des vecteurs de la forme $\lambda_1 \vec v_1 + \cdots +
\lambda_N \vec v_N$ pour des scalaires $\lambda_i \in \R^+$.
On introduit $A_{Q,f,\ell}$ l'ensemble convexe défini par :
\begin{equation}
\label{eq:AQfl}
A_{Q,f,\ell} = \big\{ \, y = (y_1, \ldots, y_n) \in \R^n \,\, | \,\, 
(y_1 \vec f_1 + \cdots + y_n \vec f_n) - \vec \ell \in Q^\star 
\,\big\}.
\end{equation}
Le théorème suivant établit un lien de dualité entre maximisation
sur $Q$ et minimisation sur $A_{Q,f,\ell}$.

\begin{theo}
\label{theo:dualconv1}
Avec les notations précédentes, on a :
$$a_{Q,f,\ell}(y) = \inf_{\alpha \in A_{Q,f,\ell}} \left< \alpha | y
\right>_n$$
où $\left< \cdot | \cdot \right>_n$ désigne le produit scalaire usuel
sur $\R^n$.
\end{theo}

\begin{proof}
Il s'agit un résultat classique de dualité en programmation linéaire.
On rappelle quand même brièvement comment on l'établit. On remarque tout
d'abord que la fonction $a_{Q,f,\ell}$ est concave. Le théorème
de Hahn Banach assure qu'elle s'écrit comme la borne inférieure
des fonctions affines qui la majorent. Or un calcul immédiat montre 
que la fonction affine $\R^n \to \R$, $(y_1, \ldots, y_n) \mapsto 
\alpha_1 y_1 + \cdots + \alpha_n y_n + \beta $ ($\alpha_i, \beta \in 
\R$) majore $a_{Q,f,\ell}$ si, et seulement si 
$$\forall x \in Q, \qquad \left< x | \vect \ell - (\alpha_1 
\vect f_1 + \cdots + \alpha_n \vect f_n) \right>_E \leq \beta.$$
Comme $Q$ est un cone convexe, ceci est encore équivalent à $\beta \geq
0$ et $(\alpha_1 \vec f_1 + \cdots + \alpha_n \vec f_n) - \vec 
\ell \in Q^\star$. Le théorème en résulte.
\end{proof}

Pour ce que l'on veut faire, on aura besoin de travailler dans une
situation légèrement plus générale que celle qui vient d'être étudiée.
Précisément, en plus de $Q$, $f$ et $\ell$, on se donne maintenant deux
cônes convexes $C$ et $D$ inclus dans $\R^n$, et on considère la
fonction $b_{Q,f,\ell,C,D} : \R^n \to \R \cup \{\pm \infty\}$ définie
par :
$$\begin{array}{rcll}
b_{Q,f,\ell,C,D}(y) & = & \sup_{x \in Q, \, f(x) \in y - C} \ell (x) & 
\text{si } y \in D \\
& = & -\infty & \text{sinon}
\end{array}$$
où par définition $y - C$ est l'ensemble des vecteurs $y' \in \R^n$ pour
lesquels il existe $c \in C$ tel que $y - c = y'$, ou autrement dit $y -
y' \in C$. Dans la suite, lorsque $D = \R^n$, on s'autorisera à ne pas
le noter en indice. On note $C^\star$ et $D^\star$ les
cônes duaux respectifs de $C$ et $D$. On pose : $$B_{Q,f,\ell,C,D} =
(A_{Q,f,\ell} \cap C^\star) + D^\star$$ où la notation précédente
signifie que les éléments de $B_{Q,f,\ell,C,D}$ sont ceux qui s'écrivent
sous la forme $y_1 + y_2$ avec $y_1 \in A_{Q,f,\ell} \cap C^\star$ et
$y_2 \in D^\star$. 

\begin{prop}
\label{prop:dualconv2}
Avec les notations précédentes, on a :
$$b_{Q,f,\ell,C,D}(y) = \inf_{\alpha \in B_{Q,f,\ell,C,D}}
\left< \alpha | y \right>_n.$$
\end{prop}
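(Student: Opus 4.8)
The plan is to reduce the general statement, with the cones $C$ and $D$, to the special case already treated in Theorem \ref{theo:dualconv1}. First I would introduce a new euclidean space that absorbs the extra freedom coming from the cone $C$. Concretely, set $\tilde E = E \times \R^n$ with its product scalar product, and consider inside $\tilde E$ the convex cone $\tilde Q = Q \times C$. Define $\tilde f : \tilde E \to \R^n$ by $\tilde f(x,c) = f(x) + c$ and $\tilde\ell : \tilde E \to \R$ by $\tilde\ell(x,c) = \ell(x)$. The point of this construction is that the constraint ``$f(x) \in y - C$'' becomes the honest equality ``$\tilde f(x,c) = y$'' once one remembers that $c$ ranges over $C$; more precisely, for $y$ fixed,
$$\sup_{\substack{x \in Q \\ f(x) \in y-C}} \ell(x) = \sup_{\substack{(x,c) \in \tilde Q \\ \tilde f(x,c) = y}} \tilde\ell(x,c) = a_{\tilde Q, \tilde f, \tilde\ell}(y).$$
Applying Theorem \ref{theo:dualconv1} to $(\tilde Q, \tilde f, \tilde\ell)$ then gives $a_{\tilde Q, \tilde f, \tilde\ell}(y) = \inf_{\alpha \in A_{\tilde Q, \tilde f, \tilde\ell}} \left< \alpha | y \right>_n$, so it remains to identify the convex set $A_{\tilde Q, \tilde f, \tilde\ell}$ with $A_{Q,f,\ell} \cap C^\star$.

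The key computation is therefore: what is the dual cone $\tilde Q^\star \subset \tilde E$, and what does membership $(y_1\vec f_1 + \cdots + y_n \vec f_n, \text{(component in }\R^n)) - \vect{\tilde\ell} \in \tilde Q^\star$ mean? Writing an element of $\tilde E$ as a pair $(u, w)$ with $u \in E$, $w \in \R^n$, one has $\left< (u,w) | (x,c) \right>_{\tilde E} = \left< u | x \right>_E + \left< w | c \right>_n$, and since $\tilde Q = Q\times C$ this is nonnegative for all $(x,c) \in \tilde Q$ exactly when $u \in Q^\star$ and $w \in C^\star$; hence $\tilde Q^\star = Q^\star \times C^\star$. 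Next, the vectors representing $\tilde f$ and $\tilde\ell$ in $\tilde E$ are $\vect{\tilde f_i} = (\vec f_i, \epsilon_i)$ (with $\epsilon_i$ the $i$-th standard basis vector of $\R^n$) and $\vect{\tilde\ell} = (\vec\ell, 0)$. Therefore $\sum_i y_i \vect{\tilde f_i} - \vect{\tilde\ell} = \bigl( \sum_i y_i \vec f_i - \vec\ell,\ y \bigr)$, and this lies in $Q^\star \times C^\star$ if and only if $\sum_i y_i \vec f_i - \vec\ell \in Q^\star$ \emph{and} $y \in C^\star$, i.e.\ if and only if $y \in A_{Q,f,\ell} \cap C^\star$ by the defining formula \eqref{eq:AQfl}. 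So $A_{\tilde Q, \tilde f, \tilde\ell} = A_{Q,f,\ell} \cap C^\star$, as wanted.

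Finally I would bring in the cone $D$. For $y \notin D$ both sides of the claimed identity must be shown to be $-\infty$: the left side is $-\infty$ by definition of $b_{Q,f,\ell,C,D}$, and the right side is $-\infty$ because adding $D^\star$ to a nonempty convex set makes $\inf_{\alpha \in (A_{Q,f,\ell}\cap C^\star) + D^\star} \left<\alpha | y\right>_n = -\infty$ whenever $y \notin D = (D^\star)^\star$ — indeed some $\delta \in D^\star$ has $\left<\delta | y\right>_n < 0$, and scaling $\delta$ drives the infimum to $-\infty$ (this uses that $A_{Q,f,\ell}\cap C^\star$ is nonempty, which one may assume, treating the degenerate empty case separately where both sides are trivially $+\infty$ or handled directly). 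For $y \in D$, one has $\left<\delta|y\right>_n \geq 0$ for all $\delta \in D^\star$, so $\inf_{\alpha \in (A\cap C^\star)+D^\star}\left<\alpha|y\right>_n = \inf_{\alpha \in A\cap C^\star}\left<\alpha|y\right>_n$, which by the previous paragraph equals $a_{\tilde Q,\tilde f,\tilde\ell}(y) = b_{Q,f,\ell,C,D}(y)$. I expect the only genuinely delicate point to be this last bookkeeping with $D$ and the edge cases where the relevant convex sets are empty or where suprema are $+\infty$; the core of the argument — the lifting to $\tilde E$ and the identification of the dual cone — is routine once the right space is chosen.
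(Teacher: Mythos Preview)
Your proof is correct and takes a genuinely different route from the paper. The paper does not reduce to Theorem~\ref{theo:dualconv1}; instead it reruns the Hahn--Banach argument from scratch for the function $b_{Q,f,\ell,C}$ (case $D=\R^n$), determining directly which affine maps $y\mapsto \left<\alpha|y\right>_n+\beta$ majorize it: the extra variable $c\in C$ in the inequality $\left<x|\vect\ell-\sum_i\alpha_i\vect f_i\right>_E\leq\beta+\left<\alpha|c\right>_n$ forces $\alpha\in C^\star$, after which one is back to the condition of Theorem~\ref{theo:dualconv1}. The treatment of the cone $D$ is then essentially identical to yours.

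Your lifting to $\tilde E=E\times\R^n$ with $\tilde Q=Q\times C$ is cleaner: it shows transparently that the proposition is Theorem~\ref{theo:dualconv1} applied to a larger space, and the computation $\tilde Q^\star=Q^\star\times C^\star$ together with $A_{\tilde Q,\tilde f,\tilde\ell}=A_{Q,f,\ell}\cap C^\star$ makes the appearance of the intersection with $C^\star$ completely mechanical rather than the outcome of a case analysis. The paper's approach, on the other hand, avoids introducing an auxiliary space and keeps everything inside $E$ and $\R^n$, at the cost of repeating the concavity/Hahn--Banach machinery. Your caution about the edge case where $A_{Q,f,\ell}\cap C^\star$ is empty is reasonable; the paper's argument for $y\notin D$ tacitly assumes nonemptiness as well when it asserts that the vectors $\lambda z$ lie in $B_{Q,f,\ell,C,D}$.
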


\begin{proof}
On commence par traiter le cas où $D = \R^n$. Alors $D^\star = \{0\}$ et
$B_{Q,f,\ell,C,D} = A_{Q,f,\ell} \cap C^\star$. La démonstration suit les
mêmes idées que celle du théorème \ref{theo:dualconv1}. On
commence par montrer que la fonction $b_{Q,f, \ell,C,D}$ est concave.
Par le théorème de Hahn Banach, elle s'écrit donc comme la borne
inférieure des fonctions affines qui la majorent. Or, la fonction affine
$\R^n \to \R$, $(y_1, \ldots, y_n) \mapsto \alpha_1 y_1 + \cdots +
\alpha_n y_n + \beta $ ($\alpha_i, \beta \in \R$) majore $b_{Q,f,\ell,
C,D}$ si, et seulement si
\begin{equation}
\label{eq:dualconv}
\forall x \in Q, \, \, \forall c \in C, \qquad
\left< x | \vect \ell - (\alpha_1 \vect f_1 + \cdots + 
\alpha_n \vect f_n) \right>_E \leq \beta + \left< \alpha | c \right>_n.
\end{equation}
Si $\alpha \in C^\star$, le produit scalaire $\left< \alpha | c 
\right>_n$ est par définition toujours positif ou nul. La valeur 
minimale qu'il prend lorsque $c$ décrit $C$ est donc $0$. Ainsi, la
condition précédente est équivalente à celle qui apparaissait dans la
démonstration du théorème \ref{theo:dualconv1}, soit encore à
$\beta \geq 0$ et $\alpha \in A_{Q,f,\ell}$.
Si, au contraire, $\alpha \not\in C^\star$, alors il existe un 
vecteur $c_0 \in C$ tel que $\left< \alpha | c_0 \right>_n < 0$. Comme 
$C$ est supposé stable par multiplication par les éléments de $\R^+$, 
la quantité $\left< \alpha | c \right>_n$ est non minorée lorsque $c$ 
décrit $C$ et la condition \eqref{eq:dualconv} n'est jamais satisfaite 
dans ce cas. La proposition, dans le cas particulier $D = \R^n$, 
résulte de ces considérations.

On en vient maintenant au cas général. D'après ce que l'on vient de
faire, il suffit d'établir que :
$$\begin{array}{rcll}
\inf_{\alpha \in B_{Q,f,\ell,C,D}} \left< \alpha | y \right>_n 
& = & \inf_{\alpha \in A_{Q,f,\ell} \cap C^\star} \left< \alpha | y 
\right>_n & \text{si } y \in D \\
& = & -\infty & \text{sinon}.
\end{array}$$
On suppose d'abord que $y \in D$. Alors, si $\alpha$ est un élément de
$B_{Q,f,\ell,C,D}$, il s'écrit $\alpha = \alpha_1 + \alpha_2$
avec $\alpha_1 \in A_{Q,f,\ell} \cap C^\star$ et $\alpha_2 \in
D^\star$, d'où il suit 
$\left< \alpha | y \right>_n = \left< \alpha_1 | y \right>_n +
\left< \alpha_2 | y \right>_n \geq \left< \alpha_1 | y \right>_n$.
En passant à la borne inférieure, on obtient l'inégalité
$\inf_{\alpha \in B_{Q,f,\ell,C,D}} \left< \alpha | y \right>_n
\geq \inf_{\alpha \in A_{Q,f,\ell} \cap C^\star} \left< \alpha | y 
\right>_n$. Mais l'inégalité dans l'autre sens est évidente puisque
$B_{Q,f,\ell,C,D}$ contient $A_{Q,f,\ell} \cap C^\star$.
Si maintenant $y \not\in D$, le théorème d'Hahn Banach assure qu'il
existe $z \in D^\star$ tel que $\left< z | y \right>_n < 0$.
Les vecteurs $\lambda z$, pour $\lambda \in \R^+$
appartiennent alors tous à $B_{Q,f,\ell,C,D}$, ce qui assure que la
quantité $\left< \alpha | y \right>_n$ est non minorée lorsque $\alpha$
parcourt cet ensemble. On a donc bien démontré ce que l'on voulait dans
tous les cas.
\end{proof}

Un cas important est celui où le cône convexe $Q$ est défini comme
l'intersection d'un nombre \emph{fini} de demi-espaces, ce qui est la
situation que l'on considèrera dans la suite. L'ensemble
$B_{Q,f,\ell,C,D}$ est alors un polytope (éventuellement non borné) qui
n'a, en tout cas, qu'un nombre fini de sommets. En outre, si on note
$\alpha_1, \ldots, \alpha_N$ ceux qui restent à distance finie, il
découle de la proposition précédente que :
\begin{equation}
\label{eq:bfini}
\begin{array}{rcll}
b_{Q,f,\ell,C,D}(y) & = & \inf_{1 \leq i \leq N} \left< \alpha_i |
y \right>_n & \text{si } y \in D \cap (f(Q)+C) \\
& = & -\infty & \text{sinon}.
\end{array}
\end{equation}
Ainsi, déterminer la fonction $b_{Q,f,\ell,C,D}$ revient à déterminer
les $\alpha_i$.

\subsubsection*{Un peu de réseaux pour pimenter}

On conserve les notations introduites précédemment, et on se donne en
outre $R$ un réseau de $E$, c'est-à-dire un sous-groupe additif de $E$
engendré par une base de $E$. On définit une nouvelle fonction
$b'_{Q,R,f,\ell,C,D} : \R^n \to \R$ par
$$\begin{array}{rcll}
b'_{Q,R,f,\ell,C,D}(y) & = & \sup_{x \in Q \cap R, \, f(x) \in y - C}
\ell (x) & \text{si } y \in D \\
& = & -\infty & \text{sinon}.
\end{array}$$ 
De même que précédemment, lorsque $D = \R^n$, on l'omettra dans la
notation. Il est évident que la fonction $b'_{Q,R,f,\ell,C,D}$ est
majorée par $b_{Q,f,\ell,C,D}$ puisque la borne supérieure pour définir
cette dernière fonction est prise sur un ensemble plus gros.

\begin{prop}
\label{prop:bprime}
On suppose que $Q$ engendre $E$ en tant qu'espace vectoriel, que
l'application $f$ est surjective, et finalement que $R \cap f^{-1}(C)$
et $f^{-1}(C)$ engendrent le même espace vectoriel dans $E$.

Alors, il existe un vecteur $y_0 \in \R^n$ et une constante $c \in \R$
tels que, pour tout $y \in f(R) + C$, on ait :
$$b_{Q,f,\ell,C}(y-y_0) - c \leq b'_{Q,R,f,\ell,C}(y) \leq
b_{Q,f,\ell,C}(y)$$
où, bien sûr, on convient que $-\infty - c = -\infty$ et $+\infty - c =
+\infty$.
\end{prop}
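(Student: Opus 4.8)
The plan is to reduce the lattice statement for $b'_{Q,R,f,\ell,C}$ to the non-lattice statement for $b_{Q,f,\ell,C}$ established in Proposition~\ref{prop:dualconv2}, by controlling how far a generic point of $Q\cap R$ can be from an arbitrary point of $Q$ with prescribed image under $f$. The upper bound $b'_{Q,R,f,\ell,C}(y)\le b_{Q,f,\ell,C}(y)$ is immediate and already noted, since the supremum defining $b'$ is taken over the smaller set $Q\cap R$. So the entire content is the lower bound, and for that I would argue as follows. Fix $y\in f(R)+C$ and let $x\in Q$ with $f(x)\in y-C$; I want to produce $x'\in Q\cap R$ with $f(x')$ close to (more precisely, $\le$ in the $C$-order) a fixed translate of $y$, and with $\ell(x')$ not much smaller than $\ell(x)$. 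The hypothesis that $Q$ spans $E$, that $f$ is surjective, and that $R\cap f^{-1}(C)$ and $f^{-1}(C)$ span the same subspace, are precisely what is needed to build a \emph{finite} set of ``correction vectors'' lying in $Q\cap R$ (or in a suitable translate), each with controlled $f$-image, which can be added to $x$ to land inside the lattice while staying in the cone.

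More concretely, here are the steps in order. First, since $f$ is surjective and $R$ spans $E$, choose once and for all a finite family $r_1,\dots,r_m\in R$ whose $f$-images $f(r_1),\dots,f(r_m)$ generate $\R^n$ as a group together with a basis; simultaneously, using that $R\cap f^{-1}(C)$ spans the same space as $f^{-1}(C)$, choose finitely many $s_1,\dots,s_p\in R\cap f^{-1}(C)$ spanning $\ker f\cap(\text{that space})$. Second, given $x\in Q$, write it as a bounded perturbation of a lattice point: because $Q$ spans $E$ and is a cone, one can add a fixed vector $x_Q\in Q$ and then subtract integer combinations of the $r_j$ and $s_k$ to land in $R$; the key point is that the ``rounding error'' stays in a fixed bounded region, so the change in $\ell$ is bounded by a constant $c$ depending only on the data, and the change in $f(x)$ is a fixed vector $y_0$ plus something absorbed into the $C$-direction (here the $s_k\in f^{-1}(C)$ are used to move freely along $C$ without leaving $Q\cap R$ after a suitable further shift by a large fixed multiple of a fixed element of $Q\cap R\cap f^{-1}(C)$). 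Third, one must check the modified point genuinely lies in $Q$: this is where staying inside the cone is delicate, and it is handled by pre-shifting $x$ by a large \emph{fixed} vector $x_0\in Q$ (possible since $Q$ is a cone containing an interior point of its span) so that the bounded corrections cannot push it out of $Q$; the cost is again only a constant in $\ell$ and a fixed translate $y_0$ in $f$. Finally, assemble: for $x$ achieving (up to $\epsilon$) the supremum defining $b_{Q,f,\ell,C}(y-y_0)$, the constructed $x'\in Q\cap R$ satisfies $f(x')\in y-C$ and $\ell(x')\ge \ell(x)-c$, giving $b'_{Q,R,f,\ell,C}(y)\ge b_{Q,f,\ell,C}(y-y_0)-c$ as desired.

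The main obstacle I anticipate is the cone constraint in the second and third steps: subtracting lattice-correction vectors from $x$ to land in $R$ can in principle leave $Q$, and one needs to know that a \emph{single} fixed interior shift $x_0\in Q$ dominates all the possible bounded corrections uniformly in $y$ and in $x$. This requires that $Q$ have nonempty interior in the subspace it spans (true, since a convex cone spanning $E$ does) and a compactness argument: the set of possible correction vectors, being integer combinations of the fixed $r_j,s_k$ with coefficients in a fixed bounded range (the ``fractional parts''), is finite, hence a suitable fixed multiple of a fixed interior point of $Q$ absorbs all of them. One also has to be slightly careful that the shift along $C$ needed to keep $f(x')\in y-C$ does not itself destroy membership in $Q\cap R$; this is arranged by choosing an element of $Q\cap R\cap f^{-1}(C)$ (whose existence follows from the spanning hypothesis on $R\cap f^{-1}(C)$ together with $Q$ spanning $E$) and adding a large fixed multiple of it. Once these uniformity statements are in place, everything else is bookkeeping with the constants $y_0$ and $c$, which by construction depend only on $Q,R,f,\ell,C$ and not on $y$.
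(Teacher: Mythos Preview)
Your high-level strategy matches the paper's exactly: pre-shift by a fixed $x_0$ deep enough in $Q$ that a bounded lattice correction cannot push you out, then show such a correction exists landing in $R$ while keeping $f$ inside $y-C$. The gap is in your two-stage correction mechanism. You propose to round to $R$ using vectors $r_j$ whose $f$-images are arbitrary, and then repair the $f$-constraint afterwards by shifting along a single fixed element of $Q\cap R\cap f^{-1}(C)$. But for the final point $x'$ to satisfy $f(x')\in y-C$ you need the \emph{total} correction $\delta$ to satisfy $f(\delta)\in -C$, not merely $f(\delta)$ bounded; a single direction in $C$ cannot absorb an arbitrary bounded vector into $-C$, and the existence of a nonzero element in $Q\cap R\cap f^{-1}(C)$ is not a consequence of the hypotheses (take $E=\R^2$, $Q=\{x_1\ge 0\}$, $f(x_1,x_2)=x_1$, $C=\R^-$: then $Q\cap f^{-1}(C)$ is the $x_2$-axis, on which $f$ vanishes, so this shift does nothing for the $f$-constraint).

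The paper avoids the issue by doing the lattice correction \emph{entirely inside} $C'_\R$, where $C'=f^{-1}(C)$. From $y\in f(R)+C$ and $f(x_1)\in (y-y_0)-C$ one gets $x_0+x_1\in R+C'_\R$, so the lattice defect already lives in $C'_\R$. The hypothesis that $R\cap C'$ spans $C'_\R$ makes $R\cap C'_\R$ a lattice there; take a compact fundamental domain $M\subset C'_\R$ and, using that $C'$ is a cone with nonempty interior in its span, translate it to some $K\subset -C'$. Then a single correction $x_2\in K$ does all three jobs at once: it puts $x=x_0+x_1+x_2$ in $R$ (fundamental-domain property), it keeps $f(x)\in y-C$ (since $f(K)\subset f(-C')=-C$), and it is bounded (since $K$ is compact). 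Choosing $x_0$ with $x_0+K\subset Q$ then gives $x=x_1+(x_0+x_2)\in Q+Q=Q$. No separate $C$-direction repair is needed, and the problematic set $Q\cap R\cap f^{-1}(C)$ never enters.
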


\begin{rem}
Le fait que $Q$ engendre $E$ n'est évidemment pas vraiment
contraignant puisque, dans le cas où cela ne serait pas vérifié, il
suffit de remplacer $E$ par le sous-espace vectoriel $Q_\R$ engendré par
$Q$ et le réseau $R$ par $R \cap Q_\R$. On attire toutefois l'attention
du lecteur sur le fait qu'il se peut que $R \cap Q_\R$ ne soit pas un
réseau de $Q_\R$ ; il s'agit donc d'une question qu'il ne faudra pas
oublier de se poser le cas échéant. 
Toutefois, dans le cas où le réseau $R$ et le cône $Q$ sont tous les
deux définis sur le corps des nombres rationnels, il est facile de
vérifier que $R \cap Q_\R$ est toujours un réseau dans $R$ ; il n'y a
donc dans cette situation particulière pas de vérification 
supplémentaire à faire.
De la même façon, la troisième hypothèse de la proposition (à savoir que
$R \cap f^{-1}(C)$ et $f^{-1}(C)$ engendrent le même espace vectoriel
dans $E$) est automatiquement satisfaite dès que $R$, $C$ et $f$ sont
définis sur $\Q$. Dans les applications à suivre, ce sera toujours le
cas, et il ne sera donc pas nécessaire de vérifier la troisième 
hypothèse, de même que l'on pourra appliquer la proposition même si $Q$ 
n'engendre pas $E$.

Un mot enfin en rapport avec l'hypothèse de surjectivité de $f$.
Bien entendu, elle n'est pas véritablement contraignante car on peut
toujours appliquer la proposition en remplaçant $\R^n$ par l'image de
$f$. La conclusion du théorème n'est alors bien sûr plus valable que
pout les $y$ qui appartiennent à l'intersection de $f(R)+C$ avec l'image
de $f$.
\end{rem}

\begin{proof}
On définit $C' = f^{-1}(C)$ et de façon générale, si $X$ est un
sous-ensemble de $E$ ou de $\R^n$, on note $X_\R$ le sous-espace
vectoriel qu'il engendre.
Soit $M \subset C'_\R$ une maille du réseau $C'_\R \cap R$. C'est un
ensemble compact qui vérifie la propriété suivante : pour tout $x \in
C'_\R$, il existe $m \in M$ tel que $x + m \in R$. Du fait que $C'$ est
d'intérieur non vide dans $C'_\R$ et qu'il est stable par multiplication
par les réels positifs, on déduit qu'il existe un translaté
de $M$ entièrement inclus dans $(-C')$. Soit $K$ un tel translaté. 
C'est encore un ensemble compact qui vérifie une propriété analogue à 
celle satisfaite par $M$.
De même que précédemment, étant donné que $K$ est compact et que $Q$
est un cône convexe d'intérieur non vide dans $Q_\R$, il existe $x_0
\in Q_\R$ tel que $x_0 + K \subset Q$. On définit $y_0 = f(x_0)$.

Soit $y \in f(R)+C$.
Si $y - y_0 \not\in f(Q)+C$, on a $b_{Q,f,\ell,C}(y-y_0) = -\infty$ et
la proposition est évidente dans ce cas. On suppose donc que $y - y_0
\in f(Q)+C$. Alors $b_{Q,f,\ell,C}(y - y_0)$ est fini, et pour tout
$\varepsilon > 0$, il existe $x_1 \in Q$ tel que $f(x_1) \in (y - y_0) -
C$ et $\ell(x_1) \geq b_{Q,f,\ell,C}(y - y_0) - \varepsilon$. On a alors
$f(x_0 + x_1) \in y - C \subset f(R) + C_\R$, d'où on déduit que $x_0 + 
x_1 \in R + C'_\R$ et, de
là, qu'il existe $x_2 \in K$ tel que $x = x_0 + x_1 + x_2$ soit élément
de $R$. Comme $x_1 \in Q$ et $x_0 + K \subset Q$, l'élément $x$
appartient aussi à $Q$. Par ailleurs, $f(x) = y_0 + f(x_1) + f(x_2) \in
y_0 + (y - y_0) - C + f(K) = y - C$ car $f(K)$ est inclus dans $(-C)$ 
par construction de $K$. Ainsi, trouve-t-on :
$$b'_{Q,R,f,\ell,C}(y) \geq \ell(x) \geq \ell(x_0) +  
b_{Q,f,\ell,C}(y-y_0) - \varepsilon \inf_{x' \in K} \ell(x')$$
et la borne inférieure est finie étant donné que $K$ est compact.
\end{proof}

La minoration dans la proposition précédente fait intervenir la valeur
de la fonction $b_{Q,f,\ell,C}$ en $y - y_0$, alors qu'il aurait été
sans doute plus agréable d'avoir simplement $b_{Q,f,\ell,C}(y)$. En
général, malheureusement, on ne peut pas remplacer $y - y_0$ par $y$,
même en modifiant la constante $c$.
Néanmoins dans le cas où $Q$ est défini comme l'intersection d'un nombre 
fini d'hyperplans, on peut être plus précis : il résulte alors de la 
formule \eqref{eq:bfini} que la fonction $b_{Q,f,\ell,C}$ est 
uniformément continue sur l'ensemble $f(Q)+C$ et donc, en particulier, 
qu'il existe une constante réelle $c'$ telle que :
$$b_{Q,f,\ell,C}(y) - c' \leq b_{Q,f,\ell,C}(y-y_0)$$
pour tout $y \in (f(Q)+C) \cap (y_0 + f(Q)+C)$. En fait, l'inégalité 
précédente estencore satisfaite si $y \not\in f(Q)+C$ puisque dans ce 
cas, le minorant vaut $-\infty$. Enfin, en posant $c'' = c + c'$, il 
vient :
\begin{equation}
\label{eq:bprime}
b_{Q,f,\ell,C,D}(y) - c'' \leq b'_{Q,R,f,\ell,C,D}(y) \leq
b_{Q,f,\ell,C,D}(y)
\end{equation}
pour tout $y$ sauf éventuellement ceux qui appartiennent à $D \cap
(f(Q)+C)$ mais pas à $y_0 + f(Q)+C$. Il existe donc, si l'on veut, une zone 
de trouble autour de la frontière de $f(Q)+C$ sur laquelle on ne sait
pas contrôler le comportement de la fonction $b'_{Q,R,f,\ell,C,D}$.

\subsection{Étude du cône convexe $\Phi$}
\label{subsec:conephi}

On reprend la situation du théorème \ref{theo:parametrisation} : on note 
$I$ l'ensemble des couples d'entiers $(i,j)$ tels que $1 \leq i \leq j 
\leq d$, et $K$ le sous-ensemble convexe de $(\R^2)^I$ défini par les 
égalités \eqref{eq:rele1} et \eqref{eq:rele2} et les inégalités 
\eqref{eq:reli4}. En fait, comme cela a déjà été dit, la formule 
\eqref{eq:rele3} permet de se passer des $\mu_{i,j}$ ce que nous allons 
faire à partir de maintenant. On considère donc plutôt l'espace 
vectoriel $E = \R^I$ formé des suites $q = (q_{i,j})_{(i,j) \in I}$ 
indexées par les éléments de $I$, et à l'intérieur de celui-ci, le cône 
convexe $Q$ défini par les jeux d'inégalités suivants :

\medskip

\begin{tabular}{rp{13cm}}
(Jeu I) : & si $1 \leq i \leq j < d$, 
$q_{i,j} \geq q_{i,j+1}$ \\
(Jeu II) : & si $1 \leq i \leq j < d$, 
$q_{i,j} \leq q_{i+1,j+1}$ \\
(Jeu III) : & si $1 \leq i \leq j < d$, \\
& $\displaystyle b q_{j,j} - q_{j,d} + b \cdot \sum_{s=i}^{j-1} (q_{s, j}
- q_{s,j-1}) \geq 
b q_{j+1,j+1} - q_{j+1,d} + b \cdot \sum_{s=i+1}^j (q_{s, j+1}
- q_{s,j})$
\end{tabular}

\medskip

\noindent
qui correspondent aux inégalités \eqref{eq:reli4} dans lesquelles on a
remplacé chaque apparition d'un $\mu_{i,j}$ par son expression en
fonction des $q_{i,j}$. Le théorème \ref{theo:parametrisation} dit
alors que l'application $\varphi \mapsto (q_{i,j}(\varphi))_{(i,j) \in
I}$ définit une bijection entre $\Phi$ et $Q$.

\medskip

On munit $E$ du produit scalaire usuel : si $v = (v_{i,j})$ et 
$w = (w_{i,j})$ sont deux éléments de $E$, on pose
$$\left< v | w \right>_E = \sum_{(i,j) \in I} v_{i,j} w_{i,j}.$$
Les inégalités définissant $Q$ se réécrivent alors sous la forme $\left<
\vec v_m | q \right>_E \geq 0$ pour certains vecteurs explicites $\vec
v_m \in E$ ($1 \leq m \leq M$). Une étude attentive des formules montre
en outre que tous les vecteurs $\vec v_m$ appartiennent à l'hyperplan
\og somme des coordonnées égale $0$ \fg. 

On note comme précédemment $Q^\star$ le cône dual de $Q$ (par rapport au
produit scalaire $\left< \cdot | \cdot \right>_E$) ; c'est simplement le
cône convexe engendré par les vecteurs $\vec v_m$. En particulier, il
est lui aussi inclus dans l'hyperplan \og somme des coordonnées égale
$0$ \fg. Malheureusement, la présentation à l'aide des $\vec v_m$ n'est
pas adaptée au calcul des convexes $A_{Q,f,\ell}$ qui apparaissent en 
\ref{subsec:opticonv}, et qui joueront un rôle central dans la suite 
de l'article. Pour calculer ces ensembles, il serait plus commode de
disposer d'une présentation de $Q^\star$ comme l'intersection d'un 
certain nombre de demi-espaces. Hélas, de part sa complexité, le jeu III 
d'inégalités rend la chose difficile à réaliser. Pour contourner le
problème, l'idée consiste à travailler avec certaines approximations 
$Q$. Les deux plus simples d'entre elles sont les cônes convexes 
$Q_\min$ et $Q_\max$ définis comme suit : $Q_\max$ est le cône convexe 
défini par les jeux d'inégalité I et II, tandis $Q_\min$ est celui 
défini par le jeu I et le jeu II' que voici :

\medskip

\begin{tabular}{rp{10cm}}
(Jeu II') : & si $1 \leq i \leq j < d$, $q_{i,j} = q_{i+1,j+1}$
\end{tabular}

\medskip

\noindent
Il est clair que $Q \subset Q_\max$ et un calcul aisé montre que
$Q_\min \subset Q$. Ainsi on a des inclusions renversées au niveau
des duaux, d'où on déduit que, pour toute donnée $(f,\ell,C,D)$, 
l'encadrement 
\begin{equation}
\label{eq:encadb}
b_{Q_\min,f,\ell,C,D} \leq b_{Q,f,\ell,C,D} \leq
b_{Q_\max,f,\ell,C,D}
\end{equation}
est vrai. Il est maintenant temps de donner les présentations annoncées
des cônes duaux $Q_\min^\star$ et $Q_\max^\star$. Pour cela, on
introduit la définition suivante.

\begin{deftn}
\label{def:admissible}
Une partie $J$ de $I$ est dite \emph{admissible} si pour tout couple
$(i,j) \in J$, les deux couples $(i,j+1)$ et $(i-1,j-1)$ sont dans $J$,
dès qu'ils appartiennent à $I$.
\end{deftn}

On peut remarquer que les parties admissibles dans le sens précédent
sont naturellement en bijection avec les parties de $\{1, \ldots, d\}$ :
à une telle partie $J$, on fait correspondre l'ensemble $T$ des nombres 
non nuls qui sont de la forme :
$$\card \big\{ \, j \geq i \,\, | \,\, (i,j) \in J \, \big\}$$
pour un $i \in \{1, \ldots, d\}$. Réciproquement, si $T$ est un
sous-ensemble de $\{1, \ldots, d\}$, on note $t_1 > \cdots > t_{\card\,T}$ 
ses éléments et on lui associe l'ensemble $J \subset I$ formé des couples 
$(i,j)$ tels que $i \leq \card\,T$ et $j > d - t_i$.
On vérifie que les deux applications précédentes sont des bijections 
inverses l'une de l'autre entre l'ensemble des parties admissibles de 
$I$ et l'ensemble des parties de $\{1, \ldots, d\}$. Par exemple, si $s 
\in \{1, \ldots, d\}$, la partie admissible correspondant à $\{1, 
\ldots, s\}$ est $I_s = \{ (i,j) \in I \,|\, j - i \geq d - s\}$.

\begin{prop}
\label{prop:qdual}
Une suite $v = (v_{i,j})$ de $E$ appartient à $Q_\min^\star$ si,
et seulement si :
$$\sum_{(i,j) \in I} v_{i,j} = 0
\quad \text{et} \quad \forall s \in \{1, \ldots, d\}, \,
\sum_{(i,j) \in I_s} v_{i,j} \leq 0.$$
Une suite $v = (v_{i,j})$ de $E$ appartient à $Q_\max^\star$ si,
et seulement si :
$$\sum_{(i,j) \in I} v_{i,j} = 0
\quad \text{et} \quad \forall J \subset I \text{ admissible},\,
\sum_{(i,j) \in J} v_{i,j} \leq 0.$$
\end{prop}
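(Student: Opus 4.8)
The plan is to compute both dual cones directly from their generators. Recall $Q_{\min}^\star$ (resp. $Q_{\max}^\star$) is the convex cone generated by the vectors $\vec v_m$ defining the inequalities of $Q_{\min}$ (resp. $Q_{\max}$). For $Q_{\max}$, the defining inequalities are exactly (Jeu I) and (Jeu II), that is, $q_{i,j} - q_{i,j+1} \geq 0$ and $q_{i+1,j+1} - q_{i,j} \geq 0$ for $1 \leq i \leq j < d$; each such inequality has a generator supported on two coordinates with entries $+1$ and $-1$, hence lying in the hyperplane "sum of coordinates $= 0$". So $Q_{\max}^\star$ is spanned by these $\pm 1$ difference vectors. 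For $Q_{\min}$, the equalities (Jeu II') contribute generators $\pm(\vec e_{i,j} - \vec e_{i+1,j+1})$, so $Q_{\min}^\star$ contains the full line through each $\vec e_{i,j} - \vec e_{i+1,j+1}$, together with the generators of (Jeu I). Since $Q_{\min} \subset Q_{\max}$, we have $Q_{\max}^\star \subset Q_{\min}^\star$, consistent with the extra equality generators.

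The key combinatorial step is to recognize the structure of the "chains" in $I$ under the two elementary moves $(i,j) \mapsto (i,j+1)$ and $(i,j) \mapsto (i-1,j-1)$: these are exactly the moves appearing in the definition of an admissible set (Definition \ref{def:admissible}). First I would establish that a linear form $v \mapsto \langle \alpha \mid v\rangle_E$ is nonnegative on $Q_{\max}$ if and only if it is a nonnegative combination of the defining inequalities, which by Farkas/biduality ($Q_{\max}^{\star\star} = Q_{\max}$, valid since $Q_{\max}$ is a finitely generated polyhedral cone) amounts to: $v \in Q_{\max}^\star$ iff $v$ is orthogonal to the lineality space of $Q_{\max}$ (giving $\sum v_{i,j} = 0$, since the all-ones vector is in $Q_{\max}$) and $\langle v \mid q\rangle_E \geq 0$ fails... no — rather, $v \in Q_{\max}^\star$ iff $\langle v \mid q \rangle_E \geq 0$ for all $q \in Q_{\max}$. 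The efficient route is instead to exhibit, for each admissible $J$, an explicit element $q^J \in Q_{\max}$ (indeed in $Q_{\min}$ when $J = I_s$) such that $\langle v \mid q^J\rangle_E = \sum_{(i,j) \in J} v_{i,j}$ up to sign, forcing the stated inequalities; and conversely to show these inequalities plus $\sum v_{i,j} = 0$ suffice by writing an arbitrary $q \in Q_{\max}$ as a nonnegative combination of the $q^J$'s (an "abel summation"/telescoping identity on the two-parameter family $q_{i,j}$).

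Concretely, for $Q_{\min}$: the constraint (Jeu II') says $q$ is constant along diagonals $\{(i,j) : j - i = \text{const}\}$, so $q$ is determined by $d$ values $r_0 \geq r_1 \geq \cdots \geq r_{d-1}$ (the inequalities (Jeu I) give the ordering, after reindexing by $j-i$), where $r_{d-s}$ is the common value on the diagonal $j - i = d-s$. Then $\langle v \mid q \rangle_E = \sum_{s=1}^d r_{d-s} \big(\sum_{(i,j) \in I_s \setminus I_{s-1}} v_{i,j}\big)$ telescopes, via Abel summation with the decreasing sequence $r$, into $\sum_{s=1}^{d} (r_{d-s} - r_{d-s-1}) \big(\sum_{(i,j)\in I_s} v_{i,j}\big)$ plus a boundary term proportional to $\sum_{(i,j)\in I}v_{i,j}$ (here $r_{-1} := r_0$ so the $s=d$ differ­ence is controlled, or one absorbs it into the equality constraint). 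Nonnegativity of this for all admissible choices of the $r$'s (equivalently all $r_{d-s} - r_{d-s-1} \geq 0$) is then equivalent to $\sum_{(i,j)\in I}v_{i,j} = 0$ together with $\sum_{(i,j) \in I_s} v_{i,j} \leq 0$ for all $s$ — one direction by plugging in the extreme rays $q$ (those with a single nonzero increment), the other by the expansion just written. The $Q_{\max}$ case is the same argument run over the larger family of admissible $J \subset I$: every $q \in Q_{\max}$ decomposes as $\sum_J c_J\, q^J$ with $c_J \geq 0$ where $q^J$ is the $0/1$-indicator-type vector constant equal to $1$ on $J$ and appropriately extended so as to satisfy (Jeu I),(Jeu II); here the admissibility condition is precisely what makes $q^J \in Q_{\max}$, and pairing against $v$ picks out $\sum_{(i,j)\in J} v_{i,j}$.

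The main obstacle I anticipate is the bookkeeping in the decomposition $q = \sum_J c_J\, q^J$ for $Q_{\max}$: one must check that the "level sets" of a generic $q \in Q_{\max}$, ordered by value, are admissible subsets of $I$ (this is exactly the content of (Jeu I) and (Jeu II), since moving $(i,j)\to(i,j+1)$ or $(i,j)\to(i-1,j-1)$ never increases $q$), and that the successive differences of consecutive levels are nonnegative — this gives the $c_J \geq 0$. Once that structural fact is in place, both equivalences follow by the evaluation/telescoping bookkeeping sketched above, and the translation between admissible $J$'s and subsets $T \subset \{1,\dots,d\}$ (already established before the proposition, with $I_s \leftrightarrow \{1,\dots,s\}$) identifies the $Q_{\min}$ statement as the special case $J = I_s$.
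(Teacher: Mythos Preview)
Your approach is correct and is precisely the ``démonstration à la main'' that the paper explicitly mentions is possible but chooses not to present. The paper instead deduces the proposition from a more general statement (Proposition~\ref{prop:dualgraphe}): for any finite directed graph $G$ with vertex set $S$, the dual of the cone $Q_G = \{(x_s) : x_{s_2(a)} \leq x_{s_1(a)} \text{ for every edge } a\}$ is described by $\sum_s x_s = 0$ together with $\sum_{s \in S'} x_s \leq 0$ for every ``downward-closed'' $S'$. The paper proves this graph-theoretic result via the Max-Flow--Min-Cut theorem: given $x$ satisfying the stated conditions, one builds an auxiliary network with a source and sink, assigns capacities depending on $x$, and uses an optimal flow to exhibit $x$ explicitly as a nonnegative combination of the edge generators. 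Proposition~\ref{prop:qdual} then follows by specializing $G$ to the two graphs on $I$ whose edges encode (Jeu I)+(Jeu II) and (Jeu I)+(Jeu II$'$) respectively.

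Your level-set (``layer-cake'') decomposition achieves the same thing more directly: the observation that sublevel sets of any $q \in Q_{\max}$ are admissible is exactly what makes the Abel-summation identity work, and this is logically equivalent to the flow decomposition the paper extracts from Max-Flow--Min-Cut. Your route is shorter and self-contained; the paper's route has the advantage of isolating a reusable lemma valid for arbitrary digraphs (which the paper exploits again later, e.g.\ for the cones $Q'_\weyl$ in \S\ref{subsec:AQgl}). One small point of care in your write-up: it is $-\mathbbm{1}_J$, not $\mathbbm{1}_J$, that lies in $Q_{\max}$, so your decomposition should read $q = c_{\max}\,\mathbbm{1}_I + \sum_J (c_{J^+}-c_J)(-\mathbbm{1}_J)$ with nonnegative coefficients, modulo the constant direction; once this sign is fixed the argument goes through exactly as you sketched.
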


\noindent
En utilisant l'identification décrite précédemment entre parties
admissibles de $I$ et parties de $\{1, \ldots, d\}$, on peut réécrire
la condition d'appartenant à $Q_\max^\star$ comme suit :
\begin{equation}
\label{eq:qmaxdual}
\sum_{(i,j) \in I} v_{i,j} = 0
\quad \text{et} \quad \forall\:T \subset \{1, \ldots, d\}, \,\,
\sum_{s=1}^{\card\:T} \, \sum_{j=s+1}^{t_s} v_{j-s,j-1} \leq 0
\end{equation}
où $t_s$ est le $s$-ième plus petit élément de $T$.

\subsubsection*{Démonstration de la proposition \ref{prop:qdual} : 
un peu de théorie des flots}

Il est possible de donner une démonstration \og à la main \fg\ de la
proposition \ref{prop:qdual} mais, comme me l'a signalé Bodo Lass, la
proposition peut également se déduire du théorème
Flot-Maximal-Coupe-Minimale, classique en théorie des graphes. J'ai
choisi ici de présenter cette dernière approche qui est à la fois plus
générale et plus conceptuelle.

\medskip

Quelques rappels pour commencer au sujet du théorème 
Flot-Maximal-Coupe-Maximale.
Soit $G$ un graphe fini orienté dans lequel on a privilégié deux sommets
$D$ (comme départ) et $A$ (comme arrivée) et on a attribué à chaque
arête $a$ un nombre positif ou nul, éventuellement égal à $+\infty$,
appelé \emph{capacité de $a$}, et noté $c(a)$. Si $a$ est une arête dans
$G$, on note $s_1(a)$ (resp. $s_2(a)$) le sommet duquel elle part
(resp. auquel elle aboutit). Un flot de $D$ vers $A$ est une fonction
$f$ à valeurs réelles définie sur les arêtes de $G$ satisfaisant les
propriétés suivantes :
\begin{itemize}
\item pour toute arête $a$, on a $0 \leq f(a) \leq c(a)$ ;
\item pour tout sommet $s$ différent de $D$ et $A$, on a 
$\sum_{a | s_1(a) = s} f(a) = \sum_{a | s_2(a) = s} f(a)$.
\end{itemize}
La dernière propriété implique que :
$$\sum_{a | s_1(a) = D} f(a) - \sum_{a | s_2(a) = D} f(a) =
\sum_{a | s_2(a) = A} f(a) - \sum_{a | s_1(a) = A} f(a).$$
Cette valeur commune s'appelle la \emph{valeur} du flot $f$ et est notée
$|f|$.
Une \emph{coupe} $C$ de $G$ est la donnée d'une partition de l'ensemble
des sommets de $G$ en deux parties $\mathcal D$ et $\mathcal A$ telles
que $D \in \mathcal D$ et $A \in \mathcal A$. La \emph{capacité} de la
coupe $C$, que l'on note $|C|$, est la somme des $c(a)$ étendue à
toutes les arêtes $a$ qui ont leur origine dans $\mathcal D$ et leur
arrivée dans $\mathcal A$. 

Il est facile de voir que si $f$ est un flot et $C$ est une coupe sur le
graphe $G$ précédemment fixé, alors $|f| \leq |C|$. Ainsi, en passant
aux bornes supérieures et inférieures, on obtient $\sup_{f\,\text{flot}}
|f| \leq \min_{C\,\text{coupe}} |C|$ (notez qu'il n'y a qu'un nombre
fini de coupes possibles).

\begin{theo}[Flot-Maximal-Coupe-Minimale]
Avec les notations précédentes, on a :
$$\textstyle \sup_{f\,\text{flot}} |f| = \min_{C\,\text{coupe}} |C|$$
et la borne supérieure précédente est atteinte.
\end{theo}

\begin{proof}
Voir par exemple \cite{maxflot}.
\end{proof}

On se place à présent dans une situation un peu différente. On considère
toujours un graphe fini orienté $G$ mais on ne se donne plus de
décoration : on ne suppose plus que deux de ses sommets sont
privilégiés, ni que les arêtes de $G$ sont munies d'une capacité. On
note $S$ l'ensemble des sommets de $G$. À un tel graphe, on associe
l'espace euclidien $E_G = \R^S$ muni du produit scalaire usuel $\left<
\cdot | \cdot \right>_G$ et $Q_G$ le cône convexe regroupant les
éléments $x = (x_s)_{s \in S} \in E_G$ vérifiant
$$x_{s_2(a)} \leq x_{s_1(a)}$$
pour toute arête $a$ de $G$.

\begin{deftn}
\label{def:admissibleg}
Un sous-ensemble $S'$ de $S$ est dit \emph{admissible} si toute arête de
$G$ ayant son origine dans $S'$ a aussi son but dans $S'$.
\end{deftn}

\noindent
On a alors la proposition suivante, de laquelle il résulte facilement la
proposition \ref{prop:qdual}.

\begin{prop}
\label{prop:dualgraphe}
On conserve les notations précédentes, et on note $Q_G^\star$ le cône
dual de $Q_G$ par rapport au produit scalaire $\left< \cdot | \cdot
\right>_G$. Alors un élément $x = (x_s)_{s \in S} \in E_G$ appartient
à $Q_G^\star$ si, et seulement si :
$$\sum_{s \in S} x_s = 0 \quad \text{et} \quad
\forall S' \subset S \text{ admissible},\,
\sum_{s \in S'} x_s \leq 0.$$
\end{prop}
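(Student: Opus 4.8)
The statement is a clean combinatorial description of the dual cone $Q_G^\star$, and the natural strategy is to reduce it to the Flot-Maximal-Coupe-Minimale theorem just recalled. First I would establish the easy inclusion. If $x=(x_s)_{s\in S}$ satisfies $\sum_{s\in S}x_s=0$ and $\sum_{s\in S'}x_s\leq 0$ for every admissible $S'$, then for any $y=(y_s)\in Q_G$ I must check $\left<x\mid y\right>_G\geq 0$. Writing $y$ as a nonnegative combination of indicator functions of its level sets is the key: since $y_{s_2(a)}\leq y_{s_1(a)}$ along every edge, each set $S_t=\{s\mid y_s> t\}$ is admissible (an edge leaving $S_t$ would have its target with strictly smaller value, contradiction — here I use that $s$ outside $S_t$ means $y_s\leq t < y_{s_1(a)}$), so $\left<x\mid \mathbbm 1_{S_t}\right>_G=\sum_{s\in S_t}x_s\leq 0$. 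Using $y_s=\int_{-\infty}^{y_s}dt$ suitably (splitting into the part above and below $0$, and exploiting $\sum_s x_s=0$ to handle the unbounded tail) gives $\left<x\mid y\right>_G=\int\bigl(\sum_{s\in S_t}x_s\bigr)\,dt\leq 0$... wait, I want $\geq 0$, so I should instead observe that $\left<x\mid y\right>_G = -\int_{-\infty}^{+\infty}\bigl(\sum_{s\in S_t}x_s\bigr)\,dt$ after an integration by parts using $\sum_s x_s = 0$; since each integrand is $\leq 0$, the integral is $\leq 0$ and hence $\left<x\mid y\right>_G\geq 0$. I would write this carefully, being attentive to signs.

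For the reverse inclusion — this is the real content — I need: if $x\in Q_G^\star$, then $\sum_{s\in S}x_s=0$ and $\sum_{s\in S'}x_s\leq 0$ for all admissible $S'$. The constant vectors $(t,t,\ldots,t)$ and their negatives all lie in $Q_G$, which forces $\left<x\mid(1,\ldots,1)\right>_G=0$, i.e.\ $\sum_s x_s=0$. For the admissibility inequalities, given an admissible $S'$, the indicator $\mathbbm 1_{S'}$ lies in $Q_G$ precisely because no edge leaves $S'$ (so we never have $y_{s_2(a)}=0 > ?$; concretely $y_{s_1(a)}=1\Rightarrow s_1(a)\in S'\Rightarrow s_2(a)\in S'\Rightarrow y_{s_2(a)}=1$, and $y_{s_1(a)}=0$ forces nothing). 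Hence $\sum_{s\in S'}x_s=\left<x\mid\mathbbm 1_{S'}\right>_G\geq 0$. But I want $\leq 0$! The resolution is to note $\sum_s x_s = 0$, so $\sum_{s\in S'}x_s = -\sum_{s\notin S'}x_s$, and the \emph{complement} $S\setminus S'$ is admissible for the reversed graph... but the cleaner fix is: $\mathbbm 1_{S\setminus S'}\in Q_G$ as well only when $S\setminus S'$ has no outgoing edge, which is not what admissibility of $S'$ gives. So instead I use $-\mathbbm 1_{S'}$: is $-\mathbbm 1_{S'}\in Q_G$? That needs $-\mathbbm 1_{S'}(s_2(a))\leq -\mathbbm 1_{S'}(s_1(a))$, i.e.\ $\mathbbm 1_{S'}(s_1(a))\leq\mathbbm 1_{S'}(s_2(a))$, which holds exactly when $S'$ is admissible. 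Thus $\sum_{s\in S'}x_s = -\left<x\mid -\mathbbm 1_{S'}\right>_G\leq 0$. So $\mathbbm 1_{S'}$ was a red herring and the right test vector is $-\mathbbm 1_{S'}$.

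Finally I would deduce Proposition \ref{prop:qdual}: take for $G$ the graph whose vertex set is $I$ and whose edges encode exactly the inequalities of Jeu I and Jeu II (for $Q_{\max}$), so that $Q_{\max}=Q_G$, and check that admissible subsets of $I$ in the sense of Definition \ref{def:admissible} correspond precisely to admissible $S'$ in the sense of Definition \ref{def:admissibleg} for this graph — this is a direct unwinding of the two definitions. For $Q_{\min}^\star$, the edges of Jeu II$'$ are two-way (equalities), which collapses the relevant vertices into $d$ equivalence classes, and the admissible subsets are then exactly the $I_s$; alternatively one observes $Q_{\min}$ is cut out by Jeu I plus a linear subspace, handles the subspace separately, and applies the graph proposition to the quotient. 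I do not expect any genuine obstacle here beyond the bookkeeping; the one place demanding care is the sign in the duality argument above — making sure the correct test vector is $-\mathbbm 1_{S'}$ and not $\mathbbm 1_{S'}$ — and the integration-by-parts identity $\left<x\mid y\right>_G = -\int(\sum_{s\in S_t}x_s)\,dt$, which relies crucially on $\sum_s x_s=0$ to kill the boundary term at $t\to-\infty$.
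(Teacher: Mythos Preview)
Your argument for ``$x \in Q_G^\star \Rightarrow$ conditions'' is correct and matches the paper's. The other direction, however, contains an error that is also the source of your sign confusion: the upper level sets $S_t = \{s \mid y_s > t\}$ are \emph{not} admissible. Your own computation shows this --- an edge leaving $S_t$ has $y_{s_2(a)} \leq t < y_{s_1(a)}$, which is perfectly compatible with $y_{s_2(a)} \leq y_{s_1(a)}$; there is no contradiction. What \emph{is} admissible is the complement $L_t = \{s \mid y_s \leq t\}$: if $y_{s_1(a)} \leq t$ then $y_{s_2(a)} \leq y_{s_1(a)} \leq t$. Since $\sum_s x_s = 0$ one has $\sum_{s \in S_t} x_s = -\sum_{s \in L_t} x_s \geq 0$, and the layer-cake identity $y_s = y_{\min} + \int_{y_{\min}}^{y_{\max}} \1_{S_t}(s)\,dt$ gives $\langle x \mid y \rangle_G = \int \sum_{s \in S_t} x_s\,dt \geq 0$ directly --- the integration-by-parts detour was a symptom of the wrong admissibility claim, not a necessary step.

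With this correction your route is sound, genuinely different from the paper's, and more elementary. The paper proves ``conditions $\Rightarrow x \in Q_G^\star$'' via Max-Flow--Min-Cut: it augments $G$ with a source $D$ and sink $A$, puts capacity $x_s + M$ on $D \to s$, capacity $M$ on $s \to A$, and $+\infty$ on the edges of $G$; finite-capacity cuts correspond to admissible $S'$ and the hypotheses force the min-cut to equal $|S|\cdot M$, so a maximal flow $f$ saturates every source and sink edge, whence flow conservation gives $x_s = \sum_{s_1(a)=s} f(a) - \sum_{s_2(a)=s} f(a)$ and thus $\langle x \mid y \rangle_G = \sum_a (y_{s_1(a)} - y_{s_2(a)}) f(a) \geq 0$. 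This explicitly exhibits $x$ as a nonnegative combination of the generators $e_{s_1(a)} - e_{s_2(a)}$ of $Q_G^\star$, which is constructive; your approach decomposes $y$ instead and bypasses the flow machinery entirely.
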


\begin{proof}
De la définition de $Q_G$, il résulte que $Q_G^\star$ est un cône
convexe engendré par des vecteurs de l'hyperplan \og somme des
coordonnées égale 0 \fg. Ainsi $Q_G^\star$ est inclus dans cet
hyperplan. Par ailleurs, si $S' \subset S$ est un ensemble admissible de
sommets, l'opposé de la fonction indicatrice de $S'$ définit un vecteur
$-\1_{S'} \in Q_G$. On en déduit que le produit scalaire $\left< x |
\1_{S'} \right>_G$ est négatif ou nul dès que $x \in Q_G^\star$. Les 
éléments
de $Q_G^\star$ vérifient donc bien les conditions de la proposition. 

Réciproquement, on considère un vecteur $x = (x_s) \in E_G$ vérifiant
ces conditions. Soit $M$ un nombre réel positif assez grand pour que
toutes les sommes $x_s + M$ (pour $s$ décrivant $S$) soient positives ou
nulles. On introduit le graphe $\tilde G$ obtenu à ajoutant à $G$ deux
nouveaux sommets notés $D$ et $A$ et, pour tout sommet $s$ de $G$, une
arête de $D$ vers $s$ et une arête de $s$ vers $A$. On définit une
capacité $c$ sur $\tilde G$ comme suit :
\begin{itemize}
\item si $a$ est une arête de $G$, on pose $c(a) = +\infty$ ;
\item si $a$ part de $D$ et arrive à un sommet $s$ de $G$, on pose
$c(a) = x_s + M$ ;
\item si $a$ part d'un sommet $s$ de $G$ et arrive à $A$, on pose
$c(a) = M$.
\end{itemize}
Soit $C = (\mathcal D, \mathcal A)$ une coupe de $\tilde G$. L'ensemble 
$S' = \mathcal D \backslash \{D\}$ est inclus dans $S$.  Si $S'$ n'est 
pas admissible, cela signifie qu'il existe une arête reliant un sommet 
de $S'$ à un sommet de $S \backslash S'$. Autrement dit, il existe une 
arête dans $G$ (donc de capacité infinie) reliant un sommet de $\mathcal 
D$ à un sommet de $\mathcal A$. Dans ce cas, la capacité de la coupe $C$ 
est donc infinie. Si, au contraire, l'ensemble des sommets $S'$ est 
admissible, la capacité de la coupe $C$ vérifie :
$$|C| = \sum_{s \in S'} M + \sum_{s \in S \backslash S'} (x_s + M)
= n M - \sum_{s \in S'} x_s \geq n M$$
où on a noté $n$ le nombre de sommets de $G$. Ainsi la capacité minimale
d'une coupe, notée $c_\min$ est, elle aussi, supérieure ou égale à $nM$.
D'après le théorème Flot-Maximal-Coupe-Minimale, il existe un flot $f$
sur $\tilde G$, de valeur $c_\min$. Comme la somme des capacités des
arêtes sortant de $D$ est $nM$, la capacité de ce flot est inférieure ou
égale à $nM$. On en déduit qu'elle est égale à $nM$ (et donc qu'il en 
est de même de $c_\min$) et que toute arête partant de $D$ est saturée,
c'est-à-dire que pour toute arête $a$ partant de $D$, on a $f(a) = c(a) 
= M + x_{s_2(a)}$. De même, on démontre que, pour toute arête $a$ 
partant d'un sommet $s \in S$ et aboutissant en $A$, on a $f(a) = c(a) = 
M$. Par la condition de flot, on a pour tout sommet $s \in S$ :
$$\sum_{a | s_1(a) = s} f(a) - \sum_{a | s_2(a) = s} f(a) = x_s$$
où, dans les sommes précédentes, $a$ désigne une arête de $G$.
Ainsi, pour tout $y = (y_s) \in Q_G$,
on a :
$$\left< x | y \right>_E 
= \sum_{s \in S} y_s \cdot \Bigg( \sum_{a | s_1(a) = s} f(a) - 
\sum_{a | s_2(a) = s} f(a) \Bigg)
= \sum_a (y_{s_1(a)} - y_{s_2(a)}) f(a) \geq 0$$
d'où on déduit finalement que $x \in Q_G^\star$ comme voulu.
\end{proof}

\subsection{Un premier exemple d'application}
\label{subsec:applimethode}

À titre d'exemple, et afin de familiariser le lecteur avec les méthodes 
de cet article, j'aimerais montrer comment la machinerie qui vient 
d'être introduite permet de démontrer la majoration du théorème 
\ref{theo:dimcaruso} sous l'hypothèse supplémentaire $b \geq d-1$. Ce 
cas est intéressant car de nombreuses difficultés techniques 
s'évanouïssent, mais il permet tout de même de donner une idée correcte 
de la nature des raisonnements qui apparaîtront dans la section 
suivante.

Pour ne pas avoir, dans la suite, à distinguer systématiquement les cas
selon que $h$ soit ou non égal à $0$, on se restreint à partir de
maintenant à $h \neq 0$, le cas contraire se traitant de façon
complètement analogue en ajoutant $\frac{d(d-1)} 2$ à tous les
majorants.

\subsubsection{Positionnement du problème}
\label{subsec:positionnement}

Un réseau $L$ de $M$ définit un $k$-point de la variété $\calX_{\leq e}$
si, et seulement si les exposants des diviseurs élémentaires du
$k[[u]]$-module engendré par $\sigma(L)$ par rapport à $L$ sont tous
compris entre $0$ et $e$. Si on note ceux-ci $\mu_1(L) \geq \cdots \geq
\mu_d(L)$ comme dans la proposition \ref{prop:varphi}, cela se réécrit
$\mu_1(L) \leq e$ et $\mu_d(L) \geq 0$. Par ailleurs, si on note
$q_{i,j}(L)$ et $\mu_{i,j}(L)$ les nombres réels associés au $d$-uplet
de fonctions $\varphi(L) = (\varphi_1(L), \varphi_2(L), \ldots,
\varphi_d(L))$, l'assertion 4 de la proposition \ref{prop:varphi}
combinée à la proposition \ref{prop:bijphipsi} assure que $\mu_j(L) =
\mu_{1,j}(L)$ pour tout indice $j \in \{1, \ldots, d\}$. Ces
observations conduisent à une décomposition de la variété $\calX_{\leq
e}$ en union disjointe de parties localement fermées comme suit :
$$\calX_{\leq e} = \bigsqcup_{\varphi \in \Phi_{\leq e}}
\calX_{\varphi}$$
où l'ensemble $\Phi_{\leq e}$ réunit les $\varphi \in \Phi_\Z$ tels que
$\mu_{1,1}(\varphi) \leq e$ et $\mu_{1,d}(\varphi) \geq 0$. On déduit
directement de cette écriture, une formule au niveau des dimensions :
$$\dim_k \calX_{\leq e} = \sup_{\varphi \in \Phi_{\leq e}} \dim_k
\calX_{\varphi}.$$

Comme précédemment, on note $E$ l'espace des suites indicées par
l'ensemble $I$, et $Q$ le cône convexe défini par les jeux d'inégalités
I, II et III. Soit $f : E \to \R^2$ l'application linéaire qui à une
famille $(q_{i,j})$ associe le couple $(\mu_{1,1}, \mu_{1,d})$ où ces
réels sont définis comme d'habitude par la formule \eqref{eq:rele3}. Par
ailleurs, par le lemme \ref{lem:dim}, la fonction $\dim$ s'étend en une
forme linéaire $E \to \R$, notée $\ell$. On pose encore $C = \R^+ \times
\R^-$, $D = \R^2$ et on note $R$ le réseau formé des éléments $(q_{i,j})
\in E$ tels que $q_{i,j} \in \frac 1 b \Z$ pour tout $(i,j) \in I$ et
$q_{i,d} \in \Z$ pour tout $i \in \{1, \ldots, d\}$. Les théorèmes
\ref{theo:parametrisation} et \ref{theo:dim}, couplés à la proposition
\ref{prop:integrite}, fournissent alors l'égalité (on rappelle que 
l'on suppose $h \neq 0$) :
\begin{equation}
\label{eq:encadrdimve}
\dim_k \calX_{\leq e} = b'_{Q,R,f,\ell,C} (e,0)
\end{equation}
et donc en particulier $\dim_k \calX_{\leq e} \leq b_{Q,R,f,\ell,C}
(e,0)$. Pour estimer ce dernier nombre, on utilise l'encadrement 
\eqref{eq:encadb}, et les propositions \ref{prop:dualconv2} et
\ref{prop:qdual} pour évaluer les fonctions $b_{Q_\min,f,\ell,C}$ et 
$b_{Q_\max,f,\ell,C}$. 

\subsubsection{Les vecteurs $\vec \mu_1, \ldots, \vec \mu_d$ et $\vec
\delta$}
\label{subsec:vecmui}

Pour pouvoir suivre la méthode indiquée ci-dessus, il faut commencer
par exprimer les coordonnées de la fonction $f$ ainsi que la forme 
linéaire $\ell$ comme des produits scalaires contre certains vecteurs de 
$E$. C'est l'objet de ce numéro.

Pour un indice $i$ compris entre $1$ et $d$, on appelle $\vec \mu_i$ le
vecteur de $E$ tel que pour toute suite $q = (q_{i,j}) \in E$, on ait
$\mu_{1,i} = \left< q | \vec \mu_i \right>_E$ où $\mu_{1,i}$ est, comme
précédemment, le réel calculé par la formule
\eqref{eq:rele3}\footnote{Pour le calcul de la dimension de $\calX_{\leq
e}$, seuls les vecteurs $\vec \mu_1$ et $\vec \mu_d$ seront utiles.
Cependant, les autres vecteurs $\vec \mu_i$ serviront dans la suite pour
estimer la dimension d'autres variétés, et nous avons pensé qu'il était
préférable de les introduire tous en même temps.}. 
Si l'on décide de représenter les éléments de $E$ comme des matrices
triangulaires supérieures (le terme d'indice $(i,j)$ étant placé à
l'intersection de la $i$-ième ligne et de la $j$-ième colonne), on a
ainsi :
$$\vec \mu_i = \left(
\raisebox{0.5\depth}{\xymatrix @R=3pt @C=3pt {
0 \ar@{.}[rrdd] \ar@{.}[rr] & & 0 \ar@{.}[dd] & -b \ar@{.}[ddd] & 
b \ar@{.}[dddd] & 0 \ar@{.}[rr] \ar@{.}[dddd] & & 0 \ar@{.}[ddd] \\
\\
& & 0 \\
& & & -b & & & & 0 \\
& & & & b & 0 \ar@{.}[r] & 0 \ar@{.}[ur] & -1 \\
& & & & & 0 \ar@{.}[rrdd] \ar@{.}[rr] & & 0 \ar@{.}[dd] \\
\\
& & & & & & & 0 }} \right)$$
où le $b$ sur la diagonale est à l'intersection de la $i$-ième ligne et
de la $i$-ième colonne. De même, on définit le vecteur $\vec \delta$
par $\left< q | \vect \delta \right>_E = \ell(q)$ pour tout $q \in E$.
Le lemme \ref{lem:dim} montre que $\vec \delta$ s'exprime comme suit :
$$\vec \delta = b \cdot \left(
\raisebox{0.5\depth}{\xymatrix @R=3pt @C=7pt {
1 \ar@{.}[rrrddd] \ar@{.}[rrr] & & & 1 \ar@{.}[ddd] & 0 \\
& & & & -1 \ar@{.}[dd] \\ \\
& & & 1 & 2-d \\
& & & & 1-d }} \right) + \left(
\raisebox{0.5\depth}{\xymatrix @R=3pt @C=7pt {
0 \ar@{.}[rrrddd] \ar@{.}[rrr] & & & 0 \ar@{.}[ddd] & 1-d \\
& & & & 3-d \ar@{.}[dd] \\ \\
& & & 0 & d-3 \\
& & & & d-1 }} \right).$$
Si $J \subset I$ est un sous-ensemble admissible (voir définition
\ref{def:admissible}) et si $x = (x_{i,j}) \in E$, on note $S_J (x)$
la somme des coordonnées $x_{i,j}$ pour $(i,j)$ parcourant $J$. Les
fonctions $S_J$ ainsi définies sont clairement des formes linéaires sur
$E$. Si $T$ est la partie de $\{1, \ldots, d\}$ correspondant à
l'ensemble admissible $J$, on s'autorise à noter $S_T$ à la place de
$S_J$. Pour les calculs à suivre, le lemme suivant nous sera fort
utile.

\begin{lemme}
\label{lem:calculST}
Si $T$ est un sous-ensemble de $\{1, \ldots, d\}$, on a :
$$S_T(\vec \mu_i) = b \cdot \1_T(d+1-i) - [(\card \: T) \geq i]$$
où $\1_T$ est la fonction indicatrice de $T$ et l'expression
$[(\card \: T) \geq i]$ vaut $1$ si $\card\:T \geq i$ et $0$ sinon.
On a aussi :
$$S_T(\vec \delta) = b \cdot \Bigg( \sum_{t \in T} t - \frac{\card\,T
\cdot (\card\, T + 1)} 2 \Bigg) - \card\,T  \cdot (d - \card\, T).$$
\end{lemme}

\begin{proof} 
C'est un simple calcul à partir des descriptions précédentes.
\end{proof}

\subsubsection{Calcul du majorant}

On commence par calculer la fonction $b_{Q_\min,f,\ell,C}$ en
utilisant la proposition \ref{prop:dualconv2}. Il s'agit donc de 
déterminer l'ensemble $B_{Q_\min,f,\ell,C} = A_{Q_\min,f,\ell}
\cap C^\star$. Il est facile de voir que $C^\star = \R^+ \times \R^-$ ;
reste donc à calculer $A_{Q_\min,f,\ell}$. Comme $f = (\left< \vec
\mu_1, \cdot \right>_E, \left< \vec \mu_d, \cdot \right>_E)$ et $\ell =
\left< \vect \delta | \cdot \right>_E$, la formule \eqref{eq:AQfl}
s'écrit :
$$A_{Q_\min,f,\ell} = \big\{ \, (y_1, y_d) \in \R^2 \,\, | \,\, 
y_1 \vec \mu_1 + y_d \vec \mu_d - \vec \delta \in Q_\min^\star
\,\big\}.$$
On cherche donc les couples $(y_1, y_d)$ tels que $y_1 \geq
0$, $y_d \leq 0$, et $y_1 \vec \mu_1 + y_d \vec \mu_d - \vec \delta \in
Q_\min^\star$. D'après la proposition \ref{prop:qdual}, la dernière
condition se réécrit :
\begin{equation}
\label{eq:systqmin}
\left\{ \begin{array}{ll}
y_1 S_{I_s}(\vec \mu_1) + y_d S_{I_s}(\vec \mu_d) \leq S_{I_s}
(\vec \delta) & \forall s \in \{1, \ldots, d\} \\
y_1 S_{I_d}(\vec \mu_1) + y_d S_{I_d}(\vec \mu_d) = S_{I_d}
(\vec \delta)
\end{array} \right.
\end{equation}
où on rappelle que $I_s$ désigne l'ensemble admissible formé des couples
$(i,j) \in I$ tels que $j-i \geq d-s$. Comme cet ensemble correspond à la
partie $T = \{1, \ldots, s\}$, le lemme \ref{lem:calculST} assure que
si $s < d$, on a $S_{I_s}(\vec \mu_1) = -1$, $S_{I_s}(\vec \mu_d) = b$ et
$S_{I_s}(\vec \delta) = -s(d-s)$, tandis que pour $s = d$, on a $S_{I_d}
(\vec \mu_1) = S_{I_d}(\vec \mu_d) = b-1$, $S_{I_d}(\vec \delta) = 0$.
Ainsi, le système \eqref{eq:systqmin} est équivalent à :
$$y_1 + y_d = 0 \qquad \text{et} \qquad
y_1 \geq \frac{s(d-s)}{b+1}, \quad \forall s \in \{1, \ldots, d-1\}.$$
Comme on voit aisément que le maximum de $s(d-s)$ vaut
$[\frac {d^2} 4]$, l'ensemble $B_{Q_\min,f,\ell,C}$ est formé des 
couples $(y,-y)$ avec $y \geq [\frac {d^2} 4] \cdot \frac 1{b+1}$. 
Ainsi, si $y_1 \geq y_d$, on obtient :
$$b_{Q_\min,f,\ell,C}(y_1, y_d) = \cro{\frac {d^2} 4} \cdot
\frac{y_1 - y_d}{b+1}.$$
On souhaite à présent montrer que l'expression ci-dessus vaut encore
pour la fonction $b_{Q_\max,f,\ell,C}$. Comme on sait que 
$b_{Q_\min,f,\ell,C} \leq b_{Q_\max,f,\ell,C}$, il suffit, pour
établir ce que l'on veut, de montrer que $( [\frac {d^2} 4] \cdot 
\frac 1{b+1}, - [\frac {d^2} 4] \cdot \frac 1{b+1}) \in 
B_{Q_\max,f,\ell,C}$, c'est-à-dire que pour toute partie $T \in 
\{1, \ldots, d\}$, on a :
$$\cro{\frac {d^2} 4} \cdot (S_T(\vec \mu_1) - S_T(\vec \mu_d))
\leq (b+1) \cdot S_T(\vec \delta).$$
(Notez que le cas d'égalité, qui doit être obtenu pour $T = \{1, \ldots,
d\}$, a déjà été vérifié.) On commence par éliminer les cas triviaux $T
= \emptyset$ et $T = \{1, \ldots, d\}$. Ceci permet d'écrire 
$S_T(\vec\mu_1) = b \1_T(d) - 1$ et $S_T(\vec \mu_d) = \1_T(1)$. Par
ailleurs, si $T = \{t_1, \ldots, t_s\}$ avec $t_1 < \cdots < t_s$, la 
formule du lemme \ref{lem:calculST} se réécrit 
\begin{equation}
\label{eq:STdelta}
S_T(\vec\delta) = -s(d-s) + b \sum_{i=1}^s (t_i - i).
\end{equation}
On distingue à présent quatre cas selon que les entiers $1$ et $d$ 
appartiennent ou n'appariennent pas à $T$. 
Si $1 \in T$ et $d \not\in T$, alors $S_T(\vec \mu_1) - S_T(\vec \mu_d)
= -b-1$ et l'inégalité à démontrer devient $S_T(\vec \delta) \geq -
[\frac{d^2} 4]$. Or les $t_i - i$ étant tous positifs ou nuls, la
formule \eqref{eq:STdelta} montre que $S_t(\vec\delta) \geq -s(d-s)$
d'où il suit ce que l'on veut.
Pour les autres cas, il est important de remarquer que si $1 \not\in
T$, alors tous les $t_i - i$ sont supérieurs ou égaux à $1$, alors que
si $d \in T$, on a $t_s - s = d-s$. En particulier, si l'on excepte le
cas que l'on vient de traiter, c'est-à-dire si l'on suppose que $1 
\not\in T$ ou que $d \in T$, on a 
$$S_T(\vec\delta) \geq b \cdot \min(s,d-s) - s(d-s).$$
Comme on a supposé $b \geq d-1$, on en déduit que $S_T(\vec\delta) \geq
0$, ce qui suffit à conclure dans le cas où les deux nombres $1$ et $d$
appartiennent (resp. n'appartiennent pas) à $T$ puisqu'alors $S_T(\vec
\mu_1) - S_T(\vec \mu_d) = -1 \leq 0$. Si finalement, $1 \not\in T$ et
$d \in T$, la minoration de $S_T(\delta)$ est renforcée comme suit :
$$S_T(\vec\delta) \geq b (s-1 + d-s) - s(d-s) = b(d-1) - s(d-s) \geq 
b(d-1) - \cro{\frac {d^2} 4}.$$
Étant donné que $S_T(\vec \mu_1) - S_T(\vec \mu_d) = b-1$ dans ce cas,
il suffit d'établir 
$$(b+1)\Big( b(d-1) - \cro{\frac {d^2} 4} \Big) \geq (b-1) \cdot 
\cro{\frac {d^2} 4},$$
ce qui se simplifie encore en $(b+1)(d-1) \geq 2 [\frac{d^2} 4]$. 
Finalement, la condition $b \geq d-1$ assure que cette inégalité est 
bien satisfaite. On a ainsi démontré que
$$b_{Q_\max,f,\ell,C}(y_1, y_d) =
\cro{\frac {d^2} 4} \cdot \frac{y_1 - y_d}{b+1} \quad \text{pour } y_1
\geq y_d.$$
à partir de quoi la majoration du théorème \ref{theo:dimcaruso} s'ensuit 
puisque l'on sait que la dimension de $\calX_{\leq e}$ est majorée par 
$b_{Q,f,\ell,C}(e,0)$ lui même majoré par $b_{Q_\max,f,\ell,C}(e,0)$.

\section{Dimension des variétés $\calX_{\leq e}$, $\calX_\mu$ et
$\calX_{\leq \mu}$}
\label{sec:dimkisin}

Pour estimer les dimensions des variérés $\calX_{\leq e}$, $\calX_\mu$ 
et $\calX_{\leq \mu}$, on suit la méthode introduite précédemment,
sauf que l'on remplace la fonction $f$ par la fonction
$$g : E \to \R^d, \quad q \mapsto (
\left< \vec \mu_1 | q \right>_E, \left< \vec \mu_2 | q \right>_E,
\ldots, \left< \vec \mu_d | q \right>_E)$$
et que la lettre $C$ fait désormais référence au nouveau cône convexe 
$\{ (y_1, \ldots, y_d) \in \R^d \, | \, y_1 \geq \cdots \geq y_d \}$
Toutes les autres notations ($Q$, $R$, $Q_\min$, $Q_\max$, $\ell$, $\vec
\mu_i$, $\vec \delta$, \emph{etc.}) sont conservées ; on se contente 
donc de renvoyer le lecteur aux \S\S \ref{subsec:conephi} et 
\ref{subsec:vecmui} s'il souhaite se remémorer les définitions. Le
cône dual de $C$ jouera un rôle particulier dans la suite ; on remarque
d'ores et déjà qu'il est engendré par les vecteurs de la forme
$(0,\ldots, 0, 1,-1,0, \ldots, 0)$. On en déduit facilement une
description en termes d'inégalités :
$$C^\star = \left\{ \, (z_1, \ldots, z_d) \in \R^d \,\,\left|\,\, 
\begin{array}{l}
z_1 + \cdots + z_d = 0 \\
z_1 + \cdots + z_i \geq 0, \, \forall i \in \{1, \ldots, d\}
\end{array} \right. \right\}.$$
D'autre part, étant donné que les vecteurs $\vec \mu_i$ forment
trivialement une famille libre dans $E$, l'application linéaire
$g$ est surjective. On peut ainsi appliquer la proposition 
\ref{prop:bprime} avec les données précédentes.

À partir de maintenant, on se restreint à nouveau au cas où $h \neq 0$, 
le cas contraire se traitant exactement de la même façon sauf qu'il
faut ajouter $\frac{d(d-1)} 2$ à tous les majorants.
Par le théorème \ref{theo:dim}, pour $\mu = (\mu_1, \ldots, \mu_d)$
vérifiant les bonnes hypothèses, la dimension de $\calX_\mu$ (resp.
$\calX_{\leq \mu}$) est majorée (et même bien approchée en vertu de la
proposition \ref{prop:bprime}) par le nombre $b_{Q,g,\ell,0}(\mu)$ (resp.
$b_{Q,g,\ell,C^\star}(\mu)$). On est ainsi ramené à calculer
$b_{Q,g,\ell,0}(\mu)$ et $b_{Q,g,\ell, C^\star} (\mu)$, et pour cela, à
étudier les ensembles $B_{Q,g,\ell,0}$ et $B_{Q,g,\ell,C^\star}$.

\subsection{Les points extrémaux de $A_{Q_\max,g,\ell}$}
\label{subsec:extrAQmaxgl}

Soit $\mathfrak S_d$ l'ensemble des permutations de l'ensemble $\{1,
\ldots, d\}$. Pour tout $\weyl \in \mathfrak S_d$, on définit le
vecteur $\vec \rho_\weyl \in \R^d$ par :
$$\vec \rho_\weyl =
\left((b-1) \cdot \sum_{n=1}^\infty \frac{d+1-i-\weyl^n(i)}{b^n} \right)
_{1 \leq i \leq d}.$$
Il est clair que la somme infinie que l'on vient d'écrire converge. En
fait, la suite des $\weyl^n(i)$ étant périodique, elle converge même
vers un nombre rationnel (qui s'exprime comme l'évaluation en $b$ d'une
fraction rationnelle à coefficients entiers). De plus, lorsque $\weyl = 
\weyl_0$ est la permutation $i \mapsto d+1-i$, le vecteur $\vec 
\rho_{\weyl_0}$ s'exprime facilement : il vaut $\frac {2 \vec 
\rho}{b+1}$ où on rappelle que $\vec \rho = (\frac{d-1} 2, \frac{d-3} 2, 
\ldots, \frac {1-d} 2)$. Enfin, à partir de maintenant, on pose $b_0 = 1 
+ [\frac{(d-1)^2} 4]$.

\begin{prop}
\label{prop:extrAQmaxgl}
On suppose $b \geq b_0$.
Les points extrémaux de $A_{Q_\max,g,\ell}$ sont exactement les
vecteurs $\vect \rho_\weyl$ pour $\weyl$ parcourant $\mathfrak S_d$.
\end{prop}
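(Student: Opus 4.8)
The plan is to identify $A_{Q_{\max},g,\ell}$ explicitly as a polytope, then read off its vertices. By definition~\eqref{eq:AQfl}, a point $y = (y_1,\ldots,y_d) \in \R^d$ lies in $A_{Q_{\max},g,\ell}$ if and only if $y_1 \vec\mu_1 + \cdots + y_d \vec\mu_d - \vec\delta \in Q_{\max}^\star$. Using the description~\eqref{eq:qmaxdual} of $Q_{\max}^\star$ and the linearity of $S_T$, this translates into the equality $\sum_i y_i\, S_{\{1,\ldots,d\}}(\vec\mu_i) = S_{\{1,\ldots,d\}}(\vec\delta)$ together with, for every nonempty proper $T \subset \{1,\ldots,d\}$, the inequality $\sum_{i=1}^d y_i\, S_T(\vec\mu_i) \le S_T(\vec\delta)$. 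By Lemme~\ref{lem:calculST}, $S_T(\vec\mu_i) = b\,\1_T(d+1-i) - [(\card T)\geq i]$, so, writing $s = \card T$ and $T' = \{\,d+1-t : t\in T\,\}$, the defining system becomes
$$\sum_{i=1}^d y_i = \frac{S_{\{1,\ldots,d\}}(\vec\delta)}{b-1}, \qquad b\sum_{i\in T'} y_i - \sum_{i=1}^s y_i \;\le\; S_T(\vec\delta) \quad\text{for all } T.$$
So $A_{Q_{\max},g,\ell}$ is cut out by one affine equation and finitely many linear inequalities indexed by pairs $(T', s)$ with $|T'| = s$.

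Next I would check that each candidate vertex $\vec\rho_\weyl$ actually lies in $A_{Q_{\max},g,\ell}$. The vector $\vec\rho_\weyl$ has $i$-th coordinate $(b-1)\sum_{n\ge 1} (d+1-i-\weyl^n(i))/b^n$; summing over $i$ gives a telescoping/geometric computation showing $\sum_i (\vec\rho_\weyl)_i$ equals the required constant (independent of $\weyl$, since $\sum_i \weyl^n(i) = \sum_i i$). For the inequalities, I would expand $b\sum_{i\in T'}(\vec\rho_\weyl)_i - \sum_{i=1}^s (\vec\rho_\weyl)_i$: the factor $b$ shifts the geometric series so that $b\sum_{i\in T'}(\vec\rho_\weyl)_i = (b-1)\sum_{i\in T'}\bigl((d+1-i) + \sum_{n\ge 1}(d+1-\weyl^n(i))/b^{n-1}\bigr)$ — here one must be careful bookkeeping the $n=1$ term. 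After simplification the inequality to prove reduces, term by term in $n$, to a statement about how many of the values $\weyl^n(i)$ for $i \in T'$ fall in a given range compared to the "greedy" choice $\{1,\ldots,s\}$; this is where the hypothesis $b \ge b_0 = 1 + [\frac{(d-1)^2}4]$ enters, exactly as in the special case worked out at the end of \S\ref{subsec:applimethode}. The constant $b_0$ should be precisely what is needed to dominate the $O(d^2)$-size combinatorial defect by a $b$-linear term.

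Then I would argue that these are \emph{all} the vertices, and that they are genuine vertices. For genuineness: $\vec\rho_\weyl$ is the unique solution of the subsystem obtained by turning into equalities the $d-1$ inequalities indexed by $T' = \{1,\ldots,s\}$, $s = 1,\ldots,d-1$ (i.e. the "$Q_{\min}$-type" constraints determined by $\weyl$ via relabelling), together with the one defining equation — one checks this $d\times d$ linear system is nonsingular and has $\vec\rho_\weyl$ as solution, using that $\weyl$ permutes the roles of the coordinates. For exhaustiveness: any vertex is determined by making $d$ of the defining constraints tight; I would show that a tight constraint set forces, for each $s$, that the tight $T$ of size $s$ is nested compatibly, hence encoded by a permutation, and that distinct permutations giving the same vertex are excluded by the strictness coming from $b \ge b_0$. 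Alternatively — and this is probably cleaner — one shows directly that every point of $A_{Q_{\max},g,\ell}$ is a convex combination of the $\vec\rho_\weyl$'s by an explicit rounding/sorting argument on the coordinates.

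\textbf{Main obstacle.} The delicate point is the combinatorial inequality verifying $\vec\rho_\weyl \in A_{Q_{\max},g,\ell}$ for \emph{all} subsets $T$ simultaneously, and pinning down that $b \ge b_0$ is exactly the right threshold. The infinite-series coordinates of $\vec\rho_\weyl$ make the bookkeeping awkward: one must track, for each power $b^{-n}$, a quantity like $\sum_{i \in T'} (d+1-\weyl^n(i)) - \sum_{i=1}^s(d+1-i)$, bound it by its worst case $[\frac{(d-1)^2}4]$-type value, and then sum the geometric series to see the $(b-1)$ prefactor turns it into something $\le S_T(\vec\delta)$. Making this estimate sharp enough to also handle the equality cases (which must hold for certain $T$ when $\weyl = \weyl_0$) is the real work; everything else is linear algebra and the convexity machinery already set up in \S\ref{subsec:opticonv}.
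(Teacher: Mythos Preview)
Your overall strategy matches the paper's: describe $A_{Q_{\max},g,\ell}$ via the functions $f_T$ (your inequality $b\sum_{i\in T'} y_i - \sum_{i\le s} y_i \le S_T(\vec\delta)$ is exactly $f_T(y)\ge 0$ after rearranging), verify $\vec\rho_\weyl \in A_{Q_{\max},g,\ell}$ by the term-by-term geometric-series bound you describe, and identify $\vec\rho_\weyl$ as the unique solution of a $d\times d$ linear system obtained from a chain of subsets. All of this is right, and your ``main obstacle'' paragraph correctly anticipates the estimate the paper carries out in \eqref{eq:fTy}: each coefficient in the series is bounded in absolute value by $b_0-1$, the $n=0$ term is a nonnegative integer, and the tail has total weight $\le (b_0-1)/(b-1)\le 1$, so $f_T(\vec\rho_\weyl)\ge 0$.

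The genuine gap is in the exhaustiveness step. You write ``I would show that a tight constraint set forces\ldots the tight $T$ of size $s$ is nested compatibly, hence encoded by a permutation,'' but you do not say how, and this is where the paper spends its effort. The key idea you are missing is a supermodularity-type inequality: for any $y\in A_{Q_{\max},g,\ell}$ and any $T_1,T_2$,
\[
f_{T_1}(y)+f_{T_2}(y)\ \ge\ f_{T_1\cap T_2}(y)+f_{T_1\cup T_2}(y),
\]
with equality if and only if $T_1\subset T_2$ or $T_2\subset T_1$ (this uses $b>d$, and the paper first proves the auxiliary bound of Lemme~\ref{lem:sommeyi} to control the partial sums that appear). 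Once you have this, if $f_{T_1}(y)=f_{T_2}(y)=0$ at a vertex $y$, then $f_{T_1\cap T_2}(y)=f_{T_1\cup T_2}(y)=0$ as well, and strictness forces nesting; so the tight constraints at any vertex form a flag $T_1\subsetneq\cdots\subsetneq T_{d-1}$, which is exactly a permutation. Your alternative ``rounding/sorting'' approach to write an arbitrary point as a convex combination of the $\vec\rho_\weyl$ does not obviously work here: the coordinates of $\vec\rho_\weyl$ are not obtained from one another by permutation, so there is no symmetric-group action on the polytope to exploit directly.

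A smaller imprecision: the constraints that become tight at $\vec\rho_\weyl$ are not those with $T'=\{1,\ldots,s\}$ but those with $T_s=\{\weyld(1),\ldots,\weyld(s)\}$ where $\weyld(i)=d+1-\weyl^{-1}(i)$ (equivalently $T'_s=\weyl^{-1}\{1,\ldots,s\}$); the resulting linear system is \eqref{eq:systAQwgl}, and its unique solution is computed by the paper's telescoping argument to be $\vec\rho_\weyl$.
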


La fin de cette partie est consacrée à la démonstration de la 
proposition. On peut d'ores et déjà présenter le convexe 
$A_{Q_\max,g,\ell}$ comme une intersection de demi-espaces ; il suffit 
pour cela d'injecter la description de $Q_\max^\star$ donnée par la 
proposition \ref{prop:qdual} dans la formule \eqref{eq:AQfl}. La
condition qui en découle s'exprime simplement à l'aide des fonctions
$S_T$ introduites juste avant le lemme \ref{lem:calculST} ; elle dit 
qu'un élément $y = (y_1, \ldots, y_d) \in \R^d$ appartient à 
$A_{Q_\max,g,\ell}$ si, et seulement si
$$y_1 S_T(\vec \mu_1) + y_2 S_T (\vec \mu_2) + \cdots +
y_d S_T (\vec \mu_d) \leq S_T (\vec \delta)$$
pour toute partie $T \subset \{1, \ldots, d\}$ et l'égalité est atteinte
lorsque $T = \{1, \ldots, d\}$. Avec les expressions du lemme
\ref{lem:calculST}, ceci se réécrit :
\begin{equation}
\label{eq:AQmaxgl}
\left\{\begin{array}{l}
y_1 + y_2 + \cdots + y_d = 0 \\
f_T(y) \geq 0, \quad \forall\, T \subsetneq \{1, \ldots, d\}
\end{array} \right.
\end{equation}
où on a posé $s = \card\,T$ et :
\begin{equation}
\label{eq:fT}
f_T(y) = (y_1 + \cdots + y_s) - b \cdot \sum_{t \in T} y_{d+1-t} -
s(d-s) + b \cdot \Bigg( \sum_{t \in T} t - \frac {s(s+1)} 2\Bigg).
\end{equation}
Ceci étant dit, un point extrémal de $A_{Q_\max,g,\ell}$ n'est autre
qu'un point de $A_{Q_\max,g,\ell}$ qui se situe à l'intesection de
$d-1$ hyperplans affines indépendants parmi ceux d'équation $f_T(x) = 0$
avec $T \subsetneq \{1, \ldots, d\}$.
Calculer ces points revient donc à déterminer une condition nécessaire
et suffisante sur les parties $T_1, \ldots, T_{d-1}$ de 
$\{1, \ldots, d\}$ pour que :
\begin{itemize}
\item[i)] en posant $T_d = \{1, \ldots, d\}$, les hyperplans affines
d'équation $f_{T_i}(x) = 0$ ($1 \leq i \leq d$) aient un unique point
d'intersection, et
\item[ii)] ce point d'intersection appartienne à $A_{Q_\max,g,\ell}$.
\end{itemize}
On commence par deux lemmes.

\begin{lemme}
\label{lem:sommeyi}
On considère des entiers $m$ et $n$ tels que $0 \leq m < n \leq d$
et un élément $y = (y_1, \ldots, y_d) \in A_{Q_\max,g,\ell}$. Alors :
$$-m(n-m) \leq y_{m+1} + y_{m+2} + \cdots + y_n \leq (n-m)(d-n).$$
\end{lemme}

\begin{proof}
On commence par prouver la majoration. Dans le cas où $m = 0$, 
celle-ci provient directement de l'inégalité $f_T(y) \geq 0$ où
$T = \{d+1-n, \ldots, d\}$. Dans le cas où $m > 0$, on considère
la partie $T = \{d+1-n, \ldots, d-m\}$. L'inégalité $f_T(y) \geq 0$
donne alors :
$$(y_1 + \cdots + y_{n-m}) - b (y_{m+1} + \cdots + y_n) \geq
(n-m)(d-n+m) - b (n-m) (d-n).$$
Or, on sait déjà par ailleurs que $y_1 + \cdots + y_{n-m} \leq
(n-m)(d-n+m)$. Le résultat s'ensuit. Pour la minoration, le plus rapide 
est de remarquer que l'application $(y_1, \ldots, y_d) \mapsto (-y_d, 
\ldots, -y_1)$ définit une bijection de $A_{Q_\max,g,\ell}$ dans 
lui-même et que, \emph{via} cette bijection, la majoration qui vient 
d'être établie donne la minoration.
\end{proof}

\begin{lemme}
On suppose $b > d$.
Soient $T_1$ et $T_2$ deux parties de $\{1, \ldots, d\}$, et soit
$y \in A_{Q_\max,g,\ell}$. On a :
$$f_{T_1}(y) + f_{T_2}(y) \geq f_{T_1 \cap T_2}(y) + 
f_{T_1 \cup T_2}(y)$$
et l'égalité a lieu si, et seulement si $T_1 \subset T_2$ ou $T_2
\subset T_1$.
\end{lemme}

\begin{proof}
Soient $s_1$, $s_2$, $s_{1 \cap 2}$ et $s_{1 \cup 2}$ les cardinaux
respectifs de $T_1$, $T_2$, $T_1 \cap T_2$ et $T_1 \cup T_2$. On a
évidemment $s_1 + s_2 = s_{1 \cap 2} + s_{1 \cup 2}$ et $s = s_1 - s_{1
\cap 2} = s_{1 \cup 2} - s_2 \geq 0$. Par ailleurs, quitte à intervertir
$T_1$ et $T_2$, on peut supposer $s_1 \leq s_2$. Un calcul immédiat
conduit alors à
$$\begin{array}{l}
f_{T_1}(y) + f_{T_2}(y) - f_{T_1 \cap T_2}(y) - f_{T_1 \cup T_2}(y) \\
\hspace{1cm} \textstyle = (y_{s_{1\cap 2} + 1} + \cdots + y_{s_1}) -
(y_{s_2 + 1} + \cdots + y_{s_{1 \cup 2}}) + \frac 1 2 \cdot s (b-2)
\cdot (s_{1 \cup 2} + s_2 - s_1 - s_{1 \cap 2}).
\end{array}$$
à partir de quoi, on trouve en utilisant le lemme précédent :
$$\begin{array}{l}
f_{T_1}(y) + f_{T_2}(y) - f_{T_1 \cap T_2}(y) - f_{T_1 \cup T_2}(y) \\
\hspace{1cm}
\geq s \cdot \big[ (b-1)(s_2-s_1) + bs - d + \frac b 2 (s_{1\cup 2}
- s_{1 \cap 2} -s_2 + s_1) \big] \\
\hspace{1cm}
\geq s \cdot \big( (b-1)(s_2 - s_1) + b s - d \big).
\end{array}$$
Si $s > 0$, il est clair que la dernière expression est elle-aussi
strictement positive. L'inégalité du lemme est donc vraie, et même 
stricte, dans ce cas. Si au contraire $s = 0$, le majorant $s \cdot ( 
(b-1)(s_2 - s_1) + b s - d)$ s'annule et l'inégalité du lemme en résulte 
encore. Dans ce dernier cas, l'égalité peut se produire, mais dire que 
$s = 0$ revient à dire que $T_1 \cap T_2 = T_1$, \emph{i.e.} $T_1 
\subset T_2$. Comme il est clair que, réciproquement, si $T_1 \subset 
T_2$, l'inégalité du lemme est une égalité, on a bien démontré ce qui 
avait été annoncé.
\end{proof}

On est maintenant prêt à démontrer la proposition 
\ref{prop:extrAQmaxgl}. On suppose donc $b \geq b_0$. Si l'on excepte le 
cas $b = d = 2$ que l'on vérifie à part à la main, on a $b > d$ et donc 
le lemme précédent s'applique. On considère des parties $T_1, \ldots, 
T_{d-1}$ de $\{1, \ldots, d\}$ vérifiant les hypothèses i) et ii) 
énoncées en amont des lemmes. On pose également $T_d = \{1, \ldots, d\}$ 
et on appelle $y$ l'unique point d'intersection des hyperplans affines
d'équation $f_{T_i}(y) = 0$ pour $i$ variant entre $1$ et $d$. Quitte à 
renuméroter les $T_i$, on peut supposer que leurs cardinaux sont rangés 
par ordre croissant. Comme $y \in A_{Q_\max,g,\ell}$ par hypothèse, les 
nombres $f_{T_i \cup T_j} (y)$ et $f_{T_i \cap T_j} (y)$ sont positifs 
ou nuls pour tous indices $i$ et $j$. Ainsi, on est nécessairement dans 
le cas d'égalité du lemme précédent, ce qui signifie, dans le cas où $i 
< j$, que $T_i \subset T_j$ d'après l'hypothèse supplémentaire que nous 
avons faite sur les cardinaux. Ainsi, on obtient
$$T_1 \subset T_2 \subset \cdots \subset T_{d-1} \subset T_d = \{1, 
\ldots, d\}.$$
Par ailleurs, toutes les inclusions sont strictes car les hyperplans
d'équation $f_{T_i}(y) = 0$ ont un unique point d'intersection et que
deux d'entre eux ne peuvent être confondus. On en déduit qu'il existe 
une permutation $\weyld \in \mathfrak S_d$ telle que $T_i = \{ 
\weyld(1), \weyld(2), \ldots, \weyld(i) \}$ pour tout $i$. Il reste à 
calculer les coordonnées $(y_1, \ldots, y_d)$ du point $y$. Celles-ci 
sont solutions du système d'équations suivant :
\begin{equation}
\label{eq:systAQwgl}
\left\{ \begin{array}{l}
y_1 - b y_{d+1-\weyld(1)} = d-1 - b(\weyld(1) - 1) \\
(y_1 + y_2) - b (y_{d+1-\weyld(1)} + y_{d+1-\weyld(2)}) = d^2-4 - 
b(\weyld(1) + \weyld(2) - 3) \\
\hspace{1cm} \vdots \\
(y_1 + \cdots + y_{d-1}) - b (y_{d+1-\weyld(1)} + \cdots + 
y_{d+1-\weyld(d-1)}) \\
\hspace{2cm} = d^2-(d-1)^2 - b(\weyld(1) + \cdots + \weyld(d-1) 
- \frac{d(d-1)} 2) \\
y_1 + \cdots + y_d = 0 
\end{array} \right.
\end{equation}
En retranchant chaque équation de sa précédente, le système fournit
les équations
$$y_i - b y_{d+1-\weyld(i)} = d + 1 - 2i - b (\weyld(i) - i),
\quad \text{pour } 1 \leq i \leq d.$$
En notant $\weyl$ la permutation inverse de $i \mapsto d+1-\weyld(i)$,
et en posant $z_i = y_i - d - 1 + 2i$ pour tout $i$, celles-ci se
réécrivent $z_i = \weyl(i) - i + \frac{z_{\weyl(i)}} b$ et donnent 
donc :
$$z_i = \weyl(i) - i + \frac{z_{\weyl(i)}} b = \weyl(i) - i + 
\frac{\weyl^2(i) - \weyl(i)} b + \frac{z_{\weyl^2(i)}} {b^2} = \cdots =
\sum_{n=0}^\infty \frac {\weyl^{n+1}(i) - \weyl^n(i)} {b^n}.$$
Un calcul facile montre alors que $y = \vec \rho_\weyl$. Réciproquement,
pour montrer que $\vec \rho_\weyl$ est un point extrémal, si l'on prend
en considération ce que l'on a déjà dit, il reste à montrer qu'il
appartient à $A_{Q_\max,g,\ell}$. Or, pour $T \subsetneq \{1, \ldots, 
d\}$, un nouveau calcul conduit à l'égalité suivante :
\begin{equation}
\label{eq:fTy}
\frac{f_T(\vec \rho_\weyl)}{b-1} = \sum_{n=0}^\infty \frac 1 {b^n} 
\Bigg( \sum_{t \in T} \weyl^{n+1} (d+1-t) - \sum_{i=1}^s \weyl^n(i) 
\Bigg)
\end{equation}
où, comme d'habitude, $s$ est le cardinal de $T$. Chaque facteur
$\sum_{t \in T} \weyl^{n+1} (d+1-t) - \sum_{i=1}^s \weyl^n(i)$ est, en
valeur absolue, inférieur ou égal $s(d-1-s)$, qui est lui-même inférieur
ou égal à $[\frac {(d-1)^2} 4] = b_0 - 1$. Ainsi, dans la somme infinie 
du membre de droite de l'égalité \eqref{eq:fTy}, la contribution des 
termes pour $n \geq 1$ est majorée en valeur absolue par $\frac{b_0-1} 
{b-1} \leq 1$. Par ailleurs, la contribution pour $n = 0$ vaut $\sum_{t 
\in T} \weyl (d+1-t) - \sum_{i=1}^s i$ qui est un nombre entier positif 
ou nul. S'il est strictement positif, il vaut au moins $1$ et d'après ce 
qu'on a démontré précédemment, il ne peut être compensé par le reste de 
la somme ; ainsi on a bien $f_T(\vec \rho_\weyl) \geq 0$ dans ce cas. 
Si, maintenant, $\sum_{t \in T} \weyl (d+1-t) = \sum_{i=1}^s i$, 
l'ensemble des $\weyl(d+1-t)$ pour $t$ parcourant $T$ est nécessairement 
égal à l'ensemble $\{1, \ldots, s\}$. Il en résulte que pour tout $n$, 
on a aussi $\sum_{t \in T} \weyl^{n+1} (d+1-t) = \sum_{i=1}^s 
\weyl^n(i)$, et donc que $f_T(\vec \rho_\weyl)$ est nul dans ce cas. 
Ainsi, pour tout sous-ensemble $T \subsetneq \{1, \ldots, d\}$, on a 
$f_T(y) \geq 0$. Comme on a également $y_1 + \cdots + y_d = 0$, on 
trouve bien $\vec \rho_\weyl \in A_{Q_\max,g,\ell}$, et la proposition 
\ref{prop:extrAQmaxgl} est démontrée.

\begin{lemme}
\label{lem:extrAQmaxgl}
On conserve les hypothèses de la proposition \ref{prop:extrAQmaxgl} et 
les hypothèses de sa démonstration. Alors, si $T$ n'est pas l'un des 
ensembles $T_i$, on a $f_T(\vec \rho_\weyl) > 0$.
\end{lemme}

\noindent
En d'autres termes, le lemme dit qu'il ne concourt au sommet $\vec 
\rho_\weyl$ que $d-1$ faces correspondant aux hyperplans d'équation 
$f_{T_i}(y) = 0$ pour $1 \leq y \leq d-1$. (Notez bien que l'équation 
$f_{T_d}(y) = 0$ définit l'hyperplan \og somme des coordonnées égale 0 
\fg\ dans lequel tout est plongé.)

\begin{proof}
Il suffit de remarquer que dans la somme de la formule \eqref{eq:fTy}, 
la contribution du terme correspondant à $n = d!$ est positive ou nulle 
puisque $\weyl^{d!}$ est l'identité. On en déduit que la contribution
cumulée de tous les termes obtenus avec $n \geq 1$ est en fait 
\emph{strictement} inférieure à $\frac{b_0-1}{b-1}$. Ainsi dès que
l'ensemble des $\weyl(d+1-t)$ ($t \in T$) n'est pas égal à $\{1, \ldots, 
s\}$, c'est-à-dire dès que $T$ n'est pas égal à $T_s$ avec $s = 
\card\:T$, la quantité $f_T(\rho_\weyl)$ est strictement positive.
\end{proof}

\subsubsection*{Un mot sur les points extrémaux de $A_{Q_\max,g,\ell} +
C^\star$}

On démontrera dans la suite que $A_{Q,g,\ell} = A_{Q_\max,g,\ell} + 
C^\star$ (proposition \ref{prop:AQgl}) et, donc, plutôt que les points 
extrémaux de $A_{Q_\max,g,\ell}$, ce sont ceux de la somme 
$A_{Q_\max,g,\ell} + C^\star$ que l'on aimerait décrire. Ceux-ci forment 
un sous-ensemble des points extrémaux de $A_{Q_\max,g,\ell}$ qui, 
malheureusement, semble difficile à comprendre. On peut, malgré tout, 
définir un ordre sur $\mathfrak S_d$ qui permet de mieux appréhender la
situation.

Voici comment on procède. Tout d'abord, pour tout entier $s \in \{1,
\ldots, d\}$, on définit le préordre $\preccurlyeq_s$ par :
$$\weyl_1 \preccurlyeq_s \weyl_2
\quad \text{ssi} \quad
\Bigg(\sum_{i=1}^s \weyl_1^n(i) \Bigg)_{n \geq 1} \leq_\lex 
\Bigg(\sum_{i=1}^s \weyl_2^n(i) \Bigg)_{n \geq 1}$$
où $\leq_\lex$ désigne l'ordre lexicographique sur l'ensemble des
suites, qui donne le plus de poids aux petits indices. On pose ensuite :
$$\weyl_1 \preccurlyeq \weyl_2
\quad \text{ssi} \quad
\weyl_1 \preccurlyeq_s \weyl_2
\text{ pour tout } s \in \{1, \ldots, d\}.$$
On vérifie facilement que $\preccurlyeq$ est bien un ordre sur
$\mathfrak S_d$. À titre d'exemple, la figure \ref{fig:hasse} montre son
diagramme de Hasse pour $d = 4$ (sur cette figure, la permutation $\weyl$
est notée $\perm {\weyl(1)} {\weyl(2)} {\weyl(3)} {\weyl(4)}$). Il existe
déjà un certain nombre d'ordres sur le groupe des permutations $\mathfrak
S_d$ (ordre de Bruhat, ordre faible, \emph{etc}.) mais l'ordre 
$\preccurlyeq$ ne semble coïncider avec aucun d'entre eux (en tout cas, 
pas avec ceux qui sont connus de l'auteur). Pour l'instant, il est 
encore assez mystérieux. La proposition suivante explique malgré tout
comment il est relié avec la problématique de cet article.

\begin{figure}[h]
\begin{center}
\includegraphics[scale=0.6]{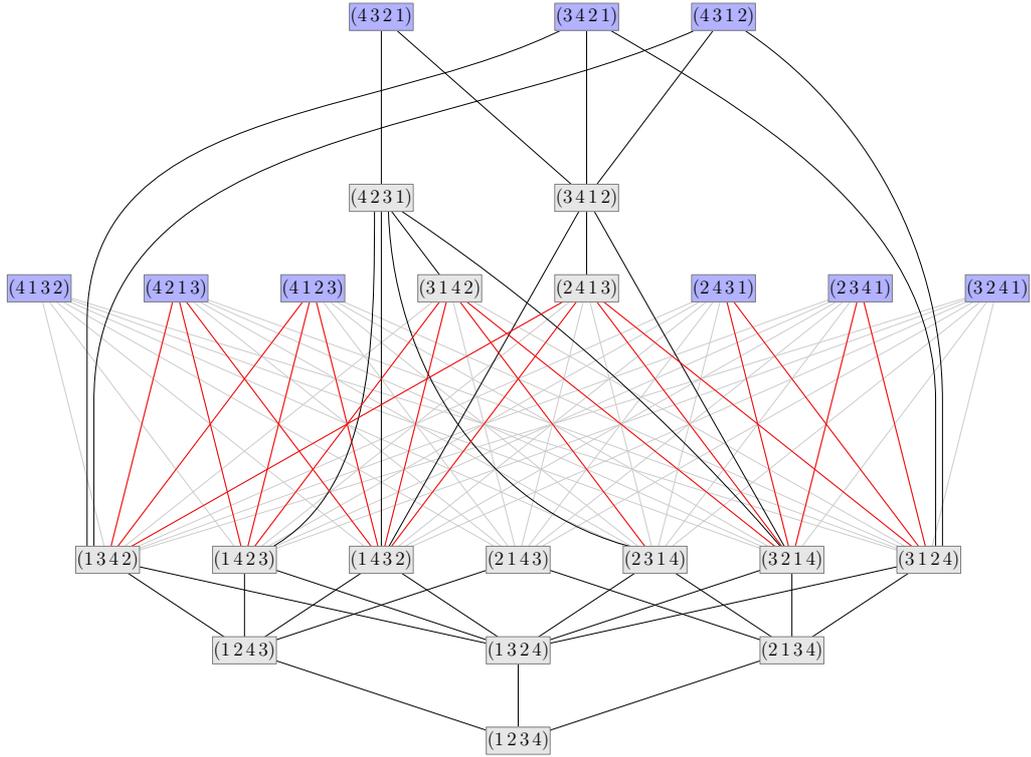}

\caption{Diagramme de Hasse de l'ordre $\preccurlyeq$ sur $\mathfrak
S_4$}
\label{fig:hasse}

\medskip

\begin{minipage}{13cm}
Les traits gris clairs sur la figure représentent, de la même façon que
les traits noirs, des arêtes dans le graphe de Hasse. Attention : afin
de faciliter la lecture, nous avons dessiné en rouge les arêtes entre une
permutation du troisième étage et une du quatrième qui \emph{ne} sont
\emph{pas} dans le diagramme de Hasse !

Les permutations coloriées en bleu sont les éléments maximaux.
\end{minipage}
\end{center}
\end{figure}

\begin{prop}
On suppose $b \geq b_0$. Pour toutes permutations 
$\weyl_1, \weyl_2 \in \mathfrak S_d$, on a $\weyl_1 \preccurlyeq \weyl_2$
si, et seulement si $\vec \rho_{\weyl_1} \in \vec \rho_{\weyl_2} + C^\star$. 
\end{prop}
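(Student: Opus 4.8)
The statement to prove is: assuming $b \geq b_0$, for $\weyl_1, \weyl_2 \in \mathfrak S_d$ one has $\weyl_1 \preccurlyeq \weyl_2$ if and only if $\vec\rho_{\weyl_1} \in \vec\rho_{\weyl_2} + C^\star$. My plan is to translate membership in $\vec\rho_{\weyl_2} + C^\star$ into the inequality system defining $C^\star$, and then recognize each of these inequalities, after multiplication by $b-1$, as a lexicographic comparison of partial sums of the $\weyl_j^n(i)$.

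First I would recall that $C^\star$ is explicitly described earlier as the set of $(z_1,\ldots,z_d)$ with $z_1+\cdots+z_d=0$ and $z_1+\cdots+z_i\geq 0$ for all $i$. Hence $\vec\rho_{\weyl_1} \in \vec\rho_{\weyl_2} + C^\star$ means exactly that $\vec\rho_{\weyl_1} - \vec\rho_{\weyl_2}$ has coordinate-sum zero (automatic, since both $\vec\rho_\weyl$ lie in the hyperplane ``sum of coordinates $=0$'', being extremal points of $A_{Q_\max,g,\ell}$ by Proposition~\ref{prop:extrAQmaxgl}, which sits inside that hyperplane) and that for each $s \in \{1,\ldots,d\}$ the partial sum $\sum_{i=1}^s \big((\vec\rho_{\weyl_1})_i - (\vec\rho_{\weyl_2})_i\big)$ is $\geq 0$. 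Using the defining formula $(\vec\rho_\weyl)_i = (b-1)\sum_{n\geq 1} \frac{d+1-i-\weyl^n(i)}{b^n}$, this partial sum equals
$$(b-1)\sum_{n=1}^\infty \frac{1}{b^n}\Bigg( \sum_{i=1}^s \weyl_2^n(i) - \sum_{i=1}^s \weyl_1^n(i)\Bigg),$$
since the terms $\sum_{i=1}^s(d+1-i)$ cancel. So I must show that this quantity is $\geq 0$ for every $s$ precisely when $\weyl_1 \preccurlyeq \weyl_2$, i.e.\ when for every $s$ the sequence $\big(\sum_{i=1}^s \weyl_1^n(i)\big)_{n\geq 1}$ is $\leq_\lex \big(\sum_{i=1}^s \weyl_2^n(i)\big)_{n\geq 1}$.

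The key observation is that the coefficients $c_n := \sum_{i=1}^s \weyl_2^n(i) - \sum_{i=1}^s \weyl_1^n(i)$ are integers bounded in absolute value by $s(d-s) \leq [\,d^2/4\,]$, whereas the weighting is $\frac{1}{b^n}$ with $b$ large. This is exactly the situation of a ``base-$b$'' comparison: if the sequence $(c_n)_{n\geq 1}$ is not identically zero, let $n_0$ be the first index with $c_{n_0}\neq 0$; then $\sum_{n\geq 1} c_n b^{-n}$ has the same sign as $c_{n_0}$, because the tail $\sum_{n>n_0} |c_n| b^{-n} \leq s(d-s)\cdot \frac{b^{-n_0-1}}{1-1/b} = s(d-s)\cdot \frac{b^{-n_0}}{b-1}$ is strictly smaller than $b^{-n_0}|c_{n_0}| \geq b^{-n_0}$ as soon as $b-1 > s(d-s)$, which holds because $b \geq b_0 = 1 + [\,(d-1)^2/4\,]$ and $s(d-s) \leq [\,d^2/4\,]$ — one has to check $[\,(d-1)^2/4\,] \geq [\,d^2/4\,] - (d-1)$ hmm, more carefully: for $1\leq s\leq d-1$ one actually has $s(d-s)\leq (d-1)\cdot 1$ adjusted... the clean bound is $s(d-s) \le d-1$ when $s\in\{1,d-1\}$ is false in general, so I should instead use that it suffices to take the index $s$ ranging where $c_n\not\equiv 0$ forces $s\notin\{0,d\}$, giving $|c_n|\le s(d-s)\le[\,(d-1)^2/4\,]\cdot$ — no. The honest bound is $|c_n| = |\sum_{i=1}^s\weyl_2^n(i) - \sum_{i=1}^s\weyl_1^n(i)|$; both sums lie in $[\,\binom{s+1}{2},\ sd - \binom{s}{2}\,]$, an interval of length $s(d-s)$, so $|c_n|\le s(d-s)$, and one genuinely needs $b-1 > s(d-s)$, i.e.\ $b > 1 + [\,d^2/4\,]$, not merely $b\ge b_0$. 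I would resolve this by observing that the \emph{first} nonzero $c_{n}$ can be controlled more tightly, or — cleaner — by noting that if $c_1=0$ then $\{\weyl_1(i):i\le s\}=\{\weyl_2(i):i\le s\}$ as sets is false too; the correct statement is that $c_1=0$ forces the two \emph{multisets} equal hence the two sets $\{\weyl_j(i):i\le s\}$ equal, after which one restricts attention to the induced action and the sharper bound $s(d-s)$ with $s$ now at most... This is the main obstacle.

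The main obstacle, then, is handling the borderline constant $b_0 = 1+[\,(d-1)^2/4\,]$ rather than the easier $1+[\,d^2/4\,]$: the naive tail estimate needs the larger constant. I expect the fix is the same trick already used in the proof of Lemma~\ref{lem:extrAQmaxgl} and in the verification that $\vec\rho_\weyl\in A_{Q_\max,g,\ell}$: one separates off the $n=1$ term, notes it is an integer with $|c_1|\le s(d-s)$, and then argues that \emph{if} $c_1 = 0$ then the sets $\{\weyl_j^{-1}(1),\ldots,\weyl_j^{-1}(\cdot)\}$ agree so $c_n=0$ for all $n$ and the sign is simply determined by the first index where the partial-sum sequences differ, at which point the relevant differences are again integers and the geometric-series tail bound with ratio $1/b$ closes because $b-1 \ge [\,(d-1)^2/4\,] \ge s(d-s)-$ hmm; in fact when the $n=1$ contributions of \emph{all} the comparisons that matter vanish, the residual permutations act on a set of size $<d$, shrinking the bound on $|c_n|$ to $[\,(d-1)^2/4\,] = b_0-1 \le b-1$, exactly as in \eqref{eq:fTy}. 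Once this sign lemma is in place, both implications are immediate: $\weyl_1 \preccurlyeq \weyl_2$ means each sequence $(\sum_{i\le s}\weyl_1^n(i))_n \leq_\lex (\sum_{i\le s}\weyl_2^n(i))_n$, which by the sign lemma is equivalent to $\sum_{n}c_n b^{-n}\ge 0$, i.e.\ to the $s$-th partial-sum inequality defining $C^\star$, and conjunction over $s$ gives the claim. I would write this up as one preliminary lemma on signs of $b$-weighted integer series, followed by the short dictionary translation above.
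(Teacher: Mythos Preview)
Your approach is exactly the paper's: translate membership of $\vec\rho_{\weyl_1}-\vec\rho_{\weyl_2}$ in $C^\star$ into the partial-sum inequalities, rewrite each as $(b-1)\sum_{n\ge1}c_n^{(s)}b^{-n}\ge0$ with $c_n^{(s)}=\sum_{i\le s}\weyl_2^n(i)-\sum_{i\le s}\weyl_1^n(i)$, and argue that the sign of such a series is governed by its first nonzero coefficient, which is precisely the lexicographic comparison defining $\preccurlyeq_s$. The paper's proof occupies five lines and does nothing beyond this.

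Where you and the paper differ is in the handling of the constant. You correctly compute the obvious bound $|c_n^{(s)}|\le s(d-s)\le[d^2/4]$ and observe that the tail estimate $\sum_{n>n_0}|c_n|b^{-n}\le \frac{s(d-s)}{b-1}b^{-n_0}$ then needs $b-1>[d^2/4]$, which is strictly stronger than $b\ge b_0=1+[(d-1)^2/4]$. The paper, by contrast, simply \emph{asserts} that $|c_n^{(s)}|\le b_0-1$ and moves on; it offers no argument for this sharper bound, and the same bound $s(d-1-s)$ is asserted (again without proof) around formula~\eqref{eq:fTy} in the proof of Proposition~\ref{prop:extrAQmaxgl}. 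Your attempts to recover the sharper constant via the trick of Lemma~\ref{lem:extrAQmaxgl} or via ``$c_1=0$ forces equal sets'' do not go through as you noticed, because $c_1^{(s)}=0$ only says the two sums agree, not that the two $s$-subsets coincide.

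So: your outline is correct and coincides with the paper's argument. The ``main obstacle'' you flag is a genuine gap that the paper does not address either --- it simply states the bound $b_0-1$. For the present exercise you may follow the paper and quote that bound; if you want a fully justified version, the clean fix is to replace the hypothesis $b\ge b_0$ by $b\ge 1+[d^2/4]$ (or $b-1>[d^2/4]$ to get strict inequalities in both directions), under which your tail estimate closes without further tricks.
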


\begin{proof}
Les coordonnées de $\vec \rho_{\weyl_1} - \vec \rho_{\weyl_2}$ sont
$$(b-1) \cdot \sum_{n=1}^\infty \frac{\weyl_1^n(i) - \weyl_2^n(i)}{b^n}$$
avec $1 \leq i \leq d$. Dire que ce vecteur est dans $C^\star$ revient 
à dire que, pour tout $s \in \{1, \ldots, d\}$, on a :
$$\sum_{n=1}^\infty \frac 1 {b^n} \sum_{i=1}^s (\weyl_1^n(i) - 
\weyl_2^n(i)) \geq 0$$
et que l'égalité a lieu si $s = d$. En fait, il est clair que l'égalité
est toujours vérifiée pour $s = d$ ; on peut donc oublier cette
condition. À part cela, on remarque que la somme $\sum_{i=1}^s 
\weyl_1^n(i) - \weyl_2^n(i)$ est majorée en valeur absolue par 
$b_0-1$. Comme $\frac{b_0-1} {b-1} \leq 1$, le signe de la somme 
infinie ne dépend que du signe du premier terme non nul. La proposition 
découle de cela.
\end{proof}

\noindent
Il résulte de la proposition que si deux permutations distinctes
$\weyl_1$ et
$\weyl_2$ sont telles que $\weyl_1 \preccurlyeq \weyl_2$, alors $\vec
\rho_{\weyl_1}$ ne peut être un point extrémal de $A_{Q_\max,g,\ell} +
C^\star$ (toujours sous l'hypothèse $b \geq b_0$). Ainsi,
les seuls points extrémaux envisageables correspondent aux permutations
$\weyl$ qui sont des éléments maximaux pour $\preccurlyeq$. Par contre,
il se peut que certains éléments maximaux ne définissent pas des points
extrémaux de $A_{Q_\max,g,\ell} + C^\star$ ; lorsque $d = 4$ par
exemple, c'est le cas des vecteurs correspondant aux permutations $\perm
2 4 3 1$ et $\perm 4 2 1 3$ (qui sont bien des éléments maximaux comme
on le voit sur la figure \ref{fig:hasse}).

\subsection{Le calcul de $A_{Q,g,\ell}$}
\label{subsec:AQgl}

Le but de cette partie est de démontrer la proposition suivante.

\begin{prop}
\label{prop:AQgl}
On suppose $b \geq b_0$. Alors $A_{Q,g,\ell} = A_{Q_\max,g,\ell} + 
C^\star$.
\end{prop}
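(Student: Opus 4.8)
The plan is to prove the two inclusions separately. The inclusion $A_{Q_\max,g,\ell} + C^\star \subset A_{Q,g,\ell}$ is the easy direction: since $Q_\min \subset Q \subset Q_\max$, we have $Q_\max^\star \subset Q^\star$, hence $A_{Q_\max,g,\ell} \subset A_{Q,g,\ell}$ directly from the defining formula \eqref{eq:AQfl}. It then suffices to check that $A_{Q,g,\ell}$ is stable under adding an element of $C^\star$. Writing $y \in A_{Q,g,\ell}$ means $\sum_i y_i \vec\mu_i - \vec\delta \in Q^\star$; adding $z \in C^\star$ contributes $\sum_i z_i \vec\mu_i$, so I need $\sum_i z_i \vec\mu_i \in Q^\star$ whenever $z \in C^\star$. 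Since $C^\star$ is generated by the vectors $(0,\dots,0,1,-1,0,\dots,0)$, it is enough to show $\vec\mu_i - \vec\mu_{i+1} \in Q^\star$ for each $i$, and this is a direct computation: one checks that $\left<\vec\mu_i - \vec\mu_{i+1} \mid q\right>_E = \mu_{1,i}(q) - \mu_{1,i+1}(q) \geq 0$ for all $q \in Q$, which holds because $\mu_{1,i} \geq \mu_{1,i+1}$ is one of the order relations built into $Q$ (it follows from \eqref{eq:reli1}, i.e.\ from the relations \eqref{eq:reli4} defining $Q$).

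The substantial direction is $A_{Q,g,\ell} \subset A_{Q_\max,g,\ell} + C^\star$. Here the strategy is: take $y \in A_{Q,g,\ell}$, and produce a decomposition $y = y' + z$ with $y' \in A_{Q_\max,g,\ell}$ and $z \in C^\star$. Equivalently, since $A_{Q_\max,g,\ell} + C^\star$ is a closed convex set (a polytope plus a cone), by Hahn--Banach it is enough to show that every linear form that is bounded below on $A_{Q_\max,g,\ell} + C^\star$ is also bounded below by the same value on $A_{Q,g,\ell}$; and by Proposition~\ref{prop:dualconv2} this is exactly the statement $b_{Q,g,\ell,C^\star} = b_{Q_\max,g,\ell,C^\star}$ as functions on $\R^d$ (the inequality $\leq$ being automatic from $Q \subset Q_\max$). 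So the real content is the reverse inequality $b_{Q,g,\ell,C^\star}(\mu) \geq b_{Q_\max,g,\ell,C^\star}(\mu)$ for all $\mu$, and by the duality formula \eqref{eq:bfini} it suffices to prove it at the extremal points of $A_{Q_\max,g,\ell} + C^\star$ — and those are among the $\vec\rho_\weyl$ by Proposition~\ref{prop:extrAQmaxgl} and the discussion following it. Thus I reduce to: for each permutation $\weyl$ such that $\vec\rho_\weyl$ is an extremal point of $A_{Q_\max,g,\ell}+C^\star$, and each $\mu$, one has $\left<\vec\rho_\weyl \mid \mu\right>_d \geq b_{Q,g,\ell,C^\star}(\mu)$, i.e.\ $\vec\rho_\weyl \in A_{Q,g,\ell}$.

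So everything comes down to showing $\vec\rho_\weyl \in A_{Q,g,\ell}$ for each relevant $\weyl$, that is, $\sum_i (\vec\rho_\weyl)_i \vec\mu_i - \vec\delta \in Q^\star$. Unlike $Q_\max^\star$, the cone $Q^\star$ does not have a clean description as an intersection of half-spaces (the Jeu~III inequalities are the obstruction, as noted in \S\ref{subsec:conephi}), so I cannot simply test against the $S_T$. Instead the plan is to exhibit $\sum_i (\vec\rho_\weyl)_i \vec\mu_i - \vec\delta$ explicitly as a nonnegative combination of the generators $\vec v_m$ of $Q^\star$ (the vectors dual to the inequalities of Jeux I, II, III). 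Concretely: from the system \eqref{eq:systAQwgl}--\eqref{eq:fTy} in \S\ref{subsec:extrAQmaxgl}, the vector $\vec\rho_\weyl$ is pinned down by the equations $f_{T_i}(\vec\rho_\weyl) = 0$ for the flag $T_1 \subset \dots \subset T_{d-1}$ attached to $\weyl$, and Lemma~\ref{lem:extrAQmaxgl} gives $f_T(\vec\rho_\weyl) > 0$ for all other $T$. I would write $\sum_i (\vec\rho_\weyl)_i \vec\mu_i - \vec\delta$ in coordinates on $E$ (using the matrix pictures for the $\vec\mu_i$ and $\vec\delta$ from \S\ref{subsec:vecmui}) and verify that all the "Jeu III coefficients" that appear are nonnegative — this is where the hypothesis $b \geq b_0 = 1 + [\frac{(d-1)^2}{4}]$ enters, bounding the tail contributions $\sum_{n\geq 1} b^{-n}(\cdots)$ by $1$ exactly as in the proof of Proposition~\ref{prop:extrAQmaxgl}. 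The main obstacle, and the place where care is needed, is precisely this explicit bookkeeping: translating the abstract condition "$\vec\rho_\weyl \in A_{Q,g,\ell}$" into a finite list of inequalities indexed by the generators of $Q^\star$, and checking each one survives under $b \geq b_0$ — the Jeu~III inequalities being the delicate ones, since they mix several $q_{i,j}$ with a factor $b$ and have no simple combinatorial shadow.
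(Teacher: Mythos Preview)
Your treatment of the easy inclusion $A_{Q_\max,g,\ell}+C^\star \subset A_{Q,g,\ell}$ matches the paper's and is fine.

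For the hard inclusion, however, your reduction goes in the wrong direction. You correctly observe (via Hahn--Banach) that $A_{Q,g,\ell} \subset A_{Q_\max,g,\ell}+C^\star$ is equivalent to
\[
\inf_{\alpha \in A_{Q,g,\ell}} \langle \alpha \mid \mu \rangle \;\geq\; \inf_{\alpha \in A_{Q_\max,g,\ell}+C^\star} \langle \alpha \mid \mu \rangle \quad \text{for all } \mu,
\]
i.e.\ $a_{Q,g,\ell}(\mu) \geq \min_{\weyl} \langle \vec\rho_\weyl \mid \mu \rangle$ for $\mu \in C$. But you then claim this follows from $\vec\rho_\weyl \in A_{Q,g,\ell}$. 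That membership says $\langle \vec\rho_\weyl \mid \mu \rangle \geq a_{Q,g,\ell}(\mu)$ for all $\mu$, which yields $a_{Q,g,\ell}(\mu) \leq \min_\weyl \langle \vec\rho_\weyl \mid \mu\rangle$ --- the \emph{opposite} inequality, and in fact the easy direction again. Indeed, $\vec\rho_\weyl \in A_{Q_\max,g,\ell} \subset A_{Q,g,\ell}$ is already known, so your ``reduction'' lands on a triviality; the bookkeeping you outline for checking $\sum_i (\vec\rho_\weyl)_i \vec\mu_i - \vec\delta \in Q^\star$ is correct in spirit but proves nothing new.

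What is actually needed is to bound $A_{Q,g,\ell}$ from \emph{outside}, which means bounding $Q$ from \emph{inside}. The paper does this by constructing, for each $\weyl$, an explicit sub-cone $Q_\weyl \subset Q$ (defined via a graph on $I$ built from the combinatorics of $\weyl$, together with the constraint $\mu_{1,1}\geq\cdots\geq\mu_{1,d}$) such that $A_{Q_\weyl,g,\ell} = \vec\rho_\weyl + D_\weyl + C^\star$, where $D_\weyl$ is the tangent cone to $A_{Q_\max,g,\ell}$ at $\vec\rho_\weyl$. Since $Q_\weyl \subset Q$ gives $A_{Q,g,\ell} \subset A_{Q_\weyl,g,\ell}$, intersecting over all $\weyl$ yields the inclusion. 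The verification that $Q_\weyl \subset Q$ is where the real work lies: it requires expressing the $\mu_{i,j}$ on $Q_\weyl$ in terms of an auxiliary ``loser permutation'' recursion (\S\ref{subsec:perdants}) and checking the Jeu~III inequalities by an inductive combinatorial argument. None of this is captured by checking that the $\vec\rho_\weyl$ lie in $A_{Q,g,\ell}$.
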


L'inclusion $A_{Q_\max,g,\ell} + C^\star \subset A_{Q,g,\ell}$ est facile. 
En effet, il est clair déjà que $A_{Q_\max,g,\ell}$ est inclus dans
$A_{Q,g,\ell}$. Soient maintenant $y = (y_1, \ldots, y_d) \in
A_{Q,g,\ell}$ et $z = (z_1, \ldots, z_d) \in C^\star$. Il suffit de montrer
que $y + z \in A_{Q,g,\ell}$. Par définition de $A_{Q,g,\ell}$, on a
$y_1 \vec \mu_1 + \cdots + y_d \vec \mu_d - \vec \delta \in Q^\star$, ce
qui revient à dire que pour tout $q \in Q$,
\begin{equation}
\label{eq:yiAQgl}
y_1 \left< \vec \mu_1 | q \right>_E + \cdots + y_d \left< \vec \mu_d |
q \right>_E \geq \left< \vect \delta | q \right>_E.
\end{equation}
Par ailleurs, on sait que la suite des $\left< \vec \mu_j | q \right>_E$
est décroissante en $j$ ; autrement dit elle définit un vecteur de
$\R^d$ qui appartient à $C$. On en déduit que 
\begin{equation}
\label{eq:ciAQgl}
z_1 \left< \vec \mu_1 | q \right>_E + \cdots + z_d \left< \vec
\mu_d | q \right>_E \geq 0.
\end{equation}
Finalement en additionnant les formules \eqref{eq:yiAQgl} et
\eqref{eq:ciAQgl}, on trouve bien $y + z \in A_{Q,g,\ell}$ comme
annoncé.

\medskip
On se concentre désormais sur la démonstration de l'inclusion
réciproque. À partir de maintenant, on suppose donc $b \geq b_0$. La 
démarche générale est la suivante. Tout d'abord, pour tout point 
extrémal $\vec \rho_\weyl$ de $A_{Q_\max,g,\ell}$, on note 
$D_\weyl$ le cône convexe tel qu'au voisinage de $\vec \rho_\weyl$, on 
ait $A_{Q_\max,g,\ell} = \vec \rho_\weyl + D_\weyl$. Si $\vec 
\rho_\weyl$ est un point extrémal de $A_{Q_\max,g,\ell} + C^\star$, les 
ensembles convexes $A_{Q_\max,g,\ell} + C^\star$ et $\vec \rho_\weyl + 
D_\weyl + C^\star$ coïncident encore au voisinage de $\vec \rho_\weyl$. 
On en déduit que
$$A_{Q_\max,g,\ell} + C^\star = \bigcap_{\weyl \in \mathcal S_d} (\vec 
\rho_\weyl + D_\weyl + C^\star)$$
où l'on a noté $\mathcal S_d$ le sous-ensemble de $\mathfrak S_d$
correspondant aux points extrémaux de $A_{Q_\max,g,\ell} + C^\star$.
Ainsi, il suffit de montrer que tout élément de $A_{Q,g,\ell}$
appartient à $\vec \rho_\weyl + D_\weyl + C^\star$ pour tout
$\weyl \in \mathcal S_d$. On va en fait montrer que c'est le cas
pour tout $\weyl \in \mathfrak S_d$. Pour ce faire, l'idée est 
d'interpréter la somme $\vec \rho_\weyl + D_\weyl + C^\star$ comme des 
$A_{Q_\weyl,g,\ell}$ pour certaines cônes convexes $Q_\weyl \subset E$.
Il suffira alors pour conclure de montrer que tous les $Q_\weyl$ sont
inclus dans $Q$.

\subsubsection{Les ensembles $Q_\weyl$}

On fixe $\weyl$ une permutation $\weyl$ de l'ensemble $\{1, \ldots, 
d\}$. En s'inspirant du \S \ref{subsec:extrAQmaxgl}, on définit la
permutation $\weyld : i \mapsto d + 1 - \weyl^{-1}(i)$ et, pour tout $s$ 
compris entre $1$ et $d$, on considère l'ensemble
$$T_s(\weyl) = \{ \weyld(1), \weyld(2), \ldots, \weyld(s) \}.$$
Cet ensemble correspond à une partie admissible de $I$ notée
$I_s(\weyl)$. On rappelle que, $s$ étant fixé, si on note $t_1, \ldots,
t_s$ les nombres $\weyld(1), \ldots, \weyld(s)$ triés par ordre
décroissant (\emph{i.e.} $t_1 > t_2 > \cdots > t_s$), alors le couple
$(i,j)$ est dans $I_s(\weyl)$ si, et seulement si $i \leq s$ et 
$j > d-t_i$. Lorsque $\weyl$ est la permutation $\omega : i \mapsto
d+1-i$, on a $\weyld = \id$ et donc $T_s(\omega) = \{1, \ldots, s\}$ ;
l'ensemble admissible $T_s(\omega)$ est alors simplement l'ensemble
$I_s = \{ (i,j) \in I \, | \, j-i \geq d-s \}$ qui a déjà été
considérée.
De façon générale, on vérifie facilement que les ensembles $I_s(\weyl)$ 
définissent une partition croissante de $I$ telle que $I_d(\weyl) = I$. 
On considère la fonction $\ord_\weyl : I \to \{1, \ldots, d\}$ qui à un 
couple $(i,j)$ associe le plus petit entier $s$ tel que $(i,j) \in 
I_s(\weyl)$. De façon évidente, pour tout $s$, l'ensemble $I_s(\weyl)$ 
regroupe les éléments $x \in I$ tels que $\ord_\weyl(x) \leq s$.

Pour se représenter les constructions précédentes, il est
commode de considérer les éléments $(i,j)$ comme les cases d'un tableau
triangulaire (voir figure \ref{fig:ord}). La fonction $\ord_\weyl$
correspond alors à un remplissage des cases du tableau par les nombres
entiers compris entre $1$ et $d$. Celui-ci s'obtient en fait très
simplement à partir de la permutation $\weyl$, comme suit. On place
d'abord sur la ligne du haut et dans les $\weyl(1)$ dernières colonnes
du tableau le nombre $1$. Ensuite, on place le nombre $2$ dans les
$\weyl(2)$ dernières colonnes du tableau à chaque fois dans la case la
plus haute qui n'est pas déjà remplie. Ainsi, si $\weyl(2) < \weyl(1)$,
tous les nombres $2$ se retrouvent sur la deuxième ligne, tandis que si
$\weyl(2) > \weyl(1)$, on écrit $\weyl(1)$ nombres $2$ sous les $1$ déjà
écrits à l'étape précédente, et on met les $\weyl(2)-\weyl(1)$ nombres
$2$ restants sur la première ligne. On continue ensuite avec les $3$ :
on les place dans les $\weyl(3)$ dernières colonnes, toujours le plus
haut possible. Et ainsi de suite jusqu'à $d$. La figure \ref{fig:ord}
montre le remplissage obtenue pour la permutation $(3\,2\,4\,5\,1)$
(pour $d = 5$ donc).

\begin{figure}
\begin{center}
\includegraphics[scale=0.9]{young.ps}
\end{center}

\caption{La fonction $\ord_{(3\,2\,4\,5\,1)}$}
\label{fig:ord}
\end{figure}

On définit encore un graphe $I_\weyl$ comme suit : ses sommets sont les
éléments de $I$ et l'on convient qu'il y a une arête entre $x$ et $y$
dans ce graphe si $\ord_\weyl(x) \geq \ord_\weyl(y)$. Finalement, on
introduit l'ensemble $Q'_\weyl \subset E$ : c'est le cône convexe
$Q_{I_\weyl}$ associé au graphe $I_\weyl$ par la recette donnée juste en
dessous de la définition \ref{def:admissibleg}. Pour $\weyl_0 : i 
\mapsto d+1-i$, l'ensemble $Q_{\weyl_0}$ n'est autre que l'ensemble
$Q_\min$.

\begin{lemme}
\label{lem:AQptau}
On suppose $b \geq b_0$.
Alors, on a $A_{Q'_\weyl,g,\ell} = \vec \rho_\weyl + D_\weyl$.
\end{lemme}

\begin{proof}
Les parties admissibles du graphe $I_\weyl$ (dans le sens de la
définition \ref{def:admissibleg}) sont exactement les $I_s(\weyl)$ pour
$1 \leq s \leq d$. 
La proposition \ref{prop:dualgraphe} assure donc que le cône dual
$(Q'_\weyl)^\star$ est défini dans $E$ par les équations :
$$\sum_{(i,j) \in I} x_{i,j} = 0 \quad \text{et} \quad
\forall s \in \{1, \ldots, s\},\, \sum_{(i,j) \in I_s(\weyl)} 
x_{i,j} \leq 0.$$
où les $x_{i,j}$ sont les coordonnées canoniques sur $E = \R^I$. En
injectant cela dans la formule \eqref{eq:AQfl} et en utilisant le lemme
\ref{lem:calculST}, on trouve qu'un élément $y = (y_1, \ldots, y_d) \in
\R^d$ appartient à $A_{Q'_\weyl,g,\ell}$ si, et seulement si $y_1 +
\ldots + y_d = 0$ et $f_{I_s(\weyl)}(y)\geq 0$ pour tout $s$ (où la
fonction $f_{I_s(\weyl)}$ est défini comme précédemment, voir formule
\eqref{eq:fT}).

Par ailleurs, étant donné que $b \geq b_0$, le lemme 
\ref{lem:extrAQmaxgl} s'applique et implique
que le cône $D_\weyl$ est défini par les équations et inéquations :
$$y_1 + \cdots + y_d = 0 \quad \text{et} \quad g_{I_s(\weyl)} (y) = 0$$
où, si $T$ est une partie de $\{1, \ldots, d\}$, on a noté $g_T$ la
fonction linéaire associé à $f_T$ : pour tout $y = (y_1, \ldots, y_n)$
dans $\R^d$, on a $g_T(y) = y_1 + \cdots + y_s - b \cdot \sum_{t \in T}
y_{d+1-t}$ avec $s = \card\,T$. Comme $\vec \rho_\weyl$ a été justement
construit pour vérifier $f_{I_s(\weyl)}(\vec \rho_\weyl) = 0$ pour tout
$s$, l'égalité du lemme en découle.
\end{proof}

Soit $D$ le cône convexe de $E$ défini par $D = \{ (q_{i,j}) \in E \, | 
\, \mu_{1,1} \geq \mu_{1,2} \geq \cdots \geq \mu_{1,d} \, \}$ où les 
$\mu_{i,j}$ sont définis à partir des $q_{i,j}$ par la formule 
\eqref{eq:rele3}. On rappelle que l'on a défini dans le \S 
\ref{subsec:vecmui} des vecteurs $\vec \mu_j \in \R^d$ tels que 
$\mu_{1,j} = \left< \vec \mu_j | q \right>_E$ où $q = (q_{i,j}) \in E$. 
Ainsi $D^\star$, le cône dual de $D$, n'est autre que le cône convexe 
engendré par les vecteurs $\vec \mu_j - \vec \mu_{j+1}$ pour $j$ 
parcourant l'ensemble d'indices $\{1, \ldots, d-1\}$. Par ailleurs, 
l'application $g$ est définie par $q \mapsto (\left< \vec \mu_1 | q 
\right>_E, \ldots, \left< \vec \mu_1 | q \right>_E)$ et donc envoie 
$D^\star$ sur l'ensemble des $y = (y_1, \ldots, y_d) \in \R^d$ tels que 
$y_1 \geq \cdots \geq y_d$, c'est-à-dire $C$. On pose enfin $Q_\weyl = 
Q'_\weyl \cap D \subset E$. Le cône dual de $Q_\weyl$ est alors égal à 
$(Q'_\weyl)^\star + D^\star$. À partir de là et de ce qui a été dit 
précédemment (et notamment du lemme \ref{lem:AQptau}), il suit, en 
déroulant les définitions, que l'ensemble $A_{Q_\weyl,g,\ell}$ est égal 
à $\vec \rho_\weyl + D_\weyl + C^\star$ comme souhaité. Il ne reste donc 
plus qu'à démontrer que $Q_\weyl \subset Q$.

\subsubsection{Intermède : la permutation des perdants}
\label{subsec:perdants}

On constate sur l'exemple de la figure \ref{fig:ord} que si l'on retire
la ligne du haut du damier triangulaire, et que l'on soustrait $1$ à
tous les nombres restants, on obtient une numérotation qui correspond à
une nouvelle permutation $\weyl'$, qui est ici égale à $(2\,3\,4\,1)$.
Dans ce paragraphe, on montre que cela est vrai de façon générale et
on explique comment obtenir la permutation $\weyl'$ à partir de $\weyl$.

\begin{deftn}
Soit $\weyl$ une permutation de l'ensemble $\{1, \ldots, d\}$.

Un \emph{record} de $\weyl$ est un entier $\weyl(i)$ tel que $\weyl(j) <
\weyl(i)$ pour tout $j < i$. Si $\weyl(i)$ est un record de $\weyl$, on
dit qu'il apparaît en \emph{position} $i$.

La \emph{permutation des perdants} de $\weyl$ est la permutation 
$\weyl'$ de $\{1, \ldots, d-1\}$ définie par récurrence en décrétant
que $\weyl'(i)$ est le plus petit élément de l'ensemble différence
\begin{equation}
\label{eq:perdants}
\big\{ \, \weyl(1), \weyl(2), \ldots, \weyl(i+1) \, \big\} 
\, \backslash \, \big\{ \, \weyl'(1), \ldots, \weyl'(i-1) \, \big\}.
\end{equation}
\end{deftn}

\noindent
La notion de record est classique : elle a déjà été
introduite il y a de nombreuses années dans \cite{renyi} et a depuis
fait l'objet de multiples études, notamment en ce qui concerne leur
distribution asymptotique. Par contre, l'auteur n'a pas réussi à trouver
une trace antérieure de la permutation des perdants.

\medskip

On montre immédiatement par récurrence que l'ensemble 
\eqref{eq:perdants} est toujours de cardinal $2$ (\emph{i.e.} que 
l'ensemble que l'on ôte est toujouts inclus dans le premier), et plus 
précisément qu'il contient exactement l'élément $\weyl(i+1)$, et le 
dernier record de $\weyl$ apparaissant avant $i$. Ainsi si $\weyl(i+1)$ 
n'est pas un record de $\weyl$, on a $\weyl'(i) = \weyl(i+1)$, tandis 
que dans le cas contraire, $\weyl'(i)$ est le record précédent de 
$\weyl$. Dans le cas de la permutation $\weyl = (3\,2\,4\,5\,1)$, on 
voit que les records de $\weyl$ sont les entiers $3$, $4$ et $5$ et que 
la permutation est perdants de $\weyl$ est $\weyl' = (2\,3\,4\,1)$.

\medskip

On peut donner une reformulation moins mathématique des définitions
précédentes qui donne tout son sens à la terminologie. Il faut pour cela
imaginer que les entiers de $1$ à $d$ sont des candidats qui prennent
part à une compétition, dans l'ordre indiqué par la permutation $\weyl$
: si l'on reprend notre exemple, cela signifie que $3$ joue d'abord, $2$
juste après, \emph{etc}. En outre un candidat est d'autant plus fort au
jeu que l'entier qui lui est attaché est grand. Les records
correspondent alors aux records au sens usuel : l'entier $3$ joue en
premier, et donc décroche le record (il n'a pas grand mérite, mais peu
importe) ; ensuite vient le tour de l'entier $2$ dont la performance est
moins bonne, il n'a donc pas le record ; ensuite, joue $4$ qui fait un
nouveau record ; puis $5$ qui bat encore le record ; et enfin $1$ qui
n'améliore certainement pas le record. Les records successifs sont donc
bien $3$, $4$ et $5$. 

La permutation des perdants, quand à elle, s'interprète comme suit. Il
faut imaginer qu'au fur et à mesure que la compétition se déroule, on
met à jour une liste des perdants. Au premier tour, l'entier $3$ joue et
il n'y a pour l'instant aucun perdant ; on écrit donc rien sur la
liste. Ensuite, c'est au tour de l'entier $2$ de jouer ; celui-ci fait
un moins bon résultat et se retrouve ainsi être le premier perdant.
C'est maintenant $4$ qui s'élance, et il subtilise le record à $3$ ;
l'entier $3$ devient comme ceci un perdant et on l'inscrit sur la liste
en dessous de $2$. Et ainsi de suite, on obtient la liste des perdants
--- ou la permutation des perdants pour reprendre la terminologie 
mathématique --- composée dans l'ordre des nombres $2$, $3$, $4$ et
$1$.

Dans la suite, et notamment lors des démonstrations, on continuera 
d'employer la terminologie imagée issue de la métaphore de la 
compétition.

\begin{lemme}
\label{lem:youngperdants}
Soient $\weyl \in \mathfrak S_d$ et $\weyl'$ sa permutation des perdants.
Soit $\ord_{\weyl'} : I' \to \{1, \ldots, d-1\}$ (avec $I' = \{ (i,j) \in
\N^2 \, | \, 1 \leq i \leq j \leq d-1 \}$) la fonction associée à
$\weyl'$. Alors $\ord_{\weyl'}(i,j) = \ord_\weyl(i+1,j+1) - 1$.
\end{lemme}

\begin{proof}
On raisonne sur la représentation de la fonction $\ord_\weyl$ sous forme 
de tableau triangulaire (voir figure \ref{fig:ord}). 
Il s'agit alors de montrer qu'un entier $i \geq 2$ apparaît exactement
$\weyl(i) - \weyl'(i-1)$ fois sur la première ligne. Si $\weyl(i)$ n'est
pas un record, alors manifestement $i$ n'apparaît pas sur la première
ligne et on a déjà dit qu'alors $\weyl(i) = \weyl'(i-1)$. Dans ce cas, 
on a donc bien ce que l'on voulait. Si, au contraire, $\weyl(i)$ est un
record, soit $\weyl(j)$ le record de $\weyl$ qui apparaît juste avant
$i$, \emph{i.e.} $\weyl(j) = \max \{ \weyl(1), \ldots, \weyl(i-1) \}$.
Alors $\weyl'(i-1) = \weyl(j)$ et l'entier $i$ apparaît $\weyl(i) - 
\weyl(j)$ sur la première ligne du tableau. On a donc, à nouveau, bien
vérifié ce que l'on avait annoncé.
\end{proof}

\subsubsection{La fin de la démonstration}

On en revient à la démonstration de la proposition \ref{prop:AQgl}.
Il reste à démontrer que $Q_\weyl \subset Q$ pour toute permutation 
$\weyl \in \mathfrak S_d$.
On fixe à partir de maintenant $\weyl \in \mathfrak S_d$ et un élément $q
= (q_{i,j}) \in Q_\weyl$. On note également $\mu_{i,j}$ les nombres réels
définis par la formule \eqref{eq:rele3}. Par hypothèse $\mu_{1,j} \geq
\mu_{2,j} \geq \cdots \geq \mu_{d,j}$ et, en examinant la définition de
$Q'_\weyl$, on prouve qu'il existe des réels $q_1 \leq q_2 \leq \cdots
\leq q_d$ tels que $q_{i,j} = q_{\ord_\weyl(i,j)}$. On veut montrer
que $q \in Q$, ce qui signifie que les inégalités $\mu_{i,j} \leq
\mu_{i+1,j}$ et $\mu_{i,j} \geq \mu_{i+1,j+1}$ sont satisfaites pour
dès que cela a un sens. 
La première étape consiste à exprimer les $\mu_{i,j}$ en fonction des
$q_i$. Pour cela, on considère $\weyl_1, \ldots, \weyl_d$ les 
permutations
définies par récurrence en convenant que $\weyl_1 = \weyl$ et que
$\weyl_{i+1}$ est la permutation des perdants de $\weyl_i$ ; ainsi
$\weyl_i$ est une permutation de l'ensemble $\{1, \ldots, d+1-i\}$. 
On définit encore les permutations $\weyldd_i \in \mathfrak S_{d+1-i}$
par $\weyldd_i(j) = \weyl_i^{-1}(d+2-i-j)$ où $\weyl_i^{-1}$ désigne bien 
sûr la permutation inverse de $\weyl_i$.

\begin{lemme}
\label{lem:exprmuij}
Avec les notations précédentes, on a pour tout couple $(i,j) \in I$ :
$$\mu_{i,j} = b q_{\weyldd_i(j-i+1)+i-1} - q_j.$$
\end{lemme}

\begin{proof}
D'après le lemme \ref{lem:youngperdants} et les expressions donnant les
valeurs de $\mu_{i,j}$, il suffit de traiter le cas où $i = 1$. La 
formule à démontrer s'écrit alors simplement $\mu_{1,j} = b 
q_{\weyldd_1(j)} - q_j$. Par définition, on a :
\begin{eqnarray*}
\mu_{1,j} & = & -q_{j,d} + b \cdot \sum_{i=1}^j q_{i,j} - b \cdot
\sum_{i=0}^{j-1} q_{i,j-1} \\
& = & -q_{\ord_\weyl(j,d)} + b \cdot \sum_{i=1}^j q_{\ord_\weyl(i,j)}
- b \cdot \sum_{i=0}^{j-1} q_{\ord_\weyl(i,j-1)}.
\end{eqnarray*}
Or, si l'on se souvient de l'interprétation de
la fonction $\ord_\weyl$ en termes de remplissage de tableau, il est
clair que $\ord_\weyl(j,d) = j$ et que l'ensemble $\{\ord_\weyl(1,j),
\ldots, \ord_\weyl(j,j)\}$ consiste en les entiers $s$ tels que
$\weyl(s) \geq d+1-j$. Ainsi, trouve-t-on :
$$ \{\ord_\weyl(1,j), \ldots, \ord_\weyl(j,j)\} = \{\ord_\weyl(1,j-1),
\ldots, \ord_\weyl(j-1,j-1)\} \cup \{ \weyldd_1(j) \}$$
et la formule du lemme en découle.
\end{proof}

On est en mesure à présent de montrer que $\mu_{1,j} \leq \mu_{2,j}$ 
pour tout indice $j \in \{2, \ldots, d\}$. Par définition de 
$\weyldd_2$, on a $\weyl_2 \circ \weyldd_2 (j-1) = d+1-j$ ; autrement 
dit, l'entier $d+1-j$ apparaît en position $\weyldd_2(j-1)$ dans la 
permutation des perdants de $\weyl$. Il a donc forcément joué avant le 
tour $\weyldd_2(j-1)+1$, ce qui se traduit mathématiquement par 
l'inégalité $\weyldd_1(j) \leq \weyldd_2 (j-1) + 1$. Ainsi, grâce au 
lemme \ref{lem:exprmuij}, on obtient :
$$\mu_{1,j} = b q_{\weyldd_1(j)} - q_j \leq b q_{\weyldd_2(j-1) + 1}
- q_j = \mu_{2,j}$$
ce qui est bien ce que l'on désirait. La démonstration de l'inégalité
$\mu_{1,j} \geq \mu_{2,j+1}$ suit une idée analogue. On remarque d'abord
que, d'après le lemme \ref{lem:exprmuij}, la différence entre les deux
nombres à comparer s'exprime comme suit :
$$\mu_{1,j} - \mu_{2,j+1} = b (q_{\weyldd_1(j)} - q_{\weyldd_2(j)+1})
+ q_{j+1} - q_j.$$
Comme $q_{j+1} \geq q_j$, l'égalité que l'on souhaite démontrer est
trivialement satisfaite si $q_{\weyldd_1(j)} \geq q_{\weyldd_2(j)+1}$ et
donc dès que $\weyldd_1(j) > \weyldd_2(j)$. On suppose donc à partir de
maintenant que $\weyldd_1(j) \leq \weyldd_2(j)$. On pose $i_1 = \weyldd_1
(j)$ et $i_2 = \weyldd_2(j)$ ; on a alors $\weyl_1(i_1) = d+1-j$ et 
$\weyl_2(i_2) = d-j$. Selon la permutation $\weyl$, l'entier $d-j$ a
donc forcément joué avant le temps $i_2 + 1$. Par ailleurs, il n'a
pas pu jouer avant le temps $i_1$ car sinon, il aurait été battu par
$d+1-j$ au temps $i_1$ et donc aurait au pire rejoint la liste des
perdants en position $i_1 - 1$. De même, il n'a pas pu jouer en 
$i_1$ car c'était alors le tour de $d+1-j$, ni entre les temps 
$i_1+1$ et $i_2$ car il aurait été alors perdant tout de suite
(étant donné que $d+1-i$ aurait déjà fait une meilleure performance 
avant). On en déduit que $d-j$ a joué au temps $i_2 + 1$, c'est-à-dire
que $\weyl(i_2 + 1) = d-j$, soit encore $\weyldd_1(j+1) = i_2 + 1 = 
\weyldd_2(j) + 1$. 
En appliquant le lemme \ref{lem:exprmuij}, on obtient
$$\mu_{1,j+1} = b q_{\weyldd_1(j+1)} - q_{j+1} =
b q_{\weyldd_2(j) + 1} - q_{j+1} = \mu_{2,j+1}$$
et la conclusion résulte alors de l'hypothèse $\mu_{1,j} \geq 
\mu_{1,j+1}$.

À présent, des deux égalités que l'on vient de prouver, il suit
$\mu_{2,j} \geq \mu_{1,j} \geq \mu_{2,j+1}$ pour tout $j \in \{2,
\ldots, d-1\}$. On peut donc reitérer l'argumentation précédente en
décalant les indices (ou, si l'on préfère, appliquer ce que l'on vient
de démontrer à la permutation $\weyl'$) afin d'obtenir les inégalités
$\mu_{2,j} \leq \mu_{3,j}$ pour $3 \leq j \leq d$, et $\mu_{2,j} \geq
\mu_{3,j+1}$ pour $2 \leq j \leq d-1$. En continuant ainsi, on démontre
bien au final ce que l'on voulait.

\subsection{Démonstration des théorèmes \ref{theo:dimcaruso},
\ref{theo:dimdivelem2} et \ref{theo:dimdivelem}}

\subsubsection{Le cas des variétés $\calX_\mu$}

On commence par démontrer le théorème \ref{theo:dimdivelem2}. 
Soit $\mu = (\mu_1, \ldots, \mu_d)$ un $d$-uplet d'entiers vérifiant 
$\mu_1 \geq \cdots \geq \mu_d$. En passant au
déterminant, on démontre tout de suite que si $b-1$ ne divise pas $\mu_1
+ \cdots + \mu_d$, la variété $\calX_\mu$ est vide. On suppose désormais
que $\mu_1 + \cdots + \mu_d$ est un multiple de $b-1$. On a 
$$\dim_k \calX_\mu = \max \dim_k \calX_\varphi$$
où le maximum est pris sur tous les $\varphi \in \Phi_\Z$ tels que
$\mu_{1,i}(\varphi) = \mu_i$ pour tout $i$. Les congruences énoncées
sont donc une conséquence immédiate du corollaire \ref{cor:dim} et du
théorème \ref{theo:dim}. Pour le reste, on rappelle que l'on se 
restreint au cas où $h \geq 0$. On a donc
$\dim_k \calX_\mu = b'_{Q,R,g,\ell,0} (\mu)$, et, en vertu de la
proposition \ref{prop:bprime}, on est amené à justifier les deux
assertions suivantes :
\begin{itemize}
\item l'image de $R \cap Q_\R$ (où on rappelle que $Q_\R$ est défini 
comme le $\R$-espace vectoriel engendré par $Q$) par la fonction $g$ 
contient
l'ensemble des $d$-uplets $(y_1, \ldots, y_d)$ tels que $b-1$ divise
$y_1 + \cdots + y_d$ ;
\item le nombre $b_{Q,g,\ell,0}(\mu) = a_{Q,g,\ell}(\mu)$ est égal au
minimum qui apparaît dans l'énoncé du théorème \ref{theo:dimdivelem2}.
\end{itemize}
Il est facile de construire un élement $q = (q_{i,j})$ telle que toutes 
les inégalités
des jeux I, II et III définissant $Q$ soient strictes ; on pourra pour
cela s'inspirer de l'exemple de la figure \ref{fig:exemplevarphi}. Un
tel point est dans l'intérieur de $Q$, ce qui assure que $Q$ est
d'intérieur non vide et donc que $Q_\R = E$. Si l'on note $\vec v_1,
\ldots, \vec v_d$ la base canonique de $\R^d$, un calcul facile montre
qu'étant donné un entier $s$ compris entre $1$ et $d$, l'image par $g$
de l'élément $(q_{i,j}) \in R$ défini par $q_{i,j} =
\delta_{(i,j),(s,d)}$ ($\delta$ étant le symbole de Kronecker) est $\vec
w_s = b \vec v_d - \vec v_s$. En outre, un vecteur $y$ de coordonnées
$(y_1, \ldots, y_d)$ se décompose sur la familles de $\vec w_s$ comme
suit :
$$y = \frac b{b-1} \cdot (y_1 + \cdots + y_d) \cdot \vec w_d - (y_1 
\vec w_1 + \cdots + y_d \vec w_d).$$
Ainsi on trouve que $y$ appartient à $g(R) = g(R \cap Q_\R)$ dès que 
$b-1$ divise la somme de ses coordonnées $y_1 + \cdots + y_d$, ce qui
démontre la première assertion.

On en vient à la seconde. Comme l'on ne considère que des éléments $\mu$ 
dans $C$, on a $a_{Q,g,\ell}(\mu) = b_{Q,g,\ell,0, C} (\mu)$ et de même 
$a_{Q_\max,g,\ell}(\mu) = b_{Q_\max,g,\ell,0, C} (\mu)$. D'après la 
proposition \ref{prop:dualconv2} et la description des points extrémaux 
de $A_{Q_\max,g,\ell}$ donnée par la proposition \ref{prop:extrAQmaxgl}, 
il suffit de montrer que $B_{Q,g,\ell,0,C} = B_{Q_\max,g,\ell,0,C}$, 
\emph{i.e.} $A_{Q,g,\ell} + C^\star = A_{Q_\max,g,\ell} + C^\star$, ce 
qui suit de la proposition \ref{prop:AQgl}.

\begin{rem}
Bien entendu, $a_{Q,g,\ell}(\mu)$ s'écrit aussi comme le minimum des
produits scalaires $\left< \alpha | \mu \right>_d$ où $\alpha$ décrit
l'ensemble des points extrémaux de $A_{Q,g,\ell}$. Cela signifie que le
théorème \ref{theo:dimdivelem2} reste vrai si l'on se contente de
prendre le minimum sur le sous-ensemble (strict) $\mathcal S_d$ de
$\mathfrak S_d$, qui est défini au début du \S \ref{subsec:AQgl}. En
d'autres termes, les permutations qui n'appartiennent pas à $\mathcal
S_d$ ne contribuent jamais (\emph{i.e.} pour aucun $\mu \in C$) au
minimum.
\end{rem}

Il reste encore à démontrer que lorsque $\mu$ est $b$-régulier,
le minimum des produits scalaires $\left< \vect \rho_\weyl | \mu
\right>_d$ (pour $\weyl$ décrivant $\mathfrak S_d$) est atteint lorsque
$\weyl = \weyl_0 : i \mapsto d+1-i$, et qu'il vaut alors $\frac 1{b+1}
\cdot \left< 2 \vect \rho | \mu \right>_d$ (on rappelle que $\vec \rho$
est défini par $\vec \rho = (\frac{d-1} 2, \frac{d-3} 2, \ldots,
\frac{1-d} 2) \in \R^d$). On note pour cela $\Reg$ le sous-ensemble de
$\R^d$ formé des éléments $\mu = (\mu_1, \ldots, \mu_d)$ qui sont
$b$-réguliers ; il s'agit d'un cône convexe dont le cône dual est
noté $\Reg^\star$. Avec la proposition \ref{prop:dualconv2}, on voit 
aisément qu'il suffit de démontrer le lemme suivant.

\begin{lemme}
\label{lem:AQmingl}
Soit $\weyl_0$ la permutation $i \mapsto d+1-i$. On a l'égalité :
$$A_{Q_{\weyl_0},g,\ell} = A_{Q_\min,g,\ell} = \frac{2 \vec \rho}{b+1} + 
\Reg^\star.$$
\end{lemme}

\begin{proof}
On a déjà dit que les convexes $A_{Q_{\weyl_0},g,\ell}$ et 
$A_{Q_\min,g,\ell}$ étaient égaux. Pour $i \in \{1, \ldots, d-1\}$, 
on note $\vec v_i \in \R^d$ le vecteur
$\vec v_i = (0, \ldots, 0, 1, -1, 0 \ldots, 0)$ où le $1$ est en $i$-ème
position. Ces vecteurs forment une base de l'hyperplan \og somme des
coordonnées égale $0$ \fg, noté $H$. D'après la définition des points
$b$-réguliers, le cône dual $\Reg^\star$ est le cône convexe engendré
par les vecteurs $\vec w_i = b\,\vec v_{d-i} - \vec v_i$ pour $1 \leq i
\leq d-1$. Par ailleurs, comme dans la démonstration du lemme
\ref{lem:AQptau}, on obtient qu'un élément $y = (y_1, \ldots, y_d) \in 
\R^d$ appartient à $A_{Q_{\weyl_0},g,\ell}$ si, et seulement si
$$\left\{\begin{array}{l}
y_1 + y_2 + \cdots + y_d = 0 \\
(y_1 + \cdots + y_s) - b (y_{d+1-s} + \cdots + y_d) \geq s(d-s),
\quad \forall s \in \{1, \ldots, d-1\}. 
\end{array} \right. $$
Comme le vecteur $\frac{2 \vec \rho}{b+1}$ n'est autre que
l'intersection de ces $d$ hyperplans (vérification immédiate), le
convexe $-\frac{2 \vec \rho}{b+1} + A_{Q_{\weyl_0},g,\ell}$ est le cône
défini dans $H$ par les inégalités $(y_1 + \cdots + y_s) - b (y_{d+1-s}
+ \cdots + y_d) \geq 0$ pour $s \in \{1, \ldots, d-1\}$. Pour conclure,
il suffit de montrer que ce cône est engendré par les vecteurs $\vec
w_i$. Or, il est engendré par les intersections $(d-2)$ à $(d-2)$ des
$(d-1)$ hyperplans (de $H$) frontière des demi-espaces précédents. Un
calcul direct montre enfin que ces intersections sont exactement les
vecteurs $\vec w_i$, ce qui conclut.
\end{proof}

\subsubsection{Le cas des variétés $\calX_{\leq \mu}$}

On en vient maintenant aux variétés $\calX_{\leq \mu}$, c'est-à-dire
à la démonstration du théorème \ref{theo:dimdivelem}. Pour ce faire,
on suit à nouveau la même méthode : on donne une description de
l'ensemble convexe $B_{Q,g,\ell,C^\star}$, à partir de laquelle on
déduit une formule pour la fonction $b_{Q,g,\ell,C^\star}$ d'où
résultera le théorème.

\paragraph{Le calcul de $A_{Q,g,\ell} \cap C$}

Le lemme \ref{lem:AQmingl} donne une description explicite de l'ensemble
$A_{Q_\min,g,\ell}$. Dans ce paragraphe, on se propose de démontrer 
qu'il s'en déduit une autre description simple de l'intersection 
$A_{Q,g,\ell} \cap C$, au moins si $b$ est suffisamment grand. Plus 
précisément, on se propose de démontrer que, si $b \geq b_0$, on a :
\begin{equation}
\label{eq:AQglC}
A_{Q,g,\ell} \cap C = A_{Q_\min,g,\ell} \cap C =
\Big( \frac{2 \vec \rho}{b+1} + \Reg^\star \Big) \cap C
\end{equation}
D'après la proposition \ref{prop:AQgl} (en fait, seule l'inclusion
facile sert), il suffit d'établir l'égalité $(A_{Q_\max,g,\ell} +
C^\star) \cap C = A_{Q_\min,g,\ell} \cap C$. On commence par un
lemme concernant les points extrémaux de $A_{Q_\max,g,\ell}$.

\begin{lemme}
\label{lem:extrAQmaxglC}
On suppose $b \geq \max(b_0, d+1)$. Alors le seul point extrémal 
de $A_{Q_\max,g,\ell}$ qui appartient à $C$ est $\frac {2 \vec
\rho} {b+1}$.
\end{lemme}

\begin{proof}
Comme on a supposé $b \geq b_0$, la proposition \ref{prop:extrAQmaxgl} 
s'applique et les points extrémaux de $A_{Q_\max,g,\ell}$ sont de la 
forme $\vec \rho_\weyl$. On est ainsi ramené à montrer que le vecteur 
$\vec \rho_\weyl$ n'appartient pas à $C$ dès que $\weyl$ n'est pas la 
permutation $i \mapsto d+1-i$. Or, par définition de $C$ et d'après la 
formule donnant les coordonnées de $\vec \rho_\weyl$, le fait que $\vec 
\rho_\weyl$ soit élément de $C$ implique que pour tout $i \in \{1, 
\ldots, d-1\}$, l'égalité suivante est satisfaite :
$$\sum_{n=1}^\infty \frac{
\weyl^n(i+1) - \weyl^n(i) + 1}{b^n} \geq 0.$$
Du fait que $b \geq d+1$, ceci implique que le premier
terme de la somme (\emph{i.e.} pour $n = 1)$ est lui-même positif ou
nul. Ainsi $\weyl(i+1) \geq \weyl(i) - 1$ pour tout indice $i \in \{1, 
\ldots, d-1\}$ et il est alors clair que $\weyl$ ne peut être que la 
permutation $i \mapsto d+1-i$.
\end{proof}

\begin{lemme}
\label{lem:AQvois}
Soit $y = (y_1, \ldots, y_d)$ un élément de $A_{Q_\min,g,\ell}$ tel que
$y_i \leq y_{i+1} + 1$ pour tout $i \in \{1, \ldots, d-1\}$. Alors $y \in
A_{Q_\max,g,\ell}$.
\end{lemme}

\begin{proof}
D'après la descrition de $A_{Q_\max,g,\ell}$ donnée par le système
\eqref{eq:AQmaxgl}, il s'agit de montrer que $f_T(y) \geq 0$ pour toute 
partie $T$ de $\{1, \ldots, d\}$. Soit $T$ une telle partie. Si 
on note $t_1 < \cdots < t_s$ ses éléments (avec $s = \card\,T$), 
l'inégalité $f_T(y) \geq 0$ devient
\begin{equation}
\label{eq:inAQmax}
y_1 + \cdots + y_s - b \cdot \sum_{i=1}^s y_{d+1-t_i} \geq s(d-s) + 
b \cdot \sum_{i=1}^s (t_i - i).
\end{equation}
Par ailleurs, comme $y$ est pris dans $A_{Q_\min,g,\ell}$, on a
$$y_1 + \cdots + y_s - b \cdot \sum_{i=1}^s y_{d+1-t_i} \geq 
s(d-s) + b \cdot \sum_{i=1}^s (y_{d+1-i} - y_{d+1-t_i}).$$
Or, on a $t_i \geq i$ pour tout $i$ d'où, d'après l'hypothèse, on
tire $y_{d+1-i} - y_{d+1-t_i} \leq t_i - i$. L'inégalité \eqref{eq:inAQmax} 
s'ensuit.
\end{proof}

On peut à présent démontrer l'égalité \eqref{eq:AQglC} lorsque $b \geq 
\max(b_0, d+1)$. On rappelle qu'il suffit de montrer que le polytope 
$P_1 = A_{Q_\min,g,\ell} \cap C$ est inclus dans le polytope
$P_2 = (A_{Q_\max, g,\ell} + C^\star) \cap C$ et, pour cela, on peut
se contenter de prouver que tous les sommets de $P_1$ (en incluant ceux 
à l'infini) sont dans $P_2$. On vérifie tout de suite que $\Reg \subset
C^\star$ ; ainsi $C \subset \Reg^\star$ et la description donnée par le
lemme \ref{lem:AQmingl} implique $A_{Q_\max,g,\ell} + C =
A_{Q_\max,g,\ell}$ et finalement $P_1 + C = P_1$. Comme en plus $P_1
\subset C$, on en déduit que les sommets à l'infini de $P_1$ sont ceux
de $C$. Mais du fait que $C \subset C^\star$ (vérification facile), il
suit que $P_2 + C = P_2$. Ainsi les sommets à l'infini de $C$ sont bien
dans $P_2$.

Il reste donc à traiter le cas des sommets de $P_1$ à distance fini, 
c'est-à-dire des points extrémaux de $P_1$. Soient $F_1, \ldots, 
F_{d-1}$ les facettes de $A_{Q_\min,g,\ell}$ ; ce sont des cônes 
simpliciaux de dimension $d-2$ issus de $\frac{2 \vec \rho} {b+1}$. Le 
lemme \ref{lem:AQvois} assure que les convexes $A_{Q_\max,g,\ell} + 
C^\star$ et $A_{Q_\min,g,\ell}$ sont égaux sur un voisinage de $\frac{2 
\vec \rho} {b+1}$. On en déduit que $A_{Q_\max,g,\ell} + C^\star$ a 
exactement $d-1$ facettes issues de $\frac{2 \vec \rho} {b+1}$ et que 
l'on peut numéroter celles-ci $F'_1, \ldots, F'_{d-1}$ de sorte que 
$F'_i \subset F_i$ pour tout $i$. Pour chaque $i$, les sommets de $F'_i$ 
à distance finie sont des points extrémaux de $A_{Q_\max,g,\ell} + 
C^\star$ et donc \emph{a fortiori} des points extrémaux de 
$A_{Q_\max,g,\ell}$. En particulier, d'après la proposition 
\ref{prop:extrAQmaxgl} et le lemme \ref{lem:extrAQmaxglC}, ceux-ci ne 
sont pas dans $C$ à l'exception de $\frac {2 \vec \rho} {b+1}$. Les 
sommets à l'infini, quant à eux, correspondent aux directions de 
$\Reg^\star$, c'est-à-dire aux vecteurs $\vec w_i$ introduits dans la 
démonstration du lemme \ref{lem:AQmingl}. Comme aucun des $\vec w_i$, ni 
de leurs opposés, n'est dans $C$, on en déduit finalement que $F_i \cap 
C = F'_i \cap C$.

Il est maintenant facile de conclure : tout point extrémal de $P_1 =
A_{Q_\min,g,\ell} \cap C$ appartient à l'une des faces $F_i$ et bien sûr
également à $C$. Il appartient donc à la face $F'_i$ correspondante, et
par suite à $A_{Q_\max,g,\ell} + C^\star$ puis à $P_2$.

\paragraph{Obtention de la majoration}

On sait que la dimension de la variété $\calX_{\leq \mu}$ est majorée
par la quantité $b_{Q,g,\ell,C^\star}(\mu)$ qui d'après la proposition
\ref{prop:dualconv2} vaut 
$\inf_{\alpha \in B_{Q,g,\ell,C^\star}} \left< \alpha | \mu \right>_E$
où, on rappelle que $B_{Q,g,\ell,C^\star} = A_{Q,g,\ell} \cap C$. Pour
démontrer la première majoration :
$$\dim_k \calX_{\leq \mu} \leq \frac{\left< 2 \vect \rho | \mu \right>_d}
{b+1}$$
il suffit donc de prouver que le vecteur $\frac{2 \vect \rho}{b+1}$ est
dans $B_{Q,g,\ell,C^\star}$. Il est déjà évident qu'il appartient à $C$.
De plus, le lemme \ref{lem:AQmingl} montre qu'il appartient à
$A_{Q_\min,g,\ell}$, d'où il suit, grâce au lemme \ref{lem:AQvois},
qu'il est aussi dans $A_{Q_\max,g,\ell}$ et donc \emph{a fortiori} 
dans $A_{Q,g,\ell}$.
On en vient finalement à la deuxième majoration du théorème : on suppose 
$b \geq \max(b_0, d+1)$ et on veut montrer
\begin{equation}
\label{eq:majrenf}
\dim_k \calX_{\leq \mu} \leq \sup_{\substack{\mu' \leq \mu \\ \mu'
\in \Reg}} \frac{\left< 2 \vect \rho | \mu' \right>_d} {b+1}.
\end{equation}
Si l'on définit la forme linéaire $L : \R^d \to \R$, $\mu \mapsto \frac
1 {b+1}{\cdot}\left< 2 \vect \rho | \mu \right>_d$, on remarque que le
majorant dans \eqref{eq:majrenf} n'est autre que
$b_{\Reg,\id,L,C^\star}(\mu)$. Pour terminer la preuve, il suffit donc
de montrer que $b_{\Reg,\id,L,C^\star} = b_{Q,g,\ell,C^\star}$. Or, en
déroulant les définitions, on trouve
$A_{\Reg,\id,L} = \frac{2 \vec \rho}{b+1} + \Reg^\star$
et donc $A_{\Reg,\id,L} = A_{Q_\min,g,\ell}$ d'après le lemme
\ref{lem:AQmingl}. L'égalité \eqref{eq:AQglC} implique alors 
$A_{\Reg,\id,L} \cap C = A_{Q,g,\ell} \cap C$ à partir de quoi la
proposition \ref{prop:dualconv2} permet de conclure.

\paragraph{Obtention de la minoration}

Il ne reste plus qu'à démontrer la minoration.
Étant donné que si $\mu' \leq \mu$, on a tautologiquement $\calX_{\leq
\mu'} \subset \calX_{\leq \mu}$ et donc $\dim_k \calX_{\leq \mu'} \leq
\dim_k \calX_{\leq \mu}$, il suffit de montrer que si $\mu = (\mu_1,
\ldots, \mu_d)$ est fortement intégralement $b$-régulier, alors :
$$\dim_k \calX_{\leq \mu} \geq \frac{\left< 2 \vect \rho | \mu \right>_d}
{b+1} - (d-1)^2 - \frac {(d-2)^2} 4.$$
Pour cela, il suffit de trouver un élément $q \in Q \cap R$ tel que
$g(q) \in \mu - C^\star$ et $\ell(q) \geq \frac{\left< 2 \vect \rho | \mu
\right>_d} {b+1} - (d-1)^2 - \frac {(d-2)^2} 4$. On pose, pour tout $i$,
$q'_i = \frac {\mu_i + b \mu_{d+1-i}}{b^2-1}$. 
La condition de divisibilité qui apparaît dans la définition d'un
$d$-uplet fortement intégralement $b$-régulier assure que si $q'_1 + \cdots
+ q'_d$ est un entier alors que les conditions d'inégalité se traduisent
par $q'_1 \leq q'_2 \leq \cdots \leq q'_{d-1} \leq q'_d - d$.
Pour $i < d$, on note $q_i$ la partie entière supérieure de $q'_i$
(\emph{i.e.} le plus petit entier plus grand ou égal à $q'_i$) et
on définit $q_d$ de sorte que $q_1 + \cdots + q_d = q'_1 + \cdots +
q'_d$. Tous les $q_i$ sont alors des entiers, et on vérifie directement
que la suite qu'ils forment est croissante. Ainsi le vecteur $q$ de
coordonnées $q_{i,j} = q_{d-j+i}$ (pour $(i,j) \in I$) appartient à $Q$.
En outre, un calcul immédiat montre que pour tout $s$, on a :
\begin{eqnarray*}
\left< \vec \mu_1 + \cdots + \vec \mu_s | q \right>_E 
& = & b(q_d + \cdots + q_{d+1-s}) - (q_1 + \cdots + q_s) \\
& = & - b (q_1 + \cdots + q_{d-s}) - (q_1 + \cdots + q_s) \\
& \leq & - b (q'_1 + \cdots + q'_{d-s}) - (q'_1 + \cdots + q'_s) \\
& = & b(q'_d + \cdots + q'_{d+1-s}) - (q'_1 + \cdots + q'_s)
\, = \,\mu_1 + \cdots + \mu_s
\end{eqnarray*}
et l'égalité est atteinte lorsque $s = d$. Autrement dit, on a bien 
$g(q) \in \mu - C^\star$. Il reste à minorer $\ell(q)$ :
$$\ell(q) = \left< \vect \delta | q \right>_d = \sum_{i=1}^d (2i-d-1)
q_i = \frac{\left< 2 \vect \rho | \mu \right>_d}{b+1} + 
\sum_{i=1}^d (2i-d-1) (q_i - q'_i).$$
Or $q_d - q'_d$ est compris entre $1-d$ et $0$, tandis que les $q_i
- q'_i$ pour $i < d$ sont compris entre $0$ et $1$. On en déduit
que :
$$\ell(q) \geq \frac{\left< 2 \vect \rho | \mu \right>_d}{b+1} -
(d-1)^2 - \sum_{i=2}^d \max(2i-d-1,0) \geq \frac{\left< 2 \vect \rho | 
\mu \right>_d}{b+1} - (d-1)^2 - \frac{(d-2)^2} 4.$$

\subsubsection{Le cas des variétés $\calX_{\leq e}$}
\label{subsec:demcaruso}

\paragraph{Preuve de la majoration}

Soit $e$ un nombre entier. La variété $\calX_{\leq e}$ s'écrit 
manifestement comme l'union disjointe des variétés $\calX_{\leq \mu}$ 
pour $\mu = (\mu_1, \cdots, \mu_d)$ vérifiant $e \geq \mu_1 \geq \cdots 
\geq \mu_d \geq 0$. Il suffit donc de démontrer que, si $\mu$ est tel 
qu'on vient de le décrire, la dimension de $\calX_{\leq \mu}$ est 
majorée par $[\frac{d^2} 4] \cdot \frac e{b+1}$. Or, par le théorème 
\ref{theo:dimdivelem}, on sait que cette dimension est majorée par 
$\frac 1{b+1} \cdot \left< 2 \vect \rho | \mu \right>_d$ et on a 
$$\frac{\left< 2 \vect \rho | \mu \right>_d}{b+1} = \sum_{i=1}^d 
\frac{d+1-i}{b+1} \cdot \mu_i \leq e \cdot \sum_{i=1}^{[d/2]} 
\frac{d+1-i}{b+1} = \cro{\frac{d^2} 4} \cdot \frac e {b+1}.$$ La 
conclusion en résulte.

\paragraph{Preuve de la minoration}

Pour obtenir la minoration, on doit estimer la quantité
$b'_{Q,R,f,\ell,C}(e,0)$\footnote{Remarquez que la proposition
\ref{prop:bprime} et la discussion qui suit sa démontration nous dit
qu'elle diffère de $b_{Q,f,\ell,C}(e,0)$ d'une quantité bornée.
Cependant, pour arriver à la minoration énoncée dans le théorème
\ref{theo:dimcaruso}, on a besoin d'être plus précis que cela.}.
Or, il est évident par définition que si $q \in Q \cap R$ vérifie $f(q)
\in (e,0) + C$, cette quantité est minorée par $\ell(q)$. Il suffit donc,
pour terminer la démonstration du théorème \ref{theo:dimcaruso}, de
construire un élément $q = (q_{i,j}) \in Q \cap R$ tel que :
$$f(q) \in (e,0) + C \quad \text{et} \quad
\ell(q) = \cro{\frac{d^2} 4} \cdot \cro{\frac{e-b+2}{b+1}}.$$
Soient $n = [\frac{e-b+2} {b+1}]$ et $m$ le reste de la division 
euclidienne de $(-n)$ par $b-1$. On considère l'élément $q$ défini comme 
suit :
$$\begin{array}{rcll}
q_{i,j} & = & \frac{m + b n}{b-1} & \text{si } 0 \leq j - i < \frac d 2\\
q_{i,j} & = & \frac{m + n}{b-1} & \text{si } j - i \geq \frac d 2.
\end{array}$$
Il est clair que $\frac{m + b n}{b+1} \geq \frac{m+n}{b+1}$. À partir
de là et du fait que la valeur de $q_{i,j}$ ne dépend que de la
différence $j-i$, il résulte que $q$ appartient à $Q_\min$, et donc
\emph{a fortiori} à $Q$. Comme, par ailleurs, il suit de la définition
de $m$ que tous les $q_{i,j}$ sont entiers, on a bien $q \in Q \cap R$.
D'autre part, un calcul direct donne $\left< \vec \mu_1 | q \right>_E =
m + n(b+1) \leq b-2 + e - b + 2 = e$ et $\left< \vec \mu_d | q \right>_E
= m \geq 0$, d'où il suit $f(q) \in (e,0) + C$. Finalement, à nouveau un
calcul facile conduit à la valeur souhaitée pour $\ell(q) = \left< \vect
\delta | q \right>_E$.

\subsection{Points extrémaux de $A_{Q,g,\ell} \cap C$ : quelques
exemples}

J'aimerais revenir un instant sur la démonstration de la majoration du 
théorème \ref{theo:dimdivelem}. Elle procédait ainsi. Dans un premier 
temps, on a utilisé le théorème \ref{theo:dim} afin de majorer la 
dimension de $\calX_{\leq \mu}$ par la quantité $b_{Q,g,\ell,C^\star}(\mu)$,
et ensuite on a prouvé l'égalité $b_{Q,g,\ell,C^\star}(\mu) = 
b_{\Reg,\id,L,C^\star}(\mu)$ en montrant que chacun de ces deux nombres
s'égalisait avec :
$$M(\mu) = \inf_{\alpha \in K} \left< \alpha | \mu \right>_d
\qquad \text{où} \quad
K = \Big( \frac{2 \vec \rho}{b+1} + \Reg^\star\Big) \cap C.$$
Ainsi, l'expression $M(\mu)$ est une nouvelle façon d'exprimer le
majorant obtenu. En outre, $K$ est un polytope qui n'a qu'un nombre
fini de points extrémaux, et si on appelle $\Lambda$ leur ensemble,
on a :
$$M(\mu) = \inf_{\lambda \in \Lambda} \left< \lambda | \mu \right>_d$$
pour tout $\mu \in \Reg + C^\star = C^\star$, c'est-à-dire dès que
$\mu_1 \geq \cdots \geq \mu_d$ si, comme d'habitude, on appelle $\mu_i$
($1 \leq i \leq d$) les coordonnées de $\mu$.

Déterminer les points extrémaux de $K$ apparaît donc comme une question
naturelle et importante. Malheureusement, bien que $K$ soit défini de
façon plutôt simple (c'est l'intersection de $2d-2$ demi-espaces affines
dans un espace de dimension $d-1$), la combinatoire de ses points
extrémaux paraît compliquée. Par exemple, le nombre de ces points semble
exploser très rapidement lorsque $d$ augmente. Ces deux dernières
impressions se sont forgées à la suite de calculs numériques effectués
grâce au logiciel {\tt polymake} \cite{polymake}. La deuxième ligne du
tableau \ref{tab:nbextr} donne, par exemple, le nombre de points
extrémaux de $K$ pour diverses valeurs de $d$ avec $b = 10\,000$ (la
valeur de $b$ importe peu pour la complexité de l'ensemble des points
extrémaux de $K$ comme l'explique le théorème \ref{theo:depb} ci-après). 
La croissance est apparemment exponentielle ; on notera qu'elle ne peut, 
en tout cas, pas être pire car, un point extrémal étant situé à 
l'intersection de $d$ hyperplans parmi les $2d$ définissant $K$, leur 
nombre est trivialement majoré par $\binom {2d-2}{d-1} \leq 4^{d-1}$.

\begin{table}[h]
\begin{center}
\begin{tabular}{|>{\raggedright}m{3.3cm}||c|c|c|c|c|c|c|c|c|}
\hline
Dimension $d$ & 2 & 3 & 4 & 5 & 6 & 7 & 8 & 9 & 10 \\
\hline
Nombre de points extrémaux de $K$
& 1 & 3 & 6 & 15 & 33 & 70 & 136 & 347 & 667 \\
\hline
Nombre de points extrémaux de $K{+}C^\star$
& 1 & 3 & 5 & 9 & 17 & 31 & 47 & 103 & 163 \\
\hline
\end{tabular}
\end{center}
\caption{Nombre de points extrémaux de $K$ et $K+C^\star$
pour $b = 10\,000$}
\label{tab:nbextr}
\end{table}

\medskip

Cependant, les points extrémaux que l'on a trouvé précédemment ne sont
pas vraiment tous pertinents. En effet, comme l'on ne s'intéresse aux 
variétés $\calX_{\leq \mu}$ que lorsque les coordonnées de $\mu$ sont 
triés par ordre décroissant\footnote{En fait, si $\mu$ est quelconque, 
on montre qu'il existe $\mu' = (\mu'_1 \geq \cdots \mu'_d)$, facilement 
explicitable à partir de $\mu$, tel que $\calX_{\leq \mu} = \calX_{\leq 
\mu'}$.}, c'est-à-dire lorsque $\mu \in C$, on peut, au lieu de 
considérer la fonction $b_{Q,f,\ell,C^\star}$, travailler plutôt avec 
la fonction $b_{Q,f,\ell,C^\star,C}$ définie par :
$$\begin{array}{rcll}
b_{Q,g,\ell,C^\star,C}(y) & = & b_{Q,g,\ell,C^\star} (y)
& \text{si } y \in C \\
& = & -\infty & \text{sinon}.
\end{array}$$
Par la proposition \ref{prop:dualconv2}, on a
$$M'(\mu) = \inf_{\alpha \in K'} \left< \alpha | \mu \right>_d
\qquad \text{où} \quad K' = K + C^\star$$
Ainsi, plutôt que décrire les points extrémaux de $K$, on a plutôt envie
de comprendre ceux de $K+C^\star$ qui forment un sous-ensemble (en général
strict) des points extrémaux de $K$. Si l'on reprend les exemples
précédents ($b = 10\:000$, $d$ petit), on constate sur le tableau 
\ref{tab:nbextr}, que l'on élimine en effet ainsi un bon paquet
de points extrémaux, au moins pour les petites valeurs de $d$.

Il a été dit précédemment que la dépendance en $b$ est moins délicate à 
comprendre. En effet, on a le théorème suivant.

\begin{theo}
\label{theo:depb}
On pose, comme précédemment, 
$$K(b) = \Big( \frac{2 \vec \rho}{b+1} + \Reg^\star\Big) \cap C
\quad \text{et} \quad K'(b) = K(b) + C^\star$$
et on note $\Lambda(b)$ et $\Lambda'(b)$ l'ensemble des points extrémaux
de $K(b)$ et $K'(b)$ respectivement. Alors, pour $b$ suffisamment grand,
le cardinal de $\Lambda(b)$ (resp. $\Lambda'(b)$) est constant et les
coordonnées de points de cet ensemble s'expriment comme des fractions
rationnelles en $b$.
\end{theo}

\begin{proof}
Le théorème résulte du fait que les méthodes de calcul de points
extrémaux de polytopes s'appliquent dans n'importe quel corps 
ordonné. Ici donc, on peut voir $K(b)$ et $K'(b)$ comme des
polytopes définis sur le corps réel $\R(b)$ muni de l'ordre qui fait
de $b$ un élément infiniment grand. Ces polytopes ont alors bien sûr
un nombre de sommets qui ne dépend pas de $b$ et les coordonnées de ces 
sommets sont des éléments de $\R(b)$, c'est-à-dire des fractions 
rationnelles en $b$.
Il reste à justifier que ces expressions redonnent bien les points
extrémaux de $K(b)$ et $K'(b)$ lorsque l'on spécialise $b$ en une
valeur suffisamment grande. Mais c'est évident car le calcul des 
sommets faits dans $\R(b)$ est valable dès que $b$ satisfait un
certain nombre fini d'inégalités, et donc en particulier dès que 
$b$ est suffisamment grand.
\end{proof}

\noindent
Le tableau \ref{tab:coordextr} montre les fractions
rationnelles que l'on obtient pour les petites valeurs de $d$.

\begin{table}
\def\espace{\vphantom{\frac {\displaystyle \sum} b}}
\def\espaced{\vphantom{\frac {\displaystyle \sum} {\displaystyle \sum}}}
$$\begin{array}{|c|>{\centering}m{2cm}|>{\displaystyle}c|}
\hline
\text{Dimension } d & Domaine de validité & 
\text{Points extrémaux de } K+C^\star \\
\hline
\hline
2 & $b \geq 2$ &  \frac{2 \vec \rho}{b+1} = \frac{(1,-1)}{b+1} \espaced \\
\hline
& & \frac{2 \vec \rho}{b+1} = \frac {(2,0,-2)} {b+1} \espace \\ 
3 & $b \geq 2$ & \frac {(-2,-2,4)} {b+2} \espace \\
& & \frac {(4,-2,-2)} {b+2} \espaced \\
\hline
& & \frac{2 \vec \rho}{b+1} = \frac {(3,1,-1,-3)} {b+1} \espace \\
& & \frac {(3,3,3,-9)} {b+3} \espace \\
4 & $b \geq 3$ & \frac {(9,-3,-3,-3)} {b+3} \espace \\
& & \frac {(1,-1,-1,1)} {b-1} + \frac {(4,0,0,-4)} {b+1} \espace \\
& & \frac {(-1,1,1,-1)} {b-1} + \frac {(4,0,0,-4)} {b+1} \espaced \\
\hline
& & \frac{2 \vec \rho}{b+1} = \frac {(4,2,0,-2,-4)} {b+1} \espace \\
& & \frac {(4,4,4,4,-16)} {b+4} \espace \\
& & \frac {(16,-4,-4,-4,-4)} {b+4} \espace \\ 
& & \frac {(3,-2,-2,-2,3)} {b-1} + \frac {(7,0,0,0,-7)} {b+1} \espace \\
5 & $b \geq 6$ & \frac {(-3,2,2,2,-3)} {b-1} + \frac {(7,0,0,0,-7)} {b+1} \espace\\
& & \frac {(-2,2,0,0,0)} b + \frac {(6,0,0,0,-6)} {b+1} \espace \\
& & \frac {(0,0,0,-2,2)} b + \frac {(6,0,0,0,-6)} {b+1} \espace \\
& & \frac {(4, 0,0,0,-4)} {b+1} + \frac {(0,2,2,-4,0)} {b+2} \espace \\
& & \frac {(4, 0,0,0,-4)} {b+1} + \frac {(0,4,-2,-2,0)} {b+2} \espaced \\
\hline
\end{array}$$
\caption{Coordonnées des points extrémaux de $K+C^\star$}
\label{tab:coordextr}
\end{table}

\section{Perspectives et conjectures}
\label{sec:conjectures}

\subsection{Peut-on espérer une formule exacte pour la dimension ?}
\label{subsec:dimexacte}

Si $h \geq 0$, le théorème \ref{theo:dim} donne une
formule exacte pour la dimension des variétés $\calX_\varphi$. On peut
donc raisonnablement penser que, dans ce cas, il est possible d'en
déduire une formule exacte
pour la dimension de $\calX_{\leq e}$, $\calX_\mu$ et $\calX_{\leq
\mu}$. Et de fait, on dispose d'une telle formule car on peut toujours 
écrire (comme nous l'avons déjà fait plusieurs fois) :
\begin{equation}
\label{eq:dimexacte}
\dim_k \calX_{\leq e} = b'_{Q,R,f,\ell,\R^+ \times \R^-}(0,e) = 
\sup_{\substack{q \in Q \cap R \\ f(q) \in (e,0) - (\R^+ \times \R^-)}} 
\ell(q)
\end{equation}
ainsi que des expressions analogues pour les autres variétés. Les
notations dans la formule \eqref{eq:dimexacte} sont celles qui ont
été utilisées dans les sections précédentes ; on renvoie le lecteur 
aux débuts des \S\S \ref{subsec:conephi} et \ref{subsec:vecmui} pour
un récapitulatif rapide des définitions. Lorsque $h \geq 0$, calculer la 
dimension de $\calX_{\leq e}$ revient ainsi à calculer le nombre 
$b'_{Q,R,f,\ell,\R^+ \times \R^-}(0,e)$. Comme ce dernier s'exprime
comme le maximum d'une forme linéaire sur un ensemble fini on peut, en
un certain sens, considérer que le problème est résolu ; en tout cas, il
est aisé à partir de là d'écrire un algorithme qui répond à la question
pour des entiers $d$, $b$ et un $d$-uplet $\mu$ donnés. Toutefois, cela
n'est pas entièrement satisfaisant car l'on aimerait comprendre par
exemple le comportement précis de la dimension de $\calX_{\leq e}$ 
lorsque les paramètres $d$, $b$ et $e$ varient. Pour ce
type de questions, l'approche algorithmique naïve, que l'on vient de 
présenter, s'avère insuffisante.  À l'opposé de cette approche
algorithmique, il y a un théorème général de logique qui prédit la
dépendance de $b'_{Q,R,f,\ell,\R^+ \times \R^-}(0,e)$ en fonction de
$e$. Voici ce qu'il implique dans notre cas.

\begin{theo}
\label{theo:presburger}
On suppose que $h \neq 0$. Alors, il existe un entier $N$ et une
fonction $f : \Z/N\Z \to \Q$ tel que, pour $e$ suffisamment grand, on
ait :
$$\dim_k \calX_{\leq e} = \cro{\frac{d^2} 4} \cdot \frac e{b+1} + 
f(e \mod N).$$
\end{theo}

\begin{proof}
D'après la formule \eqref{eq:dimexacte}, la différence $\delta(e) =
\dim_k \calX_{\leq e} - [\frac{d^2} 4] \cdot \frac e{b+1}$ est définie
par une formule de l'arithmétique de Presburger ayant la variable libre
$e$. Par le théorème d'élimination des quantificateurs dans
l'arithmétique de Presburger, cette formule est équivalente à une
formule sans quantificateurs. Par ailleurs, le théorème
\ref{theo:dimcaruso} montre que la fonction $\delta : e \mapsto
\delta(e)$ est bornée sur $\N$. On déduit facilement à partir de là que,
pour $e$ suffisamment grand, elle ne dépend que de la réduction de $e$
modulo un certain entier $N$. C'est exactement ce qu'il fallait
démontrer.
\end{proof}

On peut reformuler le théorème précédent en disant que la série
génératrice
$$\sum_{e = 0}^\infty (\dim_k \calX_{\leq e}) \cdot X^e.$$
est, en fait, une fraction rationnelle. Ce théorème est probablemement 
intéressant sur le plan théorique mais, d'une point de vue pratique, le 
théorème \ref{theo:presburger} est absolument inutile car il ne dit rien 
ni sur l'entier $N$, ni sur la fonction $f$, ni sur le moment à partir 
duquel la formule pour la dimension est correcte. On souligne en outre, 
au cas où l'énoncé n'était pas clair sur ce point, que ces données 
dépendent \emph{a priori} de $d$ et de $b$. Un élément positif malgré 
tout est qu'il existe des algorithmes pour les calculer. Par contre, 
malheureusement, au delà de la dimension $3$ (pour laquelle on peut 
encore faire les calculs à la main), il n'est pas envisageable 
d'utiliser de tels outils, ceux-ci étant (à l'heure actuelle) trop peu 
efficaces.

Le théorème \ref{theo:presburger} admet, bien sûr, des analogues pour 
les variétés $\calX_\mu$ et $\calX_{\leq \mu}$ qui sont peu ou prou 
équivalents à la rationnalité des séries
$$\sum_{\mu \in \N^d} (\dim_k \calX_\mu) 
\cdot X_1^{\mu_1} X_2^{\mu_2} \cdots X_d^{\mu_d}
\quad \text{et} \quad 
\sum_{\mu \in \N^d} (\dim_k \calX_{\leq \mu}) 
\cdot X_1^{\mu_1} X_2^{\mu_2} \cdots X_d^{\mu_d}$$
où les entiers $\mu_i$ désignent les coordonnées de $\mu$.

\subsubsection{Calcul en petites dimensions}

En guise d'illustration du résultat du théorème \ref{theo:presburger}
(ou plutôt de l'un de ses analogues qui viennent d'être évoqués), on
se propose de calculer les dimensions exactes des variétés $\calX_\mu$ 
lorsque $d = 2$ et également lorsque $d = 3$ dans certains cas. 
Pour cela, plutôt que d'utiliser les $q_{i,j}$ pour paramétrer les
$d$-uplet $\varphi = (\varphi_1, \ldots, \varphi_d)$ comme cela a été 
fait jusqu'à présent, on va plutôt travailler ici avec les $\mu_{i,j}$,
ce qui sera plus commode. Les inégalités qui définissent l'ensemble $Q$ 
s'écrivent
$$\begin{array}{ccl}
\mu_{i-1,j-1} \leq \mu_{i,j} \leq \mu_{i-1,j} &
\text{ et } &
\displaystyle b \mu_{i,i} + b \sum_{s=i+1}^j (\mu_{i,s} 
- \mu_{i+1,s}) + \sum_{s=j+1}^d (\mu_{i,s} - \mu_{i+1,s}) \\
& & \hspace{0.4cm} \leq
\displaystyle b \mu_{i-1,i-1} + b \sum_{s=i}^{j-1} (\mu_{i-1,s} 
- \mu_{i,s}) + \sum_{s=j}^d (\mu_{i-1,s} - \mu_{i,s})
\end{array}$$
pour tout couple d'entiers $(i,j)$ avec $2 \leq i \leq j \leq d$,
alors que les conditions d'intégrité, qui définissent le réseau $R$,
sont données par la proposition \ref{prop:integrite} :
$$\begin{array}{rl}
\forall (i,j) \in I, & \mu_{i,j} \in \Z \\
\forall i \in \{1, \ldots, d\}, &
\mu_{i,i} + \mu_{i,i+1} + \cdots + \mu_{i,d} \equiv 0 \pmod{b-1}.
\end{array}$$
On rappelle que si $\varphi$ est l'élément de $\Phi$ correspondant à
une donnée $(\mu_{i,j})$ satisfaisant les conditions précédentes, alors
pour tout réseau $L \subset M$, les exposants des diviseurs élémentaires 
du $k[[u]]$-module engendré par $\sigma(L)$ par rapport à $L$
sont les $\mu_i = \mu_{1,i}$ (voir propositions
\ref{prop:varphi} et \ref{prop:bijphipsi}) et que :
$$\dim(\varphi) = \sum_{j=1}^d (d+1-j) \cdot \mu_{1,j} - \sum_{(i,j) \in
I} \mu_{i,j} = \sum_{j=1}^d (d+1-j) \cdot \mu_j -
\sum_{(i,j) \in I} \mu_{i,j}$$
(voir lemme \ref{lem:dim}). Dans la suite, on supposera toujours
que $h \geq 0$ de sorte que la quantité précédente s'égalise avec
la dimension de la variété $\calX_\varphi$. On note $\lceil x \rceil$,
la partie entière supérieure du nombre réel $x$, c'est-à-dire le plus
petit entier supérieur ou égal à $x$. On pose aussi $\df(x) = \lceil 
x \rceil - x$ ; c'est à l'évidence un nombre compris entre $0$ et $1$ 
qui ne dépend que de la congruence de $x$ modulo $\Z$.

\paragraph{En dimension $\mathbf 2$}

D'après ce que l'on vient de rappeler, étant donnés des nombres entiers
$\mu_1 \geq \mu_2$ tels que $b-1$ divise $\mu_1 + \mu_2$, calculer la
dimension de la variété $\calX_{(\mu_1, \mu_2)}$ revient à maximiser
le nombre $\mu_1 - \mu_{1,2}$ sous les contraintes
$$\left\{ \begin{array}{l}
\mu_1 \leq \mu_{1,2} \leq \mu_2 \\
(b+1) \mu_{1,2} \geq b \mu_1 + \mu_2 \\
b-1 \text{ divise } \mu_{1,2}
\end{array} \right.$$
En écrivant $\mu_{1,2} = (b-1) x$, on voit tout de suite que le maximum
cherché est atteint pour $x = \lceil \frac{b\mu_1 + \mu_2}{b^2-1}
\rceil$. Ainsi, obtient-on :
\begin{equation}
\label{eq:dimmu12}
\dim_k \calX_{(\mu_1, \mu_2)} = \mu_1 - (b-1) \cdot 
\left\lceil \frac{b \mu_1 + \mu_2}{b^2-1} \right\rceil
= \frac{\mu_1 - \mu_2}{b+1} - (b-1) \cdot \df\left( \frac{b \mu_1 + 
\mu_2} {b^2-1} \right)
\end{equation}
On constate immédiatement sur la dernière écriture que la dimension de
$\calX_{(\mu_1, \mu_2)}$ s'exprime comme la somme de $\frac{\mu_1 -
\mu_2}{b+1}$ (qui correspond au terme attendu) et d'un terme correctif
qui ne dépend que des congruences de $\mu_1$ et $\mu_2$ modulo $b^2-1$.
De surcroît, ce terme correctif varie dans l'intervalle $]1-b, 0]$ ;
la dimension de $\calX_{(\mu_1, \mu_2)}$ se caractérise donc encore
comme le plus grand entier $\leq \frac{\mu_1 - \mu_2}{b+1}$ qui est
congru à $\mu_1$ modulo $b-1$ (en accord avec la congruence du
théorème \ref{theo:dimdivelem2}).

La dimension de la variété $\calX_{\leq e}$, quant à elle, s'obtient en
prenant le maximum de $\dim_k \calX_{(\mu_1, \mu_2)}$ sur tous les
couples d'entiers $(\mu_1, \mu_2)$ vérifiant $0 \leq \mu_2 \leq \mu_1
\leq e$ et $\mu_1 + \mu_2 \equiv 0 \pmod {b-1}$. Le calcul devient alors
pénible et conduit à distinguer de nombreux cas ; nous ne le faisons 
pas. Il est quand même possible à peu de frais d'obtenir le résultat
suivant.

\begin{prop}
\label{prop:Vedeux}
On suppose $d = 2$ et $h \geq 0$.
Alors pour tout entier $e \geq 0$, on a :
$$\dim_k \calX_{\leq e + (b^2-1)} = \dim_k \calX_{\leq e} + 
(b-1).$$
\end{prop}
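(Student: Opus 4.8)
Looking at this, I need to prove that $\dim_k \calX_{\leq e + (b^2-1)} = \dim_k \calX_{\leq e} + (b-1)$ for $d = 2$ and $h \geq 0$.

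\begin{proof}[Esquisse de d�monstration]
Le plan est de tout ramener � la formule explicite \eqref{eq:dimmu12} et � l'�criture
$$\dim_k \calX_{\leq e} = \max_{(\mu_1,\mu_2)\in S_e} \dim_k \calX_{(\mu_1,\mu_2)},$$
o� $S_e$ d�signe l'ensemble (fini et non vide, puisqu'il contient $(0,0)$) des couples d'entiers $(\mu_1,\mu_2)$ tels que $0 \leq \mu_2 \leq \mu_1 \leq e$ et $b-1$ divise $\mu_1+\mu_2$. On notera d'embl�e que $\dim_k\calX_{(0,0)} = 0$ d'apr�s \eqref{eq:dimmu12}, de sorte que $\dim_k\calX_{\leq e} \geq 0$ pour tout $e \geq 0$.

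La premi�re �tape, qui est le c\oe ur du calcul, est le lemme de translation suivant : si $0 \leq \mu_2 \leq \mu_1$ et $b-1$ divise $\mu_1+\mu_2$, alors le couple $(\mu_1+(b^2-1),\mu_2)$ v�rifie les m�mes conditions (la congruence est pr�serv�e car $b-1$ divise $b^2-1$) et l'on a $\dim_k\calX_{(\mu_1+(b^2-1),\mu_2)} = \dim_k\calX_{(\mu_1,\mu_2)} + (b-1)$. Cela r�sulte imm�diatement de la premi�re �galit� de \eqref{eq:dimmu12} : en rempla�ant $\mu_1$ par $\mu_1+(b^2-1)$, le terme de t�te augmente de $b^2-1$ tandis que l'argument de la partie enti�re sup�rieure augmente exactement de $b$, donc la partie enti�re sup�rieure augmente de $b$ ; la variation totale est $(b^2-1)-b(b-1) = b-1$. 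En appliquant ce lemme � un couple r�alisant $\dim_k\calX_{\leq e}$, on obtient un �l�ment de $S_{e+(b^2-1)}$ dont la vari�t� associ�e est de dimension $\dim_k\calX_{\leq e}+(b-1)$, d'o� la minoration $\dim_k\calX_{\leq e+(b^2-1)} \geq \dim_k\calX_{\leq e}+(b-1)$.

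Pour la majoration, on choisit un couple $(\nu_1,\nu_2)$ r�alisant $\dim_k\calX_{\leq e+(b^2-1)}$ et l'on distingue trois cas. Si $\nu_1 \leq e$, alors $(\nu_1,\nu_2)\in S_e$ et $\dim_k\calX_{\leq e+(b^2-1)} = \dim_k\calX_{(\nu_1,\nu_2)} \leq \dim_k\calX_{\leq e}$, ce qui suffit. Sinon $e < \nu_1 \leq e+(b^2-1)$, donc $\nu_1-(b^2-1) \leq e$ ; si de plus $\nu_1-(b^2-1) \geq \nu_2$, le couple $(\nu_1-(b^2-1),\nu_2)$ appartient � $S_e$ et le lemme de translation (appliqu� � l'envers) donne $\dim_k\calX_{\leq e} \geq \dim_k\calX_{(\nu_1-(b^2-1),\nu_2)} = \dim_k\calX_{\leq e+(b^2-1)}-(b-1)$. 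Il reste le cas $\nu_1-(b^2-1) < \nu_2$, c'est-�-dire $\nu_1-\nu_2 < b^2-1$ : la seconde �criture de \eqref{eq:dimmu12}, jointe � $\df \geq 0$, donne alors $\dim_k\calX_{(\nu_1,\nu_2)} \leq \frac{\nu_1-\nu_2}{b+1} < b-1$, et comme $\dim_k\calX_{\leq e} \geq 0$, on a bien $\dim_k\calX_{\leq e+(b^2-1)} = \dim_k\calX_{(\nu_1,\nu_2)} < b-1 \leq \dim_k\calX_{\leq e}+(b-1)$. En combinant les deux in�galit�s on obtient l'�galit� cherch�e.

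Je ne vois pas d'obstacle s�rieux ici : la seule subtilit� est la discussion de cas dans la majoration, et plus pr�cis�ment le fait que lorsque la translation $(\nu_1,\nu_2)\mapsto(\nu_1-(b^2-1),\nu_2)$ fait sortir de la chambre $\mu_1 \geq \mu_2$, la diff�rence $\nu_1-\nu_2$ est automatiquement assez petite pour que la dimension soit d�j� strictement inf�rieure � $b-1$, rendant la majoration grossi�re suffisante.
\end{proof}
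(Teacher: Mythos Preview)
Your proof is correct and follows essentially the same approach as the paper: both inequalities rest on the translation identity $\dim_k \calX_{(\mu_1+(b^2-1),\mu_2)} = \dim_k \calX_{(\mu_1,\mu_2)} + (b-1)$ read off from \eqref{eq:dimmu12}. The only difference lies in the majoration: where you split into three cases, the paper observes that $\dim_k \calX_{\leq e'} \geq \dim_k \calX_{\leq b^2-1} = b-1$ (by monotonicity in $e$), which via \eqref{eq:dimmu12} forces $\nu_1-\nu_2 \geq b^2-1$ for any maximizer, so the reverse translation always lands in $S_e$ and your cases~1 and~3 are in fact vacuous.
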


\begin{rem}
La proposition signifie exactement que, dans le théorème 
\ref{theo:presburger}, on peut choisir $N = b^2-1$ et que l'égalité 
énoncée vaut alors pour tout $e$.
\end{rem}

\begin{proof}
On considère un couple $(\mu_1, \mu_2)$ pour lequel les variétés
$\calX_{\leq e}$ et $\calX_{(\mu_1, \mu_2)}$ ont même dimension. Alors
par la formule \eqref{eq:dimmu12}, on a $\dim_k \calX_{(\mu_1 + b^2-1, 
\mu_2)} = \dim_k \calX_{(\mu_1, \mu_2)} + (b-1)$. Il en résulte 
l'inégalité 
$\dim_k \calX_{\leq e + (b^2-1)} \geq \dim_k \calX_{\leq e} +
(b-1)$.
On pose à présent $e' = e + (b^2-1)$ et on choisit $(\mu'_1, \mu'_2)$
tel que $\dim_k \calX_{\leq e'} = \calX_{(\mu'_1, \mu'_2)}$. Comme
$e' \geq b^2-1$, on a :
$$b - 1 = \dim_k \calX_{\leq b^2-1} \leq \dim_k \calX_{\leq e'} \leq
\frac{\mu'_1 - \mu'_2}{b+1}$$
d'où $\mu'_1 - (b^2-1) \geq \mu'_2$. La dimension de la variété
$\calX_{(\mu'_1 - (b^2-1), \mu'_2)}$ peut donc encore se calculer par la
formule \eqref{eq:dimmu12} et elle vaut $\dim_k \calX_{(\mu'_1, \mu'_2)}
- (b-1)$. À partir de là, on déduit $\dim_k \calX_{\leq e} \geq \dim_k
\calX_{\leq e'} - (b-1)$, et la proposition est démontrée.
\end{proof}

\paragraph{En dimension $\mathbf 3$}

On considère $(\mu_1, \mu_2, \mu_3)$ un triplet d'entiers tels que 
$\mu_1
\geq \mu_2 \geq \mu_3$ et $\mu_1 + \mu_2 + \mu_3 \equiv 0 \pmod {b-1}$.
De façon similaire à ce qui se passait en dimension $2$, calculer la
dimension de $\calX_{(\mu_1, \mu_2, \mu_3)}$ revient à maximiser la
quantité $\dim(\mu) = 2 \mu_1 + \mu_2 - \mu_{2,2} - \mu_{2,3} -
\mu_{3,3}$ sous les contraintes
$$\left\{ \begin{array}{l}
\mu_3 \leq \mu_{2,3} \leq \mu_2 \leq \mu_{2,2} \leq \mu_1
\quad ; \quad \mu_{2,3} \leq \mu_{3,3} \leq \mu_{2,2} \\
(b+1) \mu_{3,3} \geq b \mu_{2,2} + \mu_{2,3} \\
(b+1) \mu_{2,2} + 2\mu_{2,3} - \mu_{3,3} \geq b \mu_1 + \mu_2 + \mu_3 \\
2b \mu_{2,2} + (b+1)\mu_{2,3} - b \mu_{3,3} \geq b \mu_1 + b \mu_2 + \mu_3 \\
\mu_{2,2} + \mu_{2,3} \equiv \mu_{3,3} \equiv 0 \pmod {b-1}
\end{array} \right.$$
À $\mu_{2,2}$ et $\mu_{2,3}$ fixés, le meilleur $\mu_{3,3}$ (\emph{i.e.} 
le plus petit) est toujours $(b-1) \cdot \lceil 
\frac{b\mu_{2,2}+\mu_{2,3}}{b^2-1} \rceil$. On peut ainsi reformuler le 
problème en éliminant la variable $\mu_{3,3}$ ; celui-ci est équivalent
à maximiser la somme
$$2 \mu_1 + \mu_2 - \mu_{2,2} - \mu_{2,3} - (b-1) \cdot
\left\lceil\frac{b\mu_{2,2}+ \mu_{2,3}}{b^2-1} \right\rceil$$
sous les nouvelles contraintes
$$\left\{ \begin{array}{l}
\mu_3 \leq \mu_{2,3} \leq \mu_2 \leq \mu_{2,2} \leq \mu_1 \\
(b+1) \mu_{3,3} \geq b \mu_{2,2} + \mu_{2,3} \\
(b+1) \mu_{2,2} + 2\mu_{2,3} - (b-1) \cdot \left\lceil
\frac{b\mu_{2,2}+\mu_{2,3}}{b^2-1} \right\rceil
\geq b \mu_1 + \mu_2 + \mu_3 \\
2b \mu_{2,2} + (b+1)\mu_{2,3} - b (b-1) \cdot \left\lceil
\frac{b\mu_{2,2}+\mu_{2,3}}{b^2-1} \right\rceil
\geq b \mu_1 + b \mu_2 + \mu_3 \\
\mu_{2,2} + \mu_{2,3} \equiv 0 \pmod {b-1}
\end{array} \right.$$
L'écriture se simplifie encore si l'on effectue le changement de
variables $x = \frac{b \mu_{2,2} + \mu_{2,3}} {b-1}$ et
$y = \frac{\mu_{2,2} + \mu_{2,3}} {b-1} - \lceil \frac x
{b+1} \rceil$. En effet, on a alors 
$$\dim(\mu) = 2 \mu_1 + \mu_2 - 2 (b-1) \cdot \left\lceil \frac x
{b+1} \right\rceil - (b-1) y$$
tandis que les contraintes deviennent :
$$\left\{ \begin{array}{l}
x, y \in \Z \\
\displaystyle \mu_3 \leq 
-x + b \cdot \left\lceil \frac x {b+1} \right\rceil + by
\leq \mu_2 \leq 
x - \left\lceil \frac x {b+1} \right\rceil - y
\leq \mu_1 \smallskip \\
\displaystyle x + y \geq \frac{b \mu_1 + \mu_2 + \mu_3} {b-1} \quad ;
\quad x + b y \geq \frac{b \mu_1 + b \mu_2 + \mu_3} {b-1}
\end{array} \right.$$
On laisse momentanément de côté les inégalités compliquées de la seconde 
ligne pour se concentrer sur celles de la troisième.
\begin{figure}
\begin{center}
\includegraphics[scale=0.8]{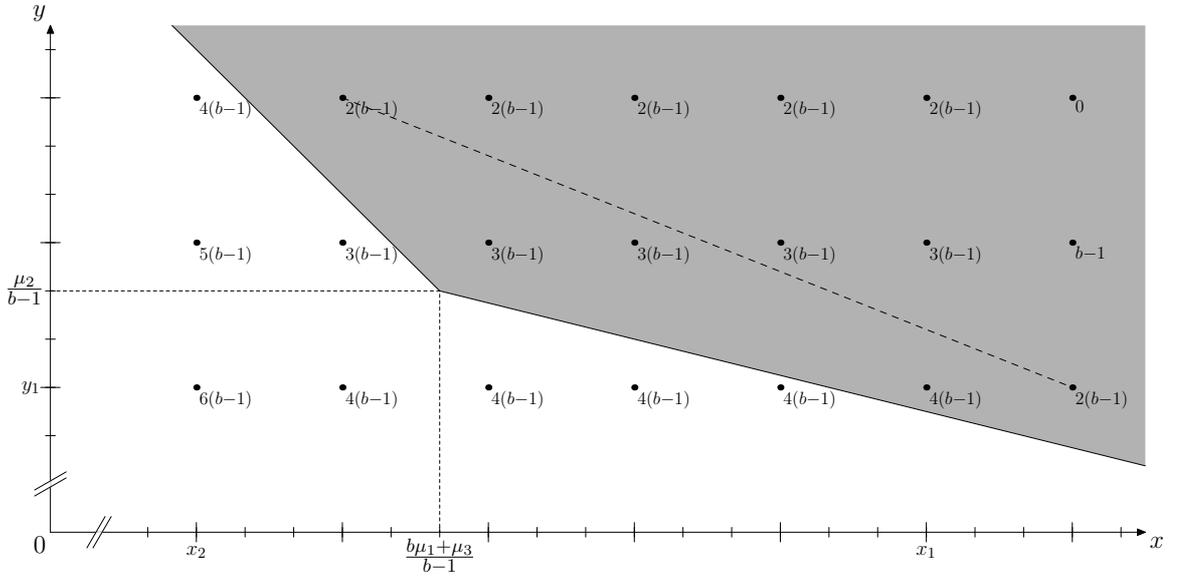}
\end{center}

\caption{Illustration du problème d'optimisation pour $b = 4$}
\label{fig:objectif}
\end{figure}
Sur la figure \ref{fig:objectif} est représentée la région définie par
celles-ci (en gris) et sont notées les
valeurs de la fonction à maximiser (à une constante additive près). En
étudiant cette figure --- et notamment en comparant les pentes des
droites définissant le domaine à celle de la droite oblique en
pointillés qui relie deux points de même valeur --- on démontre que le
maximum est nécessairement atteint au point de coordonnées $(x_1, y_1)$
avec
$$x_1 = (b+1) \cdot \left\lceil \frac{b\mu_1 + \mu_3}{b^2-1}\right\rceil
\quad \text{et} \quad
y_1 = \left\lceil \frac{\mu_2}{b-1} - \frac{b+1} b \cdot \df
\left(\frac{b \mu_1 + \mu_3}{b^2-1}\right) \right\rceil$$
ou au point de coordonnées $(x_2, y_2)$ avec
$$x_2 = x_1 - (b+1)
\quad \text{et} \quad
y_2 = b+1 + \left\lceil \frac{\mu_2}{b-1} - (b+1) \cdot \df
\left(\frac{b \mu_1 + \mu_3}{b^2-1}\right) \right\rceil.$$
Plus précisément, le maximum est atteint en $(x_1, y_1)$ si $\df (
\frac{b\mu_1 + \mu_3}{b^2-1} ) \leq \frac b {b+1}$ et en $(x_2,y_2)$ dans
le cas contraire. Dans la suite, on notera $(x_0, y_0)$ ce point.
Au sujet de la valeur du maximum, un calcul montre qu'il vaut :
\begin{equation}
\label{eq:dimmu123}
2\mu_1 + \mu_2 - 2 (b-1) \cdot \left\lceil \frac{b\mu_1 +
\mu_3}{b^2-1} \right\rceil - (b-1) \cdot \left\lceil \frac{\mu_2}{b-1} 
- (b+1) m \right\rceil
\end{equation}
avec 
$$m = \min \left\{ \frac 1 b \cdot \df \left( \frac{b \mu_1 +
\mu_3}{b^2-1} \right), \,\, \df \left( \frac{b \mu_1 + \mu_3}{b^2-1}
\right) - \frac{b-1}{b+1} \right\}.$$
On rappelle quand même que certaines contraintes avaient été mises de
côté. Il faut donc encore au moins se demander à quelles conditions 
les $x_0$ et $y_0$ précédents les satisfont. On remarque pour cela que 
$b+1$ divise $x_0$ de sorte que $\lceil \frac {x_0} {b+1} \rceil = 
\frac{x_0}{b+1}$ et que l'on a les encadrements suivants :
$$\frac{b \mu_1 + \mu_3}{b-1} -1 \leq x_0 \leq 
\frac{b \mu_1 + \mu_3}{b-1} + b 
\quad \text{et} \quad
\frac{\mu_2}{b-1} - 1 \leq y_0 \leq \frac{\mu_2}{b-1} + 1.$$
Ainsi il vient :
$$\frac{-b \mu_1 + b(b+1) \mu_2 - \mu_3}{b^2-1} - \frac{b^2+2b}{b+1} 
\leq -x_0 + b \left \lceil \frac {x_0} {b+1} \right\rceil + b y_0 \leq
\frac{-b \mu_1 + b(b+1) \mu_2 - \mu_3}{b^2-1} + \frac{b^2+b+1}{b+1}$$
$$\frac{b^2 \mu_1 - (b+1) \mu_2 + b \mu_3}{b^2-1} - \frac{2b+1}{b+1} 
\leq x_0 - \left \lceil \frac {x_0} {b+1} \right\rceil - y_0 \leq
\frac{b^2 \mu_1 - (b+1) \mu_2 + b \mu_3}{b^2-1} + \frac{b^2+b+1}{b+1}$$
à partir de quoi il suit que le couple $(x_0, y_0)$ est solution du
problème dès que le triplet $(\mu_1, \mu_2, \mu_3)$ vérifie $\mu_1
- \mu_2 \leq b(\mu_2 - \mu_3) - (b^2+b+1)(b-1)$ et $\mu_2 - \mu_3 \leq
b(\mu_1 - \mu_2) - (b^2+b+1)(b-1)$, ce qui revient encore à dire que le
triplet $(\mu_1-b^2-b-1, \mu_2, \mu_3+b^2+b+1)$ est $b$-régulier. On a 
ainsi démontré la proposition suivante.

\begin{prop}
On suppose $h \geq 0$. Soit $(\mu_1, \mu_2, \mu_3)$ un 
triplet tel que $(\mu_1-b^2-b-1, \mu_2, \mu_3+b^2+b+1)$ soit
intégralement $b$-régulier (voir définition \ref{def:breg} de
l'introduction). Alors la dimension de la variété $\calX_{(\mu_1,
\mu_2, \mu_3)}$ est donnée par la formule \eqref{eq:dimmu123}.
\end{prop}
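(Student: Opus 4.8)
Le plan est de lire l'\'enonc\'e directement sur l'analyse d'optimisation men\'ee juste au-dessus. D'apr\`es le th\'eor\`eme \ref{theo:dim}, on a $\dim_k\calX_{(\mu_1,\mu_2,\mu_3)}=\max_\varphi\dim(\varphi)$, le maximum portant sur les $\varphi\in\Phi_\Z$ tels que $(\mu_{1,1}(\varphi),\mu_{1,2}(\varphi),\mu_{1,3}(\varphi))=(\mu_1,\mu_2,\mu_3)$ ; par le lemme \ref{lem:dim}, ce nombre est le maximum de la forme lin\'eaire $2\mu_1+\mu_2-\mu_{2,2}-\mu_{2,3}-\mu_{3,3}$ sur les points entiers du poly\`edre d\'ecoup\'e par les in\'egalit\'es de $Q$ (jeux I, II, III) et les conditions d'int\'egrit\'e de la proposition \ref{prop:integrite}. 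On \'elimine d'abord $\mu_{3,3}$ : \`a $\mu_{2,2},\mu_{2,3}$ fix\'es, l'objectif est maximis\'e par le plus petit $\mu_{3,3}$ compatible avec $(b+1)\mu_{3,3}\geq b\mu_{2,2}+\mu_{2,3}$, $\mu_{2,3}\leq\mu_{3,3}\leq\mu_{2,2}$ et $b-1$ divise $\mu_{3,3}$, soit $\mu_{3,3}=(b-1)\lceil\frac{b\mu_{2,2}+\mu_{2,3}}{b^2-1}\rceil$ (en v\'erifiant que cette valeur tombe bien dans $[\mu_{2,3},\mu_{2,2}]$). Le changement de variables $x=\frac{b\mu_{2,2}+\mu_{2,3}}{b-1}$, $y=\frac{\mu_{2,2}+\mu_{2,3}}{b-1}-\lceil\frac x{b+1}\rceil$ ram\`ene alors le probl\`eme au programme en nombres entiers \`a deux variables \'ecrit plus haut, aux contraintes ``simples'' $x+y\geq\frac{b\mu_1+\mu_2+\mu_3}{b-1}$, $x+by\geq\frac{b\mu_1+b\mu_2+\mu_3}{b-1}$ et aux contraintes ``compliqu\'ees'' provenant de $\mu_3\leq\mu_{2,3}\leq\mu_2\leq\mu_{2,2}\leq\mu_1$ et $\mu_{2,3}\leq\mu_{3,3}$.

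Le c\oe ur du raisonnement est ensuite l'\'etude du programme relax\'e obtenu en ne gardant que les deux in\'egalit\'es simples. En comparant, sur la figure \ref{fig:objectif}, les pentes des deux droites fronti\`ere \`a celle d'une ligne de niveau de l'objectif, on montre que l'optimum de ce probl\`eme relax\'e est atteint en l'un des deux points entiers explicites $(x_1,y_1)$ ou $(x_2,y_2)$, le choix \'etant gouvern\'e par le signe de $\frac b{b+1}-\df\!\big(\frac{b\mu_1+\mu_3}{b^2-1}\big)$ ; on note $(x_0,y_0)$ ce point. Une substitution directe donne alors que la valeur de l'objectif en $(x_0,y_0)$ est bien \eqref{eq:dimmu123}.

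Il reste \`a remettre les contraintes compliqu\'ees et \`a v\'erifier qu'elles sont automatiquement satisfaites en $(x_0,y_0)$ sous l'hypoth\`ese faite. Comme $b+1$ divise $x_0$, on a $\lceil\frac{x_0}{b+1}\rceil=\frac{x_0}{b+1}$, et les formules explicites pour $x_0$ et $y_0$ donnent les encadrements $\frac{b\mu_1+\mu_3}{b-1}-1\leq x_0\leq\frac{b\mu_1+\mu_3}{b-1}+b$ et $\frac{\mu_2}{b-1}-1\leq y_0\leq\frac{\mu_2}{b-1}+1$. En propageant ces bornes dans les expressions $-x_0+b\lceil\frac{x_0}{b+1}\rceil+by_0$ et $x_0-\lceil\frac{x_0}{b+1}\rceil-y_0$, on obtient que les in\'egalit\'es \'ecart\'ees valent d\`es que $\mu_1-\mu_2\leq b(\mu_2-\mu_3)-(b^2+b+1)(b-1)$ et $\mu_2-\mu_3\leq b(\mu_1-\mu_2)-(b^2+b+1)(b-1)$, c'est-\`a-dire d\`es que $(\mu_1-b^2-b-1,\mu_2,\mu_3+b^2+b+1)$ est $b$-r\'egulier ; l'int\'egrit\'e des $\mu_i$ et les conditions de divisibilit\'e de la proposition \ref{prop:integrite} sont, quant \`a elles, ce qui l\'egitime l'\'elimination de $\mu_{3,3}$ et le passage aux variables $x$, $y$. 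Sur ce domaine, le maximum du probl\`eme relax\'e co\"incide donc avec le maximum v\'eritable et vaut \eqref{eq:dimmu123}.

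Le point d\'elicat me semble \^etre l'\'etape centrale : localiser pr\'ecis\'ement le sommet optimal du programme en nombres entiers \`a deux variables, ce qui r\'eclame la comparaison de pentes sur la figure \ref{fig:objectif} ainsi qu'une comptabilit\'e assez fine avec les parties enti\`eres sup\'erieures et avec la quantit\'e $m$ ; y est \'etroitement li\'ee l'arithm\'etique de signes de la derni\`ere \'etape, qui doit faire appara\^itre exactement la $b$-r\'egularit\'e du triplet d\'ecal\'e comme condition d'admissibilit\'e.
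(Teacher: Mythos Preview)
Your proposal is correct and follows essentially the same approach as the paper: the proposition is in fact stated as the conclusion of the computation that precedes it, and your outline reproduces that computation step for step --- elimination of $\mu_{3,3}$, change of variables $(x,y)$, analysis of the relaxed two-variable integer program via the slope comparison of figure~\ref{fig:objectif}, identification of the optimal point $(x_0,y_0)$ and its value \eqref{eq:dimmu123}, and finally the propagation of the bounds on $x_0,y_0$ to show that the discarded constraints hold precisely when the shifted triple is $b$-r\'egulier.
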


\noindent
À partir de la formule \eqref{eq:dimmu123}, on voit que, dans le cas où
$(\mu_1-b^2-b-1, \mu_2, \mu_3+b^2+b+1)$ est intégralement $b$-régulier,
la dimension de $\calX_{(\mu_1, \mu_2, \mu_3)}$ est la somme du terme
attendu $\frac{2 (\mu_1 - \mu_3)}{b+1}$ et d'une quantité bornée qui ne
dépend que des congruences de $b \mu_1 + \mu_3$ modulo $b^2-1$ et
$\mu_2$ modulo $b-1$. 

\subsection{Généralisations envisageables}

\subsubsection{À un opérateur $\sigma : M \to M$ arbitraire}
\label{subsec:frobarbit}

Dans tout cet article, on a supposé que $\sigma$ agissait coordonnée par 
coordonnée sur $M$. Ceci est en fait assez restrictif, et une situation 
plus générale que l'on aimerait étudier (notamment car elle correspond à 
certains problèmes importants de déformation) est celle où on se donne 
une application $\sigma$-semi-linéaire quelconque $\sigma_M : M \to M$. 
Dans ce cas, les variétés $\calX_{\leq e}(\sigma_M)$, $\calX_\mu 
(\sigma_M)$ et $\calX_{\leq \mu}(\sigma_M)$ sont définies de
façon analogue. Par exemple, l'ensemble des $k$-points de $\calX_{\leq
e}(\sigma_M)$ est l'ensemble des réseaux $L$ de $M$ satisfaisant
$$u^e L \subset \sigma^\star_M(k[[u]] \otimes_{\sigma, k[[u]]} L) \subset
L$$
où $\sigma_M^\star : k[[u]] \otimes_{\sigma, k[[u]]} M \to M$ est
l'application linéarisée de $\sigma_M$. Si $A$ désigne la matrice de
$\sigma_M$ dans une $k((u))$-base de $M$ (par exemple la base canonique),
on s'autorisera à écrire $\calX_{\leq e}(A)$ à la place de $\calX_{\leq
e}(\sigma_M)$, et de même pour les deux autres variantes.
L'auteur pense que les théorèmes \ref{theo:dimcaruso},
\ref{theo:dimdivelem2} et \ref{theo:dimdivelem} s'étendent sans grande
modification à ce cas plus général.

\begin{conj}
\label{conj:anyfrob}
Il existe des constantes $b_0, c_1, \ldots, c_7$ et un vecteur
$\mu_0 \in \R^d$ tel que si $b \geq b_0$, alors
\begin{itemize}
\item pour tout entier $e$, on ait :
$$\dim_k \calX_{\leq e}(\sigma_M) \leq c_1 + \cro{\frac{d^2} 4} \cdot 
\frac e {b+1}$$
\item pour tout entier $e$ suffisamment grand, on ait :
$$\dim_k \calX_{\leq e}(\sigma_M) \geq -c_2 +  \cro{\frac{d^2} 4} \cdot 
\frac e {b+1}$$
\item pour tout $\mu = (\mu_1, \ldots, \mu_d) \in \R^d$ tel que $\mu_1
\geq \cdots \geq \mu_d$ et $\mu_1 + \cdots + \mu_d \equiv \val 
(\det \sigma_M) \pmod {b-1}$\footnote{Comme $\sigma_M$ est une 
application semi-linéaire, le déterminant de sa matrice peut varier 
lorsqu'on le calcule dans deux bases différentes ; toutefois la 
congruence modulo $b-1$ de sa valuation reste, elle, fixe. Il fait
donc bien sens d'écrire que $\val (\det \sigma_M)$ est congru à un 
certain entier modulo $b-1$. On notera également que dans le cas où
$\mu_1 + \cdots + \mu_d$ n'est pas congru à $\val (\det \sigma_M)$
modulo $b-1$, la variété $\calX_\mu$ est vide.}, on ait :
$$\dim_k \calX_\mu(\sigma_M) \leq c_3 + (b-1) \cdot \sum_{i=1}^d
\sum_{n=1}^\infty \mu_i \cdot \frac{d+1-i-\weyl^n(i)}{b^n}$$
\item pour tout $\mu$ comme précedemment tel qu'en outre $\mu_i \geq
\mu_{i+1} + c_4$ pour tout $i$, on ait si $d \geq 3$ :
$$\dim_k \calX_\mu(\sigma_M) \geq - c_5 + (b-1) \cdot \sum_{i=1}^d
\sum_{n=1}^\infty \mu_i \cdot \frac{d+1-i-\weyl^n(i)}{b^n}$$
\item pour tout $\mu = (\mu_1, \ldots, \mu_d) \in \R^d$ tel que $\mu_1
\geq \cdots \geq \mu_d$, on ait :
$$-c_6 + \sup_{ \substack{\mu' \leq \mu \\ \mu' - \mu_0\,b\text{\rm
-rég.}}} \frac{\left< 2 \vect \rho | \mu' \right>_d} {b+1} \leq \dim_k
\calX_{\leq \mu} (\sigma_M) \leq c_7 + \sup_{ \substack{\mu' \leq \mu \\
\mu'\,b\text{\rm -rég.}}} \frac{\left< 2 \vect \rho | \mu' \right>_d}
{b+1}.$$
\end{itemize}
\end{conj}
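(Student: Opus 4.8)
The plan is to run the entire machinery of Sections~\ref{sec:stratification}--\ref{sec:dimkisin} with the operator $\sigma$ replaced throughout by $\sigma_M$, keeping track at each step of which constants acquire a dependence on the matrix $A$. First I would redefine, for a lattice $L\subset M$, the functions $\tilde\varphi_i(L)$ exactly as in Definition~\ref{def:varphitilde} but with $\sigma$ replaced by $\sigma_M$. Since Proposition~\ref{prop:varphi} is merely a transcription of Lemma~4.1 of \cite{viehmann}, which is already stated for an arbitrary semilinear operator, it carries over unchanged; in particular the numbers $\mu_{1,j}(\varphi(L))$ remain the exponents of the elementary divisors of the $k[[u]]$-module generated by $\sigma_M(L)$ relative to $L$. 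The parametrisation of $\Phi$ by the convex set $Q\subset E=\R^I$ (Theorem~\ref{theo:parametrisation}) and the formula for $\dim(\varphi)$ (Lemma~\ref{lem:dim}, Corollary~\ref{cor:dim}) are purely combinatorial and do not see $\sigma$ at all, so they remain literally valid. This already yields a stratification $\calX_{\le e}(A)=\bigsqcup_{\varphi}\calX_\varphi(A)$, and likewise for the two other families.

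The first genuinely new point is the analogue of Theorem~\ref{theo:dim}: I would aim to prove that there exist constants $c,c'$ depending only on $d$, $b$ and $A$ with $\dim(\varphi)-c\le\dim_k\calX_\varphi(A)\le\dim(\varphi)+c'$. For the upper bound one follows the ``famille correcte'' argument verbatim: a lattice $L$ with $\tilde\varphi(L)=\tilde\varphi$ admits elements $v_{q,i}$ and scalars $a_{q,i,q',i'}$ satisfying the obvious analogues of conditions~i)--v), where in \eqref{eq:viehmann2} the term $u^{\tilde q_i}e_{i'}$ is replaced by the appropriate combination of columns of $A$. The morphism $f:\calC\to\calA=\A_k^{\tilde A}$ is still defined, and the Hensel-type Lemma~\ref{lem:henselvqi} still bounds its fibres, \emph{except} that the matrix $G$ appearing there is no longer forced to be $I_d$ plus a topologically nilpotent matrix: its reduction modulo $u$ is governed by the $\sigma$-conjugacy class of $A$. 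One therefore only gets a bound $\dim f^{-1}(x)\le c(A)$ in place of $\varepsilon\cdot\tfrac{d(d-1)}2$, which is precisely why the error constants in the conjecture must be allowed to depend on $A$. For the lower bound one exhibits, as in the proof of the minoration of Theorem~\ref{theo:dim}, one stratum $\tilde\varphi$ with $\tilde\varphi_1\ge\cdots\ge\tilde\varphi_d$ on which $f$ becomes an isomorphism up to a bounded-dimensional correction and $g^{-1}(\tilde\calX_{\tilde\varphi})$ is a non-empty open; non-emptiness is exactly what forces the regularity hypotheses $\mu_i\ge\mu_{i+1}+c_4$ and, more importantly, produces the shift vector $\mu_0$, which should be read off from the Newton polygon of $A$.

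Once Theorem~\ref{theo:dim} is available in this form the remainder is automatic: the study of the cone $Q$, of the extremal points of $A_{Q_\max,g,\ell}$ (Proposition~\ref{prop:extrAQmaxgl}), of the identity $A_{Q,g,\ell}=A_{Q_\max,g,\ell}+C^\star$ (Proposition~\ref{prop:AQgl}), of $A_{Q,g,\ell}\cap C$, and the final linear-programming computations involve only the vectors $\vec\mu_i,\vec\delta,\vec\rho$ and the cones $C,C^\star,\Reg,\Reg^\star$, none of which depend on $\sigma_M$. Applying Proposition~\ref{prop:bprime} with the same data $Q,R,g,\ell,C$ then gives each of the five inequalities of Conjecture~\ref{conj:anyfrob}, the $c_i$ absorbing both the discrepancy $\dim_k\calX_\varphi(A)-\dim(\varphi)$ and the lattice-point approximation constant of Proposition~\ref{prop:bprime}; the asymmetry between the upper bound for $\calX_{\le\mu}$ (with the supremum over $b$-regular $\mu'$) and the lower bound (with the supremum over $\mu'$ such that $\mu'-\mu_0$ is $b$-regular) is inherited directly from Theorem~\ref{theo:dimdivelem}.

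The hard part will be the dependence on $A$ in the new version of Theorem~\ref{theo:dim}. When $\sigma_M$ does not act coordinate by coordinate, the successive-approximation construction of correct families no longer decouples, and the failure of $G$ to be unipotent can make $c(A)$ large and difficult to control; worse, one must show that $b_0$ itself can be chosen independently of $A$ if the conjecture is to hold as stated. The natural remedy is to refine the stratification once more by the Newton polygon of $A$ --- the analogue of Viehmann's decomposition of affine Deligne--Lusztig varieties --- and to run the correct-family argument on each Newton stratum separately, where the relevant $G$ becomes block-triangular with controlled blocks. This is conceptually clear but combinatorially heavy, and carrying it out rigorously, together with pinning down $\mu_0$ and the precise shape of the $c_i$, is where essentially all the work lies.
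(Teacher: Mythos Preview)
The statement you are trying to prove is explicitly a \emph{conjecture} in the paper (Conjecture~\ref{conj:anyfrob}), not a theorem. The paper does not contain a proof of it; the author merely states it as a natural extrapolation of Theorems~\ref{theo:dimcaruso}, \ref{theo:dimdivelem2} and \ref{theo:dimdivelem} to the case of an arbitrary $\sigma$-semilinear operator $\sigma_M$, and accompanies it only with the heuristic remark that the restriction $d\ge 3$ in the fourth item is motivated by known pathologies in dimension~$2$ (simple $\sigma$-modules can force extra congruence obstructions there, cf.\ \cite{hellmann}). There is therefore no ``paper's own proof'' against which to compare your proposal.

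Your outline is a reasonable description of what a proof \emph{would have to accomplish}, and you correctly identify the crux: the analogue of Theorem~\ref{theo:dim} for general $\sigma_M$. The purely combinatorial and linear-programming parts (the parametrisation of $\Phi$, the study of $Q$, $A_{Q_\max,g,\ell}$, Propositions~\ref{prop:extrAQmaxgl} and \ref{prop:AQgl}) are indeed independent of the choice of Frobenius and would carry over unchanged. But you should be aware that the step you flag as ``the hard part'' is genuinely open in the paper: the correct-family construction, and in particular Lemma~\ref{lem:henselvqi}, relies on the specific shape of equation~\eqref{eq:viehmann2}, which in turn uses that $\sigma$ acts coordinatewise so that $\sigma(v_{q,i})$ has a predictable leading term. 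For a general $A$, controlling the fibres of $f:\calC\to\calA$ uniformly, establishing non-emptiness of the relevant stratum for the lower bound, and producing the shift $\mu_0$ are precisely the points the author did not know how to handle --- your suggestion to stratify further by Newton polygon is plausible but is not carried out anywhere in the paper. In short, your proposal is not wrong as a strategy, but it is a programme rather than a proof, and the paper itself offers nothing beyond the statement of the conjecture.
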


\noindent
La condition $d \geq 3$ peut paraître étrange, mais certains calculs 
explicites en dimension $2$ (voir par exemple \cite{hellmann}) montrent 
que, dans ce cas, pour certains $\sigma_M$ (précisément, ceux qui 
conduisent à des objets simples) des congruences supplémentaires sur les 
$\mu_i$ doivent être imposées afin que la variété $\calX_\mu(\sigma_M)$ 
résultante ne soit pas vide. Néanmoins, l'auteur pense --- et certains 
calculs numériques tendent à le confirmer --- qu'il s'agit là d'un 
phénomène lié à la petite dimension qui disparaît à partir de $d = 3$.

\subsubsection{À d'autres propriétés géométriques}

Pour l'instant, seule la dimension des variétés $\calX_{\leq e}$, 
$\calX_\mu$ et $\calX_{\leq \mu}$ a été regardée. Toutefois, d'autres 
propriétés géométriques revêtent également un intérêt certain. Il en est 
ainsi notamment du nombre de composantes connexes de ces variétés. À 
part pour le cas $d = 2$ qui peut être traité à la main par des méthodes 
\emph{ad hoc} (voir \cite{hellmann} et \cite{hellmann2}) et qui conduit 
déjà à des énoncés non triviaux, pratiquement rien n'est connu. De façon 
générale, étudier la géométrie fine des variétés précédentes paraît être 
une question très difficile. On peut néanmoins se demander dans quelle 
mesure les méthodes développées dans cet article sont susceptibles 
d'apporter une aide dans l'accomplissement de cette tâche. Si tout ce 
qui concerne l'optimisation linéaire semble lié exclusivement au calcul 
de la dimension, il est raisonnable de croire que la stratification par 
les variétés $\calX_\varphi$ définie au \S \ref{sec:stratification} ait 
encore un rôle à jouer pour d'autres questions, comme par exemple le 
calcul de la fonction zêta si le corps de base $k$ est fini ou de la 
caractéristique d'Euler-Poincaré.

En s'inspirant de la théorie de l'intégration motivique, on peut être
encore plus précis. Soit $K_0(\Var_k)$ le groupe abélien présenté de
la façon suivante :
\begin{itemize}
\item les générateurs sont les symboles $[X]$ où $X$ est un schéma
de type fini sur $k$ ;
\item les relations sont
$$\begin{array}{ll}
[X] = [X_\red] & \text{où } X_\red \text{ est le réduit de } X \\{}
[X] = [Y] & \text{si } X \text{ et } Y \text{ sont isomorphes} \\{}
[X] = [U] + [F] & \text{si } U \text{ est un ouvert de } X
\text{ et } F \text { est le fermé complémentaire.}
\end{array}$$
\end{itemize}
La formule $[X] \cdot [Y] = [X \times_k Y]$ définit un produit sur
$K_0(\Var_k)$ qui est fait un anneau commutatif. L'élément neutre pour
l'addition (resp. la multiplication) est le symbole de la variété vide
(resp. du point). De même que l'on a considéré dans le \S
\ref{subsec:dimexacte} les séries génératrices des dimensions de
$\calX_{\leq e}$, $\calX_\mu$ et $\calX_{\leq \mu}$, on peut définir ici
la série génératrice suivante :
$$S(X_1, \ldots, X_d) = \sum_{\mu \in \N^d} 
[\calX_\mu(\sigma_M)] \cdot X_1^{\mu_1} X_2^{\mu_2} \cdots X_d^{\mu_d}$$
où $\sigma_M$ est un certain opérateur $\sigma$-semi-linéaire agissant sur
$M$ et les $\mu_i$ sont les coordonnées de $\mu$. Bien sûr, on peut 
également considérer les séries génératrices associées aux variétés
$\calX_{\leq \mu}(\sigma_M)$ et $\calX_{\leq e}(\sigma_M)$ mais 
celles-ci se déduisent de la précédente à l'aide de manipulations 
algébriques élémentaires, et c'est pourquoi on se contente de celle de 
$\calX_\mu(\sigma_M)$.

Si l'on note $\L = [\A^1_k]$ le symbole de la droite affine, des 
résultats ou conjectures classiques en intégration motivique stipulent 
que les séries du type de $S$ sont en fait des fractions rationnelles 
lorsque leurs coefficients sont vus dans le localisé $K_0(\Var_k) 
[\L^{-1}]$ (ou parfois encore, un certain complété de cet anneau). 
L'auteur pense qu'il est raisonnable d'énoncer une conjecture similaire 
dans la situation de cet article.

\begin{conj}
\label{conj:motivique}
Il existe deux polynômes $P, Q \in K_0(\Var_k)[X_1, \ldots, X_d]$ tels
que $Q(0,\ldots, 0)$ soit inversible dans $K_0(\Var_k)[\L^{-1}]$ et
l'égalité
$$S(X_1, \ldots, X_d) = \frac{P(X_1, \ldots, X_d)}{Q(X_1, \ldots,
X_d)}$$
ait lieu dans l'anneau $K_0(\Var_k)[\L^{-1}][[X_1, \ldots, X_d]]$.
\end{conj}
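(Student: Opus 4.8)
La strat\'egie que je proposerais consiste \`a d\'eduire la rationalit\'e de $S$ de la stratification mise en place au \S\ref{sec:stratification}, combin\'ee \`a des r\'esultats classiques (combinatoires ou motiviques) de rationalit\'e de s\'eries g\'en\'eratrices index\'ees par les points entiers d'un c\^one poly\'edral rationnel. On commence par se ramener --- quitte \`a sym\'etriser (ce qui pr\'eserve la rationalit\'e) et \`a utiliser que $\calX_\mu(\sigma_M)$ est vide d\`es que $\mu_1 + \cdots + \mu_d \not\equiv \val(\det \sigma_M) \pmod{b-1}$ --- au cas des $d$-uplets $\mu = (\mu_1 \geq \cdots \geq \mu_d) \in \N^d$, c'est-\`a-dire des points entiers d'un c\^one poly\'edral rationnel. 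Comme $\calX_\mu(\sigma_M) = \bigsqcup_\varphi \calX_\varphi$, o\`u $\varphi$ parcourt les \'el\'ements de $\Phi_\Z$ tels que $\mu_{1,i}(\varphi) = \mu_i$ pour tout $i$ (famille finie), on obtient dans $K_0(\Var_k)$ l'identit\'e $[\calX_\mu(\sigma_M)] = \sum_\varphi [\calX_\varphi]$, d'o\`u
\[
S(X_1,\ldots,X_d) = \sum_{\substack{\varphi \in \Phi_\Z \\ \mu_{1,d}(\varphi) \geq 0}} [\calX_\varphi] \cdot X_1^{\mu_{1,1}(\varphi)} \cdots X_d^{\mu_{1,d}(\varphi)}.
\]
Via la bijection du th\'eor\`eme \ref{theo:parametrisation} et la caract\'erisation de la proposition \ref{prop:integrite}, cette somme est index\'ee par les points entiers d'un c\^one poly\'edral rationnel de $E = \R^I$, les exposants $\mu_{1,i}(\varphi)$ \'etant des formes lin\'eaires explicites \`a coefficients entiers de ces coordonn\'ees.

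Le point central est alors d'\'etablir que la classe $[\calX_\varphi] \in K_0(\Var_k)[\L^{-1}]$ d\'epend de fa\c{c}on \og motiviquement quasi-polynomiale \fg\ du point entier param\'etrant $\varphi$ : autrement dit, qu'il existe une d\'ecomposition du c\^one d'indexation en une r\'eunion finie de parties d\'efinies chacune par un nombre fini d'in\'equations lin\'eaires et de congruences --- une partition d\'efinissable dans l'arithm\'etique de Presburger --- sur laquelle $[\calX_\varphi]$ s'\'ecrit $c \cdot \L^{a(\varphi)}$ avec $a$ une forme affine enti\`ere et $c \in K_0(\Var_k)[\L^{-1}]$ une constante. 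Pour cela, je reprendrais la d\'emonstration du th\'eor\`eme \ref{theo:dim} : $\calX_\varphi$ est r\'eunion d'au plus $d!$ sous-vari\'et\'es localement ferm\'ees $\tilde\calX_{\tilde\varphi}$, chacune isomorphe \`a un ouvert de la vari\'et\'e $\calC$ des familles correctes, laquelle, \emph{via} le morphisme $f : \calC \to \calA = \A_k^{\tilde A}$, est un rev\^etement \'etale (si $h \neq 0$) ou une fibration en espaces affines de dimension $\leq \frac{d(d-1)}2$ (si $h = 0$) au-dessus d'un espace affine standard. L'examen des \'equations \eqref{eq:viehmann1} et \eqref{eq:viehmann2} montre que ces rev\^etements sont de type Artin-Schreier-Witt : au pire, chaque fibre est un torseur sous un groupe fini constant, de sorte que $[\tilde\calX_{\tilde\varphi}]$ reste un polyn\^ome de Laurent en $\L$ \`a coefficients dans le sous-anneau de $K_0(\Var_k)$ engendr\'e par les classes de $k$-sch\'emas finis \'etales. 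Enfin, le degr\'e de ce polyn\^ome, le lieu ouvert d\'ecoupant $\tilde\calX_{\tilde\varphi}$ dans $\calC$, et jusqu'\`a la constante $c$, se lisent sur les cardinaux $\card\,\tilde A$ et $\card\,A = \dim(\varphi)$ (lemme \ref{lem:cardA}) et sur la combinatoire des fonctions $\varphi_i$, donn\'ees qui d\'ependent de $\varphi$ de mani\`ere elle-m\^eme r\'egie par l'arithm\'etique de Presburger, exactement comme dans la d\'emonstration du th\'eor\`eme \ref{theo:presburger}.

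Une fois ceci acquis, on conclut gr\^ace au fait classique --- valable dans le cadre combinatoire (Brion, Barvinok) aussi bien que dans le cadre motivique (Denef-Loeser) --- que, pour tout c\^one poly\'edral rationnel $\sigma \subset \R^n$, toute forme affine enti\`ere $a$ et tout $c \in K_0(\Var_k)[\L^{-1}]$, la s\'erie $\sum_{n \in \sigma \cap \Z^n} c\,\L^{a(n)} X^n$ est une fraction rationnelle de $K_0(\Var_k)[\L^{-1}][[X_1,\ldots,X_n]]$ dont le d\'enominateur est un produit de facteurs de la forme $1 - \L^j X_1^{m_1} \cdots X_n^{m_n}$ ; le r\'esultat subsiste en rempla\c{c}ant $\sigma \cap \Z^n$ par une partie d\'efinissable de Presburger, par d\'ecoupage en une r\'eunion finie de translat\'es de tels c\^ones. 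En sommant les contributions des (finiment nombreuses) strates et pi\`eces, on obtient la rationalit\'e voulue, avec un d\'enominateur $Q$ produit de facteurs $1 - \L^j X_1^{m_1} \cdots X_d^{m_d}$ ; en particulier $Q(0,\ldots,0) = 1$, inversible dans $K_0(\Var_k)[\L^{-1}]$.

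Le principal obstacle est le deuxi\`eme point, \`a savoir le contr\^ole de la \emph{classe} --- et non seulement de la dimension --- de $\calX_\varphi$ dans $K_0(\Var_k)[\L^{-1}]$. Il faut d'une part analyser pr\'ecis\'ement la nature des rev\^etements \'etales apparaissant dans la d\'emonstration du th\'eor\`eme \ref{theo:dim} (en particulier v\'erifier qu'ils sont d\'eploy\'es, ou du moins que leurs classes restent des polyn\^omes de Laurent en $\L$ \`a coefficients \og constants \fg), et g\'erer soigneusement le cas $h = 0$ o\`u s'ajoutent des fibres de dimension $\leq \frac{d(d-1)}2$ dont il faut identifier la classe ; d'autre part, il faudrait formaliser et d\'emontrer la d\'ependance \og de Presburger \`a coefficients dans $K_0(\Var_k)[\L^{-1}]$ \fg\ du symbole $[\calX_\varphi]$, ce qui passerait vraisemblablement par un analogue motivique du th\'eor\`eme d'\'elimination des quantificateurs de Presburger d\'ej\`a invoqu\'e au th\'eor\`eme \ref{theo:presburger}.
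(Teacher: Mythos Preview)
The statement you are attempting to prove is labelled \emph{Conjecture} in the paper, and the paper offers no proof of it. The surrounding text explicitly presents it as a conjecture motivated by analogy with classical results in motivic integration (\og des r\'esultats ou conjectures classiques en int\'egration motivique stipulent que les s\'eries du type de $S$ sont en fait des fractions rationnelles\fg), and then moves on to discuss its consequences without any attempt at a demonstration. There is therefore nothing in the paper to compare your proposal against.

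That said, your strategy is a natural one and is clearly in the spirit of what the paper suggests: the author explicitly writes, just before the conjecture, that \og il est raisonnable de croire que la stratification par les vari\'et\'es $\calX_\varphi$ d\'efinie au \S\ref{sec:stratification} ait encore un r\^ole \`a jouer pour d'autres questions, comme par exemple le calcul de la fonction z\^eta\fg. Your decomposition $[\calX_\mu(\sigma_M)] = \sum_\varphi [\calX_\varphi]$ and your reduction to a Presburger-definable dependence of $[\calX_\varphi]$ on the lattice point parametrising $\varphi$ is exactly the kind of argument the author seems to have in mind.

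You have also correctly identified the genuine gap: the paper only controls $\dim_k \calX_\varphi$ (th\'eor\`eme~\ref{theo:dim}), not the full class $[\calX_\varphi]$ in $K_0(\Var_k)[\L^{-1}]$. Your sketch of how to upgrade this --- analysing the Artin--Schreier-type coverings in the proof of th\'eor\`eme~\ref{theo:dim} and arguing that the resulting classes are Laurent polynomials in $\L$ with coefficients in a fixed finite subring --- is plausible but is precisely the step that is not carried out anywhere in the paper, and your own proposal flags it as the \og principal obstacle\fg. In particular, the case $h=0$ (where the fibres of $f:\calC\to\calA$ are only bounded in dimension, not identified) and the passage from $\tilde\calX_{\tilde\varphi}$ being an open subset of $\calC$ to an explicit formula for its class both require substantial further work. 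So your proposal is a credible outline of attack, but it remains an outline: the conjecture is, as far as the paper is concerned, open.
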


\noindent
L'intérêt d'un tel énoncé est qu'il peut être spécialisé à un certain
nombres d'invariants géométriques ou arithmétiques plus classiques. 
Plus précisément dès que l'on dispose d'un morphisme $f$ de 
$K_0(\Var_k)$ dans un anneau $A$ qui envoie $\L$ sur un élément
inversible, sa véracité implique la rationalité de la série :
$$\sum_{\mu \in \N^d} f([\calX_\mu(\sigma_M)]) \cdot X_1^{\mu_1}
X_2^{\mu_2} \cdots X_d^{\mu_d}.$$
Or, il existe un certain nombre de tels morphismes $f$ intéressants. Si
$k$ est un corps fini, il y a par exemple celui qui a un symbole $[X]$
associe le cardinal de $X(k)$, ou plus généralement la fonction zêta de
$X$. Pour un corps $k$ quelconque, on dispose également d'exemples
construits par voie cohomologique comme les nombres de Betti ou le
polynôme de Poincaré virtuel. À partir de là, on peut retrouver la
dimension de $X$ ou encore son nombre de composantes irréductibles de
dimension maximale. La conjecture \ref{conj:motivique} admet donc pour
conséquence la rationalité de la série génératrice des dimensions (qui
a été démontrée directement dans cet article), mais implique 
également la rationalité d'autres séries génératrices numériques.

\medskip

Finalement, pour étudier d'autres propriétés géométriques qui ne 
proviennent pas de $K_0(\Var_k)$, il pourrait être intéressant de 
comprendre comment les variétés $\calX_\varphi$ s'agencent entre elles à 
l'intérieur de $\calX_{\leq e}$, $\calX_\mu$ ou $\calX_{\leq \mu}$. 
Notamment, une question qui paraît importante est de déterminer 
l'adhérence de $\calX_\varphi$ à l'intérieur de ces variétés. Par 
exemple, s'écrit-elle comme une union de certains $\calX_{\varphi'}$ où 
$\varphi'$ vérifie une condition qui s'exprime facilement en fonction de 
$\varphi$ ?

\subsubsection{À un groupe réductif connexe arbitraire}
\label{subsec:reductif}

À l'instar des variétés de Deligne-Lusztig, il est possible d'étendre la
définition des variétés $\calX_\mu$ et $\calX_{\leq \mu}$ à un groupe
réductif connexe déployé quelconque (le cas qui a été considéré dans
cet article 
étant celui de $\text{GL}_d$). Plus précisément, on considère un groupe
réductif connexe $G$ défini sur le corps $k$ (qui, pour simplifier, est
encore supposé algébriquement clos) et $T \subset G$ un tore maximal. 
Soit $X_\star(T)$ le groupe des caractères de $T$. On fixe une chambre
de Weyl dans $X_\star(T) \otimes \R$ dont l'adhérence est notée $C$. Si
$\lambda \in X_\star(T)$, on appelle $u^\lambda$ l'image de $u \in
\G_m(K)$ dans $T(K) \subset G(K)$, où $K = k((u))$. Si on pose $\O_K =
k[[u]]$, la décomposition de Cartan dit que $G(K)$ s'écrit comme l'union
disjointe des doubles classes $G(\O_K) u^\mu G(\O_K)$ où $\mu$ parcourt
l'ensemble des copoids dominants. On définit par ailleurs un opérateur
$\sigma$ agissant sur $G(K)$ de la façon suivante : on fixe un morphisme
de groupes algébriques $\sigma_0 : G \to G$ qui induit une bijection sur
les $k$-points (on rappelle que $k$ est supposé algébriquement clos) et
on pose $\sigma = G(u \mapsto u^b) \circ \sigma_0(K)$ où $\sigma_0(K)$
désigne l'application induite par $\sigma_0$ sur les $K$-points et où
$G(u \mapsto u^b)$ est l'application déduite par fonctorialité du
morphisme d'anneaux $K \to K$, $\sum_{i \gg -\infty} a_i u^i \mapsto
\sum_{i \gg -\infty} a_i u^{bi}$.
Si $\mu$ est un copoids dominant et si $A \in G(K)$, on peut alors
définir des variétés $\calX^G_\mu (A)$ dont les $k$-points sont :
$$\calX^G_\mu (A)(k) = \big\{ \, g \in G(K)/G(\O_K) \, | \, g^{-1}
A \sigma(g) \in G(\O_K) u^\mu G(\O_K) \, \big\}.$$
On définit également $\calX^G_{\leq \mu}(A)$ comme la réunion des
$\calX^G_{\mu'}(A)$ où $\mu'$ décrit l'ensemble des copoids dominants
tels que $\mu - \mu'$
s'écrive comme une combinaison linéaire à coefficients positifs des
racines simples correspondant au choix de $C$. Si
$G$ est le groupe linéaire $\text{GL}_d$, on retrouve les variétés
$\calX_\mu (A)$ et $\calX_{\leq \mu}(A)$. De façon générale, les
variétés $\calX^G_\mu(A)$ et $\calX^G_{\leq \mu}(A)$ sont toujours de 
dimension finie, et on peut s'interroger sur la valeur de cette 
dimension. 

Dans cette optique, une première question est de savoir si le théorème
\ref{theo:dimdivelem2} a des chances de se généraliser à cette nouvelle
situation, et le cas échéant sous quelle forme. Un premier coup d'\oe il
à l'expression
\begin{equation}
\label{eq:exprGLd}
(b-1) \cdot \min_{\weyl \in \mathfrak S_d} \left< \vect \rho_\weyl |
\mu \right>_d \qquad \text{où} \quad
\vec \rho_\weyl = 
\Bigg(\sum_{n=1}^\infty \frac{d+1-i-\weyl^n(i)}{b^n}\Bigg)_{1 \leq i 
\leq d} \in \R^d
\end{equation}
qui apparaît dans son énoncé (et qui constitue une première
approximation de la dimension de $\calX_\mu$) laisse bon espoir. En
effet, on voit d'emblée apparaître un minimum pris sur le groupe des
permutations de $\{1, \ldots, d\}$, c'est-à-dire exactement sur le
groupe de Weyl de $\text{GL}_d$. En outre si l'on fait agir ce groupe de
manière naturelle sur $\R^d$ --- c'est-à-dire par $\weyl \cdot (y_1, 
\ldots, y_d) = (y_{\weyl^{-1}(1)}, \ldots, y_{\weyl^{-1}(d)})$ --- le 
vecteur $\vec \rho_\weyl$ s'exprime en fonction de $\vec \rho = 
(\frac{d+1}2 - i)_{1 \leq i \leq d}$ comme suit :
$$\vec \rho_\weyl = (b-1) \cdot \sum_{n=1}^\infty \frac{\vec \rho + \weyl^{-n} 
\vec \rho}{b^n} = \vec \rho + (b-1) \cdot (b w - 1)^{-1}(\vec \rho)$$
au moins lorsque $b$ est assez grand pour que l'endomorphisme $bw-1$ de
$\R^d$ soit inversible. Si l'on se rappelle finalement que $\vec\rho$ est
égal à la demi-somme des racines positives du système de racines $A_d$,
on voit que la formule \eqref{eq:exprGLd} s'exprime uniquement en
termes du système de racines du groupe $\text{GL}_d$.
Ces considérations conduisent à la conjecture suivante.

\begin{conj}
\label{conj:reductif}
Soit $G$ un groupe réductif connexe sur $k$. Soit $T$ un tore maximal de
$G$. On note $W$ le groupe de Weyl associé et on fixe une fois pour 
toutes le choix d'une chambre de Weyl. Soient $\vec \rho$ la demi-somme 
des racines positives de $G$ et $A \in G(K)$.
Alors, il existe des constantes $b_0$ et $c_0$ telles que pour tout
$b \geq b_0$, on ait :
$$\dim_k \calX^G_\mu(A) \leq c_0 + \inf_{\weyl \in W} \left< \vect
\rho_\weyl | \mu \right> \quad \text{où} \quad
\vec \rho_\weyl = \vec \rho + (b-1) \cdot (b \weyl - 1)^{-1}(\vec \rho)$$
pour tout copoids dominant $\mu$. 
\end{conj}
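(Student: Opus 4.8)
The plan is to reduce the general reductive case to the $\text{GL}_d$ case already treated, combined with functoriality of the construction $A \mapsto \calX^G_\mu(A)$ along closed immersions of reductive groups. First I would observe that it suffices to prove the inequality in the case where $G$ is \emph{semi-simple simply connected}, since passing from $G$ to its derived group only translates $\mu$ by a fixed element (contributing to $c_0$), and passing to the simply connected cover is an isogeny which changes the variety only by a finite radicial morphism, hence does not affect dimensions. For such $G$ one can choose a faithful representation $G \hookrightarrow \text{GL}_d$ compatible with the tori, i.e. sending $T$ into the diagonal torus $T_d$ of $\text{GL}_d$; this induces, for a well-chosen $\sigma$ on $\text{GL}_d$ extending $\sigma_0$, a closed immersion of ind-schemes $G(K)/G(\O_K) \hookrightarrow \text{GL}_d(K)/\text{GL}_d(\O_K)$ which restricts to a locally closed immersion of $\calX^G_\mu(A)$ into a finite union $\bigsqcup_{\nu} \calX^{\text{GL}_d}_{\nu}(A)$, the union being over the dominant coweights $\nu$ of $\text{GL}_d$ whose restriction to $T$ lies in the $W$-orbit of $\mu$ (more precisely, over those $\nu$ such that $G(\O_K) u^\mu G(\O_K)$ meets $G(K) \cap \text{GL}_d(\O_K) u^\nu \text{GL}_d(\O_K)$). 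This is a bounded, finite set of $\nu$'s, each with $|\nu| \leq C \cdot |\mu|$ componentwise for an explicit constant depending only on the embedding.

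From this inclusion one gets $\dim_k \calX^G_\mu(A) \leq \max_\nu \dim_k \calX^{\text{GL}_d}_\nu(A)$, and then Theorem \ref{theo:dimdivelem2} (or rather its conjectural extension to arbitrary $\sigma_M$, Conjecture \ref{conj:anyfrob}, which one would need here since the induced operator on $M = K^d$ is not coordinate-by-coordinate) bounds each term by $c_3 + (b-1)\cdot \min_{\weyl \in \mathfrak S_d} \left< \vec\rho_\weyl^{\,\text{GL}_d} \mid \nu \right>_d$. The key linear-algebra step is then to show that this $\text{GL}_d$-minimum, evaluated at any such $\nu$ lying over the $W$-orbit of $\mu$, is at most $c_0 + \inf_{\weyl \in W} \left< \vec\rho_\weyl \mid \mu \right>$ for the $G$-side quantity, where $\vec\rho_\weyl = \vec\rho + (b-1)(b\weyl - 1)^{-1}(\vec\rho)$. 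Here one exploits the identity noted in the excerpt, $\vec\rho_\weyl = \vec\rho + (b-1)\cdot(b\weyl-1)^{-1}(\vec\rho) = (b-1)\sum_{n\geq 1} b^{-n}(\vec\rho + \weyl^{-n}\vec\rho)$, valid for $b$ large; the point is that $\langle \vec\rho_\weyl \mid \mu\rangle$ is, up to the linear term $\langle \vec\rho \mid \mu\rangle$ which absorbs into $c_0$, a convergent geometric-type sum in the $W$-action on $\mu$, and the Weyl group of $G$ sits inside $\mathfrak S_d$ in a way compatible with $\vec\rho$ mapping to (a translate of) $\vec\rho^{\,\text{GL}_d}$. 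Restricting the $\text{GL}_d$-minimum to permutations coming from $W$ gives exactly the $G$-side infimum; permutations not in $W$ can only decrease the $\text{GL}_d$-minimum, so the inequality goes the right way. The boundedness of the correction terms (finitely many $\nu$, each a bounded perturbation of $\mu$, plus the constants $c_3$) collapses into a single $c_0$ depending only on $G$, $T$, the chamber, and $b_0$.

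The main obstacle is twofold. The genuinely hard analytic input is Conjecture \ref{conj:anyfrob}: the methods of this article establish Theorem \ref{theo:dimdivelem2} only for the coordinate-by-coordinate $\sigma$, whereas the operator induced on $K^d$ by a non-split torus or a non-trivial $\sigma_0$ need not be of that form. So strictly speaking this proof is conditional on the $\text{GL}_d$ case for arbitrary semi-linear $\sigma_M$. The second, more technical, obstacle is controlling the restriction $\calX^G_\mu(A) \hookrightarrow \bigsqcup_\nu \calX^{\text{GL}_d}_\nu(A)$ precisely enough: one must check that the locus in $\text{GL}_d(K)/\text{GL}_d(\O_K)$ cut out by the condition $g \in G(K)/G(\O_K)$ is compatible with the Cartan stratification only up to the bounded, finite correction described above — this requires a careful comparison of the two Cartan decompositions and of the two operators $\sigma$, and is where the constant $c_0$ really comes from. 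I expect that with Conjecture \ref{conj:anyfrob} granted, the remaining root-system combinatorics — showing that the $G$-infimum over $W$ dominates the relevant $\text{GL}_d$-infimum over $\mathfrak S_d$ evaluated at the lifted coweights — is elementary, following the pattern of Lemma \ref{lem:AQmingl} and the discussion of $\vec\rho_\weyl$ in Section \ref{subsec:extrAQmaxgl}.
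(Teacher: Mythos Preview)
The statement you are attempting to prove is labeled \emph{Conjecture} in the paper, and the paper does not prove it. Section~\ref{subsec:reductif} only motivates the formulation by rewriting the $\text{GL}_d$ expression from Theorem~\ref{theo:dimdivelem2} in root-system language, observing that $\vec\rho_\weyl = \vec\rho + (b-1)(b\weyl-1)^{-1}(\vec\rho)$ and that $\vec\rho$ is the half-sum of positive roots, so that the right-hand side makes sense for any reductive $G$. No proof is offered, and the author explicitly says he ``pense que les m\'ethodes d\'evelopp\'ees dans cet article sont de nature \`a s'\'etendre'' and ``y reviendra sans doute dans un travail ult\'erieur.'' So there is no proof in the paper to compare your proposal against.

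As for your proposal itself, it has two genuine gaps beyond the one you already flag. First, as you acknowledge, the whole argument is conditional on Conjecture~\ref{conj:anyfrob}, which is itself unproven; so at best you are proving one conjecture assuming another. Second, and more seriously, the linear-algebra step does not work as stated. Under a faithful representation $G \hookrightarrow \text{GL}_d$, the half-sum of positive roots $\vec\rho_G$ does \emph{not} map to $\vec\rho_{\text{GL}_d}$ or a translate thereof: the positive roots of $\text{GL}_d$ restricted to $T$ include many weights that are not roots of $G$ at all, and conversely the multiplicities are wrong. Consequently the quantity $\langle \vec\rho + (b-1)(b\weyl-1)^{-1}\vec\rho \mid \mu\rangle$ computed on the $G$ side is not the restriction to $W \subset \mathfrak S_d$ of the corresponding $\text{GL}_d$ quantity evaluated at a lift $\nu$ of $\mu$; the two $\vec\rho$'s live in different spaces and are related in a way that depends heavily on the chosen representation. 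Your claim that ``restricting the $\text{GL}_d$-minimum to permutations coming from $W$ gives exactly the $G$-side infimum'' is therefore unjustified, and in fact one should expect a discrepancy that grows linearly in $\mu$, not a bounded one absorbable into $c_0$. A reduction-by-embedding strategy would need a much finer comparison, likely tracking how the stratification by $\calX_\varphi$ behaves under the embedding rather than just the Cartan type $\nu$.
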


\begin{rem}
Lorsque $\weyl$ est le mot le plus long $\weyl_0$ de $W$, le vecteur
$\rho_{\weyl_0}$ se calcule facilement. En effet, $\weyl_0$ échange les 
racines positives avec les racines négatives. En particulier, on a
$\weyl_0(\vec \rho) = - \vec \rho$, d'où il résulte que $(b w_0 - 1) 
(\vec \rho) = -(b+1) \vec \rho$, et par suite que $\vec \rho_{w_0} = 
\frac {2 \vec \rho} {b+1}$ ; on retrouve donc encore une fois ce vecteur 
particulier.
\end{rem}

\noindent
Il paraît aussi raisonnable de croire qu'une minoration de la dimension
de $\calX^G_\mu(A)$ par une expression du même type soit valable, au
moins lorsque $\mu$ vérifie une certaine condition d'intégrité et reste
suffisamment lors de la frontière de $C$. Malgré tout, gardant à
l'esprit le comportement singulier des variétés $\calX_\mu(\sigma_M)$
lorsque $\sigma_M$ représente un $\sigma$-module simple en dimension $2$,
nous préférons rester prudent et évasif à ce sujet.

La conjecture donne également une indication sur la façon d'étendre la 
définition de copoids $b$-réguliers : un copoids dominant $\mu$ est dit 
\emph{$b$-régulier} lorsque le minimum des produits scalaires $\left< 
\vect \rho_\weyl | \mu \right>$ ($\weyl \in W$) est atteint lorsque 
$\weyl$ est le mot le plus long $\weyl_0$, ce qui s'écrit en déroulant 
les définitions :
\begin{equation}
\label{eq:breggrp}
\left< \vect \rho + (b+1)\cdot (b\weyl -1)^{-1}(\vect \rho) | \mu 
\right> \geq 0
\end{equation}
pour tout $\weyl \in W$. Cette définition s'étend à tous les $\mu
\in X_\star(T) \otimes \R$.
On peut démontrer que, si $b$ est assez grand, un copoids $\mu$ est
$b$-régulier si, et seulement s'il vérifie les inégalités
\eqref{eq:breggrp} pour tout $\weyl \in W$ de longueur $\ell(\weyl_0) -
1$. Par ailleurs, pour $b$ suffisamment grand, la suite des copoids
$b$-réguliers est croissante en $b$ (\emph{i.e.} si $\mu$ est
$b$-régulier, alors il est $b'$-régulier pour tout $b' \geq b$) et tout
copoids $b$-régulier est dominant, dans le sens où il appartient à $C$.
Réciproquement, si $\mu$ est un élément de l'\emph{intérieur} de $C$
(c'est-à-dire un élément de la chambre de Weyl choisie), il est 
$b$-régulier pour $b$ suffisamment grand (le \og suffisamment \fg\
dépendant bien sûr de $\mu$).

\medskip

On a également une conjecture pour les variétés $\calX^G_{\leq
\mu}(A)$ :

\begin{conj}
Soit $G$ un groupe réductif connexe sur $k$. Soit $T$ un tore maximal de
$G$. On note $W$ le groupe de Weyl associé et on fixe une fois pour 
toutes le choix d'une chambre de Weyl. Soient $\vec \rho$ la demi-somme 
des racines positives de $G$ et $A \in G(K)$. Alors, il existe des 
constantes $c_1$ et $c_2$ et un élément $\mu_0 \in X_\star(T) \otimes 
\R$ tels que :
$$-c_2 + \sup_{\substack{\mu' \leq \mu \\ \mu' - \mu_0\,b\text{\rm
-rég.}}} \frac{\left< 2 \vect \rho | \mu' \right>_d} {b+1} \leq \dim_k
\calX^G_{\leq \mu}(A) \leq c_1 + \sup_{\substack{\mu' \leq \mu \\
\mu'\,b\text{\rm -rég.}}} \frac{\left< 2 \vect \rho | \mu' \right>_d}
{b+1}$$
où $\mu'$ désigne, ici, un copoids réel.
\end{conj}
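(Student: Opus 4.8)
The plan is to transplant the machinery of \S\S\ref{sec:stratification}--\ref{sec:dimkisin} to the reductive setting, the only genuinely new ingredients being a stratification of $\calX^G_{\leq\mu}(A)$ playing the role of the $\calX_\varphi$ and the replacement of the $\mathfrak S_d$--combinatorics by $W$--combinatorics. First I would attach to a point $gG(\O_K)\in\calX^G_{\leq\mu}(A)(k)$ a combinatorial invariant $\varphi(g)$ generalising the $d$-tuple $(\varphi_1(L),\ldots,\varphi_d(L))$: intrinsically, a right-continuous piecewise-affine ``jump function'' valued in $X_\star(T)\otimes\R$ recording, as one dilates, the relative position of $gG(\O_K)$ and $A\sigma(g)G(\O_K)$ in the valued Cartan decomposition (a point of the affine building decorated with the jump data of $A\sigma$). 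Imitating the explicit arguments of \S\ref{sec:stratification} and Viehmann's methods from \cite{viehmann}, these invariants should satisfy rigid constraints cutting out a polyhedral cone $Q$ in a real vector space $E$, parametrised as in Theorem~\ref{theo:parametrisation}; the function $\dim$ of Definition~\ref{def:dim} should extend to a linear form $\ell$ on $E$, and the analogue of Theorem~\ref{theo:dim} should hold in the form $\dim_k\calX^G_\varphi=\dim(\varphi)+O(1)$, the $O(1)$ (vanishing when $h\neq0$, since then the relevant morphism is \'etale, cf.\ \eqref{eq:majdim}) coming from the choice of lifts of the linearisation coefficients. Carrying out this step rigorously --- building the stratification and establishing the strata-dimension formula intrinsically, without the explicit linear-algebra bookkeeping of \S\ref{sec:stratification} --- is where essentially all the difficulty lies, and I expect it to be the main obstacle.

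Granting that, Proposition~\ref{prop:bprime} gives $\dim_k\calX^G_{\leq\mu}(A)=b'_{Q,R,g,\ell,C^\star}(\mu)$ up to a bounded error, where $g\colon E\to X_\star(T)\otimes\R$ reads off the Cartan type, $C$ is the dominant chamber and $R$ the integrality lattice (analogue of Proposition~\ref{prop:integrite}). By Proposition~\ref{prop:dualconv2} it then suffices to describe $B_{Q,g,\ell,C^\star}=A_{Q,g,\ell}\cap C$. I would redo \S\ref{subsec:extrAQmaxgl}: with $Q_\max$ and $Q_\min$ the cones defined by the root-theoretic analogues of the inequality systems I, II and III, I expect the extreme points of $A_{Q_\max,g,\ell}$ to be exactly the vectors $\vec\rho_\weyl=\vec\rho+(b-1)(b\weyl-1)^{-1}(\vec\rho)$, $\weyl\in W$, once $b$ exceeds a bound $b_0$ depending on $G$. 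The submodularity lemma for the functions $f_T$ used in the proof of Proposition~\ref{prop:extrAQmaxgl}, together with the ``permutation of losers'' combinatorics of \S\ref{subsec:perdants} feeding Proposition~\ref{prop:AQgl}, are the $\mathrm{GL}_d$-specific ingredients that must be replaced; I would try to re-derive the chains $\mu_{i,j}\lessgtr\mu_{i+1,j}$ and $\mu_{i,j}\gtrless\mu_{i+1,j+1}$ directly from the weak order on $W$ rather than from the triangular-tableau picture, and I regard this as the secondary difficulty.

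Once $A_{Q,g,\ell}=A_{Q_\max,g,\ell}+C^\star$ holds and the extreme points are known, the analogue of \eqref{eq:AQglC}, namely $A_{Q,g,\ell}\cap C=\bigl(\tfrac{2\vec\rho}{b+1}+\Reg^\star\bigr)\cap C$ (with $\Reg$ now the cone of $b$-regular coweights of \eqref{eq:breggrp}), follows from the counterparts of Lemmas~\ref{lem:extrAQmaxglC} and~\ref{lem:AQvois}, using that $\vec\rho_{\weyl_0}=\tfrac{2\vec\rho}{b+1}$ for $\weyl_0$ the longest element and that $b$-regularity of $\mu$ is precisely the condition that $\min_{\weyl\in W}\left<\vec\rho_\weyl|\mu\right>$ be attained at $\weyl_0$. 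The upper bound $\dim_k\calX^G_{\leq\mu}(A)\leq c_1+\sup_{\mu'}\tfrac{\left<2\vec\rho|\mu'\right>}{b+1}$, the supremum over $b$-regular real coweights $\mu'\leq\mu$, then comes from $b_{\Reg,\id,L,C^\star}=b_{Q,g,\ell,C^\star}$ with $L\colon\mu\mapsto\tfrac1{b+1}\left<2\vec\rho|\mu\right>$, exactly as in the paragraph ``Obtention de la majoration''. For the lower bound I would imitate ``Obtention de la minoration'': for $\mu$ far enough inside the chamber, pick the $\Q$-point of $Q$ whose image under $g$ lies in $\mu-C^\star$ and which solves the dual extremal system, round its coordinates into $R$ at a bounded cost, and choose $\mu_0$ to absorb the gap between ``$b$-regular'' and the strengthened (integral) regularity needed for that construction to apply; the constant $c_2$ is then the rounding error, just as the $(d-1)^2+\tfrac{(d-2)^2}4$ term was in the $\mathrm{GL}_d$ case. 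No new idea beyond those of \S\ref{sec:dimkisin} is needed once the stratification and the $W$--linear programming are in place; in particular the same argument yields, for $G=\mathrm{GL}_d$ and arbitrary $A$, the $\calX_{\leq\mu}$ estimate of Conjecture~\ref{conj:anyfrob}.
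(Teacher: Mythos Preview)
The statement you are addressing is a \emph{conjecture} in the paper (in \S\ref{subsec:reductif}), not a theorem; the paper offers no proof and explicitly presents it as an open problem. There is therefore no ``paper's own proof'' to compare your proposal against. What the paper does say is that the author believes the methods of the article are ``de nature \`a s'\'etendre \`a la situation g\'en\'erale des groupes r\'eductifs'' and intends to return to this in later work --- your outline is precisely an attempt to make that belief concrete.

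As a research program your sketch is reasonable and follows the natural line the paper suggests, but it is not a proof, and you correctly flag the two genuine obstacles. The first --- constructing a stratification $\calX^G_\varphi$ with a dimension formula analogous to Theorem~\ref{theo:dim} --- is not a matter of ``transplanting'' the arguments of \S\ref{sec:stratification}: those arguments rely throughout on explicit lattice bases, coordinatewise valuations, and the elementary-divisor description of relative position, none of which have evident intrinsic analogues for general $G$. Viehmann's paper \cite{viehmann} treats only $\text{GL}_d$ as well, so there is no off-the-shelf result to invoke. The second obstacle --- replacing the ``permutation des perdants'' combinatorics of \S\ref{subsec:perdants} by something indexed by $W$ --- is acknowledged by the author himself in the appended material as an open question (``il est tr\`es tentant de vouloir g\'en\'eraliser les constructions de cette note \`a un groupe de Weyl quelconque''). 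Your suggestion to use the weak order on $W$ is plausible but speculative; nothing in the paper establishes that the chain inequalities $\mu_{i,j}\lessgtr\mu_{i\pm1,j\pm1}$ admit a root-system reformulation that makes Proposition~\ref{prop:AQgl} go through. In short: your plan matches the author's stated intentions, but both steps you identify as difficulties are genuinely open, and the paper does not resolve them.
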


Un cas particulièrement intéressant, qui apparaît déjà dans l'article de 
Kisin \cite{kisin}, est celui où l'on suppose le corps $k$ parfait (par 
exemple $k = \F_p$), où l'on se donne une extension finie $\ell$ de $k$ 
et où l'on considère le groupe $G$ défini comme la restriction des 
scalaires à la Weil de $\ell$ à $k$ de $\text{GL}_d$. Les variétés 
obtenues ont alors encore une interprétation arithmétique puisqu'elles 
apparaissent comme certaines espaces de modules de schémas en groupes 
définis sur des corps locaux. Lorsque $d = 2$, le calcul de leur 
dimension a déjà été accompli par Imai dans \cite{imai3} et, dans ce 
cas, les résultats qu'il obtient sont en accord avec les conjectures 
précédentes.


\begin{thebibliography}{99}
\bibitem{breuil}
  C. Breuil, \emph{Sch\'emas en groupes et corps des normes},
  disponible à \url{http://www.ihes.fr/~breuil/publications.html},
  1998

\bibitem{caruso}
  X. Caruso, \emph{Sur la classification de quelques $\sigma$-modules
  simples},
  Mosc. Math. J. {\bf 9} (2009)

\bibitem{polymake}
  E. Gawrilow, M. Joswig, 
  \emph{polymake: a Framework for Analyzing Convex Polytopes in Polytopes
  --- Combinatorics and Computation}, Birkhhäuser (2000), pp. 43--74

\bibitem{gortz}
  U. Görtz, T. Haines, R. Kottwitz, D. Reuman,
  \emph{Dimensions of some affine Deligne-Lusztig varieties},
  Ann. Scient. Éc. Norm. Sup. {\bf 39} (2006), pp. 467--511

\bibitem{hellmann}
  E. Hellmann, \emph{On the structure of some moduli spaces of finite
  flat group schemes},
  Mosc. Math. J. {\bf 9} (2009), pp. 531--561

\bibitem{hellmann2}
  E. Hellmann, \emph{Connectedness of Kisin varieties for $\text{GL}_2$},
  preprint (2010)

\bibitem{kisin}
  M. Kisin, \emph{Moduli of finite flat group schemes and modularity},
  Annals of Math. {\bf 170} (2009), pp. 1085-1180.


\bibitem{leborgne}
  J. Le Borgne, \emph{Un algorithme pour la réduction des $\sigma$-modules
  sur $k((u))$}, 
  en préparation

\bibitem{imai1}
  N. Imai, \emph{On the connected components of moduli spaces of finite 
  flat models}, à paraître à Amer. J. Math.

\bibitem{imai2}
  N. Imai,
  \emph{Finite flat models of constant group schemes of rank two},
  preprint (2008)

\bibitem{imai3}
  N. Imai,
  \emph{Ramification and moduli spaces of finite flat models},
  preprint (2008)

\bibitem{monniaux}
  D. Monniaux. 
  \emph{A quantifier elimination algorithm for linear real
  arithmetic}, 
  in LPAR (Logic for Programming Artificial Intelligence and Reasoning), 
  Lecture Notes in Computer Science {\bf 5330}, pp. 243--257.

\bibitem{maxflot}
  C. H. Papadimitriou, K. Steiglitz 
  \emph{The Max-Flow, Min-Cut Theorem} in 
  \emph{Combinatorial Optimization: Algorithms and Complexity},
  Dover (1998), pp. 120--128.

\bibitem{renyi}
  A. Rényi,
  \emph{Théorie des éléments saillants d'une suite d'observations},
  Ann. Fac. Sci. Univ. Clermont-Ferrand {\bf 8} (1962), pp. 7--13

\bibitem{rapoport}
  G. Pappas, M. Rapoport,
  \emph{$\Phi$-modules and coefficient spaces},
  Mosc. Math. J. {\bf 9} (2009), pp. 625--663

\bibitem{viehmann}
  E. Viehmann, \emph{The dimension of some affine Deligne-Lusztig
  varieties}, Ann. Scient. Éc. Norm. Sup. {\bf 39} (2006), pp.
  513--526
\end{thebibliography}
\end{document}